\newtheorem{theorem}{Theorem}[section]
\newtheorem{lemma}[theorem]{Lemma}
\newtheorem{corollary}[theorem]{Corollary}
\newtheorem{proposition}[theorem]{Proposition}
\newtheorem{conjecture}[theorem]{Conjecture}
\theoremstyle{definition}
\newtheorem{definition}[theorem]{Definition}
\newtheorem{example}[theorem]{Example}
\theoremstyle{remark}
\newtheorem{remark}{Remark}
\tikzset{-dot-/.style={decoration={
      markings,
      mark=at position #1 with {\fill circle (2pt);}},postaction={decorate}}}
\newcommand{\tauD}{\tau^{u,v}_D}
\newcommand{\tauDi}[1]{\tau^{u_{#1},v_{#1}}_{D_{#1}}}
\DeclareMathOperator{\Pic}{Pic}
\DeclareMathOperator{\asp}{ASP}
\DeclareMathOperator{\sci}{sci}
\newcommand{\iou}[1][]{
    \ifthenelse{\equal{#1}{}}{{\color{blue}\{IOU\}}}
    {{\color{blue}\{IOU: #1\}}}
}
\definecolor{dust}{HTML}{E1BE6a}
\definecolor{shallows}{HTML}{005A52}
\definecolor{ocean}{HTML}{191386}
\newif\ifpflcomms
\newif\ifnscomms
\title{Twice-Marked Banana Graphs \& Brill--Noether Generality}
\author{Nathan Pflueger \& Noah Solomon }
\date{November 2022}
\begin{document}
\maketitle

\begin{abstract}
    We analyze a family of graphs known as banana graphs, with two marked vertices, through the lens of Hurwitz--Brill--Noether theory.
    As an application, we construct explicit new examples of finite graphs which are Brill--Noether general. These are the first such examples since the analysis of chains of loops by Cools, Draisma, Payne and Robeva. The graphs constructed are chains of loops and ``theta graphs,'' which are banana graphs of genus $2$. We also demonstrate that almost all banana graphs of genus at least $3$ cannot be used for this purpose, due either to failure of a submodularity condition or to the presence of far too many inversions in certain permutations associated to divisors called transmission permutations. 
\end{abstract}

\section{Introduction}
This paper offers a case study in Brill--Noether theory of graphs. Its contents are complementary to \cite{Pfl22}; our aim is perform some explicit computations and examples with the tools developed in that paper to obtain some novel examples and shine light on some of the intriguing phenomena that arise. In particular, we construct the first explicit examples of \emph{Brill--Noether general graphs} in all genera other than the famous chains of loops found in \cite{CDPR12}. Our focus is on an enriched form of Brill--Noether theory taking two marked vertices into account; we perform a detailed analysis of banana graphs from this point of view, and use this analysis for our other constructions.

Brill--Noether theory of graphs is a purely combinatorial subject born out of a tantalizing analogy with algebraic geometry. This analogy was brought into sharp relief by Baker and Norine \cite{BakNor07}, which proved a graph-theoretic analog of the classical Riemann--Roch formula. The essence of the analogy is that a configuration of \emph{chips} on a finite graph, up to an equivalence relation generated by \emph{chip-firing moves}, are analogous to line bundles on smooth algebraic curves. In light of this analogy, we refer to chip configurations on a finite graph as \emph{divisors}, and the chip-firing equivalence relation \emph{linear equivalence}. A key innovation of \cite{BakNor07} is a simple and useful definition of the \emph{rank} $r(D)$ of a divisor, which, roughly speaking, is a graph-theoretic analog of the dimension of a projective space $\mathbb{P}^r$ to which a line bundle defines a map from an algebraic curve. We refer the reader to \cite{BakNor07} or the expository book \cite{corry2018divisors} for the terminology and background on divisors and rank, and the Riemann--Roch theorem.

We consider three versions of Brill--Noether theory of graphs in this paper, considering graphs with zero, one or two marked vertices. There are two reasons for adding marked vertices. First, doing so provides a tool to study Brill--Noether theory of graphs without marked points, by gluing graphs at marked points. This is what enables our new constructions of Brill--Noether general graphs. Second, the enriched question lends itself to interesting examples and computations in low genus; we will see interesting behavior even in genus $2$ when two vertices are marked.

We explain each situation in turn.

\subsection{Brill--Noether general graphs}

\begin{figure}
    \newcommand{\markpt}[1]{\draw[fill=black] (#1) circle(0.05cm);}
    \newcommand{\bigmarkpt}[1]{\draw[fill=black] (#1) circle(0.1cm);}
    \centering
    \begin{tikzpicture}[scale=1]
        \begin{scope}
           \draw (0,0) -- (1,0) -- (1,1) -- (0,1) -- cycle;
           \bigmarkpt{0,0}
           \draw (0,0) node[left] {$u = u_1$};
           \bigmarkpt{1,0}
           \draw (1,0) node[above right] {$v_1 = u_2$};
           \markpt{1,1}
           \markpt{0,1}
           
           \draw [thick,decorate, decoration = {brace}] (0,1.2) --  (1,1.2) node[midway, above] {$k_1 = 4$};
        \end{scope}
        
        \begin{scope}[xshift=2cm,yshift=-0.5cm]
            \draw (0,0) -- (-1,0.5) -- (-2,-0.5) -- (-1,-1.5) -- (0,-1) -- cycle;
            \draw (0,0) -- (1,0.5) -- (2,-0.5) -- (1,-1.5) -- (0,-1);
            \markpt{0,0};
            \markpt{-1,0.5}
            \markpt{-2,-0.5}
            \markpt{-1,-1.5}
            \markpt{0,-1}
            \bigmarkpt{1,0.5}
            \draw (1,0.5) node[below] {$v_2$};
            \markpt{2,-0.5}
            \markpt{1,-1.5}
            \draw [thick,decorate, decoration = {brace}] (2,-2) --  (-2,-2) node[midway, below] {$k_2 = 5$};
        \end{scope}
        
        \draw (3,0) -- (4,0.25);
        
        \begin{scope}[xshift=4.5cm, yshift=-0.75cm]
        \draw (0,0) -- (1,0) -- (1.5,1) -- (0.5,1.5) -- (-0.5,1) -- cycle;
        \markpt{0,0}
        \bigmarkpt{1,0}
        \draw (1,0) node[below] {$v_3$};
        \markpt{1.5,1}
        \markpt{0.5,1.5}
        \bigmarkpt{-0.5,1}
        \draw (-0.5,1) node[above left] {$u_3$};
        \draw [thick,decorate, decoration={brace}] (-0.5,1.7) --  (1.5,1.7) node[midway, above] {$k_3=5$};
        \end{scope}
        
        \draw (5.5,-0.75) -- (6.5,-1);
        
        \begin{scope}[xshift=8.5cm,yshift=0cm]
           \draw (0,-1) -- (0,-3);
           \draw (0,-1) -- (-1,-0.5) -- (-2,-1) -- (-2,-2) -- (-1,-3) -- (0,-3);
           \draw (0,-1) -- (0.2,-0.2) -- (1,0.5) -- (2,0.5) -- (3,0) -- (3.5,-1) -- (3.5,-2) -- (3,-3) -- (2,-3.5) -- (1,-3.5) -- (0,-3);
           \markpt{0.2,-0.2}
           \markpt{0,-1}
           \markpt{0,-2}
           \markpt{-1,-0.5}
           \bigmarkpt{-2,-1}
           \draw (-2,-1) node[above] {$u_4$};
           \markpt{-2,-2}
           \markpt{-1,-3}
           \markpt{0,-3}
           \markpt{1,0.5}
           \markpt{2,0.5}
           \bigmarkpt{3,0}
           \draw (3,0) node[below left] {$v_4 = u_5$};
           \markpt{3.5,-1}
           \markpt{3.5,-2}
           \markpt{3,-3}
           \markpt{2,-3.5}
           \markpt{1,-3.5}
           \draw [thick,decorate, decoration = {brace}] (4,-4) --  (-2,-4) node[midway, below] {$k_4 = 5$};
        \end{scope}
        
        \begin{scope}[xshift=13.5cm,yshift=2cm]
        \draw (0,0) -- (0,-2);
        \draw (0,0) -- (-1,0.5) -- (-2,0) -- (-2.5,-1) -- (-2,-2) -- (-1,-2.5) -- (0,-2);
        \markpt{0,0}
        \markpt{0,-1}
        \markpt{0,-2}
        \markpt{-1,0.5}
        \markpt{-2,0}
        \markpt{-2.5,-1}
        \markpt{-2,-2}
        \markpt{-1,-2.5}
        \draw (0,0) -- (0.667,-0.667) -- (0.667,-1.333) -- (0,-2);
        \markpt{0.667,-0.667}
        \bigmarkpt{0.667,-1.333}
        \draw (0.667,-1.333) node[right] {$v_5 = v$};
        \draw [thick,decorate, decoration = {brace}] (-2.5,0.7) --  (0.667,0.7) node[midway, above] {$k_5 = 3$};
        \end{scope}
    \end{tikzpicture}
    \caption{A new example of a Brill--Noether general graph. See Example \ref{eg:bng} for discussion of this example.}
    \label{fig:bngenlexample}
\end{figure}

The driving question of this paper may be stated informally as follows: which finite graphs (perhaps with marked vertices), most closely resemble a ``typical'' algebraic curve of the same genus (perhaps with marked points), from the standpoint of ranks of divisors?
For now, we do not incorporate any marked points.
To make this informal question a bit more precise, we introduce the following term.

\begin{definition}
For a graph or smooth algebraic curve, the \emph{divisor census} is the set of all pairs $(d,r)$ of integers for which there exists a divisor $D$ with $\deg D = d$ and $r(D) \geq r$.
\end{definition}

Much of the richness of the geometry of algebraic curves stems from the fact that not every curve of the same genus has the same divisor census. Nonetheless, the celebrated \emph{Brill--Noether theorem} \cite{gh80} says in part that there is one specific divisor census that is ``typical:'' if a genus-$g$ curve is chosen at random from the moduli space of curves, then with probability $1$ its divisor census consists of those pairs $(d,r)$ for which the following \emph{Brill--Noether number} is nonnegative.
$$
\rho(g,r,d) = g - (r+1)(g-d+r) \geq 0
$$
Therefore, one may regard a graph with this same divisor census as resembling a ``typical'' algebraic curve. Nonetheless, at the outset of this story it was by no means obvious that any such graphs exist. Indeed, Baker made the following conjecture, paraphrased from \cite[Conjecture 3.9]{BakerSpec} in our terminology.

\begin{conjecture}[Brill--Noether conjecture for graphs \cite{BakerSpec}]
\label{conj:bn}
For every genus $g \geq 0$,
\begin{enumerate}[label = \arabic*)]
    \item Every genus $g$ graph contains every pair $(d,r)$ with $\rho(g,r,d) \geq 0$ in its divisor census.
    \item There exists a genus $g$ graph such that every $(d,r)$ in the divisor census satisfies $\rho(g,r,d) \geq 0$ (proved in \cite{CDPR12})
\end{enumerate}
\end{conjecture}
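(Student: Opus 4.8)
The plan is to treat the two halves of the conjecture separately, since they assert opposite inequalities: part 1) is an \emph{existence} statement (every census contains the entire Brill--Noether locus) while part 2) is a \emph{non-existence} statement (some census contains nothing beyond it). Neither half is genuinely open — part 1) is a consequence of Baker's specialization machinery and part 2) is the theorem of \cite{CDPR12} — so the proposal is really to reconstruct the two known arguments.

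For part 1) I would not argue combinatorially but rather borrow from algebraic geometry. Given any genus-$g$ graph $\Gamma$, realize it as the dual graph of the special fiber of a regular semistable family whose generic fiber is a smooth curve $X$ of genus $g$; assigning a rational component to each vertex and a node to each edge, with all edge lengths equal to $1$, produces a totally degenerate such curve of arithmetic genus $g$. By the existence half of the classical Brill--Noether theorem \cite{gh80}, whenever $\rho(g,r,d) \geq 0$ the curve $X$ carries a divisor of degree $d$ and rank at least $r$. Baker's specialization lemma \cite{BakerSpec} then sends this divisor to a divisor of the same degree on $\Gamma$ whose rank can only have increased, so $(d,r)$ lies in the divisor census of $\Gamma$. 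This disposes of part 1) uniformly for every graph at once.

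For part 2) I would reproduce the strategy of \cite{CDPR12}: exhibit a single genus-$g$ graph — a chain of $g$ loops with generic edge lengths — and show directly that its census contains \emph{no} pair with $\rho(g,r,d) < 0$. The engine is the theory of reduced divisors: by Baker--Norine \cite{BakNor07}, every divisor class has a unique $v_0$-reduced representative, computable by Dhar's burning algorithm, and genericity of the edge lengths forces the $g$ loops to behave independently, with no accidental torsion coincidences. One encodes the reduced representative of a class by recording, loop by loop, where its chips sit, obtaining a combinatorial invariant from which the rank can be read off by an explicit rule. The crux is to show that demanding rank at least $r$ in degree $d$ imposes exactly enough independent constraints — packaged in \cite{CDPR12} as the requirement that certain lattice data fit inside an $r \times (g-d+r)$ rectangle, equivalently that a rectangular standard Young tableau of the corresponding shape exists — and that these constraints are simultaneously satisfiable precisely when $\rho(g,r,d) = g - (r+1)(g-d+r) \geq 0$.

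The main obstacle lives entirely in part 2), and specifically in the rank \emph{upper} bound. Producing high-rank divisors is the easy, lower-bound direction; the real difficulty is proving that the generic chain of loops admits \emph{nothing extra}, which demands careful bookkeeping of how Dhar's algorithm propagates chips from one loop to the next and a delicate argument that any class of rank at least $r$ must have its combinatorial data confined to the rectangle. I expect essentially all of the effort to go into verifying that genericity of the edge lengths really does decouple the loops and into translating the rank computation into the tableau-counting statement, since it is exactly that translation which pins the census down to the Brill--Noether locus and no larger.
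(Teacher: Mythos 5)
You have set out to prove a statement that the paper records as a \emph{conjecture}, and the paper is explicit about why: ``Part 1 of Conjecture \ref{conj:bn} is still open outside of small genus \cite{atanasovRanganathan}, although the analogous statement for \emph{metric} graphs was established in \cite{BakerSpec}.'' Your claim that part 1) ``is a consequence of Baker's specialization machinery'' and can be disposed of ``uniformly for every graph at once'' is exactly where the argument breaks. The specialization lemma does not hand you a divisor on the finite graph $G$. The points of the divisor on the generic fiber $X$ are in general defined only over finite extensions of the base field; after base change one must blow up to restore regularity and semistability, and those blow-ups subdivide the edges of the dual graph. The specialized divisor is therefore supported on a subdivision of $G$ --- equivalently, on rational points of the associated metric graph $\Gamma$ --- and what you obtain is a divisor class on $\Gamma$ of degree $d$ and \emph{metric-graph} rank at least $r$. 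That is precisely the metric statement Baker proved, not part 1) of the conjecture. There is no known way to trade this for a divisor supported on the vertices of $G$ whose rank, computed in the finite graph $G$, is still at least $r$: passing to a vertex-supported representative of the class can drop the rank, and the rank-comparison theorem (Hladk\'y--Kr\'a\v{l}--Norine) only equates the finite and metric ranks for divisors that are \emph{already} vertex-supported. Closing this gap is the open content of part 1), so no correct blind proof of the full statement was possible.

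For part 2), the paper likewise contains no proof --- it is attributed to \cite{CDPR12} --- and your sketch (reduced divisors via Dhar's burning algorithm, generic edge lengths decoupling the loops, and the rank criterion packaged as lattice data confined to a rectangle) is a fair high-level summary of that paper's strategy, including the correct observation that the hard direction is the rank \emph{upper} bound on the generic chain of loops. One small correction: the relevant rectangle in the Brill--Noether count has dimensions $(r+1) \times (g-d+r)$, matching $\rho(g,r,d) = g - (r+1)(g-d+r)$, not $r \times (g-d+r)$. But the substantive verdict is the one above: your part 1) argument proves the metric-graph analogue, not the stated claim about finite graphs, and the distinction is not a technicality --- it is the reason the statement remains a conjecture.
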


Part $1$ of Conjecture \ref{conj:bn} is still open outside of small genus \cite{atanasovRanganathan}, although the analogous statement for \emph{metric} graphs was established in \cite{BakerSpec}. The proof of in \cite{CDPR12} used the now-famous example of \emph{chains of loops}. 

\begin{definition}
Graphs satisfying part $2$ of Conjecture \ref{conj:bn} are called \emph{Brill--Noether general}.
\end{definition}

\begin{remark}
Although this form of Brill--Noether generality was discussed in the original \cite{BakerSpec}, many sources use a stronger definition of Brill--Generality, which requires a \emph{dimension} statement as well. For this definition, one works with the metric graph corresponding to $G$, and requires that the locus $W^r_d$ of degree-$d$ divisor classes of rank at least $r$ has local dimension exactly (equivalently, at most) $\rho(g,r,d)$ everywhere. We opt for the simpler form of Brill--Noether generality herein, so that we can work purely graph-theoretically. However, we strongly believe that all results about Brill--Noether generality of marked graphs generalize exactly as stated if this dimension requirement is added. Note that the ``existential'' form of Brill--Noether generality for \emph{once-marked} graphs we work with here is still enough to deduce Brill--Noether generality (in the stronger, dimension sense) of algebraic curves specializing to the graph in question. Indeed, this was the approach of \cite{CDPR12}, which worked, in effect, with a simplified version of Brill--Noether generality of marked graphs.
\end{remark}

What \cite{CDPR12} demonstrated is that, assuming certain genericity conditions on the path lengths of the loops, chains of loops are Brill--Noether general in this sense.

Nevertheless, a question has remained since \cite{CDPR12}: which \emph{other} families of graphs include Brill--Noether general graphs? This question has proved remarkably stubborn. Some negative results are known, e.g. \cite{jensenNotDense} identifies homeomorphism classes of graphs containing no Brill--Noether general graphs, but the chain of loops remained for a long time the only family of graphs where explicit Brill--Noether general graphs are known in every genus.

\begin{remark}
It follows from the semicontinuity of Brill--Noether rank \cite{len14} that the locus of Brill--Noether general \emph{metric} graphs is open in moduli (although it is not dense, in contrast to the algebraic setting). Since a chain of loops (without bridges) may deform to a chain of loops and theta graphs, it follows there exist some such metric graphs that are Brill--Noether general. Choosing an example with rational edge lengths, one can deduce the existence of Brill--Noether general finite graphs of this type. Since this argument is topological in nature, it does not give specific examples. The novelty of our paper is in part that we can construct specific examples, as well as examples where the middle edge length is long compared to the others (indeed, the set of possible ratios $n_0 : n_1 : n_2$ is dense in the positive part of $\mathbb{P}^2_{\mathbb{R}}$).
\end{remark}

Since the publication of \cite{CDPR12}, chains of loops have proved to be extremely fruitful in proving theorems about general algebraic curves, including a proof of the Gieseker--Petri theorem \cite{JensenPayneTP1}, work on the maximal rank conjecture \cite{JensenPayneTP2}, the study of Prym varieties \cite{CLRW,LenUlirsch}, and Hurwitz--Brill--Noether theory \cite{PflKGonal, JensenRanganathan, CookPowellJensen}. Several of these applications are summarized in the surveys \cite{jensenNotices, jensen2021recent}. They have also proved to be a useful setting for studying Poincar\'e series and a tropical analog of Lang's conjecture \cite{manjunath2020poincar}. Conceptually, one may interpret this as follows: the fact that chains of loops are Brill--Noether general is evidence that they are excellent stand-ins for ``typical'' algebraic curves; therefore they are excellent graphs to use when attempting to prove theorems about algebraic curves via combinatorial means.

Since the publication of \cite{CDPR12}, and in light of how useful chains of loops have proven to be, a question presents itself: \emph{what other graphs are Brill--Noether general?} 
We take herein a first step beyond the familiar landscape of chains of loops, providing the first explicit constructions of Brill--Noether general graphs other than chains of loops. The newly minted graphs have forms like the example in Figure \ref{fig:bngenlexample}. The precise construction is given in \Cref{fig:bngenlexample}.

The reader will observe the similarity with chains of loops: we have merely coalesced some pairs of adjacent loops into so-called ``theta graphs,'' on which we must place certain constraints. While this is only a step outside the realm of chains of loops, it nonetheless breaks the boundary of this class of graphs, and we hope it will provide clues for future exploration.

\subsection{Once-marked graphs and Weierstrass partitions}

We now move on to enriched forms of Brill--Noether theory of graphs, in which we keep track of marked vertices.
The choice $(G,v)$ of a graph with a chosen vertex is called a \emph{(once-)marked graph}, while a choice $(G,u,v)$ of a graph and two chosen vertices is a \emph{twice-marked graph}. We will often assume that $u \not \sim v$ as this is a degenerate case, but this is not required.  We first consider one-marked graphs.

Given a once-marked graph $(G,v)$, we enlarge our census questionnaire as follows: when examining a divisor $D$ on $G$, we record not just $r(D)$ itself, but also the ranks $r(D+\ell v)$, $\ell \in \Z$, of all divisors obtained by adding a multiple of the marked vertex.

Although this in principle requires recording infinitely many ranks, we can make our lives easier by only recording the ``excess'' beyond the minimum rank predicted by Riemann--Roch. In this way, all these ranks $r(D + \ell v)$ may be summarized in a finite combinatorial object, called the \emph{Weierstrass partition} of $D$, and denoted $\lambda(D,v)$. Here by a \emph{partition} we mean a nonincreasing sequence $(\lambda_i(D,v))_{i \geq 0}$ of nonnegative integers, only finitely many of which are nonzero. The precise definition is as follows. This is phrased differently from the original definition given in \cite{Pfl17}, but is readily checked to be equivalent.

\begin{definition}
Let $(G,v)$ be a genus--$g$ graph with a marked vertex $v$. For any divisor $D$ and integer $i \geq 0$, let
$$s_i(D,v) = \operatorname{min} \left\{ \ell \in \Z:\ r(D+\ell v) \geq i \right\}.$$
Note that Riemann--Roch implies that $s_i(D,v) \leq i + g - \deg D$ for all $i \geq 0$, with equality for $i \gg 0$.
These numbers are commonly interpreted as pole orders (although some may be negative).
The \emph{Weierstrass partition} of $D$ with respect to $v$ is the nonincreasing sequence of nonnegative integers 
$\lambda(D,v) = (\lambda_0(D,v), \lambda_1(D,v), \cdots)$
defined by $$\lambda_i(D,v) = i - s_i(D,v) + g - \deg D.$$
The (finite) sum $\displaystyle \sum_{i=0}^\infty \lambda_i(D,v)$ is denoted $| \lambda(D,v) |$.
\end{definition}

With this definition in hand, we may inquire about the following more refined census.

\begin{definition}
For a once-marked graph $(G,v)$, the \emph{divisor census} is the set of all partitions $\lambda$ for which there exists a divisor $D$ with $\lambda_i(D,v) \geq \lambda_i$ for all $i \geq 0$.
\end{definition}

This census is related to the cenus of $G$ (with no marked points) in a simple way: $(d,r)$ belongs to the census of $G$ if and only if the census of $(G,v)$ includes the ``rectangular'' partition $(g-d+r,g-d+r,\cdots,g-d+r,0,\cdots)$, where there are $r+1$ copies of $g-d+r$. In fact, the analog of the Brill--Noether number is $g - |\lambda|$.

\begin{conjecture}[Brill--Noether existence conjecture for once-marked graphs]\label{conj:bnOnceMarked}
For \emph{any} once-marked graph $(G,u)$ genus $g$, every partition $\lambda$ with $|\lambda| \leq g$ is in the divisor census. 
\end{conjecture}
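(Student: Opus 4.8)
The plan is to prove the conjecture by specialization from algebraic curves, reducing the combinatorial statement to the classical pointed Brill--Noether existence theorem, and then to confront the integrality problem that separates metric graphs from finite graphs. First I would record the elementary reformulation implicit in the definition of $\lambda(D,v)$: the condition $\lambda_i(D,v) \ge \lambda_i$ for all $i$ is equivalent to $r\big(D + (i + g - \deg D - \lambda_i)\,v\big) \ge i$ for all $i$, since $s_i(D,v) \le \ell$ precisely when $r(D+\ell v) \ge i$. Thus the census of $(G,v)$ asks for a single divisor all of whose twists $D + \ell v$ have prescribed rank, which is the graph analog of a line bundle with a prescribed vanishing sequence at the marked point, and $g - |\lambda|$ plays the role of the pointed Brill--Noether number. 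The rectangular-partition dictionary recorded above shows that this conjecture specializes to Part 1 of Conjecture \ref{conj:bn}, so consistency is automatic and the target is a genuine strengthening.

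Second, I would pass to the metric graph $\Gamma$ underlying $(G,v)$ and lift it to a smooth curve: choose a totally degenerate curve $X$ over a non-archimedean field whose Berkovich skeleton is $\Gamma$, equipped with a section specializing to the vertex $v$, which always exists. The existence half of pointed Brill--Noether, valid for an \emph{arbitrary} smooth pointed curve and not merely a general one (just as the unpointed Kleiman--Laksov statement holds for all curves), then furnishes, for every $\lambda$ with $|\lambda| \le g$, a line bundle $L$ on $X$ whose vanishing sequence at the marked point dominates $\lambda$. I would next upgrade Baker's specialization inequality \cite{BakerSpec} to a pointed form by applying it to the twists $L(-\ell x)$: since $r_\Gamma(\tau(L) - \ell v) \ge r_X(L(-\ell x))$ for every $\ell$, the pole orders obey $s_i(\tau(L),v) \le s^X_i(L,x)$, so $\lambda(\tau(L),v)$ dominates $\lambda$. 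This produces a divisor on the \emph{metric} graph $\Gamma$ realizing $\lambda$ and settles the metric-graph analog of the conjecture, in harmony with the semicontinuity picture of \cite{len14}.

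The main obstacle is the return trip from $\Gamma$ to the finite graph $G$: the specialized divisor $\tau(L)$ lives at arbitrary real points along the edges, whereas the census of $G$ demands a divisor supported at vertices, and replacing a metric-graph divisor by an integral one without degrading any of the ranks $r(D+\ell v)$ is exactly the gap that keeps Part 1 of Conjecture \ref{conj:bn} open for finite graphs \cite{atanasovRanganathan}. Since the rectangular partitions already encode that open problem, the once-marked conjecture is at least as hard, and I do not expect specialization alone to close it. Instead I would try to make the construction effective and integral using the machinery of \cite{Pfl22}: work with $v$-reduced divisors, compute each $r(D+\ell v)$ by Dhar's burning algorithm, and build a reduced divisor realizing a prescribed $\lambda$ by induction on the genus, cutting $G$ along a separating edge and transporting Weierstrass data across each block via the transmission permutations developed in this paper. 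The delicate point is that a general graph need not decompose into a chain of simple blocks joined at cut vertices, so the combination of Weierstrass partitions across a $2$-edge-connected piece is where I expect the real difficulty to lie --- and it is exactly there that the twice-marked banana-graph analysis is meant to serve as the prototype.
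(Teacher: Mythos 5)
You cannot be judged against the paper's proof here, because the paper has none: the statement you were given is labeled a \emph{conjecture}, and the authors explicitly state that (like Part 1 of Conjecture \ref{conj:bn}, which it implies via the rectangular partitions $(g-d+r,\cdots,g-d+r)$) it is open for finite graphs, being known only for \emph{metric} graphs by an argument ``requiring algebraic geometry and intersection theory'' (Propositions 4.2 and 5.1 of \cite{Pfl17}). Your first two steps are a correct reconstruction of exactly that known argument: the reformulation of $\lambda_i(D,v)\geq\lambda_i$ as $r\big(D+(i+g-\deg D-\lambda_i)v\big)\geq i$ follows directly from $\lambda_i(D,v)=i-s_i(D,v)+g-\deg D$ and the monotonicity of $\ell\mapsto r(D+\ell v)$; lifting $\Gamma$ to a totally degenerate pointed curve, invoking the one-point existence theorem (which, like Kleiman--Laksov, holds for \emph{every} smooth pointed curve), and applying Baker's specialization inequality \cite{BakerSpec} to the twists $L(-\ell x)$ to get $s_i(\tau(L),v)\leq s_i^X(L,x)$ is sound and yields the metric-graph statement. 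To your credit, you then explicitly concede that the descent from $\Gamma$ to the finite graph $G$ is open, so the proposal is an accurate status report rather than a proof.

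The genuine gap is therefore the one you yourself name, and your closing sketch does not bridge it. Replacing a metric-graph divisor supported at arbitrary points of edges by a vertex-supported divisor without degrading all the ranks $r(D+\ell v)$ simultaneously is precisely the obstruction that keeps Part 1 of Conjecture \ref{conj:bn} open outside small genus \cite{atanasovRanganathan}, and since your statement contains that conjecture as the rectangular special case, no amount of refinement of the specialization argument alone can succeed. The inductive scheme you propose as a substitute --- cutting along separating edges and transporting Weierstrass data via transmission permutations --- has no traction on the hard cases: a general graph is $2$-edge-connected with no cut vertices, so there is nothing to cut along, and the machinery of \cite{Pfl22} used in Section \ref{sec:mixedTorsion} (Demazure products of transmission permutations, Proposition \ref{prop:sciInvStar}) applies only to vertex gluings of pieces on which all divisors are submodular. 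The paper's own results cut against hoping this extends: Theorem \ref{thm-NSMForBanana} shows submodularity already fails on almost all twice-marked banana graphs of genus $\geq 3$, and Theorem \ref{thm-quadInvGrowth} shows inversion counts growing quadratically in $g$, so the transmission framework cannot be deployed across an arbitrary $2$-edge-connected piece. In short: the metric half of your argument is correct and matches the known proof; the finite-graph half is, as the paper records, an open problem, and your proposal does not close it.
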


\begin{definition}
A once-marked graph $(G,v)$ is called \emph{Brill--Noether general} if $|\lambda(D,v)| \leq g$ for all divisors $D$ on $G$. In other words, every partition in the divisor census has size at most $g$.
\end{definition}

Conjecture \ref{conj:bnOnceMarked} implies Conjecture \ref{conj:bn}. Like Conjecture \ref{conj:bn}, it is known to hold for \emph{metric} graphs, although the only known proof requires algebraic geometry and intersection theory; see Propositions 4.2 and 5.1 of \cite{Pfl17}.

With suitable genericity hypothesis, a chain of loops with a vertex marked at one end is Brill--Noether general in this sense, as proved in \cite{Pfl17}. This paper constructs new example of Brill--Noether general marked graphs, consisting of chains that mix loops with theta graphs \Cref{eg:bng}.

\subsection{Twice-marked graphs and transmission permutations}

The situation with two marked points is studied in \cite{Pfl22}. Given a twice-marked graph $(G,u,v)$, we enlarge our census further, and inquire about all ranks $r(D+au+bv)$ for $a,b \in \Z$. As in the once-marked situation, we hope to record this in a finite combinatorial datum, called a \emph{transmission permutation} and denoted $\tauD$, which we define in Section \ref{ssec:conventions}.

Two complexities emerge in the twice-marked situation. First, transmission permutations do not always exist: $\tauD$ exists if and only if $D$ satisfies a convexity condition called \emph{submodularity}, defined in Section \ref{ssec:conventions}. Any divisor on an algebraic curve is submodular, so it is natural to regard those twice-marked graphs on which all divisors are submodular as better analogs of twice-marked algebraic curves. Second, on a finite graph, the class $[u-v]$ has finite order. That is, $ku \sim kv$ for some positive integer $k$. The minimum such $k$ is called the \emph{torsion order} of $(G,u,v)$. The torsion order has a profound effect on transmission permutations: $\tauD$ (when it exists) always satisfies the periodicity property $\tauD(n+k) = \tauD(n)+k$ for all $n \in \Z$. We will call such permutations \emph{extended $k$-affine}, and denote the group of such permutations by $\eaf{k}$. As explained in \cite{Pfl22}, a useful analog of the Brill--Noether number can be constructed by counting the number of inversions of $\tauD$ up to this periodicity; we denote the resulting count by $\inv_k(\tauD)$; see Section \ref{ssec:conventions} for details.

\begin{definition}
A genus $g$ twice-marked graph $(G,u,v)$ for which $ku \sim kv$ is said to have \emph{$k$-general transmission} if all divisors $D$ are submodular, and satisfy $\inv_k(\tauD) \leq g$.
\end{definition}

Although this definition requires only that $k u \sim kv$, i.e. that $k$ is divisible by the torsion order, we will see in Lemma \ref{lem:kgtImpliesTorsionOrder} that it implies that $k$ is \emph{exactly} the torsion order. So in practice we can sometimes be lax in specifying which $k$ is intended when we say that a twice-marked graph has $k$-general transmission: the only $k$ that could be intended is the torsion order.

\begin{example}\label{eg:cycle}
    If $G$ is a cycle graph, with two marked points $u,v$ joined by two paths of length $a$ and $b$, then the torsion order of $(G,u,v)$ is $k =\frac{a+b}{\gcd(a,b)}$ and $(G,u,v)$ has $k$-general transmission \cite[\S 2.1]{Pfl22}.
\end{example}

Unfortunately, the relationship between $k$-general transmission and Brill--Noether generality not as simple as one would like. One way to view the difficulty is that \emph{no specific torsion order is ``typical.''} So the definition of $k$-general transmission accepts this and instead specifies the most generic situation given a specific torsion order. Furthermore, for sufficiently large torsion order, namely $k \geq \frac12 g + 1$, we can in fact deduce Brill--Noether generality; this is proved in Proposition \ref{prop:kgt-bngenl}.

One can also view the study of $k$-general transmission as a part of \emph{Hurwitz}--Brill--Noether theory, which studies the geometry of linear series on general curves of a fixed gonality $k$. This point of view is explained in \cite{Pfl17}, though we do not discuss it in detail here. See \cite{larsonHBN,larsonLarsonVogt} for the main results of Hurwitz--Brill--Noether theory for algebraic curves.

We emphasize that $k$-general transmission neither implies Brill--Noether generality nor is implied by it in general. Nevertheless, the two notions are related.
We explain in Section \ref{sec:mixedTorsion} how twice-marked graphs with $k$-general transmission may be used to construct Brill--Noether graphs and once-marked graphs. For now, it suffices to remark that, when working to understand which twice-marked graphs best represent twice-marked algebraic curves, it is natural to consider three categories, in order of how well the twice-marked graphs resemble general twice-marked algebraic curves.

\begin{enumerate}[label = \arabic*)]
    \item Twice-marked graphs with non-submodular divisors.
    \item Twice-marked graphs with all divisors submodular, but some with $\inv_k(\tauD) > g$.
    \item Twice-marked graphs of torsion order $k$ with $k$-general transmission.
\end{enumerate}

Twice-marked graphs in the first category are particularly poor avatars of twice-marked algebraic curves, since \emph{every} twice-marked algebraic curve, with no genericity assumptions, have all divisors submodular. Therefore one should use caution using such graphs to obtain intuition about curves, and our results give some examples of such behavior.

\subsection{Summary of results}

This paper is a case study in this classification of twice-marked graphs. Our objects of interest are twice-marked \emph{banana graphs}, which we aim to classify into the three categories stated above.

For genus $2$ banana graphs, also called \emph{theta graphs} (as they look topologically like a $\theta$ symbol), we obtain quite precise results. We defer the precise statements until after developing the necessary notation, but the theorem below summarizes some easy-to-state consequences.

\begin{theorem}\label{thm:thetaSimple}
Let $(G,u,v)$ be a theta graph with two marked points.
\begin{enumerate}[label = \arabic*)]
    \item If $u$ and $v$ are located on the interiors of distinct strands of $G$ (as defined in Section \ref{ssec:conventions}), then all divisors on $G$ are submodular.
    \item If $(G,u,v)$ is \emph{evenly marked}, meaning that $u$ and $v$ divide their strands into two segments with the same ratio $\frac{a}{b} \in \Q$ (see Definition \ref{defn:evenlyMarked} for a precise definition), then $(G,u,v)$ has $k$-general transmission, where $k = \frac{a+b}{\gcd(a,b)}$ is the order of $[u-v] \in \jac{G}$.
\end{enumerate}
\end{theorem}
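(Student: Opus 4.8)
The plan is to reduce both parts of the theorem to statements about the combinatorial structure of divisors and their transmission permutations on a theta graph, which is a banana graph of genus $2$ consisting of three internal strands (paths) joining the two branch vertices. I would first set up coordinates: parametrize each strand by arc length, locate $u$ on the interior of one strand and $v$ on the interior of another, and describe the reduced divisors and chip-firing moves explicitly. The key observation I would rely on is that on a theta graph the Jacobian and the break divisor / reduced divisor theory are completely explicit, so that for any divisor $D$ the ranks $r(D+au+bv)$ can be computed by a tropical/polyhedral argument in the two-dimensional lattice of exponents $(a,b)$.

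\medskip

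For part (1), the goal is to show submodularity of every divisor, which by the definition in Section \ref{ssec:conventions} amounts to a convexity statement: the function $(a,b) \mapsto r(D+au+bv)$ (suitably normalized) should be concave, equivalently the region of $(a,b)$ for which $r \geq i$ should be ``staircase convex'' so that the transmission permutation $\tauD$ exists. I would prove this by showing that when $u$ and $v$ lie on distinct strands, adding $u$ and adding $v$ affect the reduced divisor in ``independent'' directions, so that the two chip-firing processes do not interfere destructively. Concretely, I expect to show that increasing $a$ and increasing $b$ can each be analyzed locally on their respective strands, and that the mixed second difference of the rank function has the correct sign. The heart of the matter is to verify the submodularity inequality $r(D+(a+1)u+bv) + r(D+au+(b+1)v) \geq r(D+au+bv) + r(D+(a+1)u+(b+1)v)$ for all $a,b$; I would do this using Dhar's burning algorithm or the explicit description of $v$-reduced divisors to track exactly when adding a chip at $u$ versus at $v$ raises the rank, and argue that simultaneous increases cannot ``double-count'' the same rank jump precisely because the two marked points sit on different strands.

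\medskip

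For part (2), assuming the evenly-marked hypothesis, I would first confirm the torsion order: $ku \sim kv$ with $k = \frac{a+b}{\gcd(a,b)}$ follows because on each strand the segment ratio $\frac{a}{b}$ forces $[u-v]$ to have this order in $\jac{G}$, a computation one can do directly from the cycle structure. Granting submodularity from part (1), the transmission permutation $\tauD \in \eaf{k}$ exists for every $D$, and $k$-general transmission reduces to the bound $\inv_k(\tauD) \leq g = 2$. I would approach this by enumerating, up to the periodicity and up to linear equivalence, the finitely many possible shapes of $\tauD$ that can arise on an evenly-marked theta graph, and checking that none has more than $2$ inversions per period. The even-marking symmetry is what makes this enumeration tractable: it should force the transmission permutations into a highly constrained family (essentially governed by the two independent cycles of the genus-$2$ graph), so that the inversion count is controlled.

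\medskip

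I expect the main obstacle to be part (2), specifically the uniform bound $\inv_k(\tauD) \leq 2$ over all divisors $D$ and all allowed ratios $\frac{a}{b}$. Submodularity in part (1) is essentially a local convexity check and should follow cleanly once the reduced-divisor bookkeeping is in place, but bounding inversions requires genuinely global control: one must rule out divisors whose transmission permutation is pathologically tangled. The even-marking hypothesis is presumably indispensable here, and the crux will be to show that the associated symmetry of the graph constrains $\tauD$ enough that the inversions can only come from the two ``independent'' chip-transmission events corresponding to the genus, never more. I anticipate that the cleanest route is to compare directly with the cycle-graph base case of Example \ref{eg:cycle}, building the theta-graph transmission permutation from the two cycles and showing the inversions add rather than compound.
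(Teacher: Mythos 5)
Your plan for part (1) is the right intuition but stops short of the actual mechanism. Checking the submodularity inequality for \emph{all} twists $(a,b)$ via Dhar's algorithm is much heavier than necessary, and the claim that chips added at $u$ and $v$ ``cannot double-count because the points sit on different strands'' is exactly the assertion that needs proof, not an argument. The paper's route is to first use Riemann--Roch and the Clifford bound in genus $2$ to reduce any failure of submodularity to a \emph{degree-$2$, rank-$0$} twist, where $\Delta(D)<0$ is characterized by a support-complex condition (\Cref{lemm-rank0supp}); writing $D-u \sim w$ for a unique vertex $w$ (bridgelessness), the rank computation of \Cref{lem-SameStrand} then forces $u$, $v$, $w$ all onto a single strand, which is precisely what the distinct-strand hypothesis rules out (\Cref{thm-NonSubmodGenus2}). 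Without some such reduction, your ``local convexity check'' has no finite list of cases to verify and no identified reason why same-strand configurations are the only obstruction.

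For part (2) there is a concrete wrong turn: your proposed ``cleanest route'' of building $\tauD$ on the theta graph from the transmission permutations of two cycles, with inversions adding, cannot work, because a theta graph is \emph{not} a vertex gluing of two twice-marked cycles --- it has no cut vertex, so the composition theory for transmission permutations under gluing (Equation \eqref{eq:tauGlued}, which is what makes \Cref{eg:cycle} chain) simply does not apply; the paper's \Cref{thm:g2general} treats chains of two loops and theta graphs as genuinely different cases. Likewise, ``enumerating the finitely many shapes of $\tauD$'' over all divisor classes and all ratios $a/b$ is not a viable strategy: the permutations depend on all three strand lengths and there is no a priori finite list. The paper instead proves an exact counting formula, $\inv_k(\tauD) = \#\{[D'] \in T^1_D : |D'| \neq \emptyset\} + \delta(0 \in T^0_D \mbox{ and } u+v \sim K_G)$ (\Cref{lem:invtau}, via an inclusion--exclusion identity valid in all genera), which converts the uniform bound $\inv_k(\tauD) \leq 2$ into the \emph{non-recurrence} of $[u-v]$: at most one $n \in \{1,\dots,k-1\}$ with $|w + n(u-v)| \neq \emptyset$ for each vertex $w$ (\Cref{thm:kgtThetas}). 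Non-recurrence is then verified for evenly marked thetas by the explicit Jacobian computation $n(v_{0,i}-v_{1,j}) \sim v_{0,\,ni \bmod a} - v_{1,\,nj \bmod b}$ (Equation \eqref{eq:multDiffMarkedPts}): multiples of $[u-v]$ never involve the third strand, so the supports $\Supp(K_G - n(u-v))$ are pairwise disjoint (\Cref{lem:nonrecDisjoint}), and rigid marking from part (1) (\Cref{cor-allSubmodSameStrand}) completes \Cref{cor:evenlyMarkedKGT}. Your proposal never identifies this reduction of inversion counts to effective degree-$1$ twists, which is the idea that makes the problem tractable.
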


The two parts of Theorem \ref{thm:thetaSimple} follow directly from \Cref{thm-NonSubmodGenus2} and \Cref{cor:evenlyMarkedKGT}, respectively. Part 1 becomes bi-conditional once a few cases are added, and part 2 is an example where we can apply a ``non-recurrence criterion,'' Theorem \ref{thm:kgtThetas}, that is tractable to verify for evenly marked theta graphs. We are optimistic that non-recurrence should be tractable to verify in other cases as well, and that a complete classification of genus $2$ twice-marked graphs may be possible, but some new ideas are needed.

Theorem \ref{thm:thetaSimple} provides a large family of new genus $2$ twice-marked graphs with $k$-general transmission for any value of $k$. We can then use these to construct many new examples of Brill--Noether graphs and once-marked graphs by coupling these constructions, and Example \ref{eg:cycle}, with the following theorem, proved in Section \ref{sec:mixedTorsion}.

\begin{theorem}
\label{thm:bngChain}
Let $(G_i, u_i, v_i)$, for $i=1,2, \cdots, \ell$, be a sequence of $\ell$ twice-marked graphs, and $(G,u,v) = (G,u_1, v_\ell)$ the iterated vertex gluing (see Section \ref{ssec:conventions}). Let $g_i$ and $k_i$ be the genus of $G_i$ and torsion order of $(G_i,u_i,v_i)$, respectively. 
\begin{enumerate}[label = \arabic*)]
    \item If $k_i > g_1 + g_2 + \cdots + g_i$ for all $i$, then $(G,v)$ is a Brill--Noether general marked graph.
    \item if $k_i > \min{g_1 + g_2 + \cdots g_i, g_i + g_{i+1} + \cdots + g_\ell }$ for all $i$, then $G$ is a Brill--Noether general graph.
\end{enumerate}
\end{theorem}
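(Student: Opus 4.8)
The plan is to build the divisor census of the glued graph $(G,u,v)$ out of the censuses of the pieces, using the transmission permutations as the bookkeeping device that records how divisors ``transmit'' chips across each gluing vertex. The key structural fact I would rely on is that gluing twice-marked graphs at vertices corresponds to \emph{composing} their transmission permutations: if $D$ is a divisor on $G$ and $D_i$ its restriction to the piece $G_i$, then $\tauD$ should factor as a product $\tauDi{1} \cdots \tauDi{\ell}$ (up to the appropriate periodicity conventions), because the sequence of pole orders achievable at the right endpoint of one piece feeds directly into the left endpoint of the next. Establishing this composition law precisely — in particular checking that submodularity is preserved under gluing so that all the $\tauDi{i}$ and the composite $\tauD$ actually exist — is the first technical step, and I would expect the bulk of Section \ref{sec:mixedTorsion} to be devoted to it.

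\textbf{First} I would set up the inductive framework for part 2. The goal is to bound $|\lambda(D,v)|$ (or the corresponding inversion count for the unmarked census) by $g = g_1 + \cdots + g_\ell$ for every divisor $D$ on $G$. The natural inductive quantity is the Weierstrass partition, or rather the full transmission data, of $D$ with respect to the marked points. I would proceed by induction on $\ell$, writing $G = G' \cup_{u_\ell} G_\ell$ where $G' = G_1 \cup \cdots \cup G_{\ell-1}$, and try to show that the inversions (equivalently, the size of the Weierstrass partition) contributed by the composite permutation is at most the sum of the inversions contributed by $\tauD[G']$ and by $\tauDi{\ell}$. The hypothesis $\inv_{k_i}(\tauDi{i}) \le g_i$ for each piece — which is exactly what $k_i$-general transmission buys us, once we verify the $k_i$ are the torsion orders via Lemma \ref{lem:kgtImpliesTorsionOrder} — then feeds into the sum. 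The role of the hypothesis $k_i > g_1 + \cdots + g_i$ (resp.\ $k_i > \min\{\cdots\}$) is subtler: it ensures the periodicities of the various permutations are \emph{coarser} than the relevant inversion counts, so that inversions from different pieces cannot ``wrap around'' and overlap or cancel when we count them modulo the global torsion order. Without a torsion order large enough relative to the accumulated genus, an inversion created in an early piece could be identified with one in a later piece under the periodicity, either double-counting or under-counting, and the clean additivity would fail.

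\textbf{The hard part} will be precisely this interaction between the torsion conditions and the inversion count: proving that under $k_i > g_1 + \cdots + g_i$ the inversions of the composite permutation genuinely split as a disjoint sum over the pieces, with no interference from periodicity. I expect this to require a careful analysis of how the inversion set of a product in $\eaf{k}$ decomposes, tracking which ``window'' of indices each inversion of each factor lives in and showing the windows stay separated precisely when the torsion orders dominate the partial genus sums. For part 1, the once-marked statement, the induction is one-sided (we only add pieces on one side of the marked end $v$), so only the ascending chain of inequalities $k_i > g_1 + \cdots + g_i$ is needed; for part 2 the graph is unmarked, so we may read off the census from either end, and the $\min$ in the hypothesis reflects that we get to choose the more favorable direction for each index $i$. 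I would finish by translating the inversion bound back into the statement that every partition in the census has size at most $g$, which is the definition of Brill--Noether generality, invoking Proposition \ref{prop:kgt-bngenl} or the census translation remarks where the large-torsion-order pieces let us pass from transmission data to ordinary ranks.
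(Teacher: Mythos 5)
Your high-level architecture for part 1 — induct on $\ell$, attach one piece at a time, prove additivity of an inversion-type statistic, with the torsion hypothesis preventing ``wrap-around'' — is the right shape, and your intuition about why $k_i > g_1 + \cdots + g_i$ is needed matches the paper's mechanism. But two of your load-bearing steps are wrong or missing as stated. First, the composition law is not the group product: on a vertex gluing, $\tau^{u_1,v_2}_{D_1+D_2}$ is the \emph{Demazure product} $\tauDi{1} \star \tauDi{2}$, a min-plus operation characterized by $s_{\alpha\star\beta}(a,b) = \min\{s_\alpha(a,\ell)+s_\beta(\ell,b):\ell\in\Z\}$ (Equation \eqref{eq:tauGlued}, imported from \cite{Pfl17} and \cite{PflDemProd} rather than proved in Section \ref{sec:mixedTorsion}). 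Your heuristic about pole orders feeding across the junction is precisely this min-plus composition, but the ordinary product of the $\tauDi{i}$ does not compute $\tauD$, and the inversion bookkeeping you want depends on the $\star$-specific identity $\alpha\star\sigma_S = \alpha\sigma_T$ with $T = \{\ell\in S:\alpha(\ell)<\alpha(\ell+1)\}$ (Equation \eqref{eq:starSigma}), used to peel simple reflections off the right factor one $k$-inversion at a time. Second, your inductive statistic is not well-defined: since the $k_i$ are mixed, the accumulated permutation $\tau^{u_1,v_{i-1}}_{D'}$ lies in $\asp$ with no uniform periodicity, so ``the inversions contributed by $\tau_{D'}$'' has no meaning as a $k$-inversion count. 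The paper's resolution is to mix two statistics: $\sci$ (sign-changing inversions), which by Proposition \ref{prop:sciLambda} equals $|\lambda(D,v)|$ and requires no periodicity, for the accumulated left part, and $\inv_{k_i}$ only for the newly attached piece. The inductive engine is then Proposition \ref{prop:sciInvStar}: if $k > \sci(\alpha)+\inv_k(\beta)$ then $\sci(\alpha\star\beta) \leq \sci(\alpha)+\inv_k(\beta)$ (via Lemma \ref{lem-SciSimpleRefl}), which packaged as Theorem \ref{thm:glueBNGtoKGT} gives your part 1 by induction. So the additivity you conjecture is true, but for $\sci$, not for any global inversion count.

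For part 2 there is a genuine gap that your ``choose the more favorable direction for each index'' gloss does not close. At the index where the favorable direction switches, the hypothesis supplies only the $\min$, so the torsion order at that junction need not dominate the accumulated genus from either side; Proposition \ref{prop:sciInvStar}'s hypothesis fails there, and neither Proposition \ref{prop:kgt-bngenl} nor any census-translation remark covers gluing in the middle of the chain. The paper instead splits the chain at the maximal $j$ with $g_1+\cdots+g_j \leq g_j+\cdots+g_\ell$, applies part 1 from each end (the right half read backwards, using the marked-point swap symmetry of Remark \ref{rem-PermInvol}) to obtain two Brill--Noether general \emph{once-marked} graphs, and then invokes Proposition \ref{prop:glueMarked}: gluing two such graphs yields a Brill--Noether general unmarked graph. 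That proposition is proved not with transmission permutations but with the rank gluing formula $r_G(D) = \min\{r_{G_1}(D_1+\ell v_1)+r_{G_2}(D_2-(\ell+1)v_2)+1:\ell\in\Z\}$ of \cite[Prop. 3.15]{Pfl22}, which yields $\lambda_i(D_1,v_1)+\lambda_{r-i}(D_2,v_2)\geq g-d+r$ for $0\leq i\leq r$ and hence $|\lambda(D_1,v_1)|+|\lambda(D_2,v_2)|\geq (r+1)(g-d+r)$; crucially it needs no submodularity or torsion hypothesis at the junction. Without this (or an equivalent rank-formula argument), your plan cannot cross the middle of the chain.
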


\begin{remark}
As a degenerate case, Theorem \ref{thm:bngChain} shows that a single twice-marked graph $(G,u,v)$ with $k$-general transmission is Brill--Noether general provided that $k > g$. In fact, we prove in Proposition \ref{prop:kgt-bngenl} that $k \geq \frac12 g + 1$ is sufficient; that bound is sharp. We strongly believe that one can refine the bound in Theorem \ref{thm:bngChain} so that it gives this sharper bound in the $\ell = 1$ case, but we have not attempted to do so here in order to simplify the statements and arguments.
\end{remark}

\begin{example}
\label{eg:bng}
    Consider the graph in Figure \ref{fig:bngenlexample}. Regard it as a twice-marked graph with the leftmost and rightmost thick vertices marked. This graph is a chain of $5$ twice-marked graphs $(G_i, u_i, v_i)$, $1 \leq i \leq 5$. Some attachments are vertex gluings, while others include a bridge; as we discuss below this does not affect Brill--Noether generality. The five twice-marked graphs are as follows. Use some notation to be officially defined later in the paper.
    
    $(G_1,u_1,v_1)$ is a cycle ($g_1 = 1$), with the marked points joined by paths of length $3$ and $1$. By Example \ref{eg:cycle}, the torsion order is $k_1 = \frac{3+1}{\gcd(3,1)} = 4$, and we have $4$-general transmission.
    
    $(G_2,u_2,v_2) = (\theta_{4,1,4}, x_1, z_1)$ is an evenly marked theta graph ($g_2 = 2$), with edges divides in the ratio $\frac{1}{3}$, so by Theorem \ref{thm:thetaSimple} it has torsion order $k_2 = \frac{1+3}{\gcd(1,3)} = 4$ and $4$-general transmission.
    
    $(G_3, u_3, v_3)$ is a cycle  with $\frac{3+2}{\gcd(3,2)} = 5$-general transmission.
    
    $(G_4,u_4,v_4) = (\theta_{5,2,10}, x_2, z_4)$ is an evenly marked theta with $\frac{2+3}{\gcd(2,3)} = \frac{4+6}{\gcd(4,6)} = 5$-general transmission. Note that in this case, the marked strands do not have \emph{equal} length, but they are cut in the same ratio.
    
    $(G_5, u_5, v_5) = (\theta_{6,2,3}, x_4, z_2)$ is an evenly marked theat with $\frac{4+2}{\gcd(4,2)} = \frac{2+1}{\gcd(2,1)} = 3$-general transmission.
    
    The genera are $(g_1, g_2, g_3,g_4,g_5) = (1,2,1,2,2)$, , and the torsion orders are $(k_1, k_2, k_3, k_4, k_5) = (4,5,5,5,3)$. Since $k_1 > g_1$, $k_2 > g_1+g+2$, $k_3 > g_1+g_2+g_3$, $k_4 > g_4 + g_5$, and $k_5 > g_5$, it follows from Theorem \ref{thm:bngChain} that this graph is Brill--Noether general.
    
    It is straightforward to construct many graphs of this type, blending cycles and evenly-marked thetas to taste.
\end{example}


Our original motivation for this paper was to understand theta graphs, with the goal of classifying genus $2$ twice-marked curves. In the course of this work, it become clear that our methods generalize readily to banana graphs of all genera, so our remaining results concern banana graphs of genus $g \geq 3$. However, a stark difference emerges in our results: banana graphs of genus $g \geq 3$ almost never have $k$-general transmission, and in fact usually have non-submodular divisors. 

\begin{theorem} \label{thm:bananaSimple}
Let $(G,u,v)$ be a banana graph of genus $g \geq 3$, marked at two vertices $u,v$, at least one of which lies at least distance $2$ from both multivalent vertices. Then there exist non-submodular divisors on $(G,u,v)$.
\end{theorem}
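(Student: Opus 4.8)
The plan is to reduce the statement to producing a single ``bad square'' for the two-variable rank function, and then to build such a square explicitly using the buried marked point. Recall (from the conventions of Section \ref{ssec:conventions}) that submodularity of $D$ is, after translating pole orders into ranks, exactly the \emph{supermodularity} of the function $R(a,b) = r(D + au + bv)$ on $\Z^2$: that is, $R(a,b) + R(a',b') \le R(a\vee a', b\vee b') + R(a\wedge a', b\wedge b')$ for all $(a,b),(a',b')$, where $\vee,\wedge$ denote coordinatewise max and min. (This is the graph-theoretic shadow of the fact that $h^0(D + au + bv)$ is supermodular on any algebraic curve.) Thus it suffices to exhibit one divisor $E$ and one unit square on which supermodularity fails; concretely, I would look for $E$ with
$$ r(E+u) = r(E+v) = r(E)+1, \qquad r(E+u+v) = r(E)+1, $$
since then $R(1,0)+R(0,1) = 2r(E)+2 > 2r(E)+1 = R(0,0)+R(1,1)$ with $D := E$. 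The computational engine throughout is the Baker--Norine rank formula together with $q$-reduced divisors and Dhar's burning algorithm \cite{BakNor07}: a class is effective iff its $q$-reduced representative is nonnegative, and $r(F) \ge s$ iff $F - A$ is equivalent to an effective divisor for every effective $A$ of degree $s$.

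Next I would fix coordinates. Write the two multivalent vertices as $N$ and $S$, joined by the $g+1 \ge 4$ strands of the banana graph, and let the buried marked point (without loss of generality $v$) sit in the interior of a strand $P_0$, with $a$ the neighbor of $v$ toward $N$ and $b$ the neighbor toward $S$; the hypothesis that $v$ is at distance at least $2$ from both hubs guarantees that $a$ and $b$ are genuine bivalent ``buffer'' vertices distinct from $N$ and $S$. I would then write down an explicit rank-$0$ divisor $E$ — supported on the hubs and strand interiors and tuned so that it has a base point — with the property that a single chip placed at $v$, or a single chip placed at $u$, suffices to route a chip to every vertex (rank $1$). Both of these are lower bounds on rank, and they are the easy direction: with $g+1 \ge 4$ strands there is always enough room to fire a connected set carrying the extra chip to any prescribed target, so each is verified by exhibiting explicit chip-firing moves.

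The crux is the reverse inequality $r(E+u+v) \le 1$. Here the distance-$2$ hypothesis does the work. Intuitively, the chip added at $v$ can leave the strand $P_0$ only by crossing the buffer vertex $a$ toward $N$ or $b$ toward $S$, and when it does so it merely reproduces the degree of freedom already supplied by $u$, so the two added chips cannot create two independent sections. To make this rigorous I would produce a degree-$2$ \emph{witness}: an effective divisor $w_1 + w_2$ (I expect $w_1$ to be a hub and $w_2$ the buffer vertex $a$, or both hubs) such that $(E+u+v) - w_1 - w_2$ is \emph{not} equivalent to an effective divisor. This is checked by running Dhar's burning algorithm from $w_1$: because $v$ is shielded behind $a$, the fire reaches and clears $a$ before the chip at $v$ can discharge through it, leaving the $w_1$-reduced form of $(E+u+v)-w_1-w_2$ with a strictly negative coefficient. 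By the rank formula this certifies $r(E+u+v) < 2$, and combined with the lower bound from the previous step gives $r(E+u+v) = 1$.

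Assembling the four rank values yields the non-supermodular unit square at $(0,0)$, so $E$ is a non-submodular divisor and the theorem follows; the argument is uniform in $g \ge 3$, in the strand lengths, and in the (arbitrary) position of the near point $u$, using only that the far point is buried. I expect the single hard step to be the rank upper bound of the previous paragraph: lower bounds on rank are produced by displaying moves, but an upper bound demands a \emph{certified} obstruction, and pinning down the witness $w_1 + w_2$ and verifying via burning that the buffer vertex $a$ absorbs the chip at $v$ is exactly the place where the distance-$2$ geometry must be used, and where the main care is required. (If $v$ were adjacent to a hub the chip would escape freely and the obstruction would disappear, consistent with the hypothesis being essential.)
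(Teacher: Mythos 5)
Your reduction is sound and matches the paper's framework: non-submodularity is exactly a unit square on which the two-variable rank function fails supermodularity, i.e.\ a twist $D'$ with $\Delta(D') < 0$, and the paper likewise certifies ranks via reduced divisors and Dhar's algorithm. But that reduction is the easy part; the entire content of the theorem is the explicit divisor, and your proposal never produces it --- the divisor $E$, the witness $w_1+w_2$, and the burning computation are all deferred with ``I would'' and ``I expect.'' The paper's proof (via \Cref{thm-NSMForBanana}) simply writes the witness down: in the cross-strand case, $D = v_{\alpha,1}+v_{\alpha,i}+v_{\beta,n_\beta-1}$ (or $v_{\alpha,i}+v_{\alpha,n_\alpha-1}+v_{\beta,1}$ when $j = n_\beta-1$), with $r(D)=0$, $r(D-u)=r(D-v)=0$, $r(D-u-v)=-1$; in the same-strand case, a degree-$2$ divisor $v_{\alpha,i}+v_{\alpha,k}$ as in \Cref{lem-SameStrand} and \Cref{cor-allSubmodSameStrand}. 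Note these are two genuinely different constructions, so your claim that the argument is ``uniform in the position of the near point $u$'' glosses over a case split the paper needs.

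Beyond the omission, two of your commitments point the wrong way. First, you fix $r(E)=0$, placing the bad square at ranks $(0;1,1;1)$, whereas the paper's square sits one level lower: $E = D-u-v$ has $r(E)=-1$. Your version is not just harder but obstructed in low degree: setting $F = E+u+v$, you need $r(F)=1$, $r(F-u)=r(F-v)=1$, $r(F-u-v)=0$, and the classification of low-degree rank-$1$ divisors on a banana graph (\Cref{cor-BanaRankComp}: they are $2v_{0,0}+v_{0,n_0}$, $v_{0,0}+2v_{0,n_0}$, or $v_{0,0}+v_{0,n_0}+w$) together with \Cref{lem-SameStrand} forces contradictions like $u \sim v$ or $u = v_{0,0}$ when $u,v$ lie on different strands --- precisely the main case. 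Second, your guiding mechanism (``the chip at $v$ is shielded behind the buffer vertex, so adjacency to a hub would dissolve the obstruction'') is contradicted by the paper's classification: e.g.\ $(u,v) = (v_{\alpha,1},v_{\beta,1})$ with $n_\beta \geq 3$ has both marked points adjacent to a hub and still carries non-submodular divisors; the only exceptional cross-strand marking is $(i,j)=(1,n_\beta-1)$. The buried-point hypothesis is a clean \emph{sufficient} condition extracted from that classification, not the operative mechanism, so hunting for the witness by that intuition (hub plus buffer vertex) is liable to misfire --- the paper's witness chips sit at $v_{\alpha,1}$ and $v_{\beta,n_\beta-1}$, adjacent to the hubs, and the ineffectivity is verified by reduced-divisor computations rather than any shielding of $v$.
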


Theorem \ref{thm:bananaSimple} follows immediately from the more precise Theorem \ref{thm-NSMForBanana}. It leaves only a few special cases left where $k$-general transmission might be possible. We rule most of these out as well, and prove the following Theorem as part of Corollary \ref{cor:bananasWithKGT}.

\begin{theorem}\label{thm:bananas}
    A twice-marked banana graph of genus $g \geq 3$ does not have $k$-general transmission for any $k\geq 3$.
\end{theorem}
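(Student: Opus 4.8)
The plan is to leverage the two preceding results---Theorem~\ref{thm:bananaSimple} (equivalently the sharper Theorem~\ref{thm-NSMForBanana}) and the structural analysis summarized in Theorem~\ref{thm:thetaSimple}---to reduce the claim to a small finite list of configurations, and then to dispatch those remaining cases directly. First I would recall that $k$-general transmission requires \emph{all} divisors to be submodular. Theorem~\ref{thm:bananaSimple} already shows that non-submodular divisors exist whenever $g \geq 3$ and at least one of the marked vertices lies at distance $\geq 2$ from both multivalent (i.e.\ trivalent/hub) vertices of the banana graph. Any such configuration therefore fails condition 1) in the definition of $k$-general transmission and is immediately ruled out. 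So the core of the argument is to enumerate the configurations \emph{not} covered by Theorem~\ref{thm:bananaSimple}: these are precisely the twice-marked banana graphs where every marked vertex is within distance $1$ of a multivalent vertex.

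Next I would organize these surviving cases by the positions of $u$ and $v$ relative to the two hub vertices of the banana graph. Because each of $u,v$ must be at distance $0$ or $1$ from a hub, and a banana graph of genus $g$ has exactly two hubs joined by $g+1$ strands, there are only finitely many combinatorial types to consider (a marked point is either \emph{at} a hub, or one step into the interior of some strand adjacent to a hub; and the two marked points may sit near the same hub or near opposite hubs). For each type I would either exhibit an explicit non-submodular divisor---thereby extending the non-submodularity conclusion and killing the case via condition 1)---or, in the genuinely submodular cases, compute the transmission permutation $\tauD$ of a well-chosen test divisor and show that $\inv_k(\tauD) > g$, which violates condition 2). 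The hypothesis $k \geq 3$ should be used here to guarantee enough ``room'' in the extended $k$-affine group $\eaf{k}$ for the inversions to accumulate beyond $g$; the low-torsion cases $k \leq 2$ are excluded precisely because they are degenerate and the inversion count cannot be forced high enough.

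The key quantitative tool is the relationship, available from \cite{Pfl22} and recalled in the conventions, between the transmission permutation and the ranks $r(D + au + bv)$. For a banana graph of genus $g \geq 3$, the many parallel strands create divisors whose pole-order behavior at $u$ and $v$ is very ``spread out,'' and this spread manifests as a large number of inversions once one passes to the periodic count $\inv_k$. Concretely, I would pick a divisor supported on the strands so that transmitting chips from $u$ to $v$ forces the permutation to reverse the order of many residues modulo $k$, producing on the order of $\binom{g}{2}$ or more inversions, comfortably exceeding $g$ for $g \geq 3$. The main obstacle will be the bookkeeping in the boundary cases where a marked point sits exactly at a hub or one step inside: there the local chip-firing structure is slightly different, submodularity may genuinely hold, and one must verify the inversion bound rather than rely on non-submodularity. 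Handling these carefully---and checking that the $k \geq 3$ hypothesis is exactly what rules out the handful of small-$k$ exceptions---is where the real work lies; everything else is a finite, if tedious, case analysis feeding into Corollary~\ref{cor:bananasWithKGT}.
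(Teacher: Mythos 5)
Your opening move coincides with the paper's: since $k$-general transmission requires every divisor to be submodular, \Cref{thm-NSMForBanana} cuts the problem down to the few marking types with both marked points within distance $1$ of a multivalent vertex. But your plan for finishing --- pick a test divisor in each surviving type and show directly that $\inv_k(\tauD) > g$, with ``on the order of $\binom{g}{2}$'' inversions --- is not the paper's argument, and it has a genuine gap. The paper's own inversion counts in Section \ref{ssec:mostBananas} are exactly such computations, and they are not strong enough to conclude in all cases: \Cref{thm-quadInvGrowth} assumes the marked strands are ``sufficiently long''; \Cref{cor-bothOffMin} requires $\min\{n_0,n_1\} \geq g+1$ and yields only $\binom{g-2}{2}$, which is $\leq g$ for all $3 \leq g \leq 6$; and in the $(v_{0,0},v_{0,n_0-1})$ case the bound of \Cref{prop-oneOffNotGeneral} can come out to exactly $g$ rather than more (take $g = 3$, $n_0 = 5$: then $f(g) = 0$, $h(g) = 3$, and the bound is $\binom{1}{2} + 0 + \binom{3}{2} = 3$). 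Moreover the case analysis is not finite in the way you suggest: each marking type carries unbounded strand-length parameters $n_0,\dots,n_g$ on which these estimates depend, and it is precisely the small-genus and short-strand regimes where your quantitative tool fails.

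The paper's actual proof (of \Cref{cor:bananasWithKGT}) uses the permutation computations only to extract torsion-order lower bounds, then pivots to a global argument your proposal never mentions. Lemmas \ref{lem-TriangleInversionII}, \ref{lem-BananOneOff} and \ref{lem-bothOffTorOrder}, assembled in \Cref{prop-bananTorsion}, show that in every all-submodular case either $n_0 = n_1 = 2$ with marking $(v_{0,1},v_{1,1})$ --- torsion order $2$, which is excluded by $k \geq 3$ because \Cref{lem:kgtImpliesTorsionOrder} forces $k$ to equal the torsion order --- or else $k \geq g$. Since $g \geq 3$ then gives $k \geq \frac12 g + 1$, \Cref{prop:kgt-bngenl} would make the banana graph Brill--Noether general as an unmarked graph, contradicting hyperellipticity: by \Cref{lem:g12} there is a degree-$2$ divisor of rank $1$, and $\rho(g,1,2) = 2 - g < 0$. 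This contradiction is the engine of the proof, and it is what lets the paper bypass uniform inversion counting entirely. To repair your approach you would need either this argument or a new inversion lower bound exceeding $g$ that holds uniformly in the strand lengths for all $g \geq 3$, which the paper does not provide and which is not obviously available.
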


In fact, we show in Section \ref{sec:kgt} that most banana graphs of genus $g \geq 3$ are extremely far from $k$-general transmission: even in cases where all divisors are submodular, not only do these graphs possess transmission permutations with more than $g$ $k$-inversions, they possess transmission permutations whose number of $k$-inversions grows at least quadratically in $g$. We defer the precise statements and hypotheses to Section \ref{ssec:mostBananas}.

Sections \ref{ssec:mostBananas} and \ref{ssec:mpAutos} also contain several computations of transmission permutations on banana graphs displaying some intriguing structure. Though we do not need to exploit all of this structure for our results, we have shown it in some detail due to some intriguing patterns that emerge, that may be suggestive of additional structure within transmission permutations. We hope this case study may be informative for future work on the topic.

\begin{remark}
Readers familiar with the moduli space of tropical curves may not be surprised that genus $g \geq 3$ banana graphs do not behave like general graphs, since they belong to a highly special stratum. Indeed, a genus $g$ banana graph has $g+1$ strands, hence such graphs occupy a $(g+1)$-dimension stratum in the moduli space, i.e. a stratum of codimension $(3g-3) -(g+1) = 2g-4$. The codimension is the same after marking two points. So for $g > 2$ banana graphs are ``special.''

Another important way in which banana graphs are special is that they are always hyper-elliptic: they possess a degree $2$ divisor or rank $1$, consisting of the two non-bivalent vertices. For $g \geq 3$ this shows that they are not Brill--Noether general, since $\rho(g,1,2) = 2-g$.
\end{remark}

\section{Background}
\label{sec:background}

\subsection{Conventions and useful facts}\label{ssec:conventions}
We opt for the conventions that $[k]$ is the set $\set{0,\dots,k-1}$ and $\N:=\set{1,2,\dots}$. 

\begin{definition}[Graph]\label{def-Graph}
We assume the convention that a \textit{graph} $G$ is finite, connected, and loopless, possibly with parallel edges. The set of vertices will be donated $V(G)$ and the set of edges $E(G)$. The \textit{valence} of a vertex, denoted $\val{v}$ is the number of edges incident to $v$. We refer to vertices $v$ with $\val{v} \geq 3$ as \emph{multivalent vertices}. We take the genus $g$ of a graph to be $|E(G)|-|V(G)|+1$. 
\end{definition}

\begin{definition}
    If $(G_1,u_1,v_1),(G_2,u_2,v_2)$ are twice-marked graphs, we may obtain a new twice-marked graph $(G,u_1,v_2)$ by taking the disjoint union of $G_1$ and $G_2$ and identifying $v_1$ and $u_2$. We refer to the new graph as the \textit{vertex gluing of $(G_1,u_1,v_1)$ and $(G_2,u_2,v_2)$}.
    
    Given a sequence $\Big( (G_i, u_i, v_i) \Big)_{1 \leq i \leq \ell}$ of $\ell$ twice-marked graphs, the \emph{iterated vertex gluing} is the graph obtained by taking $(G_1,u_1,v_1)$, then successively forming the vertex gluing with $(G_2, u_2, v_2), \cdots (G_\ell, u_\ell, v_\ell)$.
\end{definition}

From the standpoint of Brill--Noether theory on finite graphs, it suffices to study graphs which are bridgeless (i.e. $2$-edge connected). Every graph $G$ admits a retraction to a bridgeless graph which gives a rank preserving isomorphism between the Picard groups, so we are not missing anything by focusing solely on bridgeless graphs. A convenience of this setting is the following lemma.

\begin{lemma}\label{lem-bridgelessFacts}
    If $G$ is a bridgeless graph then
    \begin{enumerate}[label = \arabic*)]
        \item If $u,v \in V(G)$ then $u = v$ if and only if $u \sim v$;
        \item There is a bijection between degree 1 rank $0$ divisors in $\pic{G}$ and vertices in $V(G)$.
    \end{enumerate} 
\end{lemma}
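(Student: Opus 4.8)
The plan is to prove both statements using the structure theory of divisors on bridgeless graphs, relying on the fact that in a bridgeless graph every edge is contained in a cycle, which gives us freedom to move chips around without changing degree or linear equivalence class in a controlled way.

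\medskip

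\textbf{Part 1.} The forward direction is trivial ($u = v$ implies $u \sim v$), so the content is showing $u \sim v \Rightarrow u = v$ for distinct vertices $u,v$. First I would recall that $u \sim v$ means the degree-$0$ divisor $u - v$ is principal, i.e. $u - v = \operatorname{div}(f)$ for some integer-valued function $f$ on $V(G)$. I would then analyze the combinatorics of such an $f$ via its level sets: order the vertices by the value of $f$, and consider the ``cut'' between the set $A$ where $f$ takes its maximum value and its complement. The divisor $\operatorname{div}(f)$ assigns to each vertex a net inflow across edges weighted by differences of $f$-values. The key observation is that a vertex in the top level set $A$ can only \emph{lose} chips (or stay neutral) under $\operatorname{div}(f)$, and it loses a chip for every edge leaving $A$. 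Since $u - v$ has a $+1$ only at $u$, the maximal level set must be a single vertex carrying a nonnegative coefficient, but every edge leaving $A$ forces negativity; bridgelessness guarantees that if $A$ is a proper nonempty subset then at least one edge leaves $A$ (in fact, since $G$ is $2$-edge-connected, at least two do). Pushing this argument from the top down and symmetrically from the bottom, I expect to force $f$ to be constant, contradicting $u \ne v$. The \textbf{main obstacle} here is bookkeeping the inequalities across all level sets cleanly; the cleanest route is probably to argue directly that a nonconstant $f$ produces a divisor $\operatorname{div}(f)$ whose positive part has degree at least $2$ (using that the top cut has $\geq 2$ edges), which is incompatible with $u - v$.

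\medskip

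\textbf{Part 2.} I want a bijection between degree-$1$, rank-$0$ divisors and vertices. The natural map sends a vertex $w \in V(G)$ to the class $[w]$. \emph{Well-definedness into the target:} every effective degree-$1$ divisor is a single vertex $w$, and $r(w) = 0$ since a degree-$1$ divisor cannot have positive rank on a graph of genus $\geq 1$ (and the genus-$0$ case is the single vertex, handled trivially); moreover every degree-$1$ rank-$0$ class contains an effective representative (rank $\geq 0$ means effective representative exists), so it equals $[w]$ for some vertex $w$. \emph{Surjectivity} is then immediate. \emph{Injectivity} is exactly Part 1: if $[u] = [v]$ then $u \sim v$, so $u = v$. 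Thus I would deduce Part 2 formally from Part 1 together with the elementary observation that a rank-$0$ degree-$1$ class has a unique effective representative, which is a single vertex.

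\medskip

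The essential mathematical content, and the only place requiring real work, is the level-set/cut argument in Part 1 establishing that principal divisors of the form $u-v$ force $u=v$ in the bridgeless setting; Part 2 is a formal consequence once the ``effective representative is a vertex'' remark is in place. I would present Part 1 in full and then dispatch Part 2 in a few lines.
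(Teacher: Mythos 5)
Your proof is correct. Note that the paper states this lemma without any proof at all, treating it as a standard fact about bridgeless (equivalently, $2$-edge-connected) graphs, so there is no in-paper argument to compare against; your level-set/cut argument for Part 1 is the standard one, and it is sound: if $u - v = \operatorname{div}(f)$ with $f$ nonconstant, the maximal level set $A$ is a proper nonempty subset, every vertex of $A$ has nonnegative coefficient in $\operatorname{div}(f)$, and the coefficients over $A$ sum to at least the size of the edge cut $(A, A^c)$, which is at least $2$ by bridgelessness plus connectedness — incompatible with a positive part of degree $1$. One small point in Part 2 deserves a line of justification: your claim that a degree-$1$ divisor has rank at most $0$ on a graph of genus $\geq 1$ is true but not free; the cleanest route in your setting is to derive it from Part 1 itself — if $r(w) \geq 1$ then $w - x$ is effective up to equivalence for every vertex $x$, and degree counting forces $w \sim x$ for all $x$, whence $w = x$ by Part 1, so $G$ is a single vertex. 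With that patch, Part 2 is, as you say, a formal consequence of Part 1 together with the observation that a rank-$0$ degree-$1$ class has a unique effective representative, which is a single vertex.
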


Throughout this paper, we will sometimes replace graphs with their bridgeless contraction without further comment. In particular, when performing vertex gluing, we may just as well attach the glued vertices by a path of any length (see e.g. Figure \ref{fig:bngenlexample}) with no meaningful changes to the divisor theory.

\begin{definition}[Banana-Graph]\label{def-BanGraph}
    For $n_0,\dots,n_g \in \N$ we define the \textit{banana graph} $B_{n_0,\dots,n_g}$ to be the graph obtained by connecting two vertices with $g+1$ paths of length $n_0,\dots,n_g$. The resulting graph has genus $g$. In the genus $1$ case this is simply a cycle graph of length $n_0+n_1$. In the case where $g = 2$ we refer to such graphs as \textit{theta graphs} and denote them $\theta_{n_0,n_1,n_2}$. Label one of the higher valence vertices $v_{0,0}$ and the other $v_{0,n_0}$. Label the vertex a distance $i$ from $v_{0,0}$ along the $\alpha$-th path as $v_{\alpha,i}$. Note the higher valence vertices have more than one label:
    \begin{align*}
        v_{\alpha,0} = v_{\beta,0} \text{ and } v_{\alpha,n_\alpha} = v_{\beta,n_\beta} \forall \alpha,\beta \in \set{0,\dots,g}.
    \end{align*}
    All other vertices are uniquely labeled. We define $\overline{v_{\alpha,i}}:= v_{\alpha,n_\alpha-i}$ (in the genus $2$ case, this coincides with the usual dual $K_G-v_{\alpha,i}$ up to linear equivalence). In the special case of theta graphs we adopt the notation that $x_i:= v_{0,i}, y_i := v_{1,i},$ and $z_i:= v_{2,i}$ to simplify typography. We refer to the collection of vertices $v_{\alpha,0},v_{\alpha,1},\dots,v_{\alpha,n_\alpha}$ as the $\alpha$-th \textit{strand} of  our graph. The high valence vertices are thus on all strands simultaneously.
\end{definition}

\begin{definition}\label{def-delt}
We define $\delta(P)$ for a proposition $P$ to be the indicator function which is $1$ when $P$ holds and $0$ when $P$ does not.
\end{definition}

\begin{definition}\label{def-TwMkGraph}
A \textit{twice-marked} graph $(G,u,v)$ is a graph with a choice of two distinguished vertices $u,v$. The torsion order of $(G,u,v)$ is the order of $[u-v]$ as an element of $\jac{G}$, i.e. the minimum $k \in \N$ such that $ku \sim kv$.
\end{definition}

\begin{definition}\label{def-Twist}
A \textit{twist} of a divisor $D$ on a twice-marked graph $(G,u,v)$ is a divisor of the form $D+au+bv$, $a,b \in \Z$.
\end{definition}

\begin{definition}\label{def-Delt}
For a divisor $D$ on $(G,u,v)$ we define
\begin{align*}
    \Delta(D):=r(D) - r(D-u) - r(D-v) + r(D-u-v).
\end{align*}
Although suppressed in the notation, this function depends on the choice of marked points $u,v$.
\end{definition}

\begin{definition}\cite{Pfl22}\label{def-submod}
A divisor $D$ on $(G,u,v)$ is \textit{submodular} with respect to $u,v$ if $\Delta(D')\geq 0$ for all twists $D' = D+au + bv$.
\end{definition}

\begin{definition}\label{def-EA}
    A \textit{permutation} is a function $\tau:\Z\to \Z$. Given $k\in \N$, permutations satisfying $\tau(n+k)= \tau(n) + k$ for all $n \in \Z$ form a group denoted $\eaf{k}$, referred to as the \textit{extended affine symmetric group}.
\end{definition}

\begin{definition}\cite{Pfl22}
    Given a twice-marked graph $(G,u,v)$, let $D$ be a divisor in $\div{G}$. If it exists, the \emph{transmission permutation} of $D$, denoted $\tauD$ is the unique bijection $\tauD:\ \Z \to \Z$ which satisfies, for all $a,b \in \Z$,
    \begin{align*}
        \delta(\tauD(b) = a) = \Delta(D+au-bv).
    \end{align*}
\end{definition}

\begin{lemma} \label{lem:tauChars} \cite[Remark 1.5, Proposition 2.3]{Pfl22}
A divisor $D$ on $(G,u,v)$ has a well-defined transmission permutation if and only if it is submodular. If $(G,u,v)$ has torsion order $k$, or more generally if $ku \sim k v$, then $\tauD \in \eaf{k}$. The transmission permutation is also characterized by the following two equations, which are equivalent. For all $a,b \in \Z$:
\begin{eqnarray*}
r(D + au - bv) + 1 &=& \# \{ \ell \geq b:\ \tauD(\ell) \leq a \},\\
r(K_G - D - au + bv) + 1 &=& \# \{ \ell < b:\ \tauD(\ell) >a \}.
\end{eqnarray*}
\end{lemma}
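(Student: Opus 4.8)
The plan is to encode all the ranks appearing in the statement in the single two-variable function $f(a,b) := r(D+au-bv)$ on $\Z^2$, and to read off every assertion from its discrete-difference structure. First I would record the two standard facts about rank that drive everything: for any divisor $E$ and vertex $P$ one has $0 \le r(E+P)-r(E)\le 1$, and by Riemann--Roch $r(E)=\deg E - g$ once $\deg E$ is large while $r(E)=-1$ once $\deg E$ is sufficiently negative. Consequently $f$ is nondecreasing in $a$, nonincreasing in $b$, with steps of size at most $1$ in each variable; moreover $f(a,b)=\deg D + a - b - g$ for $a\gg 0$ or $b\ll 0$, and $f(a,b)=-1$ for $a\ll 0$ or $b\gg 0$. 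Unwinding Definition \ref{def-Delt} gives the clean formula $\Delta(D+au-bv)=f(a,b)-f(a-1,b)-f(a,b+1)+f(a-1,b+1)$, i.e. $\Delta$ is the mixed second difference of $f$.

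For Part 1, fix $b$ and set $h_a := f(a,b)-f(a,b+1)$; by the step-$1$ property $h_a\in\{0,1\}$, and the asymptotics give $h_a=0$ for $a\ll 0$ and $h_a=1$ for $a\gg 0$. The mixed-difference formula reads $\Delta(D+au-bv)=h_a-h_{a-1}$, so each $\Delta$ lies in $\{-1,0,1\}$ and, being a finitely supported telescoping difference, $\sum_a \Delta(D+au-bv)=\lim_{a\to\infty}h_a-\lim_{a\to-\infty}h_a=1$. The symmetric computation with $p_b:=f(a,b)-f(a-1,b)$ gives $\Delta(D+au-bv)=p_b-p_{b+1}$ and hence $\sum_b \Delta(D+au-bv)=1$ for each fixed $a$. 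Now if $D$ is submodular then every $\Delta\ge 0$, so every row and column of the array $\big(\Delta(D+au-bv)\big)_{a,b}$ consists of nonnegative integers summing to $1$; thus each contains exactly one $1$, which is precisely the statement that $b\mapsto$ (the unique $a$ with $\Delta(D+au-bv)=1$) is a well-defined bijection $\tauD$ with $\delta(\tauD(b)=a)=\Delta(D+au-bv)$. Conversely, if $D$ is not submodular some entry equals $-1$, which cannot be an indicator value, so no such permutation exists; uniqueness is immediate from the defining equation.

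Part 2 is a one-line consequence of torsion: if $ku\sim kv$ then $D+(a+k)u-(b+k)v\sim D+au-bv$, and the same shift by $k(u-v)\sim 0$ applies to all four divisors entering $\Delta$, so $\Delta(D+(a+k)u-(b+k)v)=\Delta(D+au-bv)$ and therefore $\tauD(b+k)=\tauD(b)+k$, i.e. $\tauD\in\eaf{k}$. For Part 3 I would again telescope. The partial column sum is $\sum_{a'\le a}\Delta(D+a'u-\ell v)=f(a,\ell)-f(a,\ell+1)$, and summing over $\ell\ge b$ gives $\#\{\ell\ge b:\tauD(\ell)\le a\}=\sum_{\ell\ge b}\big(f(a,\ell)-f(a,\ell+1)\big)=f(a,b)-\lim_{\ell\to\infty}f(a,\ell)=r(D+au-bv)+1$, the first formula. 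For the second, $\sum_{a'>a}\Delta(D+a'u-\ell v)=1-\big(f(a,\ell)-f(a,\ell+1)\big)$, and summing over $\ell<b$ (a finite sum, since the summand vanishes for $\ell\ll 0$ by the asymptotics) telescopes to $f(a,b)-\deg D - a + b + g$; by Riemann--Roch this equals $r(K_G-D-au+bv)+1$, giving the second formula. The two are equivalent because each recovers $\delta(\tauD(b)=a)$ upon double differencing, and they are linked termwise by Riemann--Roch.

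The only real obstacle is bookkeeping: one must be disciplined about which differences are finitely supported so that the telescoping sums are genuinely finite, and must keep the $\{-1,0,1\}$ versus $\{0,1\}$ dichotomy straight, since that single sign constraint is exactly what separates submodular from non-submodular divisors. The conceptual content—recognizing $\Delta$ as a mixed second difference whose row and column sums are forced to equal $1$—is what drives all the claims, and once that is in hand each part reduces to a short telescoping argument together with Riemann--Roch.
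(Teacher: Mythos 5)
Your proof is correct, but note that the paper itself never proves Lemma \ref{lem:tauChars}: it is imported wholesale from \cite[Remark 1.5, Proposition 2.3]{Pfl22}, so there is no internal proof to compare against. Your reconstruction --- viewing $\Delta$ as the mixed second difference of $f(a,b)=r(D+au-bv)$, using the $\{0,1\}$-valued first differences $h_a$ and $p_b$ with their Riemann--Roch asymptotics to force every row and column sum of the array $\bigl(\Delta(D+au-bv)\bigr)$ to equal $1$, and then obtaining the two counting formulas by telescoping partial sums --- is essentially the argument of the cited reference, and all the steps check out, including the finiteness of each telescoping sum and the Riemann--Roch conversion in the second formula. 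The only point worth polishing is the phrasing ``$f(a,b)=\deg D+a-b-g$ for $a\gg 0$ or $b\ll 0$'': this holds only with the other variable held fixed (it is not uniform in both), which is exactly how you use it, but the statement as written could mislead.
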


\begin{definition}[\cite{Pfl22}]\label{def-inv}
    Given a permutation $\tau$, an inversion is a pair $(a,b) \in \Z^2$ such that $a<b$ and $\tau(a)>\tau(b)$. For a given $k \in \Z$, we say that two inversion $(a,b),(a',b')$ are $k$\textit{-equivalent} if $a-a' = b-b'$ and $a-a'  \equiv 0 \mod k$. We write $\Inv_k(\tau)$ for the set of $k$-equivalence classes of inversion of $\tau$ and $\inv_k(\tau)$ for $\# \Inv_k(\tau)$. We call the equivalence classes \textit{$k$-inversions}.
\end{definition}

\subsection{Jacobians of Banana Graphs}
In this section we characterize the Jacobians of Banana graphs of genus $\geq 2$.

\begin{proposition}\label{prop-JacBanana}
    Given $n_0,\dots,n_{g} \in \N^{\geq 1}$, there is an isomorphism
    \begin{align*}
        \jac{B_{n_0,\dots,n_g}}\cong \Z^{g+1}/\gen{(1,1,\dots,1),(n_0,-n_1,0,\dots),(n_0,0,-n_2,0,\dots,0),\dots,(n_0,0,\dots,0,-n_{g})}.
    \end{align*}
    under which the coset $[a_0, \cdots a_g]$ of $(a_0, \cdots, a_g) \in \Z^{g+1}$ is identified with the divisor $\sum_{\alpha=0}^g [ v_{\alpha,a_\alpha} - v_{0,0} ]$, assuming that $0 \leq a_\alpha \leq n_\alpha$ for all $\alpha$ (so that $v_{\alpha,n_\alpha}$ is a well-defined vertex).
\end{proposition}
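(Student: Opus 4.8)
Write $G := B_{n_0,\dots,n_g}$ and let $P := v_{0,0}$, $Q := v_{0,n_0}$ be its two multivalent vertices. The plan is to construct an explicit surjective homomorphism $\phi\colon \Z^{g+1}\to \jac{G}$, check that the listed relations lie in its kernel, and conclude by an order count. The local input is that for each strand $\alpha$ and each interior index $1\le i\le n_\alpha-1$, firing the bivalent vertex $v_{\alpha,i}$ gives $2v_{\alpha,i}\sim v_{\alpha,i-1}+v_{\alpha,i+1}$, so that the class $[v_{\alpha,i+1}-v_{\alpha,i}]$ does not depend on $i$. Calling this common class $e_\alpha:=[v_{\alpha,1}-v_{0,0}]$ and telescoping, one gets $[v_{\alpha,a}-v_{0,0}]=a\,e_\alpha$ for all $0\le a\le n_\alpha$. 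I therefore set $\phi(a_0,\dots,a_g):=\sum_{\alpha=0}^g a_\alpha e_\alpha$, which is manifestly a homomorphism and, on the range $0\le a_\alpha\le n_\alpha$, agrees with the divisor $\sum_\alpha[v_{\alpha,a_\alpha}-v_{0,0}]$ named in the statement.

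Next I would establish surjectivity and the inclusion $\Lambda\subseteq\ker\phi$, where $\Lambda$ denotes the subgroup generated by the listed vectors. Surjectivity is clear: $\jac{G}$ is generated by the classes $[v-v_{0,0}]$, and each of these equals $i\,e_\alpha=\phi(i\mathbf{1}_\alpha)$ for $v=v_{\alpha,i}$. For the inclusion, firing the multivalent vertex $v_{0,0}$ (which has valence $g+1$) gives $(g+1)v_{0,0}\sim\sum_\alpha v_{\alpha,1}$, hence $\sum_\alpha e_\alpha=0$, i.e. $\phi(1,\dots,1)=0$; and since $n_\alpha e_\alpha=[v_{\alpha,n_\alpha}-v_{0,0}]=[Q-P]$ is the same class for every $\alpha$, we get $\phi(n_0,0,\dots,-n_\beta,\dots,0)=n_0e_0-n_\beta e_\beta=0$. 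Thus all generators of $\Lambda$ lie in $\ker\phi$, and $\phi$ descends to a surjection $\bar\phi\colon \Z^{g+1}/\Lambda\twoheadrightarrow\jac{G}$.

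It remains to match orders. The index $[\Z^{g+1}:\Lambda]$ equals $|\det M|$, where $M$ is the $(g+1)\times(g+1)$ matrix whose rows are the generators of $\Lambda$: it is a bordered-diagonal (``arrow'') matrix with corner entry $1$, top row all $1$'s, the rest of the first column equal to $n_0$, and lower-right block $\operatorname{diag}(-n_1,\dots,-n_g)$. The Schur-complement formula gives $\det M=(-1)^g\big(\prod_{i=1}^g n_i\big)\big(1+n_0\sum_{i=1}^g 1/n_i\big)=(-1)^g\sum_{\alpha=0}^g\prod_{\beta\ne\alpha}n_\beta$, which is nonzero. On the other side, the matrix-tree theorem identifies $|\jac{G}|$ with the number of spanning trees of $G$, and a short analysis shows that any spanning tree of a banana graph keeps exactly one strand intact while deleting exactly one of the $n_\beta$ edges from each remaining strand $\beta$; summing over the choice of intact strand gives exactly $\sum_\alpha\prod_{\beta\ne\alpha}n_\beta$. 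Hence $\Z^{g+1}/\Lambda$ and $\jac{G}$ are finite of the same order, so the surjection $\bar\phi$ is an isomorphism, whose inverse is the claimed identification.

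I expect the only real work to be bookkeeping: correctly reading off the arrow structure of $M$, evaluating its determinant, and pinning down the spanning-tree count; none of this is conceptually hard. A self-contained alternative that sidesteps the matrix-tree theorem would be to prove $\ker\phi\subseteq\Lambda$ by hand, reducing an arbitrary kernel vector to a canonical representative modulo $\Lambda$ and checking distinct representatives have distinct images, but the order-comparison argument above is cleaner and is the route I would take.
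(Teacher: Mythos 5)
Your proposal is correct, and it reaches injectivity by a genuinely different route than the paper. The first two-thirds coincide: the paper constructs the same homomorphism (via the classes $e_\alpha = [v_{\alpha,1}-v_{0,0}]$ and the telescoping relation $v_{\alpha,i+1}-v_{\alpha,i} \sim v_{\alpha,i}-v_{\alpha,i-1}$ from firing bivalent vertices), verifies exactly as you do that $(1,\dots,1)$ maps to the chip-firing divisor at $v_{0,0}$ and that $n_\alpha e_\alpha = [v_{0,n_0}-v_{0,0}]$ independently of $\alpha$, and gets surjectivity from the same generating set. Where you diverge is injectivity: the paper proves it directly, by exhibiting canonical ``reduced'' coset representatives in $\Z^{g+1}$ and using Dhar's burning algorithm to check that the corresponding divisor $\sum_\alpha (v_{\alpha,a_\alpha}-v_{0,0})$ is $v_{0,0}$-reduced, hence non-principal unless all $a_\alpha=0$; you instead compare cardinalities, computing $[\Z^{g+1}:\Lambda]=|\det M|=\sum_{\alpha=0}^g\prod_{\beta\neq\alpha}n_\beta$ by a Schur complement and matching this against the spanning-tree count of the banana graph via the matrix-tree theorem. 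Both of your computations check out: the determinant evaluation is right, and your spanning-tree analysis (exactly one strand intact, since two intact strands create a cycle and zero intact strands would require $g+1$ deletions; then one free edge deletion on each of the remaining $g$ strands) gives precisely $\sum_\alpha\prod_{\beta\neq\alpha}n_\beta$, including the edge cases $n_\alpha=1$. The trade-off is this: your route is shorter and avoids reduced-divisor theory, at the cost of importing Kirchhoff's theorem as external input; the paper's route stays entirely inside chip-firing and, as a byproduct, produces the explicit bijection between reduced lattice vectors and $v_{0,0}$-reduced divisors (equivalently, after adding $gv_{0,0}$, break divisors) that the paper highlights in the remark immediately following the proposition and leans on in its later rank computations on banana graphs --- structural information that a pure order-counting argument does not provide.
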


\begin{proof}
\newcommand{\tphi}{\widetilde{\phi}}
    Here and throughout the sequel we write $(a_0,\dots,a_g)$ for an element of $\Z^{g+1}$ and $[a_0,\dots,a_g]$ for an element of the quotient. Let $J:= \jac{B_{n_0,\dots,n_g}}$ and $$K:=\gen{(1,1,\dots,1),(n_0,-n_1,0,\dots),(n_0,0,-n_2,0,\dots,0),\dots,(n_0,0,\dots,0,-n_g)} \subseteq \Z^{g+1}.$$ 
    We claim that there is an isomorphism $\phi:\Z^{g+1}/K \to J$ characterized by the formula 
     \begin{align} \label{eq:phiDefn}
        [a_0,\dots,a_g]\mapsto \sum_{\alpha = 0}^g a_\alpha [(v_{\alpha,1}-v_{0,0})].
    \end{align}
    We must check that $\phi$ is well-defined, injective, and surjective. We check all three by first considering a map 
    $\tphi: \Z^{g+1} \to \operatorname{Div}^0(G)$ defined by $(a_0, \cdots, a_g) \mapsto \sum_{\alpha=0}^g a_\alpha (v_{\alpha_1} - v_{0,0})$. This is transparently a homomorphism. If we verify that $\tphi$ sends all generators of $K$ to principal divisors, then it will follow that $\tphi$ induces a well-defined homomorphism $\phi$ obeying the desired formula. To see this, first note that $\tphi(1,\cdots,1)$ is the principal divisor obtained by a single chip-firing move at $v_{0,0}$. Second, note that we have
    \begin{align}\label{eq:multDiff}
    a_\alpha (v_{\alpha,1} - v_{0,0}) \sim v_{\alpha, a_\alpha} - v_{0,0} \mbox{ for all } 0 \leq a_\alpha \leq n_\alpha.
    \end{align}
    This follows by induction and the observation that $v_{\alpha,i+1} - v_{\alpha,i} \sim v_{\alpha,i} - v_{\alpha, i-1}$ for all $1 \leq i \leq n_\alpha-1$, which results from chip-firing the vertex $v_{\alpha,i}$. This implies that, if $e_\alpha \in \Z^{g+1}$ denotes the $\alpha$th standard basis vector, we have $\tphi(a_\alpha e_\alpha) \sim v_{\alpha, n_\alpha} - v_{0,0} = v_{0,n_0} - v_{0,0}$. This does not depend on $\alpha$; it follows that all other generators of $K$ are sent by $\tphi$ to principal divisors. Hence $\tphi$ induces a well-defined homomorphism $\phi: \Z^{g+1} / K \to J$ obeying Equation \eqref{eq:phiDefn}.
    
    To check injectivity, we use useful coset representatives of $\Z^{g+1}/K$. Every element of $\Z^{g+1}/K$ has a representative $(a_0,\dots,a_g)$ (in fact, a unique representative) satisfying the following three conditions.
    \begin{enumerate}[label = \arabic*)]
        \item $0\leq a_\alpha \leq n_\alpha$ for all $\alpha \in \set{0,\dots,g}$,
        \item There exists $\alpha$ such that $a_\alpha = 0$, and
        \item If $a_\alpha = n_\alpha$ and $a_\beta = 0$, then $\alpha < \beta$.
    \end{enumerate}
    Call an element $(a_0, \cdots, a_g)$ satisfying these conditions \emph{reduced}.
    To reduce an arbitrary element of $\Z^{g+1}$ to an element obeying the first two conditions, one can repeatedly follow these steps: add a multiple of $(1,\cdots,1)$ so that all coordinates are nonnegative and at least one is zero; reduce any coordinate $a_\alpha > n_\alpha$ by $n_\alpha$ while replacing a coordinate $a_\beta = 0$ by $n_\beta$. Then to satisfy the third, one can ``left-justify'' all coordinates with $a_\alpha = n_\alpha$ while moving $0$s to the right.
    
    Now, if $(a_0, \cdots, a_g)$ is reduced, Equation \eqref{eq:multDiff} implies that
    \begin{align} \label{eq:phiReduced}
    \phi( [a_0, \cdots, a_g]) = \sum_{\alpha=0}^g [ v_{\alpha, a_\alpha} - v_{0,0} ].
    \end{align}
    It follows from Dhar's burning algorithm that the divisor $\sum_{\alpha}^g (v_{\alpha, a_\alpha} - v_{0,0})$ is in fact $v_{0,0}$-reduced: the number of chips at $v_{0,n_0}$ is strictly less than the number of strands with no interior chips, so both multivalent vertices burn; all other strands have at most one chip in their interior and burn from both ends. Therefore this divisor is not principal unless $a_0 = a_1 = \cdots a_g = 0$. This shows that $\phi$ is injective. 
    
    Finally, note that $J$ is generated by the classes $[v_{\alpha,a_\alpha} - v_{0,0}]$, which are all in the image of $\phi$ by Equation \eqref{eq:phiReduced}. So $\phi$ is surjective, and therefore an isomorphism.
\end{proof}

\begin{remark}
The proof above in fact gives an explicit bijection between reduced elements of $\Z^{g+1}$ and degree-$0$ $v_{0,0}$-reduced divisors on $B_{n_0,\cdots,n_g}$. In fact, if we add $g v_{0,0}$ to these reduced representatives, we obtain all \emph{break divisors}, in the sense of \cite[\S 4.5]{MikZha07}, and also studied in \cite{ABKS}. It is a pleasant feature of banana graphs that these two ways to give canonical representatives of degree-$g$ divisors classes, $v_{0,0}$-reduced divisors and break divisors, coincide and are neatly parameterized by this subset of $\Z^{g+1}$.
\end{remark}

\begin{remark}
    While we require that each strand must have non-zero length to make later arguments easier, it's worth considering the limiting case where one of the strands has length $0$. Combinatorially this should corresponds to passing from a banana graph of genus $g$ to a bouquet of $g$ cycles. Indeed, if $n_0 = 0$ then we get
    \begin{align*}
        \Z^{g+1}/\gen{(1,\dots,1)(0,n_1,0,\dots,0),(0,0,n_2,0,\dots,0),\dots,(0,\dots,0,n_g)} \cong \prod_{\alpha = 1}^g\Z/n_\alpha\Z.
    \end{align*}
    Which is the expected description of the Jacobian of a bouquet of cycles.
\end{remark}

In genus $2$, we can restrict even further and go down another dimension.

\begin{proposition}
    For a theta graph $\theta_{a,b,c}$ we have $\jac{\theta_{a,b,c}} \cong \Z^2/\gen{(a+c,c),(-a,b)}$.
\end{proposition}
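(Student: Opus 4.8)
The plan is to specialize Proposition \ref{prop-JacBanana} to genus $2$ and then collapse one coordinate using the generator $(1,1,1)$. Setting $(n_0,n_1,n_2) = (a,b,c)$, Proposition \ref{prop-JacBanana} gives
$$\jac{\theta_{a,b,c}} \cong \Z^3 / \gen{(1,1,1),\ (a,-b,0),\ (a,0,-c)}.$$
So it suffices to exhibit an isomorphism from this quotient to $\Z^2/\gen{(a+c,c),(-a,b)}$.

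First I would introduce the surjection $\psi \colon \Z^3 \to \Z^2$ defined by $\psi(x,y,z) = (x-z,\ y-z)$. This is clearly a homomorphism, and it is surjective since $\psi(1,0,0) = (1,0)$ and $\psi(0,1,0) = (0,1)$. Its kernel consists of those $(x,y,z)$ with $x = z$ and $y = z$, i.e.\ exactly $\Z\cdot(1,1,1)$. Thus $\psi$ identifies $\Z^2$ with $\Z^3/\gen{(1,1,1)}$.

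By the correspondence (third isomorphism) theorem, since $\gen{(1,1,1),(a,-b,0),(a,0,-c)}$ contains $\ker\psi = \gen{(1,1,1)}$, the quotient $\Z^3/\gen{(1,1,1),(a,-b,0),(a,0,-c)}$ is isomorphic to $\Z^2/\gen{\psi(a,-b,0),\ \psi(a,0,-c)}$. Computing directly, $\psi(a,-b,0) = (a,-b)$ and $\psi(a,0,-c) = (a+c,c)$, so this quotient is $\Z^2/\gen{(a,-b),(a+c,c)}$. Finally I would observe that $(a,-b) = -(-a,b)$, so $\gen{(a,-b),(a+c,c)} = \gen{(-a,b),(a+c,c)}$, yielding the claimed presentation.

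There is essentially no serious obstacle here; the only points requiring care are confirming that $\ker\psi$ is exactly the cyclic group generated by $(1,1,1)$ (so that passing to the quotient by the remaining two generators via the correspondence theorem is clean) and the harmless sign flip relating $(a,-b)$ to $(-a,b)$. As a sanity check on the arithmetic, both presented groups have order equal to the determinant $ab+bc+ca$, which matches the number of spanning trees of $\theta_{a,b,c}$, confirming at the outset that the two finite abelian groups have the same cardinality.
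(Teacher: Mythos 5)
Your proof is correct and follows essentially the same route as the paper: both use the map $(x,y,z) \mapsto (x-z,\, y-z)$ to collapse the genus-$2$ case of Proposition \ref{prop-JacBanana} from $\Z^3$ to $\Z^2$. The only difference is organizational—you invoke the correspondence theorem after identifying $\ker\psi = \gen{(1,1,1)}$, whereas the paper verifies by hand that the kernel of the induced map is contained in $N$; your packaging is arguably cleaner but mathematically equivalent.
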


\begin{proof}
    
As we established above,
\begin{align*}
    \jac{\theta_{a,b,c}} \cong \Z^3/\gen{(a,0,-c),(a,-b,0),(1,1,1)}.
\end{align*}

There is a map $\phi:\Z^3 \to \Z^2/\gen{(a+c,c),(-a,b)}$ given by $(x,y,z)\mapsto (x-z,y-z)$. Clearly $$N:=\gen{(a,0,-c),(a,-b,0),(1,1,1)} \subseteq \ker{\phi}$$ so $\phi$ descends to a surjection from the quotient: $\Tilde{\phi}:\Z^3/N\to \Z^2/\gen{(a+c,c),(-a,b)}$. For injectivity note that if $(x,y,z) \in \ker{\phi}$ then this implies that 
\begin{align*}
    (x-z,y-z) = m(a,-b)+n(a+c,c).
\end{align*}
From this we can write:
\begin{align*}
    (x,y,z) = (z+ma+n(a+c),z-mb+nc,z) = (z+nc)(1,1,1) + m(a,-b,0)+n(a,0,-c) \in N.
\end{align*}
\end{proof}

\begin{remark}
    An advantage of this two-dimensional discrete torus presentation of the Jacobian is that it can be easily identified with the lattice points of a parallelogram as in \Cref{ParFunDomain} or a hexagon as in \Cref{fig:hexFunDom}, each with the opposing parallel sides identified.
    These figures can also be inferred from Figure $1$ of \cite{ABKS}; in particular the hexagon presentation is naturally divided into three parallelograms corresponding to three different types of degree-$2$ break divisors.
    In both representations, it is easy to see a copy of $V(G)$ sitting inside of $\jac{G}$ through the Abel-Jacobi map, first studied in \cite{BakNor07}.
    Thus the number of elements of the Jacobian is given by counting lattice points inside these polygons, accounting for identified boundary points. By Pick's theorem, this value equals the area of either of these polygons, namely $ab+ ac+bc$. These identification space representations of the Jacobian are helpful for thinking about progressions of the form $n(u-v)$ which correspond to straight paths along the torus. 
\end{remark}

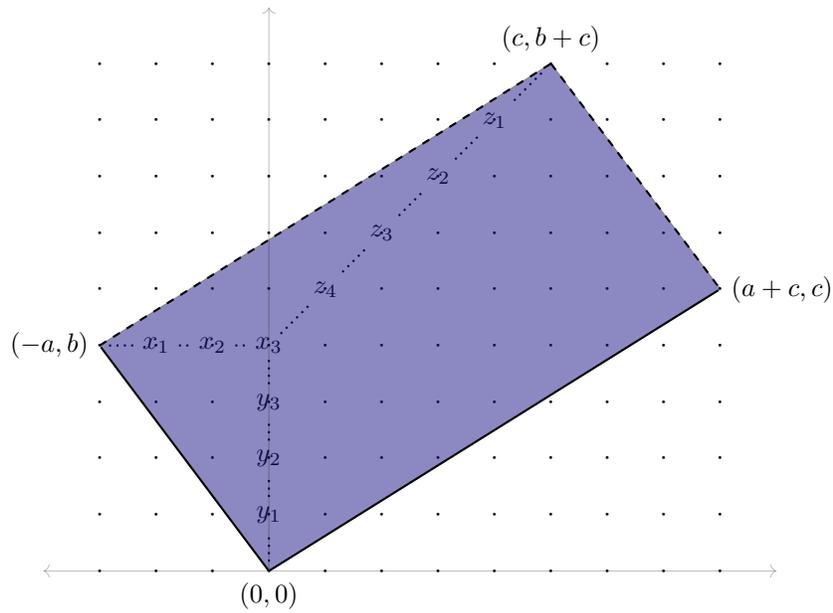
\begin{figure}
    \centering
    \pgfmathsetmacro\a{3}
    \pgfmathsetmacro\b{4}
    \pgfmathsetmacro\c{5}
    \pgfmathsetmacro\w{int(\a + \a + \c)}
    \pgfmathsetmacro\h{int(\b + \c)}
    \pgfmathsetmacro\adec{int(\a-1)}
    \pgfmathsetmacro\bdec{int(\b-1)}
    \pgfmathsetmacro\cdec{int(\c-1)}
    \begin{tikzpicture}
    \begin{scope}[scale = 0.75] 
        \node [inner sep=0pt](1) at (0,0) [label = {270:$(0,0)$}]{};
        \node [inner sep=0pt] (2) at (-\a,\b) [label = {180:$(-a,b)$}]{};
        \node [inner sep=0pt](3) at (\c,\b+\c) [label = {90:$(c,b+c)$}]{};
        \node [inner sep=0pt](4) at (\a+\c,\c) [label = {0:$(a+c,c)$}]{};
    
        \foreach \x in {0,...,\a} {
            \ifnum\x=\a{
                \node (x\x) at (-\a +\x,\b) {$x_{\x}$};
            }
            \else{
                \ifnum\x=0{
                    \node (x\x) at (-\a +\x,\b) {};
                }
                \else{
                    \node (x\x) at (-\a +\x,\b) {$x_{\x}$};
                }\fi  
        	   
           }\fi
        }
        \foreach \x [evaluate = \x as \xinc using int(\x+1)]in {0,...,\adec}{
            \draw [dotted,thick] (x\x) -- (x\xinc);
        }
        
        \foreach \x in {0,...,\b} {
            \ifnum\x=\b{
                \node (y\x) at (0,\x) {$ $};
            }
            \else{
                \ifnum\x=0{
                    \node (y\x) at (0,\x) {};
                }
                \else{
                    \node (y\x) at (0,\x) {$y_{\x}$};
                }\fi  
        	   
           }\fi
        }
    
        \foreach \x [evaluate = \x as \xinc using int(\x+1)]in {0,...,\bdec}{
            \draw [dotted,thick] (y\x) -- (y\xinc);
        }
        
        \foreach \x in {0,...,\c} {
            \ifnum\x=\c{
                \node (z\x) at (\c -\x,\b + \c - \x) {$ $};
            }
            \else{
                \ifnum\x=0{
                    \node (z\x) at (\c -\x,\b + \c - \x) {};
                }
                \else{
                    \node (z\x) at (\c -\x,\b + \c - \x) {$z_{\x}$};
                }\fi  
        	   
           }\fi
        }
    
        \foreach \x [evaluate = \x as \xinc using int(\x+1)]in {0,...,\cdec}{
            \draw [dotted,thick] (z\x) -- (z\xinc);
        }
        
        \foreach \x in {0,...,\w}{
            \foreach \y in {0,...,\h}{
                \node (\x+\y+50) at (\x -\a,\y-0.0075) {$\cdot$}; 
            }
        }
        \draw[opacity=0.5,fill = ocean] (1.center)--(2.center)--(3.center)--(4.center)--(1.center);
    
        \draw [thick](1.center) -- (2)
        (1.center) -- (4);
        \draw [thick,dashed] (2.center) -- (3.center)
        (3.center) -- (4.center);
    
        \draw[->,opacity = 0.25] (1) -- (\a+\c+1,0);
        \draw[->,opacity = 0.25] (1) -- (-\a-1,0);
        \draw[->,opacity = 0.25] (1) -- (0,\b+\c+1);
    \end{scope}
    \end{tikzpicture}
    \caption{A fundamental domain of $\jac{\theta_{\a,\b,\c}}$. The dashed upper left and upper right side indicate identification with the corresponding opposite side. The Abel-Jacobi embedding $S_{x_0}:V(G) \hookrightarrow \jac{G}$ is also visualized through the labeled vertices $x_i,y_i,z_i$ and dotted edges connecting them.}
    \label{ParFunDomain}
\end{figure}

\begin{figure}
    \pgfmathsetmacro\a{3}
    \pgfmathsetmacro\b{4}
    \pgfmathsetmacro\c{5}
    \pgfmathsetmacro\w{int(\a + \c))}
    \pgfmathsetmacro\h{int(\b + \c)}
    \pgfmathsetmacro\adec{int(\a-1)}
    \pgfmathsetmacro\bdec{int(\b-1)}
    \pgfmathsetmacro\cdec{int(\c-1)}
    \newif\ifdrawedges
    \drawedgestrue
    \centering
    \begin{tikzpicture}

    \begin{scope}[scale = 0.75 ]
    \node (O) at (0,0) {};
    \draw[->,opacity = 0.25] (O) -- (\c+1,0);
    \draw[->,opacity = 0.25] (O) -- (-\a-1,0);
    \draw[->,opacity = 0.25] (O) -- (0,\b+\c+1);

        \foreach \x in {0,...,\w}{
            \foreach \y in {0,...,\h}{
                \node (\x+\y+50) at (\x -\a,\y-0.0075) {$\cdot$}; 
            }
        }
    \begin{scope}[shift= {(-\a,0)}]
        
        \node [inner sep=0pt](hex1) at (0,0) [label = {215:$(-a,0)$}]{};
        \node [inner sep=0pt](hex2) at (0,\b) [label = {225:$(-a,b)$}]{};
        \node [inner sep=0pt](hex3) at (\c,\b+\c) [label = {135:$(c-a,b+c)$}]{};
        \node [inner sep=0pt](hex4) at (\a+\c,\b+\c) [label = {45:$(c,b+c)$}]{};
        \node [inner sep=0pt](hex5) at (\a+\c,\c) [label = {0:$(c,c)$}]{};
        \node [inner sep=0pt](hex6) at (\a,0) [label = {320:$(0,0)$}]{};

        \draw[opacity=0.5,fill = ocean] (hex1.center)--(hex2.center)--(hex3.center)--(hex4.center)--(hex5.center)--(hex6.center)--(hex1.center);

        \foreach \x in {0,...,\a} {
            \ifnum\x=\a{
                \node (x\x) at (\x,\b) {$x_{\x}$};
            }
            \else{
                \ifnum\x=0{
                    \node (x\x) at (\x,\b) {};
                }
                \else{
                    \node (x\x) at (\x,\b) {$x_{\x}$};
                }\fi  
               
           }\fi
        }

        \foreach \x in {0,...,\b} {
            \ifnum\x=\b{
                \node (y\x) at (\a,\x) {$ $};
            }
            \else{
                \ifnum\x=0{
                    \node (y\x) at (\a,\x) {};
                }
                \else{
                    \node (y\x) at (\a,\x) {$y_{\x}$};
                }\fi  
               
           }\fi
        }
        
        \foreach \x in {0,...,\c} {
            \ifnum\x=\c{
                \node (z\x) at (\a+\c -\x,\b + \c - \x) {$ $};
            }
            \else{
                \ifnum\x=0{
                    \node (z\x) at (\a+\c -\x,\b + \c - \x) {};
                }
                \else{
                    \node (z\x) at (\a+\c -\x,\b + \c - \x) {$z_{\x}$};
                }\fi  
               
           }\fi
        }
        \ifdrawedges{
            \foreach \x [evaluate = \x as \xinc using int(\x+1)]in {0,...,\adec}{
                \draw [dotted,thick] (x\x) -- (x\xinc);
            }
            \foreach \x [evaluate = \x as \xinc using int(\x+1)]in {0,...,\bdec}{
                \draw [dotted,thick] (y\x) -- (y\xinc);
            }
            \foreach \x [evaluate = \x as \xinc using int(\x+1)]in {0,...,\cdec}{
                \draw [dotted,thick] (z\x) -- (z\xinc);
            }
        }\fi
        \draw [thick](hex1.center) -- (hex6.center)
        (hex6.center) -- (hex5.center)
        (hex5.center) -- (hex4.center);
        \draw [thick,dashed] (hex1.center) -- (hex2.center)
        (hex2.center) -- (hex3.center)
        (hex3.center) -- (hex4.center);
    \end{scope}
    \end{scope}
    \end{tikzpicture}
    \caption{A depiction of an alternative fundamental domain of $\jac{\theta_{\a,\b,\c}}$. As above the dashed sides indication identification with the corresponding opposite side and the Abel-Jacobi embedding $S_{x_{0}}$ is visualized inside the diagram.}
    \label{fig:hexFunDom}
\end{figure}
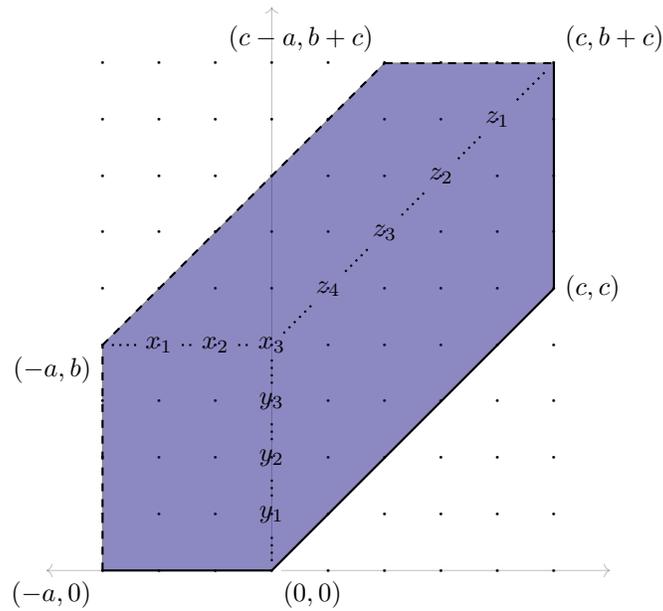

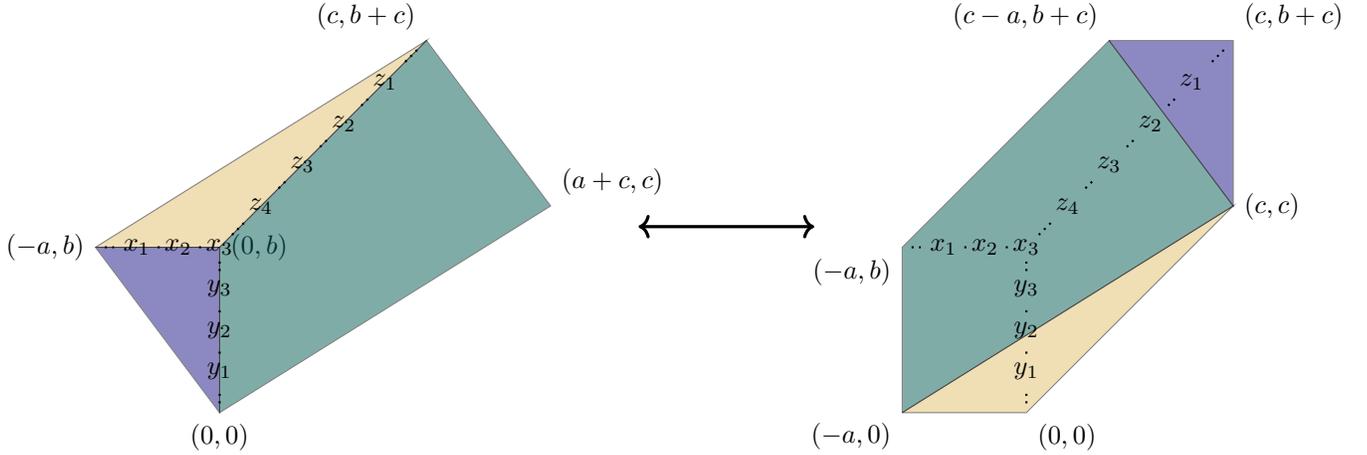
\begin{figure}
    \centering
    \pgfmathsetmacro\a{3}
    \pgfmathsetmacro\b{4}
    \pgfmathsetmacro\c{5}
    \pgfmathsetmacro\w{int(2*\a + \c))}
    \pgfmathsetmacro\h{int(\b + \c)}
    \pgfmathsetmacro\adec{int(\a-1)}
    \pgfmathsetmacro\bdec{int(\b-1)}
    \pgfmathsetmacro\cdec{int(\c-1)}
    \newif\ifdrawedges
    \drawedgestrue
    \pgfmathsetmacro\spacing{8.5}
    \begin{tikzpicture}
    \begin{scope}[scale = 0.55]
    
        \node [inner sep=0pt](1) at (0,0) [label = {270:$(0,0)$}]{};
        \node [inner sep=0pt] (2) at (-\a,\b) [label = {180:$(-a,b)$}]{};
        \node [inner sep=0pt](3) at (\c,\b+\c) [label = {135:$(c,b+c)$}]{};
        \node [inner sep=0pt](4) at (\a+\c,\c) [label = {45:$(a+c,c)$}]{};
        \node [inner sep=0pt](hex2) at (0,\b) [label = {0:$(0,b)$}]{};
    
        \draw[opacity=0.5,fill = shallows] (1.center)--(hex2.center)--(3.center)--(4.center)--(1.center);
        \draw[opacity=0.5,fill = dust] (2.center)--(3.center)--(hex2.center) --(2.center);
        \draw[opacity=0.5,fill = ocean] (2.center)--(hex2.center)--(1.center) --(2.center);

        \draw[very thick,<->] (\w-\a+.25*\spacing,\h/2) -- (\w -\a+.75*\spacing,\h/2);

        \foreach \x in {0,...,\a} {
            \ifnum\x=\a{
                \node (x\x) at (-\a +\x,\b) {$x_{\x}$};
            }
            \else{
                \ifnum\x=0{
                    \node (x\x) at (-\a +\x,\b) {};
                }
                \else{
                    \node (x\x) at (-\a +\x,\b) {$x_{\x}$};
                }\fi  
        	   
           }\fi
        }

        \foreach \x in {0,...,\b} {
            \ifnum\x=\b{
                \node (y\x) at (0,\x) {$ $};
            }
            \else{
                \ifnum\x=0{
                    \node (y\x) at (0,\x) {};
                }
                \else{
                    \node (y\x) at (0,\x) {$y_{\x}$};
                }\fi  
        	   
           }\fi
        }
        
        \foreach \x in {0,...,\c} {
            \ifnum\x=\c{
                \node (z\x) at (\c -\x,\b + \c - \x) {$ $};
            }
            \else{
                \ifnum\x=0{
                    \node (z\x) at (\c -\x,\b + \c - \x) {};
                }
                \else{
                    \node (z\x) at (\c -\x,\b + \c - \x) {$z_{\x}$};
                }\fi  
        	   
           }\fi
        }
        \ifdrawedges{
            \foreach \x [evaluate = \x as \xinc using int(\x+1)]in {0,...,\adec}{
                \draw [dotted,thick] (x\x) -- (x\xinc);
            }
            \foreach \x [evaluate = \x as \xinc using int(\x+1)]in {0,...,\bdec}{
                \draw [dotted,thick] (y\x) -- (y\xinc);
            }
            \foreach \x [evaluate = \x as \xinc using int(\x+1)]in {0,...,\cdec}{
                \draw [dotted,thick] (z\x) -- (z\xinc);
            }
        }\fi

        \begin{scope}[shift = {(\w-\a+\spacing,0)}]
            \node [inner sep=0pt](hex1) at (0,0) [label = {215:$(-a,0)$}]{};
            \node [inner sep=0pt](hex2) at (0,\b) [label = {225:$(-a,b)$}]{};
            \node [inner sep=0pt](hex3) at (\c,\b+\c) [label = {135:$(c-a,b+c)$}]{};
            \node [inner sep=0pt](hex4) at (\a+\c,\b+\c) [label = {45:$(c,b+c)$}]{};
            \node [inner sep=0pt](hex5) at (\a+\c,\c) [label = {0:$(c,c)$}]{};
            \node [inner sep=0pt](hex6) at (\a,0) [label = {320:$(0,0)$}]{};

            \draw[opacity=0.5,fill = shallows] (hex1.center)--(hex2.center)--(hex3.center)--(hex5.center)--(hex1.center);
            \draw[opacity=0.5,fill = dust] (hex1.center)--(hex5.center)--(hex6.center) --(hex1.center);
            \draw[opacity=0.5,fill = ocean] (hex3.center)--(hex4.center)--(hex5.center) --(hex3.center);

            \foreach \x in {0,...,\a} {
                \ifnum\x=\a{
                    \node (x\x) at (\x,\b) {$x_{\x}$};
                }
                \else{
                    \ifnum\x=0{
                        \node (x\x) at (\x,\b) {};
                    }
                    \else{
                        \node (x\x) at (\x,\b) {$x_{\x}$};
                    }\fi  
            	   
               }\fi
            }

            \foreach \x in {0,...,\b} {
                \ifnum\x=\b{
                    \node (y\x) at (\a,\x) {$ $};
                }
                \else{
                    \ifnum\x=0{
                        \node (y\x) at (\a,\x) {};
                    }
                    \else{
                        \node (y\x) at (\a,\x) {$y_{\x}$};
                    }\fi  
            	   
               }\fi
            }
            
            \foreach \x in {0,...,\c} {
                \ifnum\x=\c{
                    \node (z\x) at (\a+\c -\x,\b + \c - \x) {$ $};
                }
                \else{
                    \ifnum\x=0{
                        \node (z\x) at (\a+\c -\x,\b + \c - \x) {};
                    }
                    \else{
                        \node (z\x) at (\a+\c -\x,\b + \c - \x) {$z_{\x}$};
                    }\fi  
            	   
               }\fi
            }
            \ifdrawedges{
                \foreach \x [evaluate = \x as \xinc using int(\x+1)]in {0,...,\adec}{
                    \draw [dotted,thick] (x\x) -- (x\xinc);
                }
                \foreach \x [evaluate = \x as \xinc using int(\x+1)]in {0,...,\bdec}{
                    \draw [dotted,thick] (y\x) -- (y\xinc);
                }
                \foreach \x [evaluate = \x as \xinc using int(\x+1)]in {0,...,\cdec}{
                    \draw [dotted,thick] (z\x) -- (z\xinc);
                }
            }\fi
        \end{scope}
    \end{scope}
    
    \end{tikzpicture}
    \caption{The transformation between the parallelogram and hexagon fundamental domains for $\jac{\theta_{\a,\b,\c}}$ along with the Abel-Jacobi embedding $S_{x_0}$. Note that this transformation has two steps, the relocation of the triangles along the vectors $(-a,b)$ and $(a+c,c)$ respectively, and the translation of the whole diagram along $(a,0)$. As above opposite sides of each diagram are identified.}
    \label{fig:ParToHex}
\end{figure}

\begin{definition}[Abel-Jacobi Map]
For a graph $G$ with a fixed base point $v_0$ we define the \textit{Abel-Jacobi map} $S_{v_0}:V(G)\to \jac{G}$ by $v \mapsto [v-v_0]$.
\end{definition}

\subsection{Rank Computations on Banana Graphs}

In this section we develop some results which expedite rank computations on banana graphs. We first recall the notion of a rank determining set, introduced by Luo in \cite{RDS}.

\begin{definition}
    Given a set $A \subseteq V(G)$ we define $R_A(D)$ to be $-1$ if $|D| = \emptyset$ and $R_A(D) \geq r$ if $|D-E| \neq \emptyset$ for every effective divisor of degree $r$ supported on $A$. A set $A$ is a \textit{rank determining set} if $r_A(D) = r(D)$ for all divisors $D$.
\end{definition}

\begin{lemma}\label{lem:g12}
For any banana graph $G= B_{n_0,\dots,n_g}$, the divisor $v_{0,0} + v_{0,n_0}$ has rank $1$.
\end{lemma}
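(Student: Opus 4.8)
The plan is to prove the two inequalities $r(D) \ge 1$ and $r(D) \le 1$ separately, where $D = v_{0,0} + v_{0,n_0}$.

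For the lower bound, I would first record the key linear equivalence
$$D \sim v_{\alpha,i} + \overline{v_{\alpha,i}} \qquad \text{for every strand } \alpha \text{ and every } 0 \le i \le n_\alpha.$$
This follows directly from the relation \eqref{eq:multDiff} established inside the proof of Proposition \ref{prop-JacBanana}. Applying that relation with exponents $i$ and $n_\alpha - i$ and adding the two equivalences gives
$$(v_{\alpha,i} - v_{0,0}) + (v_{\alpha,n_\alpha - i} - v_{0,0}) \sim n_\alpha(v_{\alpha,1} - v_{0,0}) \sim v_{0,n_0} - v_{0,0},$$
which rearranges to $v_{\alpha,i} + v_{\alpha,n_\alpha-i} \sim v_{0,0} + v_{0,n_0}$, i.e. $D \sim v_{\alpha,i} + \overline{v_{\alpha,i}}$. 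Since every vertex of $G$ has the form $v_{\alpha,i}$ for some $\alpha$ and some $0 \le i \le n_\alpha$, this shows that for each vertex $w = v_{\alpha,i}$ the divisor $D - w$ is linearly equivalent to the effective divisor $\overline{v_{\alpha,i}} = v_{\alpha,n_\alpha - i} \ge 0$. As the effective divisors of degree $1$ are precisely the single vertices, this is exactly the statement that $r(D) \ge 1$.

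For the upper bound I would exhibit a single effective divisor of degree $2$ that rules out rank $2$. Taking $E = 2v_{0,0}$, we have $D - E \sim v_{0,n_0} - v_{0,0}$, a divisor of degree $0$. Since $v_{0,0} \ne v_{0,n_0}$ and a banana graph of genus $g \ge 1$ is bridgeless, part 1) of Lemma \ref{lem-bridgelessFacts} gives $v_{0,n_0} \not\sim v_{0,0}$, so $D - E$ is not equivalent to any effective divisor; thus $|D - E| = \emptyset$ and $r(D) < 2$. Combining the two bounds yields $r(D) = 1$.

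The only real content is the lower bound, and there the essential work has already been carried out in Proposition \ref{prop-JacBanana}: the equivalence $D \sim v_{\alpha,i}+\overline{v_{\alpha,i}}$ is just the observation that chips slide linearly along each strand, so the two endpoints $v_{0,0}$ and $v_{0,n_0}$ can be pushed symmetrically to any antipodal pair on a strand. I expect no genuine obstacle. The one point to keep an eye on is the degenerate case in which some $n_\alpha = 1$, so that a strand has no interior vertices; this is handled automatically, since neither the equivalence above nor the enumeration of vertices as $v_{\alpha,i}$ requires any nondegeneracy hypothesis, and bridgelessness already holds once $g \ge 1$.
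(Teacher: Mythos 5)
Your proof is correct and takes essentially the same route as the paper: the heart of both arguments is the family of equivalences $v_{0,0}+v_{0,n_0} \sim v_{\alpha,i}+v_{\alpha,n_\alpha-i}$, which the paper derives by chip-firing the interval $\{v_{\alpha,k}: i < k < j\}$ directly and you recover from Equation \eqref{eq:multDiff}, whose proof is that same chip-firing computation. The only difference is that you make the upper bound $r(D)\leq 1$ explicit via $E = 2v_{0,0}$ and Lemma \ref{lem-bridgelessFacts}, a step the paper leaves implicit; your version of it is correct.
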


\begin{proof}
For all $\alpha$ and indices $i,j$ with $0 \leq i < j \leq n_\alpha$, chip-firing the set $\{v_{\alpha,k}:\ i <  k < j \}$  shows that 
$$v_{\alpha,i+1} + v_{\alpha,j-1} \sim v_{\alpha,i} + v_{\alpha,j}.$$
It follows by induction that, for all $0 \leq i \leq n_\alpha$, we have
$$v_{0,0} + v_{0,n_0} = v_{\alpha,0} + v_{\alpha,n_\alpha} \sim v_{\alpha,i} + v_{\alpha,n_\alpha-i}.$$
Hence every vertex $v_{\alpha,i}$ is included in some divisor linearly equivalent to $v_{0,0} + v_{0,n_0}$.
\end{proof}

\begin{lemma}\label{lem-BananaRDS}
    For any banana graph $G= B_{n_0,\dots,n_g}$ the set $\set{v_{0,0},v_{0,n_0}}$ is a rank determining set.
\end{lemma}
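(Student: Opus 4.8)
The plan is to use Luo's theory of rank-determining sets in its ``one chip at a time'' form. The first step is to record the elementary fact that $r(D) = 1 + \min_{q \in V(G)} r(D-q)$ whenever $r(D) \geq 0$; this follows because $r(D) \geq r$ is equivalent to $r(D-q) \geq r-1$ for every vertex $q$. Feeding this into the definition of $r_A$ shows that $A = \{v_{0,0}, v_{0,n_0}\}$ is rank-determining as soon as one can peel chips off $A$ while dropping the rank at each step. Concretely, it suffices to prove the following local statement: \textbf{for every effective divisor $D$ with $r(D) \geq 0$, at least one of $v_{0,0}, v_{0,n_0}$ satisfies $r(D-q) = r(D)-1$.} Granting this, one peels chips from $A$ one at a time to build an effective divisor supported on $A$, of degree $r(D)+1$, that kills the rank; this forces $r_A(D) \leq r(D)$, and the reverse inequality is automatic.

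To establish the local statement I would argue by contradiction. If neither hub drops the rank of $D$, choose an effective rank-killing divisor $F$ (degree $r(D)+1$ with $r(D-F)=-1$); we may assume $F$ avoids both hubs, or we are done. Subtracting off all but one chip of $F$ produces an effective divisor $D_1 = D - (F - w)$ with $r(D_1) = 0$ for which both hubs are \emph{reachable}, meaning each lies in the support of some equivalent effective divisor (this uses exactly the two failed-drop hypotheses). Thus everything reduces to the clean claim: \textbf{if both $v_{0,0}$ and $v_{0,n_0}$ are reachable from an effective divisor $D_1$, then $r(D_1) \geq 1$}; equivalently, a rank-$0$ divisor reaches at most one hub.

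The easy half of the clean claim uses Lemma \ref{lem:g12}. If some single effective representative $A \sim D_1$ contains both hubs, then $A - v_{0,0} - v_{0,n_0} \geq 0$, and since $v_{\alpha,i} + v_{\alpha, n_\alpha - i} \sim v_{0,0}+v_{0,n_0}$ we get $D_1 - v_{\alpha,i} \sim (A - v_{0,0} - v_{0,n_0}) + v_{\alpha, n_\alpha - i} \geq 0$ for every interior vertex $v_{\alpha,i}$; combined with reachability of the hubs themselves, every vertex becomes reachable and $r(D_1) \geq 1$. The remaining task, which I expect to be the \textbf{main obstacle}, is to upgrade ``both hubs reachable'' to ``some representative contains both hubs simultaneously.'' Here I would pass to the $v_{0,0}$-reduced representative $P$ of $D_1$ and use Dhar's burning algorithm to pin down its shape on a banana graph: each strand carries at most one interior chip (two interior chips, or one chip of multiplicity $\geq 2$, would leave a vertex unburnt), and the coefficient at $v_{0,n_0}$ is bounded by the number of chipless strands. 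Reachability of $v_{0,0}$ forces $P(v_{0,0}) \geq 1$; if in addition $P(v_{0,n_0}) \geq 1$ we have our common representative immediately. The crux is therefore the case $P(v_{0,n_0}) = 0$: I would show that reachability of the far hub can only occur by ``overflowing'' enough of the chips at $v_{0,0}$ down a chipless strand to deposit a chip at $v_{0,n_0}$ while leaving one behind at $v_{0,0}$, which is precisely a representative containing both hubs.

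The argument is ultimately powered by the hyperelliptic structure: by Lemma \ref{lem:g12}, $H := v_{0,0}+v_{0,n_0}$ is a base-point-free $g^1_2$ and the two hubs play symmetric roles under the strand-reflecting involution. I would also check the genus-$1$ boundary case separately, where $G$ is a cycle and the ``hubs'' are ordinary bivalent vertices, since the multivalent-vertex bookkeeping in the Dhar analysis degenerates there.
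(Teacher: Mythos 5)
Your proposal is correct in outline and, after all your reductions, lands on exactly the same core claim as the paper: a divisor class of rank $0$ on a banana graph cannot reach both hubs. The difference is in the scaffolding. The paper invokes Luo's criterion (\cite[Proposition 3.1]{RDS}) to reduce the rank-determining property to the single implication ``$r_W(D) \geq 1$ implies $r(D) \geq 1$,'' which \emph{is} your clean claim, since $r_W(D) \geq 1$ says precisely that both $v_{0,0}$ and $v_{0,n_0}$ are reachable. You instead re-derive this reduction by hand: the formula $r(D) = 1 + \min_q r(D-q)$, the peeling argument, and the passage from ``neither hub drops the rank'' to the rank-$0$ divisor $D_1 = D - (F-w)$ with both hubs reachable are all valid (indeed $r(D_1 - v_{0,0}) \geq r(D - v_{0,0}) - \deg(F-w) = 0$, and symmetrically for the other hub, and if a rank-killing $F$ contained a hub $q$ then $r(D-q) \leq r(D-F) + \deg F - 1 = r(D)-1$). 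This makes the proof self-contained at the cost of bookkeeping the citation buys for free. Your ``easy half,'' using $v_{\alpha,i} + v_{\alpha,n_\alpha - i} \sim v_{0,0} + v_{0,n_0}$ from \Cref{lem:g12} to make every vertex reachable, likewise matches the first case of the paper's proof.

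The one soft spot is the crux case, and there your ``overflow'' framing is backwards. If the $v_{0,0}$-reduced representative $P$ of $D_1$ has $P(v_{0,n_0}) = 0$, the correct conclusion --- and the paper's --- is that the far hub is not reachable \emph{at all}: Dhar's burning algorithm run from $v_{0,n_0}$ shows $P - v_{0,n_0}$ is $v_{0,n_0}$-reduced and ineffective, so $|D_1 - v_{0,n_0}| = \emptyset$, contradicting reachability outright; there is no overflow scenario to analyze. The burning count is also where the only quantitative hypothesis enters: the fire reaching $v_{0,0}$ along chipless strands consumes it exactly because the number of such strands, at least $g+1-\bigl(\deg D_1 - P(v_{0,0})\bigr)$, exceeds $P(v_{0,0})$, which is equivalent to $\deg D_1 \leq g$. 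You never invoke this degree bound, but it is available in your setup for free, since $r(D_1) = 0$ forces $\deg D_1 \leq g$ by Riemann--Roch (the paper makes the corresponding reduction $d \leq g$ explicit at the outset, and without it the burning argument fails --- as it must, since high-degree divisors do reach both hubs). With that one computation written out and the degree bound stated, your argument closes.
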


\begin{proof}
    Let $W:=\set{v_{0,0},v_{0,n_0}}$. By \cite[Proposition 3.1]{RDS} the lemma is equivalent to the claim that $r_W(D) \geq 1$ implies $r(D)\geq 1$ for all $D \in \div{G}$. Fix a divisor $D$ and suppose $r_W(D) \geq 1$. If $d:=\deg D >g$ then this result is trivial by Riemann--Roch so assume $d\leq g$. The fact that $r_W(D) \geq 1$ implies that $d\geq 2$. Moreover there exists some effective divisor $E$ such that $D \sim E + v_{0,0}$. If $|E-v_{0,n_0}|$ is non-empty then $r(D)\geq r(v_{0,0} + v_{0,n_0}) \geq 1$ (by Lemma \ref{lem:g12}) so suppose this is not the case. Up to reordering the strands, this implies that we can write
    \begin{align*}
        E \sim av_{0,0} + \sum_{\alpha = 0}^{d-2-a}v_{\alpha,a_\alpha},
    \end{align*}
    where $a \in \set{0,\dots,d-1}$ and $0<a_\alpha<n_\alpha$. This in turn implies that $D \sim (a+1)v_{0,0} + \sum_{\alpha = 0}^{d-2-a}v_{\alpha,a_\alpha}$. However because $d\leq g$, we see by Dhar's burning algorithm that $(a+1)v_{0,0} + \sum_{\alpha = 0}^{d-2-a}v_{\alpha,a_\alpha} - v_{0,n_0}$ is $v_{0,n_0}$-reduced and ineffective, contradicting $r_W(D) \geq 1$. Thus $r(D) \geq 1$ and $W$ is a rank determining set.
\end{proof}

\begin{remark}
    The upshot of \Cref{lem-BananaRDS} is that the divisors with the highest rank relative to their degree are those with the most chips evenly concentrated at the high valence vertices (in some representative of the divisor class). In particular if $u = v_{0,0}$ and $v = v_{0,n_0}$ the sequence
    \begin{align*}
        0,u,u+v,2u+v,2u+2v,3u+2v,3u+3v,\dots,(g-1)u+(g-2)v,(g-1)u+(g-1)v
    \end{align*}
    is a sequence of divisors of degree $0,1,\dots, 2g-2$ each of which has maximal rank. Another similar sequence can be obtained by transposing $u,v$ above.
\end{remark}

As the following lemma notes, $v_{0,0}$-reduced divisors on banana graphs have a particularly nice form.

\begin{lemma}
    If $D \in \div{B_{n_0,\dots,n_g}}$ is $v_{0,0}$ reduced then $D = av_{0,0} + bv_{0,n_0} + E$ where $E$ is an effective divisor with at most one chip on each strand, $0\leq b \leq g - \deg E$. As with all reduced divisors, $r(D)\geq 0$ if and only if $a\geq 0$.
\end{lemma}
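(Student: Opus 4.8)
The plan is to read off the structure of $v_{0,0}$-reduced divisors directly from Dhar's burning algorithm, run with the fire ignited at $v_{0,0}$. Recall that $D$ is $v_{0,0}$-reduced precisely when $D(w) \geq 0$ for every $w \neq v_{0,0}$ and the fire started at $v_{0,0}$ eventually burns the entire graph. Effectivity away from $v_{0,0}$ immediately gives $b := D(v_{0,n_0}) \geq 0$, and writing $D = a v_{0,0} + b v_{0,n_0} + E$ with $a := D(v_{0,0})$ leaves an effective divisor $E$ supported on the strand interiors; it remains to analyze $E$ and to pin down the bound on $b$.

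First I would show that $E$ carries at most one chip on each strand. Every interior vertex $v_{\alpha,i}$ with $0 < i < n_\alpha$ has valence $2$, so it can never accumulate more than two burnt edges; in particular a vertex bearing at least two chips never burns, forcing $D(v_{\alpha,i}) \leq 1$. To rule out two distinct occupied vertices on one strand, suppose $v_{\alpha,i}$ and $v_{\alpha,j}$ with $i < j$ each carry a chip. The fire entering strand $\alpha$ from $v_{0,0}$ stalls upon reaching $v_{\alpha,i}$ (one burnt edge, one chip), and the fire entering from the $v_{0,n_0}$ side, if it ever arrives, stalls at $v_{\alpha,j}$; either way the interior vertices strictly between $v_{\alpha,i}$ and $v_{\alpha,j}$ never receive a burnt edge, so the segment $v_{\alpha,i}, \dots, v_{\alpha,j}$ never burns, contradicting reducedness. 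Hence each strand contributes at most one chip to $E$, so $E$ is effective with at most one chip per strand and $\deg E$ is the number of ``occupied'' strands.

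Next I would extract the bound $b \leq g - \deg E$ by examining when $v_{0,n_0}$ burns. Let $c$ be the number of clear strands (those with no interior chip), so that $c + \deg E = g+1$. After $v_{0,0}$ ignites, each clear strand burns through completely and delivers one burnt edge to $v_{0,n_0}$, while each occupied strand stalls at its chip; thus $v_{0,n_0}$, of valence $g+1$, initially sees exactly $c$ burnt edges. Since $v_{0,n_0}$ burns if and only if $c > b$, and this is in turn exactly what is needed for the fire to re-enter the occupied strands and finish, reducedness holds if and only if $b < c = (g+1) - \deg E$, that is $0 \leq b \leq g - \deg E$, as claimed. The one bookkeeping point to treat with care is the two-directional nature of the burning: once $v_{0,n_0}$ ignites one must confirm that the fire re-entering each occupied strand supplies the second burnt edge at the chip vertex, so that the whole graph really does burn, which is where the valence-$2$ observation gets used a second time.

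Finally, the rank statement is the standard dichotomy for reduced divisors: a $v$-reduced divisor is linearly equivalent to an effective divisor if and only if its coefficient at $v$ is nonnegative, since the $v$-reduced representative is the unique one in its class and every coordinate away from $v$ is already nonnegative. Applying this with $v = v_{0,0}$ and coefficient $a$ gives $r(D) \geq 0$ if and only if $a \geq 0$. I expect the main obstacle to be the careful synchronization of the two fire fronts in the burning argument, particularly the degenerate strands with $n_\alpha = 1$, where the strand is the single edge joining the two multivalent vertices and should be counted as clear.
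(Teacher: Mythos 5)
Your proof is correct, and it takes exactly the approach the paper intends: the paper states this lemma without proof, treating it as an immediate consequence of Dhar's burning algorithm, and your analysis (valence-$2$ interior vertices stall with one chip, two chips on a strand leave an unburnt segment, $v_{0,n_0}$ burns if and only if its chip count $b$ is strictly less than the number $c = g+1-\deg E$ of clear strands, after which the occupied strands burn from both ends) is precisely the reasoning the paper uses verbatim in the injectivity step of the proof of \Cref{prop-JacBanana}. Your care with the two fire fronts, the length-$1$ strands, and the standard dichotomy for reduced divisors fills in all the details the paper elides.
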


 Whenever we write $D = av_{0,0} + bv_{0,n_0} + E$ we mean that $D$ is $v_{0,0}$-reduced. We use these representatives to compute ranks of any effective divisor on a banana graph in the following corollary.
\begin{corollary}\label{cor-BanaRankComp}
    If $D = av_{0,0} + bv_{0,n_0} + E$ is an effective $v_{0,0}$-reduced divisor then
    \begin{align*}
        r(D) = \min{a,b} + \max{0,\max{a,b}-(g-\deg E)}.
    \end{align*}
    If we assume without loss of generality that $a \geq b$ this collapses to the easier to read $r(av_{0,0} + bv_{0,n_0} + E) = b + \max{0,a-(g-\deg E)}.$
\end{corollary}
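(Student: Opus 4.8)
The plan is to reduce the computation to an effectivity question via the rank-determining set $W=\{v_{0,0},v_{0,n_0}\}$ of Lemma~\ref{lem-BananaRDS}, and then settle that question with one application of Riemann--Roch (for the lower bound) and one application of Dhar's burning algorithm (for the upper bound). Since reversing every strand interchanges the two high-valence vertices while preserving $\deg E$ and all ranks, I may assume $a\ge b$. Write $m:=g-\deg E$; the preceding lemma guarantees $0\le b\le m$, and exactly $(g+1)-\deg E=m+1$ of the $g+1$ strands carry no chip of $E$. Because $\deg D-g=a+b-m$, the claimed value is $r(D)=b+\max{0,a-m}=\max{b,\,a+b-m}$, and I will prove the two inequalities separately, using throughout that $r(D)=r_W(D)$.

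For the lower bound I would combine two cheap estimates. First, Riemann--Roch gives $r(D)\ge \deg D-g=a+b-m$. Second, $r(D)\ge b$: an effective divisor of degree $b$ supported on $W$ is $iv_{0,0}+jv_{0,n_0}$ with $i+j=b$ and $i,j\ge0$, and since $i\le b\le a$ and $j\le b$, the difference $D-iv_{0,0}-jv_{0,n_0}=(a-i)v_{0,0}+(b-j)v_{0,n_0}+E$ has nonnegative coefficients and is therefore effective. As $W$ is rank-determining, these two facts yield $r(D)\ge\max{b,\,a+b-m}$.

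The crux is the matching upper bound. Because $W$ is rank-determining, it suffices to exhibit a \emph{single} effective divisor of degree $\max{b,a+b-m}+1$ supported on $W$ that cannot be subtracted from $D$. I would take $\max{0,a-m}\,v_{0,0}+(b+1)\,v_{0,n_0}$; subtracting it from $D$ leaves $N:=\min{a,m}\,v_{0,0}-v_{0,n_0}+E$, and the whole argument comes down to showing $N$ is ineffective. For this I run Dhar's burning algorithm from $q=v_{0,n_0}$. The divisor $N$ is effective away from $q$, so this is legitimate. The fire starting at $q$ propagates up each of the $m+1$ chip-free strands and arrives at $v_{0,0}$ along $m+1$ edges, while it is blocked at the lone interior chip on each of the remaining $\deg E$ strands. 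Since $\min{a,m}\le m<m+1$, the vertex $v_{0,0}$ catches fire; the blaze then re-enters the blocked strands from the $v_{0,0}$ side and burns their interior chips (two burning edges against one chip), so every vertex burns. Hence $N$ is $v_{0,n_0}$-reduced, and its coefficient $-1$ at $v_{0,n_0}$ certifies $|N|=\emptyset$. This gives $r(D)\le\max{b,a+b-m}$, completing the proof.

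I expect the upper bound to be the only real obstacle: the lower bound is forced by Riemann--Roch and a triviality, but the upper bound requires both guessing the correct witness $N=\min{a,m}\,v_{0,0}-v_{0,n_0}+E$ and recognizing that the inequality $\min{a,m}\le m$ --- equivalently, that there are strictly more chip-free strands ($m+1$) than chips stacked at $v_{0,0}$ --- is exactly what forces Dhar's fire to consume $v_{0,0}$, and hence exactly what makes $N$ reduced and ineffective. Everything else is bookkeeping.
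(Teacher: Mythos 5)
Your proof is correct, and it diverges from the paper's precisely in the half where the paper works hardest. Both arguments share the same skeleton: the strand-reversal automorphism to assume $a \geq b$, the reduction to the rank-determining set $W=\{v_{0,0},v_{0,n_0}\}$ of Lemma~\ref{lem-BananaRDS}, and, for the upper bound, a single witness whose subtraction leaves a $v_{0,n_0}$-reduced ineffective divisor certified by Dhar's algorithm --- your remainder $N=\min\{a,m\}\,v_{0,0}-v_{0,n_0}+E$ coincides with the paper's $(g-e)v_{0,0}+E-v_{0,n_0}$ when $a>g-e$, and your coefficient $\max\{0,a-m\}$ neatly unifies the two cases ($a\leq g-e$ and $a>g-e$) that the paper treats separately. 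The genuine difference is the lower bound $r(D)\geq a+b-(g-e)$ in the regime $a>g-e$: the paper proves it by hand, showing for every effective $L=xv_{0,0}+yv_{0,n_0}$ of that degree that $D-L$ is linearly equivalent to an effective divisor, via a two-case analysis and an induction on $b-y$ that repeatedly trades chips at $v_{0,0}$ for chips at $v_{0,n_0}$ and on the strands. You observe instead that in this regime the claimed rank equals $\deg D - g$, so Baker--Norine Riemann--Roch (namely $r(D)\geq \deg D-g$, since $r(K_G-D)\geq -1$) supplies the bound in one line, with the trivial subtraction argument giving $r(D)\geq b$ in the complementary regime; the identity $b+\max\{0,a-m\}=\max\{b,\,a+b-m\}$ then stitches the two estimates together. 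Your route buys real brevity --- it deletes the longest and most delicate portion of the paper's proof --- at the modest cost of being non-constructive where the paper's induction produces explicit effective representatives of $D-L$, information the paper never uses afterward. Amusingly, your closing prediction that the upper bound is ``the only real obstacle'' is inverted relative to the published proof, where the upper-bound witness is the one-line step and the lower bound consumes the effort; your Riemann--Roch observation is exactly what dissolves that asymmetry. One cosmetic remark, shared with the paper: when invoking the automorphism you should note that the swapped divisor $bv_{0,0}+av_{0,n_0}+E'$ is again $v_{0,0}$-reduced because the swap is performed only when $a<b\leq g-\deg E$; since your argument in fact uses only the structure of $E$ and the inequality $\min\{a,m\}\leq m$, nothing hinges on it.
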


\begin{proof}
    Note that there is an automorphism of any banana graph sending $v_{\alpha,i} \to \overline{v_{\alpha,i}}$, which induces a rank preserving automorphism of the Picard group. Thus either $a \geq b$ or we can replace $D$ with it's automorphic image $bv_{0,0} +av_{0,n_0} + E'$ which is still $v_{0,0}$ reduced and has at least as many chips at $v_{0,0}$ as at $ v_{0,n_0}$. Thus we assume without loss of generality that $a \geq b$. Let $e:= \deg E$. If $a \leq g-e$ then this is an easy consequence of \Cref{lem-BananaRDS}. Now suppose $a>g-e$ and consider some effective divisor $L$ of degree $a+b-(g-e)$. By \Cref{lem-BananaRDS} we may assume that $L = xv_{0,0}+y_{0,n_0}$ where $x,y\geq 0$. Now we claim that $D-L = (a-x)v_{0,0}+(b-y)v_{0,n_0} + E$ is linearly equivalent to an effective divisor. To see this we consider two cases.

    \textbf{Case I:} $a-x\leq g-e$

    This implies that $b-y \geq 0$ and we have that
    \begin{align*}
        a-x = y-b+(g-e) \geq y-(g-e)+(g-e) = y \geq 0.
    \end{align*}
    Thus $D-L$ is effective.

    \textbf{Case II:} $a-x > g-e$.

    Then we have that $-(a-(g-e)) \leq b-y < 0$. We proceed by induction on the value of $b-y$. If $b-y = -1$ then $D-L$ is clearly effective and we are done. Now suppose that $b-y<-1$. Then note that
    \begin{align*}
        (a-x)v_{0,0}+(b-y)v_{0,n_0} + E \sim ((a-(g+1-e))-x)v_{0,0} + ((b+n)-y)v_{0,n_0} + E'
    \end{align*}
    where $1\leq n \leq g+1$ and $E'$ is an effective divisor with at most one chip on each strand of degree $e':= e - n + (g+1-e) = g+1-n$. Let $a':= a-(g+1-e)$ and $b':= b+n$. Clearly $a'-x\geq 0$ so if $b' -y \geq 0$ then we're done. Otherwise note that
    \begin{align*}
        -(a'-(g-e')) = -(a-(g+1-e) -(n-1)) = -(a-(g-e)) + n \leq (b-y)+n = b'-y.
    \end{align*}
    Lastly,
    \begin{align*}
        (a'-x)-(g-e') = ( a-(g+1-e) - x -(n-1)) = a-(g-e)-x-n = y-b-n = y-b' >0
    \end{align*}
    Thus we are in the same situation as before, except $b'-y>b-y$ so by induction we done.

    This shows that $r(av_{0,0} + bv_{0,n_0} + E) \geq a+b-(g-e)$. To conclude note that
    \begin{align*}
        av_{0,0} + bv_{0,n_0} + E - [(a-(g-e))v_{0,0}+(b+1)v_{0,n_0}] = (g-e)v_{0,0} + E -v_{0,n_0}
    \end{align*}
    which is clearly ineffective, thus proving the corollary.
\end{proof}

The moral of \Cref{cor-BanaRankComp} is that the rank of a divisor on a banana graph depends only on the number of chips on the multivalent vertices and the number of chips on the strands, but not their positions on those strands. Using \Cref{cor-BanaRankComp} we can isolate some specific characteristics of the $\Delta$ function on various twice-marked banana graphs that we will later use to find values of some transmission permutations. These result can be generalized in a number of ways but we focus on only the cases we actually need to streamline the argument.
\begin{corollary}\label{cor-BananaDeltaComps}
    Given a twice-marked banana graph $(B_{n_0,\dots,n_g},u,v)$, we have the following statements about $\Delta$.
    \begin{enumerate}[label = \arabic*)]
        \item If $(u,v) = (v_{0,0},v_{0,n_0})$ and $a \geq 0$ then 
        \begin{align*}
            \Delta(a(v_{0,0}+v_{0,n_0})) = \delta(a\leq g).
        \end{align*}
        \item If $(u,v) = (v_{0,0},v_{0,n_0-1})$ with $0\leq b <a \leq g$ and $0<n<n_0-1$ then
        \begin{align*}
            \Delta(av_{0,n_0}) = \Delta(b(v_{0,0}+v_{0,n_0}) + v_{0,n}) = 1; \quad \Delta(av_{0,0}+bv_{0,n_0}+v) = \delta(a = g).
        \end{align*}
        \item If $(u,v) = (v_{0,1},v_{1,n_1-1})$ if $0\leq a\leq g-2$, $2\leq m \leq n_0-1,$ and $1 \leq n \leq n_1-2$ then
        \begin{align*}
            \Delta(a(v_{0,0}+v_{0,n_0})+v_{0,m}+v_{1,n}) = 1.
        \end{align*}
        If $0 \leq b <g$ and then
        \begin{align*}
            \Delta(bv_{0,n_0}+v_{0,n}) = 1; \quad \Delta((g-1)v_{0,0}+bv_{0,n_0}+v+v_{0,m}) = \Delta(b(v_{0,0}+v_{0,n_0})+v_{0,m}+v_{1,n})=\delta(b < g-1).
        \end{align*}
    \end{enumerate}
\end{corollary}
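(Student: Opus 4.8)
The plan is to expand each $\Delta(\cdot)$ into its four constituent ranks via Definition \ref{def-Delt}, put every divisor that appears into $v_{0,0}$-reduced form $av_{0,0}+bv_{0,n_0}+E$, and read off the rank from Corollary \ref{cor-BanaRankComp}. The only reduction tools needed are the strand-sliding equivalence $v_{\alpha,i}+v_{\alpha,j}\sim v_{\alpha,i-1}+v_{\alpha,j+1}$ (exactly the chip-firing move used to prove Lemma \ref{lem:g12}), its degenerate forms $v_{0,n_0-1}\sim v_{0,0}+v_{0,n_0}-v_{0,1}$ and $v_{1,n_1-1}\sim v_{0,0}+v_{0,n_0}-v_{1,1}$, and the symmetry automorphism $v_{\alpha,i}\mapsto\overline{v_{\alpha,i}}$ swapping the two multivalent vertices. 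After sliding, I would certify $v_{0,0}$-reducedness with Dhar's burning algorithm; the only quantity that ever matters is whether the coefficient at $v_{0,n_0}$ is strictly less than the number of strands carrying no interior chip, since that is precisely the condition for the fire to consume $v_{0,n_0}$. Once a reduced form is in hand, every indicator $\delta(\cdots)$ in the statement is produced by the term $\max\{0,\max(a,b)-(g-\deg E)\}$ in Corollary \ref{cor-BanaRankComp} crossing zero.

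Part (1) is immediate: with $u=v_{0,0}$, $v=v_{0,n_0}$ all four divisors already have the form $pv_{0,0}+qv_{0,n_0}$ with $E=0$, so by the swap symmetry and Corollary \ref{cor-BanaRankComp} each rank is $\min(p,q)+\max\{0,\max(p,q)-g\}$; substituting $(p,q)\in\{(a,a),(a-1,a),(a,a-1),(a-1,a-1)\}$ leaves $\Delta=1-[\max\{0,a-g\}-\max\{0,a-1-g\}]$, which equals $1$ for $a\le g$ and $0$ otherwise. Part (2) is one step harder: I would clear the marked vertex $v=v_{0,n_0-1}$ using $v_{0,n_0-1}\sim v_{0,0}+v_{0,n_0}-v_{0,1}$, then merge the two resulting strand-$0$ chips with a single slide $v_{0,1}+v_{0,n}\sim v_{0,0}+v_{0,n+1}$; the hypothesis $n<n_0-1$ is exactly what keeps the merged chip $v_{0,n+1}$ interior. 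The three claimed values then fall out of Corollary \ref{cor-BanaRankComp}, the $\delta(a=g)$ appearing only in $\Delta(av_{0,0}+bv_{0,n_0}+v)$ because there the coefficient $a$ at $v_{0,0}$ reaches the threshold $g-\deg E=g-1$ precisely when $a=g$.

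Part (3) is the crux and where I expect the real work. Now $u=v_{0,1}$ and $v=v_{1,n_1-1}$ sit in the interiors of two different strands, one adjacent to each multivalent vertex, so subtracting $u$ or $v$ requires a slide on strand $0$ ($v_{0,m}-v_{0,1}\sim v_{0,m-1}-v_{0,0}$) or on strand $1$ (via $v_{1,n_1-1}\sim v_{0,0}+v_{0,n_0}-v_{1,1}$ and $v_{1,n}+v_{0,n_0}\sim v_{1,n+1}+v_{1,n_1-1}$). The hypotheses $2\le m\le n_0-1$ and $1\le n\le n_1-2$ are calibrated exactly so that every slid chip lands in a strand interior and the occupied strands stay distinct, keeping $\deg E\in\{1,2\}$ under control. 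The genuine subtlety is that for these divisors the coefficient at $v_{0,n_0}$ equals the number of chip-free strands exactly at the boundary value ($a=g-1$ in the first and third identities, $b=g-1$ in the second); there Dhar's algorithm fails to burn $v_{0,n_0}$, so the slid divisor is \emph{not} $v_{0,0}$-reduced and Corollary \ref{cor-BanaRankComp} does not apply directly.

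At that boundary I would switch to Baker--Norine Riemann--Roch. Each boundary divisor has degree $\ge 2g-1$, or degree exactly $2g-2$; using $K_G=(g-1)(v_{0,0}+v_{0,n_0})$ together with the sliding relations, one checks that the relevant dual $K_G-D'$ either has negative degree or is inequivalent to an effective divisor, so $r(K_G-D')=-1$ and Riemann--Roch pins down the rank. Assembling the four ranks gives $\Delta=1$ away from the boundary and $\Delta=0$ at it, i.e.\ exactly $\delta(a\le g-2)$ and $\delta(b<g-1)$ as claimed. The main hazard I foresee is pure bookkeeping: an off-by-one slip in Riemann--Roch (it reads $r(D)-r(K_G-D)=\deg D-g+1$, \emph{not} $\deg D-g$) would flip the boundary value of $\Delta$ to a spurious negative number, so I would cross-check each boundary computation against the reduced-form value from the neighboring non-boundary case to confirm continuity of the pattern.
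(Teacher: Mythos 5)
Your proposal is correct and takes essentially the same approach as the paper, whose entire proof is the single line that the corollary ``is a straightforward consequence of Corollary \ref{cor-BanaRankComp}'': sliding chips along strands into $v_{0,0}$-reduced form and reading each of the four ranks off that corollary is exactly the elided computation. Your Riemann--Roch patch at the boundary cases of part (3) (where $b = g-1$ makes the naive form fail $v_{0,0}$-reducedness, and one checks $r(K_G - D') = -1$ by inequivalence of the degree-$2$ pieces) is a sound way to supply a detail the paper's one-line proof glosses over.
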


\begin{proof}
    This is a straightforward consequence of \Cref{cor-BanaRankComp}.
\end{proof}

\section{Submodularity on Banana Graphs}
\label{sec:submodularity}

As established in \cite{Pfl22}, and mentioned in Lemma \ref{lem:tauChars}, the existence of transmission permutations is governed by a convexity condition called \textit{submodularity}. A key question for this approach to Brill--Noether theory is: for which graphs are all divisors submodular? In this section we study the special case of banana graphs. The key results are that in genus 2, there exists a relatively straightforward condition on the marked points such that every divisor is submodular. For genus $3$ and above, we can construct a non-submodular divisor on any twice-marked banana graph with only a few exceptions.

\begin{definition}\label{def-Supp}
For a divisor $D$, the \textit{support complex} of $D$ is the set of vertices to which $D$ can transmit chips while remaining effective. Formally,
\begin{align*}
    \Supp(D) = \set{v \in V(G):r(D-v)\geq 0}.
\end{align*}
Note that if $|D| = \emptyset$ then $\Supp(D) = \emptyset$.
\end{definition}

For theta graphs, when submodularity fails, we can always find a rank $0$ example due to the Riemann--Roch theorem and the Clifford bound. The following lemma identifies such divisors.

\begin{lemma}\label{lemm-rank0supp}
Suppose we have a twice-marked graph $(G,u,v)$ with $r(D) = 0$. Then $\Delta(D)<0$ if and only if $v \in \Supp(D)\setminus \Supp(D-u)$ and $u \in \Supp(D) \setminus \Supp(D-v)$.
\end{lemma}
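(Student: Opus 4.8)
The plan is to translate both sides of the claimed equivalence into statements about the three ranks $r(D-u)$, $r(D-v)$, and $r(D-u-v)$, and then settle the equivalence by a short finite case analysis. The only external input I would use is the standard pair of inequalities for Baker--Norine rank: for any divisor $E$ and vertex $w$, removing a chip decreases the rank by at most one and never increases it, i.e. $r(E)-1 \leq r(E-w) \leq r(E)$. The upper bound is monotonicity of rank; the lower bound follows directly from the Baker--Norine characterization of rank, since any effective divisor of degree $r(E)-1$ together with $w$ is effective of degree $r(E)$, so $|E-(F'+w)| = |(E-w)-F'| \neq \emptyset$ for all such $F'$.

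First I would record the constraints forced by the hypothesis $r(D)=0$. Applying the inequalities above gives $r(D-u), r(D-v) \in \{-1,0\}$, and applying them once more gives $r(D-u-v) \in \{-1,0\}$ together with $r(D-u-v) \leq \min\{r(D-u), r(D-v)\}$. Writing $a = r(D-u)$, $b = r(D-v)$, $c = r(D-u-v)$, the definition of $\Delta$ collapses (using $r(D)=0$) to the clean form $\Delta(D) = c - a - b$.

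Next I would unwind the right-hand condition in terms of $a,b,c$. By the definition of $\Supp$, the statement ``$v \in \Supp(D)\setminus \Supp(D-u)$ and $u \in \Supp(D)\setminus\Supp(D-v)$'' says precisely that $r(D-v) \geq 0$, $r(D-u) \geq 0$, and $r(D-u-v) < 0$; that is, $a = b = 0$ and $c = -1$. A point worth flagging here is that the two non-membership clauses both refer to the same rank $r(D-u-v)$, so they coincide: the hypothesis is really a three-condition constraint, not four.

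Finally I would close the loop by enumerating the four possibilities for $(a,b) \in \{-1,0\}^2$, determining the allowed $c$ from $c \leq \min\{a,b\}$ and $c \in \{-1,0\}$, and reading off $\Delta(D) = c - a - b$ in each. One finds $\Delta(D)<0$ in exactly one case, namely $(a,b,c) = (0,0,-1)$ (where $\Delta(D) = -1$), which is precisely the right-hand condition just identified; the three remaining cases give $\Delta(D) \in \{0,1\}$. This verifies both directions of the equivalence simultaneously. I do not anticipate a genuine obstacle: the entire content lies in observing that $r(D)=0$ pins every relevant rank into $\{-1,0\}$, after which the claim is a mechanical four-line check, with the only care needed being the identification of the two $\setminus\Supp$ clauses with the single condition $r(D-u-v)<0$.
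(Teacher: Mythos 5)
Your proof is correct and follows essentially the same route as the paper's: both arguments reduce the statement to the three ranks $r(D-u)$, $r(D-v)$, $r(D-u-v)$, which the hypothesis $r(D)=0$ pins into $\{-1,0\}$, and then identify $\Delta(D)<0$ with the single configuration $(0,0,-1)$, which is exactly the support-complex condition. The paper's proof is just a terser version of your case analysis, so there is nothing further to reconcile.
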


\begin{proof}
By construction we know that $r(D-v) = r(D-u) = 0$, so $u,v \in \Supp(D)$. Further, because $r(D-u-v) = -1$, $v \not \in \Supp(D-u)$ and $u \not \in \Supp(D-v)$. The other direction also follows directly from the definitions.
\end{proof}

\subsection{The Genus 2 Case}

We give a complete classification of theta graphs on which all divisors submodular. The following Theorem gives a more precise version of part 1) of \Cref{thm:thetaSimple} from the introduction. 

\begin{theorem}\label{thm-NonSubmodGenus2}
Suppose $G = \theta_{n_0,n_1,n_2}$, with $(G,u,v)$ a twice-marked theta graph with $u\not \sim v$. Then the following are equivalent:
\begin{enumerate}[label = \arabic*)]
    \item There exists divisors $D$ with $\Delta(D)<0$;
    \item The marked points $u,v$ are on the same strand, so without loss of generality $u = v_{\alpha,i},v = v_{\alpha,j}$. Further the set
    $N_{(G,u,v)} := \set{v_{\alpha,k}:k \neq n_\alpha -i, k \neq j, j-i\leq k \leq j-i+n_\alpha}$ is non empty. In particular there is a bijection:
    \begin{align*}
        N_{(G,u,v)} &\to \set{[D]\in\pic{G}:\Delta(D)<0}\\
        v_{\alpha,k} & \mapsto [v_{\alpha,k}+v_{\alpha,i}].
    \end{align*}
\end{enumerate}
\end{theorem}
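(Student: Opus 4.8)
The strategy is to reduce to rank-$0$ divisors via Lemma \ref{lemm-rank0supp}, then translate the support-complex conditions into explicit position conditions using the rank computations from Corollary \ref{cor-BanaRankComp} and the structure of the Jacobian from the earlier section. The genus-$2$ hypothesis is essential: a divisor $D$ with $\Delta(D) < 0$ can always be reduced to a rank-$0$ example by twisting and using Clifford's bound, so Lemma \ref{lemm-rank0supp} gives the clean criterion that $\Delta(D) < 0$ iff $v \in \Supp(D) \setminus \Supp(D-u)$ and $u \in \Supp(D) \setminus \Supp(D-v)$.

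\emph{First} I would show the implication $(1) \Rightarrow (2)$, namely that $u$ and $v$ must lie on the same strand. Suppose $\Delta(D) < 0$ for some $D$; after twisting I may assume $r(D) = 0$ and apply Lemma \ref{lemm-rank0supp}. By Lemma \ref{lem-BananaRDS} the set $\{v_{0,0}, v_{0,n_0}\}$ is rank-determining, and Corollary \ref{cor-BanaRankComp} tells me that ranks on a theta graph depend only on the number of chips at the two multivalent vertices and the total degree on the strands, not on positions. The key point is that $v \in \Supp(D)$ but $v \notin \Supp(D - u)$ forces $D$, $D - u$, $D - v$, $D - u - v$ to have a very constrained shape. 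I expect that if $u$ and $v$ are on \emph{distinct} strands (or if one is a multivalent vertex), the symmetry afforded by the multiple strands always lets chips ``escape'' to show $v \in \Supp(D-u)$, contradicting the criterion; this is essentially a restatement of part 1 of Theorem \ref{thm:thetaSimple}. So the offending configurations force $u = v_{\alpha,i}$, $v = v_{\alpha,j}$ on a common strand.

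\emph{Next}, for the converse and the explicit bijection, I would take $u = v_{\alpha,i}$, $v = v_{\alpha,j}$ on strand $\alpha$ and directly analyze the candidate divisors $D = v_{\alpha,k} + v_{\alpha,i}$. Using Lemma \ref{lem:g12} (the divisor $v_{0,0} + v_{0,n_0}$ has rank $1$) together with the chip-firing relations $v_{\alpha,i} + v_{\alpha,j} \sim v_{\alpha,i'} + v_{\alpha,j'}$ whenever $i + j = i' + j'$, I can compute exactly when each of $r(D)$, $r(D-u)$, $r(D-v)$, $r(D-u-v)$ is $0$ versus $-1$. The conditions $k \neq n_\alpha - i$ and $k \neq j$ are precisely the conditions excluding the degenerate cases where $D - u$ or $D - v$ becomes the rank-$1$ hyperelliptic divisor $v_{\alpha,0} + v_{\alpha,n_\alpha}$ (forcing $v \in \Supp(D-u)$) or where $D - u - v$ becomes effective; the range $j - i \leq k \leq j - i + n_\alpha$ pins down which representatives actually yield distinct classes with $r(D) = 0$ and $\Delta(D) < 0$. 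I would verify injectivity of the map $v_{\alpha,k} \mapsto [v_{\alpha,k} + v_{\alpha,i}]$ using the $v_{0,0}$-reduced normal form from Proposition \ref{prop-JacBanana}, and surjectivity by showing any rank-$0$ divisor with $\Delta < 0$ reduces to this form via Lemma \ref{lemm-rank0supp}.

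\emph{The main obstacle} I anticipate is bookkeeping the boundary and identification cases cleanly: because $\overline{v_{\alpha,i}} = v_{\alpha, n_\alpha - i}$ and the dual coincides with $K_G - v_{\alpha,i}$ up to linear equivalence in genus $2$, several ostensibly different $k$ values collapse to the same class or to a degenerate (rank-raising) configuration, and getting the excluded values ($k = n_\alpha - i$ and $k = j$) and the index range exactly right — so that $N_{(G,u,v)}$ is in bijection with the non-submodular classes and not merely surjecting onto them — requires care with the torsion relations in the Jacobian. I would handle this by consistently passing to $v_{0,0}$-reduced representatives, where Corollary \ref{cor-BanaRankComp} makes every rank computation mechanical, and treating the endpoint cases $k \in \{j-i, j-i+n_\alpha\}$ separately to confirm they correspond to the same class under the strand identification.
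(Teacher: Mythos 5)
Your overall skeleton matches the paper's: reduce to rank-$0$ divisors of degree $2$ via Riemann--Roch and Clifford, apply Lemma \ref{lemm-rank0supp}, verify the converse by direct computation on $D = v_{\alpha,i}+v_{\alpha,k}$, and check the bijection using reduced representatives. But the heart of the forward direction --- the claim that $\Delta(D)<0$ forces $u$ and $v$ onto a common strand --- is asserted rather than proven: you write that you ``expect'' the extra strands let chips escape, and that this is ``essentially a restatement of part 1 of Theorem \ref{thm:thetaSimple}''; since that theorem part is deduced \emph{from} the result you are proving, the appeal is circular. The tools you name also do not suffice as stated: Corollary \ref{cor-BanaRankComp} computes ranks of \emph{effective} $v_{0,0}$-reduced divisors, whereas the divisors you must control (notably $D-v$ and $D-u-v$ when $v\notin\Supp(D-u)$) are of degree $1$ or $0$ and possibly ineffective, so ``rank depends only on chip counts'' does not directly apply to them.

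The missing mechanism is the paper's Lemma \ref{lem-SameStrand}. Since $\deg D = 2$ and $r(D-u)=0$, bridgelessness (Lemma \ref{lem-bridgelessFacts}) gives a unique vertex $w$ with $D-u \sim w$, whence $D-v \sim u+w-v$ is a rank-$0$ divisor of the special shape ``two vertices minus one vertex.'' Lemma \ref{lem-SameStrand} classifies exactly when such a divisor has rank $0$: either one of three degenerate coincidences holds, or $u$, $w$, $v$ all lie on one strand; the hypotheses $u\not\sim v$, $v\notin\Supp(D-u)$, and $r(D)=0$ rule out the three coincidences. Proving that lemma requires exhibiting $v_{\gamma,k}$-reduced representatives and checking ineffectivity via Dhar's burning algorithm --- a genuinely different computation from Corollary \ref{cor-BanaRankComp}, and the step your plan never supplies. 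Once this is in place, the rest of your outline (the support-complex computation pinning down the exclusions $k\neq n_\alpha-i$, $k\neq j$ and the window $j-i\leq k\leq j-i+n_\alpha$, and the mutually inverse constructions) goes through essentially as in the paper. One smaller caution: you should not ``twist'' to reach rank $0$; in genus $2$, $\Delta(D)<0$ already forces $\deg D = 2$ and $r(D)=0$ outright, and twisting would change the class $[D]$, which matters because the stated bijection counts divisor classes with $\Delta(D)<0$, not twist orbits.
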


\begin{remark}\label{rem-closePoints}
As a heuristic, \Cref{thm-NonSubmodGenus2} says that the only twice-marked theta graphs with non-submodular divisors are those for which the marked points are too ``close" together, where points on the same strand are considered close. In this sense the degenerate case $u\sim v$ is a special case of the failure of submodularity described in the theorem. 
\end{remark}

\begin{lemma}\label{lem-SameStrand}
    On a banana graph $B_{n_0,\dots,n_g}$ if $r(v_{\alpha,i}+v_{\beta,j}-v_{\gamma,k}) = 0$, then one of the following holds:
    \begin{enumerate}[label = \arabic*)]
        \item $v_{\alpha,i} = v_{\gamma,k}$ i.e. $(\alpha,i) = (\gamma,n)$;
        \item $v_{\beta,j} = v_{\gamma,k}$ i.e. $(\beta,j) = (\gamma,k)$;
        \item $\overline{v_{\alpha,i}} = v_{\beta,j}$ i.e. $(\alpha,i) = (\beta,n_\alpha-j)$;
        \item $v_{\alpha,i},v_{\beta,j},$ and $v_{\gamma,k}$ are all on the same strand, i.e., $\alpha = \beta = \gamma$.
    \end{enumerate}
\end{lemma}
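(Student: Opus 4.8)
The plan is to recast the rank hypothesis as a statement about linear equivalences of effective degree-$2$ divisors and then simply read off the four cases. Since $v_{\alpha,i}+v_{\beta,j}-v_{\gamma,k}$ has degree $1$, its rank is $0$ exactly when it is linearly equivalent to an effective divisor, i.e. to a single vertex $v_{\delta,\ell}$. Thus the hypothesis is equivalent to $v_{\gamma,k}\in\Supp(v_{\alpha,i}+v_{\beta,j})$ together with
$$v_{\alpha,i}+v_{\beta,j}\sim v_{\gamma,k}+v_{\delta,\ell}\qquad(\star)$$
for some vertex $v_{\delta,\ell}$. So it suffices to compute $\Supp(v_{\alpha,i}+v_{\beta,j})$ and check that every vertex in it falls into one of the four listed cases. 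A degree-$2$ divisor has rank at most $1$ (by the Clifford bound, or directly from \Cref{cor-BanaRankComp}), so I would split on whether $r(v_{\alpha,i}+v_{\beta,j})$ equals $1$ or $0$.

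When $r(v_{\alpha,i}+v_{\beta,j})=1$, I claim $\overline{v_{\alpha,i}}=v_{\beta,j}$, which is case 3. The key sublemma is that the effective degree-$2$ divisors of rank $1$ are \emph{exactly} the complementary pairs $v_{\alpha,m}+\overline{v_{\alpha,m}}$, all lying in the hyperelliptic class $[v_{0,0}+v_{0,n_0}]$. That each such pair has rank $1$ and lies in this class is \Cref{lem:g12}; to see there are no others I would use the explicit Jacobian of \Cref{prop-JacBanana}, where $[v_{0,0}+v_{0,n_0}]$ corresponds to $n_0 e_0$ with reduced representative $(n_0,0,\dots,0)$. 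Uniqueness of reduced representatives then forces any effective representative $v_{\gamma,k}+v_{\delta,\ell}$ to satisfy $\gamma=\delta$ and $k+\ell=n_\gamma$ (permitting multivalent endpoints, which may be assigned to any strand). Applying this to $v_{\alpha,i}+v_{\beta,j}$ itself gives $v_{\beta,j}=\overline{v_{\alpha,i}}$.

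When $r(v_{\alpha,i}+v_{\beta,j})=0$, I would determine $\Supp(v_{\alpha,i}+v_{\beta,j})$ by the same reduced-representative technique. If the two points share a strand — which includes the case where one of them is multivalent, since a multivalent vertex lies on every strand — then after relabelling we may take $\alpha=\beta$; the relations $v_{\alpha,i}+v_{\alpha,j}\sim v_{\alpha,i'}+v_{\alpha,j'}$ for $i+j=i'+j'$ (chip-firing intervals, as in the proof of \Cref{lem:g12}) together with uniqueness of reduced representatives show that $\Supp$ is contained in strand $\alpha$, so $v_{\gamma,k}$ lies on strand $\alpha$ and case 4 holds. Otherwise both points are interior to distinct strands, and Dhar's burning algorithm from $v_{0,0}$ shows $v_{\alpha,i}+v_{\beta,j}$ is already $v_{0,0}$-reduced: the $g-1\geq1$ empty strands carry fire to $v_{0,n_0}$, which then backfires and burns both chips. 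Uniqueness of reduced divisors then gives $\Supp(v_{\alpha,i}+v_{\beta,j})=\{v_{\alpha,i},v_{\beta,j}\}$, forcing $v_{\gamma,k}$ to equal one of these and yielding case 1 or 2.

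The main obstacle is the support computation in the rank-$0$ case, and in particular the bookkeeping around multivalent vertices: because $v_{0,0}$ and $v_{0,n_0}$ carry the several labels $v_{\alpha,0},v_{\alpha,n_\alpha}$ and lie on every strand, one must verify that whenever such a vertex enters the support it can be assigned to the strand needed to land in case 1, 2, or 4. Routing the entire argument through the unique $v_{0,0}$-reduced representatives (equivalently, the reduced coset representatives of \Cref{prop-JacBanana}) is what keeps this labelling bookkeeping under control and converts each claimed equality of divisor classes into a finite, checkable comparison of representatives.
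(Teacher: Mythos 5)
Your proposal is correct in substance but takes a genuinely different route from the paper. The paper argues by contrapositive directly on the degree-$1$ divisor $D = v_{\alpha,i}+v_{\beta,j}-v_{\gamma,k}$: assuming cases $1)$--$3)$ fail, it exhibits $D$ (after sliding the two chips along their common strand when $\alpha = \beta$, using that case $3)$ failing means $i+j \neq n_\alpha$) as a $v_{\gamma,k}$-reduced \emph{ineffective} divisor via Dhar's algorithm, so $r(D) = -1$ unless $\alpha = \beta = \gamma$ --- two short burning checks and no Jacobian computation. You instead classify effective degree-$2$ divisors outright, splitting on $r(v_{\alpha,i}+v_{\beta,j}) \in \{0,1\}$ and computing supports through the unique reduced coset representatives of \Cref{prop-JacBanana}. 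Your route is longer and requires the multivalent-label bookkeeping you flag (which does check out: a reduced tuple with two interior coordinates on distinct strands can never coincide with one containing a coordinate equal to $0$ or $n_0$, so each comparison of representatives resolves as you predict), but it buys more: en route you obtain \Cref{cor:suppUV} (which the paper instead \emph{derives from} this lemma) together with a complete description of the effective representatives of every degree-$2$ class, and the canonical representatives make each equivalence check finite and mechanical rather than a per-case burning argument.

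One detail to nail down in the rank-$1$ branch: the Jacobian argument you sketch shows only that the effective divisors \emph{inside} the class $[v_{0,0}+v_{0,n_0}]$ are the complementary pairs; your sublemma also asserts that no \emph{other} degree-$2$ class has rank $1$, and that half is what the application to $v_{\alpha,i}+v_{\beta,j}$ actually uses. It is a one-line supplement from \Cref{cor-BanaRankComp}, which you already cite: writing the $v_{0,0}$-reduced form as $av_{0,0}+bv_{0,n_0}+E$ with $a+b+\deg E = 2$ and $g \geq 2$, the rank formula yields $r = 1$ only for $(a,b,\deg E) = (1,1,0)$, i.e.\ the hyperelliptic class. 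Relatedly, note that both your argument and the paper's implicitly assume $g \geq 2$ (your Dhar step invokes $g-1 \geq 1$ empty strands), and the lemma as stated genuinely fails in genus $1$: on $B_{3,3}$ one has $2v_{0,1}-v_{1,1} \sim v_{0,3}$, so the hypothesis holds while none of $1)$--$4)$ does. This is harmless, since the lemma is only ever invoked for $g \geq 2$, but it is worth recording the hypothesis when you write the argument up.
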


\begin{proof}
    Let $D = v_{\alpha,i}+v_{\beta,j}-v_{\gamma,k}$. Suppose $r(D) = 0$ and $1)$, $2)$ and $3)$ do not hold. If $\alpha \neq \beta$ then $D$ is $v_{\gamma,n}$ reduced and ineffective thus $\alpha = \beta$. If $\alpha \neq \gamma$ then since $3)$ does not hold then
    \begin{align*}
        D \sim \begin{cases}
            v_{\alpha,0}+v_{\alpha,i+j} - v_{\gamma,k} & i + j < n_\alpha\\
            v_{\alpha,i + j - n_\alpha}+v_{\alpha,n_\alpha}-v_{\gamma,k} & i +j > n_\alpha
        \end{cases}.
    \end{align*}
    Since these are both $v_{\gamma,k}$ reduced and ineffective we conclude that $\alpha = \beta = \gamma$.
\end{proof}

\begin{proof}[Proof of \cref{thm-NonSubmodGenus2}]
This theorem is largely a consequence of \Cref{lemm-rank0supp}, and \Cref{lem-SameStrand}.

Suppose $D$ is a divisor on $(G,u,v)$ with $\Delta(D)<0$. Riemann--Roch implies that $r(D) = 0,\deg D  = 2$. Note that $D-u$ is a rank $0$ degree $1$ divisor, so by \Cref{lem-bridgelessFacts} there is a unique vertex $w$ such that $D-u\sim w$. Thus $D-v$ is a rank $0$ divisor equivalent to $w+u-v$. The facts that $u \not \sim v$, that $v \not \in \Supp(D-u)$, and that $r(D) = 0$, rules out possibilities $1),2),$ and $3)$ respectively of \Cref{lem-SameStrand} so we can conclude that $u,v,w$ are on the same strand. Without loss of generality, let $w = v_{\alpha,k}, u = v_{\alpha,i}$. Note that because $r(D) = 0$, we get that $i+k \neq n_\alpha$. Applying \Cref{lemm-rank0supp}, we see that
\begin{align*}
    v \in \Supp(D)\setminus \Supp(D-u) = \set{v_{\alpha,j}:j \neq k,i+k-n_\alpha \leq j\leq i+k}.
\end{align*}
So we can write $v = v_{\alpha,j}$ with the conditions on $j$ as above. Reformulating this condition in terms of $k$, we see that $k \neq n_\alpha-i,j$ and $j-i\leq k \leq j-i+n_\alpha$, as desired.

For the other direction, if $v_{\alpha,k} \in N_{(G,u,v)}$, let $D = v_{\alpha,i}+v_{\alpha,k}$. Then we have that $r(D) = \delta(i+k = n_\alpha) = 0$ and that
\begin{align*}
    \Supp(D)\setminus\Supp(D-u) = \set{v_{\alpha,\ell}:\ell \neq k,i+k-n_\alpha\leq \ell \leq i+k} \ni v_{\alpha,j} = v;\\
    \Supp(D)\setminus\Supp(D-v) = \set{v_{\alpha,\ell}:\ell \neq i+k-j,i+k-n_\alpha\leq \ell \leq i+k} \ni v_{\alpha,i} = u.
\end{align*}
By \Cref{lemm-rank0supp} this implies $\Delta(D)<0$. These constructions are evidently inverses to one another.
\end{proof}

\begin{corollary}\label{cor-allSubmodSameStrand}
    Given a theta graph $(G,v_{\alpha,i},v_{\beta,j})$ every divisor is submodular if and only if
    \begin{enumerate}[label = \arabic*)]
        \item The marked vertices are not on the same strand i.e. $\alpha \neq \beta$, or
        \item The marked vertices are on the same strand, so $\alpha = \beta$ and assuming without loss of generality that $i<j$
        \begin{enumerate}[label = \alph*)]
            \item $i = 0, j \in \set{n_\alpha-1,n_\alpha}$ or,
            \item $i = 1, j = n_\alpha$.
        \end{enumerate}
    \end{enumerate}
\end{corollary}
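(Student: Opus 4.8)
The plan is to derive this corollary directly from \Cref{thm-NonSubmodGenus2}, which already pins down exactly which twice-marked theta graphs admit a divisor with $\Delta(D) < 0$. Since ``every divisor is submodular'' means precisely that no twist of any divisor has negative $\Delta$, \Cref{thm-NonSubmodGenus2} tells us that non-submodular divisors exist if and only if $u$ and $v$ lie on a common strand \emph{and} the associated set $N_{(G,u,v)}$ is nonempty. Thus the corollary reduces to two tasks: first, observing that when $u,v$ do not share a strand (the case recorded as $\alpha \ne \beta$) submodularity is automatic; and second, computing precisely when $N_{(G,u,v)} = \emptyset$ in the common-strand case. Throughout I assume $u \not\sim v$ as in \Cref{thm-NonSubmodGenus2}, and I use that $\Delta$, hence submodularity, is symmetric in $u$ and $v$, so that in the common-strand case I may relabel to assume $u = v_{\alpha,i}$ and $v = v_{\alpha,j}$ with $i < j$.

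Next I would unwind the definition $N_{(G,u,v)} = \{v_{\alpha,k} : k \ne n_\alpha - i,\ k \ne j,\ j - i \le k \le j - i + n_\alpha\}$ into a cleaner range. Because $v_{\alpha,k}$ is a genuine vertex only for $0 \le k \le n_\alpha$, and because $i < j \le n_\alpha$ forces $j - i \ge 1$, the window $[j-i,\ j-i+n_\alpha]$ intersected with $[0,n_\alpha]$ collapses to $[j-i,\ n_\alpha]$. A short check shows both forbidden indices $k = j$ and $k = n_\alpha - i$ always lie inside this range. Hence
\[
N_{(G,u,v)} = \{\, v_{\alpha,k} : j - i \le k \le n_\alpha,\ k \ne j,\ k \ne n_\alpha - i \,\},
\]
an interval of $n_\alpha - j + i + 1$ integers with two (possibly coinciding) punctures.

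The core of the argument is then the cardinality count, split according to whether the two punctures coincide. If $i + j \ne n_\alpha$ the excluded indices are distinct, so $|N_{(G,u,v)}| = n_\alpha - j + i - 1$, which vanishes exactly when $j = n_\alpha + i - 1$; combined with $i < j \le n_\alpha$ this forces $(i,j) = (0,\,n_\alpha - 1)$ or $(i,j) = (1,\,n_\alpha)$. If instead $i + j = n_\alpha$, the two punctures collapse to the single index $k = j = n_\alpha - i$, giving $|N_{(G,u,v)}| = 2i$, which vanishes exactly when $i = 0$, i.e. $(i,j) = (0,\,n_\alpha)$. Assembling these, $N_{(G,u,v)} = \emptyset$ precisely for $(i,j) \in \{(0,n_\alpha-1),\,(0,n_\alpha),\,(1,n_\alpha)\}$, which is exactly the cases (a) and (b) listed under item 2). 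Together with the common-strand dichotomy this yields the stated equivalence.

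I expect the only delicate point to be the bookkeeping in this emptiness count: one must correctly truncate the defining window to valid vertex indices and, above all, treat the degenerate coincidence $j = n_\alpha - i$ (equivalently $\overline{v_{\alpha,i}} = v_{\alpha,j}$, so that $u+v$ has rank $1$ rather than $0$) separately, since there the two forbidden indices merge and the count changes. Everything else is a routine translation of \Cref{thm-NonSubmodGenus2}.
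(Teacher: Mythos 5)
Your proposal is correct and matches the paper's own (implicit) argument: the paper states this corollary without a separate proof, treating it as an immediate consequence of \Cref{thm-NonSubmodGenus2}, and the intended content is exactly your computation of when $N_{(G,u,v)}$ is empty. Your truncation of the defining window to $j-i \le k \le n_\alpha$, the observation that both forbidden indices always lie in this interval, and the case split on whether the punctures $k = j$ and $k = n_\alpha - i$ coincide (using symmetry of $\Delta$ in $u,v$ to justify the relabeling $i<j$) are all handled correctly and yield precisely the exceptional pairs $(0,n_\alpha-1)$, $(0,n_\alpha)$, $(1,n_\alpha)$.
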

So other than a few exceptional cases, requiring every divisor to be submodular forces the marked points to be on different strands.

The characterization of non-submodularity on theta graphs obtained above can also be applied to the semidegenerate case of the chain of two loops. 

\begin{proposition}\label{rem-degenerateTheta}
    On a chain of two loops, very divisor is submodular if and only if the marked points are on distinct loops or if $n_\alpha = 2$ and $\set{u,v} = \set{v_{\alpha,0},v_{\alpha,1}}$.
\end{proposition}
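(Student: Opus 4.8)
The plan is to follow the proof of Theorem \ref{thm-NonSubmodGenus2} verbatim through its general (non-banana-specific) steps, and to supply the degenerate analog of Lemma \ref{lem-SameStrand} by a direct computation in the Jacobian. Realize the chain of two loops as the degenerate banana graph $\theta_{a,0,b}$: two cycles of lengths $a$ and $b$ sharing a single central vertex $p := v_{0,0} = v_{2,0}$ (with $v_{0,n_0} = v_{2,n_2} = p$ as well). This graph is bridgeless of genus $2$, with canonical class $K_G \sim 2p$ and $\jac{G} \cong \Z/a \times \Z/b$ (the degenerate-strand computation following Proposition \ref{prop-JacBanana}), under which the Abel--Jacobi image $S_p(V(G))$ is the union of the two coordinate axes. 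The reduction step of Theorem \ref{thm-NonSubmodGenus2} uses only Riemann--Roch, the Clifford bound, and the bijection of Lemma \ref{lem-bridgelessFacts} between degree-$1$ rank-$0$ classes and vertices---all valid here---so it applies unchanged: a non-submodular divisor exists if and only if some class $D$ has $\Delta(D)<0$, and any such $D$ satisfies $r(D)=r(D-u)=r(D-v)=0$, $r(D-u-v)=-1$, and $\deg D = 2$. Hence $D \sim u + w$ for a unique vertex $w$, and by Lemma \ref{lemm-rank0supp}, $\Delta(u+w)<0$ if and only if $v \in \Supp(u+w)\setminus\Supp(w)$ and $u \in \Supp(u+w)\setminus\Supp(u+w-v)$.

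The crux is the following replacement for Lemma \ref{lem-SameStrand}, which I would establish by a one-line Jacobian calculation: for a rank-$0$ degree-$2$ class $D$ with $[D-p]=(s,t)$, a vertex $x$ lies in $\Supp(D)$ precisely when $[D-p]-[x-p]$ lies on a coordinate axis (equivalently, $D-x$ is equivalent to a vertex). Because $[x-p]$ already lies on an axis, two regimes emerge. If $u$ and $w$ lie on a common loop, say loop $A$, then $t=0$ and rank-$0$ forces $s\not\equiv 0 \pmod a$; since $(s-k,0)$ lies on an axis for \emph{every} $k$, the support $\Supp(u+w)$ is the entire loop $A$. If instead $u$ and $w$ lie on the interiors of different loops, then $s,t\not\equiv 0$ and $\Supp(u+w)=\{u,w\}$. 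Finally, a single vertex has support equal to itself.

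Now I would split on the location of the marked points. If $u$ and $v$ lie on the interiors of distinct loops, then for $v\in\Supp(u+w)$ we are forced into the split regime with $w$ on $v$'s loop and $w=v$; but then $v\in\Supp(w)$, so the criterion fails and every divisor is submodular. If instead $u=v_{\alpha,i_0}$ and $v=v_{\alpha,j_0}$ lie on a common loop $\alpha$ of length $n_\alpha=a$ (allowing one of them to be $p$), then $\Supp(u+w)$ always contains $v$, and writing $w=v_{\alpha,m}$ both remaining conditions unwind---using $\Supp(w)=\{w\}$ and $u+w-v\sim v_{\alpha,\,i_0+m-j_0}$---to the single requirement $m\not\equiv j_0\pmod a$, while rank-$0$ demands $m\not\equiv -i_0\pmod a$. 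A bad $w$ therefore exists exactly when the two residues $j_0$ and $-i_0$ do not exhaust $\Z/a$, i.e.\ whenever $a\geq 3$, or $a=2$ with $j_0\equiv -i_0$. Since a loop has length at least $2$, this shows that on a common loop every divisor is submodular if and only if $a=2$; and when $a=2$ the loop has only the two vertices $v_{\alpha,0}=p$ and $v_{\alpha,1}$, giving the stated exceptional configuration $\{u,v\}=\{v_{\alpha,0},v_{\alpha,1}\}$.

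Combining the two cases yields the proposition. The main obstacle---and the only place the degeneracy $n_1=0$ genuinely bites---is the support computation of the second paragraph: on a genuine theta graph a same-strand rank-$0$ divisor occupies only a short arc, whereas here it spreads across the whole loop, and it is precisely this ``whole loop'' behavior, colliding with the length-$2$ case, that isolates the single exceptional configuration. The remaining care is bookkeeping around the central vertex $p$, which lies on both loops and so must be assigned to whichever loop carries the other marked point.
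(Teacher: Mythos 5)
Your proof is correct, and it takes a genuinely different route from the paper's. The paper's own proof is three lines: the distinct-loops case is dispatched by citing the gluing results of \cite{Pfl22} (its Lemma 2.5 and Theorem 3.11, which show that the vertex gluing of two twice-marked cycles has all divisors submodular); the same-loop case with $n_\alpha \geq 3$ is handled by asserting that $\Delta(u+w) = -1$ for ``any other vertex $w$'' on that loop; and the length-$2$ case by essentially the same short support argument you give. You instead rerun the proof of \Cref{thm-NonSubmodGenus2} on the degenerate theta $\theta_{a,0,b}$, substituting for \Cref{lem-SameStrand} an explicit support computation in $\jac{G} \cong \Z/a\Z \times \Z/b\Z$: same-loop rank-$0$ degree-$2$ classes have support equal to the \emph{entire} loop, while split classes have support $\set{u,w}$. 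This buys three things: independence from \cite{Pfl22}; uniform bookkeeping for the shared vertex $p$, including the configurations with a marked point at $p$ that make up the exceptional case; and a precise classification of the bad twists ($w = v_{\alpha,m}$ works iff $m \not\equiv j_0, -i_0 \pmod{a}$), which in fact sharpens the paper's same-loop step --- read literally, ``any other vertex'' fails for $w = v$, where $\Delta(u+v) = r(u+v) \geq 0$, and for $w$ with $u+w \sim K_G$, where $\Delta = 1 + r(K_G-u-v) \geq 0$, and it is exactly your count of these two excluded residues that makes the dichotomy at $a = 2$ fall out. Two small patches you should make: in the common-loop case you restrict to $w = v_{\alpha,m}$ without observing that a bad $w$ cannot lie in the interior of the other loop (this is needed for the ``if'' direction at $a = 2$, and is immediate from your own regime computation, since then $\Supp(u+w)$ is either $\set{u,w}$ or, when $u = p$, the whole of $w$'s loop, neither of which contains $v$); and the Jacobian coordinates of a degree-$2$ class should be those of $[D-2p]$ rather than $[D-p]$, so that the class actually lands in $\jac{G}$.
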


\begin{proof}
    The case of marked points on distinct loops is precisely what arises from gluing two twice-marked cycles as in \cite{Pfl22} which was shown to be submodular in Lemma 2.5 and Theorem 3.11 of that paper. If the marked points are on the same loop of length at least $3$ then if $w$ is any other vertex on that loop, then $\Delta(u+w) = -1$. On the other hand if the $\alpha$-th loop is length $2$ and without loss of generality $(u,v) = (v_{\alpha,1},v_{\alpha,0})$ then any putative non-submodular divisor must satisfy $D \sim u+w$. Thus by \Cref{lemm-rank0supp}, $v \in \Supp(D)\setminus \Supp(D-u) = \set{u}$, a contradiction. 
\end{proof}

Before moving on, we mention another immediate consequence of Lemma \ref{lem-SameStrand} that will be useful later.

\begin{corollary}\label{cor:suppUV}
If $u,v$ are vertices on $B_{n_0, \cdots, n_g}$ that do not lie on the same strand, then $\Supp(u+v) = \{u,v\}$.
\end{corollary}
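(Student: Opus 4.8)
The plan is to derive the corollary as a direct application of \Cref{lem-SameStrand}, which already classifies exactly when a twist $v_{\alpha,i}+v_{\beta,j}-v_{\gamma,k}$ has rank $0$. First I would dispose of the easy inclusion: since $r\big((u+v)-u\big)=r(v)=0$ and likewise $r\big((u+v)-v\big)=0$, both $u$ and $v$ transmit, so $\{u,v\}\subseteq\Supp(u+v)$. The whole content is the reverse inclusion.

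Next I would translate the hypothesis into the notation of the lemma. Because every multivalent vertex lies on \emph{all} strands, the assumption that $u$ and $v$ share no common strand forces both to be interior (bivalent) vertices, say $u=v_{\alpha,i}$ and $v=v_{\beta,j}$ with $0<i<n_\alpha$, $0<j<n_\beta$, and $\alpha\neq\beta$. I would state this explicitly at the outset, since it is precisely the interiority and the inequality $\alpha\neq\beta$ that will annihilate the two dangerous conclusions of the lemma. Now take an arbitrary $w=v_{\gamma,k}\in\Supp(u+v)$, so that $r(u+v-w)\geq 0$. Before invoking the lemma I must upgrade this to $r(u+v-w)=0$: the divisor $u+v-w$ has degree $1$, and on a bridgeless graph a degree-$1$ divisor cannot have rank $\geq 1$, for that would give $u+v-w-x\sim$ effective, hence $\sim 0$, for every vertex $x$, forcing all vertices to be linearly equivalent and contradicting part 1 of \Cref{lem-bridgelessFacts}.

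With $r(u+v-w)=0$ in hand, I would feed the triple $(u,v,w)=(v_{\alpha,i},v_{\beta,j},v_{\gamma,k})$ into \Cref{lem-SameStrand} and walk through its four alternatives. Conclusion 4 (all three vertices on one strand) is incompatible with $\alpha\neq\beta$. Conclusion 3 asserts $\overline{u}=v$; but $\overline{u}=v_{\alpha,n_\alpha-i}$ lies on strand $\alpha$ and, by interiority of $u$, is itself interior, so it cannot equal the strand-$\beta$ interior vertex $v$. Conclusions 1 and 2 say exactly $w=u$ or $w=v$. Hence every $w\in\Supp(u+v)$ equals $u$ or $v$, giving $\Supp(u+v)\subseteq\{u,v\}$ and completing the proof.

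I expect no serious obstacle here, as the real work is already packaged in \Cref{lem-SameStrand}; the only two points requiring care are the passage from $r\geq 0$ to $r=0$ and the bookkeeping observation that ``not on the same strand'' rules out multivalent marked vertices and thereby forces the interiority that kills cases 3 and 4. Once those are pinned down the four-way case split is immediate.
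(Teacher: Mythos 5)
Your proof is correct and matches the paper's approach exactly: the paper states this corollary as an immediate consequence of Lemma~\ref{lem-SameStrand} without writing out details, and your argument supplies precisely those omitted details (the easy inclusion, the upgrade from $r\geq 0$ to $r=0$ via part 1 of Lemma~\ref{lem-bridgelessFacts}, and the four-way case elimination using that ``not on a common strand'' forces both marked vertices to be interior). One caveat, inherited from the paper rather than introduced by you: Lemma~\ref{lem-SameStrand} implicitly requires $g\geq 2$, since its Dhar-reducedness step uses valence at least $3$ at the multivalent vertices, and on a genus-$1$ cycle both the lemma and the corollary fail (e.g.\ antipodal interior vertices $u,v$ on a $4$-cycle satisfy $u+v\sim 2v_{0,0}$, so $\Supp(u+v)$ is all of $V(G)$) --- harmless here, as the paper only invokes the corollary for theta graphs.
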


\subsection{Banana Graphs of Genus $\geq 3$}
Unfortunately, the picture is not as nice for higher genus banana graphs. As the following result shows the presence of additional strands allows non-submodular divisors to appear for most banana graphs of higher genus. We prove the following, which implies the simplified \Cref{thm:bananaSimple} from the introduction.

\begin{theorem}\label{thm-NSMForBanana}
Let $(G,u,v) = (B_{n_0,\dots,n_g},v_{\alpha,i},v_{\beta,j})$ be a banana graph of genus $g \geq 3$. Then we have the following classification for non-submodular divisors on $G$. Either
\begin{enumerate}[label = \arabic*)]
    \item One of the following possibilities holds :
    \begin{enumerate}[label = \alph*)]
        \item $\alpha = \beta$ and up to swapping $u,v$, $(i,j) \in \set{(0,n_\alpha),(1,n_\alpha),(0,n_\alpha-1)}$;
        \item $\alpha \neq \beta$ and up to reversing the order of the vertices along each strand $(i,j) = (1,n_\beta-1)$, or
    \end{enumerate}
    \item There exist divisors $D \in \div{G}$ such that $\Delta(D)<0$.
\end{enumerate}
\end{theorem}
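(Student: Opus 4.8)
The plan is to prove the substantive implication behind the dichotomy: assuming the pair $(i,j)$ is \emph{not} one of the configurations in part 1, I would exhibit a single divisor $D$ with $\Delta(D)<0$, giving part 2. Because \Cref{lemm-rank0supp} characterizes non-submodularity at a rank-$0$ divisor purely through supports, I would search only among divisors $D$ with $r(D)=0$ satisfying $v\in\Supp(D)\setminus\Supp(D-u)$ and $u\in\Supp(D)\setminus\Supp(D-v)$; that is, both $u$ and $v$ lie in $\Supp(D)$, but each is ``trapped out'' of the support once the other is removed. The proof then splits according to whether $u$ and $v$ lie on the same strand.

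For the same-strand case ($\alpha=\beta$), I would argue that the genus-$2$ analysis of \Cref{thm-NonSubmodGenus2} applies essentially verbatim. The candidate divisors are $D=v_{\alpha,i}+v_{\alpha,k}$ for a second vertex on the same strand. Sliding shows $D\sim v_{0,0}+v_{\alpha,i+k}$ (or its reflection), a divisor of reduced type $(1,0,1)$, so \Cref{cor-BanaRankComp} gives $r(D)=0$ exactly when $i+k\neq n_\alpha$, independently of $g$. The support conditions are likewise internal to strand $\alpha$: whether $v_{\alpha,j}\in\Supp(D)$ depends only on whether $v_{\alpha,i}+v_{\alpha,k}-v_{\alpha,j}$ slides to an effective divisor, and $\Supp(D-u)$ is a single vertex by \Cref{lem-bridgelessFacts}. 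Hence the extra strands are invisible to the relevant computation, the set $N_{(G,u,v)}$ of \Cref{thm-NonSubmodGenus2} governs matters unchanged, and it is nonempty precisely outside the exceptional pairs $(0,n_\alpha)$, $(1,n_\alpha)$, $(0,n_\alpha-1)$, which is exactly case 1(a). (The sub-case in which $u$ or $v$ is multivalent folds in here, since a multivalent vertex lies on every strand.)

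The different-strand case ($\alpha\neq\beta$) is where the hypothesis $g\geq 3$ does real work, and where a genuinely higher-degree divisor is forced: since $\Supp(u+v)=\{u,v\}$ is rigid by \Cref{cor:suppUV}, the two-vertex divisors used on a single strand cannot trap $u$ and $v$ at once, so I would pass to the degree-$3$ divisor $D=v_{0,0}+v_{\alpha,i+1}+v_{\beta,j+1}$. Its reduced type is $(1,0,2)$, so \Cref{cor-BanaRankComp} gives $r(D)=\max(0,3-g)=0$ precisely because $g\geq 3$; in genus $2$ this divisor has rank $1$ and the argument collapses, matching the fact that different-strand thetas are submodular. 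Sliding the chip at $v_{0,0}$ along strand $\alpha$ or $\beta$ yields $D-u\sim v_{\alpha,1}+v_{\beta,j+1}$ and $D-v\sim v_{\alpha,i+1}+v_{\beta,1}$, whose supports are the rigid pairs $\{v_{\alpha,1},v_{\beta,j+1}\}$ and $\{v_{\alpha,i+1},v_{\beta,1}\}$ by \Cref{cor:suppUV}; these exclude $v$ and $u$ respectively, while the same slides exhibit representatives of $D$ containing $u$ and $v$. Finally $D-u-v\sim v_{\alpha,1}+v_{\beta,1}-v_{0,0}$ is ineffective, since $v_{0,0}\notin\Supp(v_{\alpha,1}+v_{\beta,1})=\{v_{\alpha,1},v_{\beta,1}\}$, so $\Delta(D)=0-0-0+(-1)=-1$. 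Applying the reversal automorphism $v_{\gamma,m}\mapsto v_{\gamma,n_\gamma-m}$ gives the companion divisor $D=v_{0,n_0}+v_{\alpha,i-1}+v_{\beta,j-1}$, and I would check that the first construction covers all pairs with $i\leq n_\alpha-2$ and $j\leq n_\beta-2$ while the second covers all pairs with $i\geq 2$ and $j\geq 2$. Their union omits an interior pair only when $(i=1\text{ or }i=n_\alpha-1)$ and $(j=1\text{ or }j=n_\beta-1)$ hold simultaneously, which for strands of length at least $3$ leaves exactly $(1,n_\beta-1)$ and its reversal $(n_\alpha-1,1)$ — precisely case 1(b), the unique configuration in which neither point can be nudged one step toward a multivalent vertex while staying interior.

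The hard part will be the bookkeeping at the boundary of this construction rather than the central computation. The shift $v_{\alpha,i}\rightsquigarrow v_{\alpha,i+1}$ requires the shifted vertex to remain interior, which fails when a strand has length $2$ (its only interior vertex $v_{\alpha,1}$ is adjacent to both multivalent vertices and cannot be shifted either way) or when a marked point is itself multivalent or adjacent to the ``wrong'' end. These degenerate configurations resist the uniform divisor above and need separate ad hoc constructions, and verifying that the resulting collection covers exactly the complement of cases 1(a) and 1(b) — with no gaps and without accidentally producing a non-submodular divisor at the genuinely submodular pair $(1,n_\beta-1)$ — is the most delicate and error-prone step. I would organize it as a small finite case analysis, using the reversal and the point-swap symmetry $(u,v)\leftrightarrow(v,u)$ to cut down the number of configurations that must be handled by hand.
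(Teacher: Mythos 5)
Your overall route is the paper's route: the same split into same-strand and different-strand cases, the same reduction of the same-strand case (including multivalent marked points) to the genus-$2$ analysis of \Cref{thm-NonSubmodGenus2} and \Cref{cor-allSubmodSameStrand}, and in the different-strand case a degree-$3$ rank-$0$ divisor verified through \Cref{lemm-rank0supp}, \Cref{cor-BanaRankComp} and \Cref{cor:suppUV}. Indeed your divisor $v_{0,0}+v_{\alpha,i+1}+v_{\beta,j+1}$ is, after the slide $v_{\alpha,1}+v_{\alpha,i}\sim v_{0,0}+v_{\alpha,i+1}$, only a minor variant of the paper's $v_{\alpha,1}+v_{\alpha,i}+v_{\beta,n_\beta-1}$ (the paper anchors the $\beta$-chip at $v_{\beta,n_\beta-1}$ rather than adjacent to the marked point), your reversed divisor plays the role of the paper's second divisor $v_{\alpha,i}+v_{\alpha,n_\alpha-1}+v_{\beta,1}$, and both arguments isolate the same leftover pair $(1,n_\beta-1)$. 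One small slip: the set your two constructions omit is governed by the condition ``($i=n_\alpha-1$ or $j=n_\beta-1$) and ($i=1$ or $j=1$),'' not ``($i=1$ or $i=n_\alpha-1$) and ($j=1$ or $j=n_\beta-1$)''; your conclusion for strands of length at least $3$ is nevertheless correct.

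The genuine gap is the one you flagged and deferred: the boundary cases, above all a marked strand of length $2$. When $n_\alpha=2$ and $u=v_{\alpha,1}$, with $v=v_{\beta,j}$ interior and $2\leq j\leq n_\beta-2$, the theorem still asserts a non-submodular divisor exists, but neither of your divisors is available ($i$ can be shifted in neither direction), and this case is not a routine piece of bookkeeping: since $2v_{\alpha,1}\sim v_{0,0}+v_{0,n_0}$ is the hyperelliptic class of \Cref{lem:g12}, the natural candidates containing two chips near $u$ jump to rank $1$ (e.g.\ $2v_{\alpha,1}+v_{\beta,n_\beta-1}\sim v_{0,0}+v_{0,n_0}+v_{\beta,n_\beta-1}$ has rank $1$ by \Cref{cor-BanaRankComp}, and one checks $\Delta=0$ for it), so the rank-$0$ support-trapping template of \Cref{lemm-rank0supp} does not apply and a genuinely different construction, or an argument at a twist of different degree, is required. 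You should be aware that the paper's own proof is thinnest at exactly this point: it runs the same divisor $v_{\alpha,1}+v_{\alpha,i}+v_{\beta,n_\beta-1}$ in the subcase ``$n_\alpha=2$, $i=1$'' and declares it ``rank $0$ by inspection,'' which the computation above contradicts (similarly, its second divisor presumes $v_{\beta,2}$ is interior, i.e.\ $n_\beta\geq 3$). So your instinct that this is the delicate, error-prone step is exactly right, but as it stands your proposal (like the paper's write-up in that subcase) does not actually dispose of it, and without it the claimed classification is not fully proved.
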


\begin{figure}
    \centering
    \begin{tikzpicture}
        \def\h{2} 
        \def\spacing{4} 
        \def\lbend{100} 
        \def\rbend{100} 
        \def\dotsXloc{0.45} 
        \def\offset{0.1} 
        \node (T) at (0,\h) {$\bullet$};
        \node (B) at (0,0) {$\bullet$};
        \node (dots) at (\dotsXloc,\h/2) {$\dots$};
        \draw[thick] (T) to (B);
        \draw[thick] (T) to [bend right = \rbend] (B);
        \draw[thick] (T) to [bend left = \lbend] (B);
        \node (cap) at (0,-1) {$\set{v_{0,0},v_{0,n_0}}$};
        \begin{scope}[shift = {(\spacing,0)}]
            \node (T) at (0,\h) {$\cdot$};
            \node (B) at (0,0) {$\bullet$};
            \node (dots) at (\dotsXloc,\h/2) {$\dots$};
            \draw[thick] (T) to (B);
            \draw[-dot-=\offset,thick] (T) to [bend right = \rbend] (B);
            \draw[thick] (T) to [bend left = \lbend] (B);
            \node (cap) at (0,-1) {$\set{v_{0,1},v_{0,n_0}}$};
        \end{scope}
        \begin{scope}[shift = {(2*\spacing,0)}]
            \node (T) at (0,\h) {$\bullet$};
            \node (B) at (0,0) {$\cdot$};
            \node (dots) at (\dotsXloc,\h/2) {$\dots$};
            \draw[thick] (T) to (B);
            \draw[-dot-=1-\offset,thick] (T) to [bend right = \rbend] (B);
            \draw[thick] (T) to [bend left = \lbend] (B);
            \node (cap) at (0,-1) {$\set{v_{0,0},v_{0,n_0-1}}$};
        \end{scope}
         \begin{scope}[shift = {(3*\spacing,0)}]
            \node (T) at (0,\h) {$\cdot$};
            \node (B) at (0,0) {$\cdot$};
            \node (dots) at (\dotsXloc,\h/2) {$\dots$};
            \draw[-dot-=1-\offset,thick] (T) to (B);
            \draw[-dot-=\offset,thick] (T) to [bend right = \rbend] (B);
            \draw[thick] (T) to [bend left = \lbend] (B);
            \node (cap) at (0,-1) {$\set{v_{0,1},v_{1,n_0-1}}$};
        \end{scope}
    \end{tikzpicture}
    \caption{The four possibilities not precluding failure of submodularity on banana graphs of genus $\geq 3$. The large dots indicate the locations of the marked points and the ellipses indicate the possible presence of additional strands.}
    \label{fig:banaNonSubMod}
\end{figure}
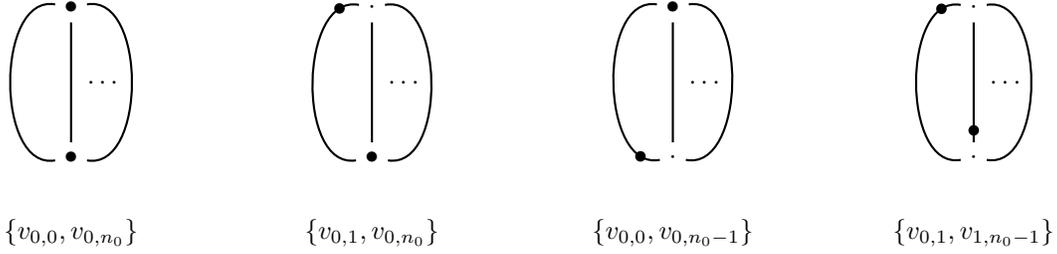

The four possibilities enumerated in part $1)$ of the theorem are euphemistically illustrated in \Cref{fig:banaNonSubMod}.

\begin{proof}
As suggested by the proof statement we consider two possibilities depending whether $u,v$ are on the same strand. In each case we show that if the conditions in part $1)$ do not apply then we can construct a non-submodular divisor.

First suppose that $u,v$ are not on the same strand. If $n_\alpha = 2$ then $i = 1$, otherwise up to relabelling, we may assume that $i<n_\alpha-1$. If $j<n_\beta-1$ consider the divisor $D:= v_{\alpha,1}+v_{\alpha,i}+v_{\beta,n_\beta-1}$. This is rank $0$ by inspection and we have that
\begin{align*}
    r(D-u) = r(v_{\alpha,1}+v_{\beta,n_\beta-1})  = 0;\\
    r(D-v) = r(v_{\alpha,i+1}+v_{\beta,n_\beta-1-j}) = 0;\\
    r(D-u-v) = r(v_{\alpha,i+1}+v_{\beta,n_\beta-1-j}-v_{\alpha,i}) = -1.
\end{align*}
On the other hand if $j = n_\beta-1$ and $i \neq 1$ then consider the divisor $D:= v_{\alpha,i}+v_{\alpha,n_\alpha-1}+v_{\beta,1}$. Again this is rank $0$ by inspection and further computations reveal that
\begin{align*}
    r(D-u) = r(v_{\alpha,n_\alpha-1}+v_{\beta,1}) = 0;\\
    r(D-v) = r(v_{\alpha,i-1}+v_{\beta,2}) = 0;\\
    r(D-u-v) = r(v_{\alpha,i-1}+v_{\beta,2}-v_{\alpha,i}) = -1.
\end{align*}

Thus if $u,v$ are on different strands we can construct non-submodular divisors except when $i = 1,j = n_\beta-1$.

On the other hand, if $\alpha = \beta$ then up to swapping $u,v$ we may assume that $i \leq j$. Then, the analysis of \Cref{cor-allSubmodSameStrand} shows that unless $(i,j) \in \set{(0,n_\alpha),(1,n_\alpha),(0,n_\alpha-1)}$ we can always construct a non-submodular divisor.
\end{proof}

\section{$k$-General Transmission in Banana Graphs}\label{sec:kgt}

In this section we first give an explicit description of transmission permutations which arise on theta graphs, framed in terms of effective representatives of twists in degree $2$. Next we give an abstract condition called non-recurrence which we show is equivalent to $k$-general transmission in genus $2$. We then use non-recurrence to deduce general transmission for a family of well behaved theta graphs. Lastly, we analyze general transmission on banana graphs of genus $\geq 3$, concluding that they in general fail to have general transmission.

\begin{remark}\label{rem-PermInvol}
    A natural question to ask about $k$-general transmission is whether it depends the order of the pair of marked vertices. Clearly $\Delta(D)$ is symmetric in $u,v$ so the first condition does not and by Riemann--Roch we have that
    \begin{align*}
        \delta(\tau_D^{u,v}(b) = a)) = \delta(\tau_{K_G-D+u+v}^{v,u}(a) = b).
    \end{align*}
    Thus permuting the marked vertices merely permutes the set of transmission permutations, so $k$-general transmission is invariant under such a swap. The somewhat odd looking factor of $u+v$ in this involution is a result of choices made in the definition of transmission permutations which were motivated by a desire to keep statement \cite[Theorem 3.11]{Pfl22} as clean as possible. 
\end{remark}

\subsection{Torsion order and general transmission}

Throughout this section, we will always take $k$ to be the torsion order of $(G,u,v)$ when studying $k$-general transmission. This brief subsection explains why.

\begin{lemma} \label{lem:kgtImpliesTorsionOrder}
If $(G,u,v)$ is a twice-marked graph with $k$-general transmission, then $k$ is the torsion order of $(G,u,v)$.
\end{lemma}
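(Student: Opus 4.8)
The plan is to let $k_0$ denote the torsion order of $(G,u,v)$. Since $k$-general transmission requires $ku \sim kv$, we have $k_0 \mid k$, so I may write $k = m k_0$ for some $m \in \N$, and the entire content of the lemma is the claim that $m = 1$. Because $k_0 u \sim k_0 v$, Lemma \ref{lem:tauChars} guarantees that every submodular divisor $D$ (hence, by hypothesis, every divisor) has $\tau^{u,v}_D \in \eaf{k_0}$, a strictly finer periodicity than the $\eaf{k}$ that the definition of $k$-general transmission a priori provides. The strategy is to exploit this finer periodicity: it forces the $k$-inversion count to overcount the $k_0$-inversion count by exactly a factor of $m$, after which a single sufficiently special divisor pins down $m$.

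First I would establish the counting identity $\inv_k(\tau) = m\,\inv_{k_0}(\tau)$ for every $\tau \in \eaf{k_0}$. Fix such a $\tau$. Its inversion set is invariant under the shift $(a,b) \mapsto (a+k_0, b+k_0)$, since $\tau(a+k_0) = \tau(a)+k_0$; thus each $k_0$-equivalence class of inversions is a full orbit under $\langle (k_0,k_0)\rangle$. Refining the equivalence from shifts by $(k_0,k_0)$ to shifts by $(k,k) = (mk_0, mk_0)$ splits each such orbit into exactly $m$ distinct $k$-equivalence classes, represented by $(a + jk_0, b + jk_0)$ for $0 \le j < m$. Since $\Inv_{k_0}(\tau)$ is finite, this yields $\inv_k(\tau) = m\,\inv_{k_0}(\tau)$. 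It is essential here that $\tau$ genuinely lies in $\eaf{k_0}$ and not merely in $\eaf{k}$; this is exactly the input supplied by $k_0$ being the torsion order.

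It then remains to produce a single divisor $D^{*}$ with $\inv_{k_0}(\tau^{u,v}_{D^{*}}) \ge g$. Granting this, $k$-general transmission gives $g \ge \inv_k(\tau^{u,v}_{D^{*}}) = m\,\inv_{k_0}(\tau^{u,v}_{D^{*}}) \ge m g$, and since $g \ge 1$ (the case $g=0$ being degenerate, as there $u \sim v$) this forces $m = 1$. The natural candidate is the zero divisor: $\tau^{u,v}_{0}$ cannot be a pure translation $n \mapsto n + c$, since matching Riemann--Roch in high degree forces $c = g$, whence $r(0+0u-0v) = \max(-1, a-b-c)|_{a=b=0} = \max(-1,-g) = -1$, contradicting $r(0)=0$. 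Hence $\inv_{k_0}(\tau^{u,v}_{0}) \ge 1$ immediately, and I would aim to prove the sharp identity $\inv_{k_0}(\tau^{u,v}_{0}) = g$ directly from Riemann--Roch, rewriting $\inv_{k_0}(\tau^{u,v}_{0}) = \sum_{a=0}^{k_0-1}\big(r((\tau^{u,v}_0(a)-1)u - (a+1)v) + 1\big)$ via the rank characterization of Lemma \ref{lem:tauChars} and counting the special twists among the summands.

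The main obstacle is precisely this last step: guaranteeing a divisor whose transmission permutation carries at least $g$ many $k_0$-inversions. Everything before it is bookkeeping about $\eaf{k_0}$, but the bound $\max_D \inv_{k_0}(\tau^{u,v}_D) \ge g$ is a genuine existence statement, asserting that at the minimal torsion order some divisor is as special as a general divisor ought to be. I would attack it either through the explicit Riemann--Roch evaluation of $\inv_{k_0}(\tau^{u,v}_0)$ sketched above, or, should that prove delicate, by invoking the existence half of Brill--Noether theory (available for metric graphs) to locate a divisor saturating the bound. A softer alternative worth trying is a duality argument: the involution $D \mapsto K_G - D + u + v$ of Remark \ref{rem-PermInvol} shows $\tau^{v,u}_{K_G - D + u + v} = (\tau^{u,v}_D)^{-1}$ and so preserves $\inv_{k_0}$, and one might leverage this symmetry to show that the average of $\inv_{k_0}(\tau^{u,v}_D)$ over a fundamental set of classes equals $g$, whence the maximum is at least $g$.
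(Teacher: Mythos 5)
Your reduction is sound, and in fact it is a clean repackaging of the mechanism in the paper's own proof: the steps you fully argue are all correct. That $k_0 \mid k$ follows from the definition of $k$-general transmission; $\tau^{u,v}_D \in \eaf{k_0}$ for every divisor follows from Lemma \ref{lem:tauChars} since $k_0 u \sim k_0 v$; and the identity $\inv_k(\tau) = m\,\inv_{k_0}(\tau)$ for $\tau \in \eaf{k_0}$, $m = k/k_0$, is right (each $k_0$-equivalence class of inversions splits into exactly $m$ distinct $k$-classes, and $\inv_{k_0}(\tau)$ is finite because $\tau(n)-n$ takes only finitely many values). The genuine gap is exactly the step you flag: you never prove $\inv_{k_0}(\tau^{u,v}_0) \geq g$, and the routes you propose for it are either overshooting or off-target. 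You do not need the sharp identity $\inv_{k_0}(\tau^{u,v}_0) = g$ (without the generality hypothesis it can fail, and under the hypothesis it is recovered a posteriori); you only need the inequality, and it is two lines from Lemma \ref{lem:tauChars}. Taking $a = b = 0$ in the first displayed characterization, $r(0) = 0$ gives a \emph{unique} $\ell_0 \geq 0$ with $\tau^{u,v}_0(\ell_0) \leq 0$; taking $a = b = 0$ in the second and using Riemann--Roch ($r(K_G) = g-1$) gives exactly $g$ integers $\ell < 0$ with $\tau^{u,v}_0(\ell) > 0$. Each pair $(\ell, \ell_0)$ is then an inversion, and these $g$ inversions are pairwise $k_0$-inequivalent (indeed inequivalent for any modulus), since they share their second coordinate and have distinct first coordinates. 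This is precisely what the paper does: it shows $\tau^{u,v}_0(0) = 0$ and extracts the $g$ inversions $[(\ell,0)]$; then, in place of your multiplicative identity, it argues that these exhaust all $k$-inversions and that torsion-periodicity produces the extra inversion $(\ell+m,m)$, forcing $k \mid m$. Your identity $\inv_k = m\,\inv_{k_0}$ packages the same periodicity argument more systematically, which is a genuine (if modest) gain in clarity.

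By contrast, your fallback suggestions would not rescue the argument and should be dropped: metric-graph Brill--Noether existence concerns the existence of divisors of prescribed rank, not the inversion count of the transmission permutation of a \emph{fixed} divisor, and the averaging claim (that $\inv_{k_0}(\tau^{u,v}_D)$ averages to $g$ over divisor classes) is unsubstantiated and not obviously true. Note also that your translation computation, while correct, yields only $\inv_{k_0}(\tau^{u,v}_0) \geq 1$ and hence only $m \leq g$, which, as you observe, is insufficient. Your caveat about $g \geq 1$ is well taken: for $g = 0$ one has $u \sim v$, every transmission permutation is the identity, and the statement degenerates (the paper's proof likewise silently needs the set of $g$ negative integers to be nonempty). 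With the two-line bound above inserted, your proof is complete and essentially equivalent to the paper's.
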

\begin{proof}
Let $m$ be the torsion order of $(G,u,v)$, and let $D = 0$ be the zero divisor. Since $r(D) = 0$, Lemma \ref{lem:tauChars} implies that there exists a unique $\ell \geq 0$ such that $\tauD(\ell) \leq 0$. This in turn implies that $r(D + \tauD(\ell) u - \ell v) = 0$. Effective divisors have nonnegative degree, so in fact $\ell = 0$ and $\tauD(0) = 0$. Applying the other part of Lemma \ref{lem:tauChars} and Riemann--Roch, $r(K_G) = g-1$ implies that there exists a set $U$ of $g$ distinct integers $u < 0$ such that $\tauD(u) > 0$. Each such $u$ gives a distinct $k$-inversion $[(u,0)]$ (the square brackets indicate the $k$-equivalence class) of $\tauD$. Since $\inv_k(\tauD) \leq g$, these are \emph{all} of the $k$-inversions of $\tauD$. Hence every inversion $(u,v)$ of $\tauD$ satisfies $v \equiv 0 \mod k$.

The fact that $mu \sim mv$ implies that $\tauD(n + m) = \tauD(n)$ for all $n \in \Z$. Therefore for any $u \in U$, $(u+m,m)$ is also an inversion of $\tauD$. Therefore $m \equiv 0 \mod k$, i.e. $k \mid m$.

Conversely, $k$-general transmission implies that $\tauD \in \eaf{k}$, so $\tauD(0) = 0$ implies $\tauD(k) = k$, and hence $r(ku-kv) \geq 0$. The only degree-$0$ effective divisor is $0$, so $k(u-v) \sim 0$ and therefore $m \mid k$. So in fact $m = k$.
\end{proof}

The converse of Lemma \ref{lem:kgtImpliesTorsionOrder} is quite false, but a partial converse does hold for $k=2$ only.

\begin{lemma}\label{lem-TO2GenTrans}
    If $(G,u,v)$ has torsion order $2$ and every divisor is submodular then $G$ has $2$-general transmission.
\end{lemma}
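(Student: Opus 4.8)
The plan is to reduce everything to the single inequality $\inv_2(\tau_D)\le g$ for every divisor $D$: submodularity of all divisors is assumed, and by Lemma~\ref{lem:tauChars} this guarantees that $\tau_D\in\eaf{2}$ is defined for every $D$, so $2$-general transmission is exactly this bound. First I would make the group $\eaf{2}$ fully explicit. Any $\tau\in\eaf 2$ is determined by $\tau(0)$ and $\tau(1)$; writing $c=\tau(0)$ and $d=\tau(1)-1$ one gets $\tau(n)=n+c$ for $n$ even and $\tau(n)=n+d$ for $n$ odd, and bijectivity forces $c\equiv d\pmod 2$. A direct inspection shows that two indices of the same parity are never inverted, so every inversion pairs an even index with an odd one; counting the $2$-equivalence classes then gives $\inv_2(\tau)=\tfrac12\lvert c-d\rvert$. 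Thus the whole problem is to bound $\lvert c-d\rvert$ by $2g$.

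By the symmetry of Remark~\ref{rem-PermInvol} — swapping $u$ and $v$ replaces the family $\{\tau_D\}_D$ by the family of its inverses (Remark~\ref{rem-PermInvol}), which have the same number of $2$-inversions and for which the sign of $c-d$ is reversed — I may assume $c\ge d$. If $c=d$ then $\inv_2(\tau_D)=0$ and there is nothing to prove, so assume $c>d$ and set $I:=\inv_2(\tau_D)=\tfrac12(c-d)\ge 1$.

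The heart of the argument is to produce a single divisor on which the Clifford bound is sharp. I would take
\[
M:=D+(c-1)u-v .
\]
Applying the two equivalent characterizations of Lemma~\ref{lem:tauChars} with $(a,b)=(c-1,1)$ and counting the indices $\ell$ with $\tau_D(\ell)\le c-1$ (respectively $\tau_D(\ell)>c-1$) gives $r(M)=I-1$ and $r(K_G-M)=0$. Riemann--Roch then determines the degree, $\deg M=g+I-2$. Since $r(M)\ge 0$ and $r(K_G-M)=0\ge 0$, both $M$ and $K_G-M$ are effective, so the Clifford bound for graphs yields $r(M)\le\tfrac12\deg M$, i.e. $I-1\le\tfrac12(g+I-2)$, which simplifies to $I\le g$. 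This is exactly $\inv_2(\tau_D)\le g$, completing the proof.

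The step I expect to be the real obstacle is choosing $M$ correctly. The twists $D+au$ whose ranks literally record the inversions turn out to be non-special, so Clifford says nothing about them; and the ``obvious'' special effective twists give only $\inv_2\le g+1$, leaving a stubborn borderline case $\inv_2=g+1$ that one would otherwise have to exclude by hand (via a Clifford-equality argument forcing a twist of $D$ to be linearly equivalent to $K_G$, contradicting the value of $\tau_D$ at a suitable point). Inserting the single $-v$ in $M=D+(c-1)u-v$ is what forces $r(K_G-M)=0$ rather than $-1$: it makes $M$ special and the resulting Clifford inequality tight, eliminating the off-by-one. Once $M$ is in hand the only computation needed is the pair of rank values, and both fall straight out of Lemma~\ref{lem:tauChars}.
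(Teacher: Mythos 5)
Your argument is correct in its main line, and it takes a genuinely different route from the paper's. The paper's proof is a three-line extremal argument: an element of $\eaf{2}$ is determined by two consecutive values, Riemann--Roch in the form of Equation \eqref{eq-RRTauBounds} bounds the only possible descent by $\tauD(b)-\tauD(b+1)\leq 2g-1$, and the extremal permutation $\sigma(0)=2g$, $\sigma(1)=1$ has exactly $g$ two-inversions, so $\inv_2(\tauD)\leq\inv_2(\sigma)=g$. Your normal form $\tau(n)=n+c$ ($n$ even), $\tau(n)=n+d$ ($n$ odd) with $\inv_2(\tau)=\tfrac12\lvert c-d\rvert$ is the same combinatorial skeleton made explicit; where you diverge is in how $\lvert c-d\rvert\leq 2g$ is obtained. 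Rather than bounding the values of $\tauD$ directly, you manufacture the special twist $M=D+(c-1)u-v$, read off $r(M)=I-1$ and $r(K_G-M)=0$ from \Cref{lem:tauChars} (both rank computations check out), and apply Clifford for graphs. Since Clifford is itself Riemann--Roch plus subadditivity of rank on effective divisors, the two proofs rest on the same input; yours isolates a single special divisor that witnesses the inversion count, which is more geometric and suggests how one might argue in higher torsion, while the paper's is shorter.

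The one step that does not hold as stated is the reduction to $c\geq d$. Swapping the marked points does replace $\tauD$ by its inverse (\Cref{rem-PermInvol}), and inversion preserves $\inv_2$, but it does \emph{not} always reverse the sign of $c-d$: since $\tau^{-1}(m)=m-c$ for $m\equiv c\pmod 2$ and $\tau^{-1}(m)=m-d$ otherwise, one finds $(c',d')=(-c,-d)$ when $c,d$ are even (sign reversed, as you claim), but $(c',d')=(-d,-c)$ when $c,d$ are odd (sign preserved). So for odd $c<d$ your symmetry produces nothing new, and the case $c<d$ is left uncovered. The gap is easily closed without touching the rest of the proof: replace $D$ by the twist $D+u-v$, whose transmission permutation satisfies $\tau_{D+u-v}^{u,v}(b)=\tauD(b+1)-1$ and therefore has the two shifts $(c,d)$ exchanged with $\inv_2$ unchanged; alternatively, when $d-c=2I>0$ the twist $M=D+(d-2)u$ satisfies $r(M)=I-1$ and $r(K_G-M)=0$ by the same computation as yours, and Clifford again yields $I\leq g$. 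With either one-line repair your proof is complete.
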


\begin{proof}
    Given any $D \in \Pic(G)$, the transmission permutation $\tau:= \tauD$ is determined by any two consecutive values. If there are no inversions then we are done, otherwise there exists some $b$ such that $\tau(b)>\tau(b+1)$. The number of inversions is then maximized when $\tau(b)-\tau(b+1)$ is maximized. By \Cref{eq-RRTauBounds}, this difference is bounded above by $2g-1$, so let $\sigma$ denote the element of $\eaf{2}$ defined by $\sigma(0) = 2g,\sigma(1) = 1$. Thus we have the following
    \begin{align*}
        \inv_k(\tau) \leq \inv_k(\sigma) = \#\set{b \in 2\Z: 1-2g<b<1} =g.
    \end{align*}
\end{proof}

\subsection{Computing Transmission Permutations on Banana Graphs}
In this section we make explicit the relationship between ``intersting'' twists of a divisor and factors of the corresponding transmission permutation. To that end we give an explicit formula for transmission permutations of theta graphs in the most generic case.

\begin{definition}
    A twice-marked graph $(G,u,v)$ is \textit{rigidly marked} if every divisor $D \in \pic{G}$ is submodular and $r(u + v) = 0$.
\end{definition}

A \textit{rigidly} marked graph satisfies $u \not \sim v$, and in the context of banana graph $u \not \sim \Bar{v}$ and $v \not \sim \Bar{u}$. In the genus $2$ case the second condition is equivalent to $u+v \not \sim K_G$. This definition is useful for excluding some annoying corner cases where points of interest on our twice-marked graphs happen to coincide.

\subsubsection{Theta Graphs}
\begin{proposition}\label{prop-thetaTransChar}
Let $(G,u,v)$ be a rigidly marked theta graph. Let $D$ be any degree $2$ divisor on $G$. For $t \in \Z$, Define $D'_t$ to be $D+t(u-v)$. Then
\begin{align*}
    \tau_D^{u,v}(t) = \begin{cases}
    t-2 & D'_t \sim 2u\\
    t-1 & D'_t \sim u +w \text{ for some } w \not \sim u,v\\
    t+1 & D'_t \sim v+w \text{ for some } w \not \sim \Bar{u},\Bar{v}\\
    t+2 & D'_t \sim v+\Bar{u}\\
    t & \text{ else}.
    \end{cases}
\end{align*}

\end{proposition}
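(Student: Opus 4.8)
The plan is to compute $\tau_D^{u,v}(t)$ directly from the characterization in Lemma \ref{lem:tauChars}, exploiting the fact that on a rigidly marked theta graph all divisors are submodular (so $\tau_D^{u,v}$ exists and lies in $\eaf{2}$), and that every twist $D'_t = D + t(u-v)$ has degree $2$. Since $G$ has genus $2$, the canonical divisor $K_G$ has degree $2$, and a degree-$2$ divisor has rank either $-1$, $0$, or $1$; by Clifford and Riemann--Roch, rank $1$ occurs exactly when $D'_t \sim K_G$. The key observation is that $\Delta(D + au - bv)$ is governed by the ranks of $D'_t$ and its single-point translates, so I would organize the computation around the value $\tau_D^{u,v}(t)$, which by definition records the unique $a$ (if any) for which $\delta(\tau(t)=a) = \Delta(D + au - tv) = 1$. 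Because $\tau \in \eaf{2}$ and has bounded ``spread'' (via the Riemann--Roch bounds on transmission permutations, e.g.\ the inequality used in Lemma \ref{lem-TO2GenTrans}), the displacement $\tau(t) - t$ can only take the finitely many values $-2,-1,0,+1,+2$, which is exactly why the proposition has five cases.

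The main engine is the first equation of Lemma \ref{lem:tauChars}: for all $a,b$,
$$r(D + au - bv) + 1 = \#\{\ell \geq b : \tau(\ell) \leq a\}.$$
I would apply this with $b = t$ and track how $r(D'_t + (a-t)u)$ jumps as $a$ increases, since $\tau(t) = a$ precisely when the count picks up a new element at $\ell = t$. Concretely, $\tau(t) \le a - 1$ fails but $\tau(t) \le a$ holds exactly when the rank increments appropriately; translating this back, $\tau(t) - t$ equals the amount by which one must twist $D'_t$ by $u$ (relative to $v$) to first make the relevant rank condition hold. I would then match each of the five geometric conditions on $D'_t$ to the corresponding rank jump: $D'_t \sim 2u$ forces a large transmission ($t-2$), $D'_t \sim u + w$ with $w$ generic gives $t-1$, the symmetric conditions $v + w$ and $v + \bar u$ give $t+1$ and $t+2$ by the involution of Remark \ref{rem-PermInvol} (which swaps $u,v$ and dualizes), and the generic ``else'' case gives the fixed point $t$.

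The heart of the argument is a clean case analysis of degree-$2$ divisor classes on the theta graph. Using Proposition \ref{prop-JacBanana} and its genus-$2$ refinement, together with the rank formula of Corollary \ref{cor-BanaRankComp}, every degree-$2$ class is either $2u$, $v + \bar u$ (equivalently $\sim K_G$, giving rank $1$), of the form $u + w$ or $v + w$ for a vertex $w$, or a ``generic'' class not of any of these forms. The rigid-marking hypothesis ($r(u+v) = 0$, $u \not\sim v$, $u \not\sim \bar v$, $v \not\sim \bar u$) ensures these cases are genuinely disjoint and that the boundary coincidences which would collapse two cases are excluded. I would verify, using Corollary \ref{cor-BanaRankComp} and Lemma \ref{lem-SameStrand}, that the ranks $r(D'_t)$, $r(D'_t - u)$, $r(D'_t - v)$, and $r(D'_t + u - v)$ behave as required in each case, which pins down $\Delta(D'_t + au - 0\cdot v)$ and hence the transmission value.

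I expect the main obstacle to be the bookkeeping that converts the rank-jump count of Lemma \ref{lem:tauChars} into the \emph{exact} integer $\tau(t) - t$, rather than merely bounding it. In particular, confirming that the cases $2u$ and $v + \bar u$ produce displacements of magnitude $2$ (not just $\geq 1$) requires checking two consecutive rank increments, i.e.\ that both $r(D'_t)$ and a neighboring twist realize rank $1$ simultaneously, and this is where the genus-$2$ Clifford/Riemann--Roch constraints must be used carefully. The symmetric cases can then be handled economically by invoking the duality involution of Remark \ref{rem-PermInvol} rather than repeating the computation, which I would use to cut the work roughly in half.
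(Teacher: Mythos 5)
Your overall engine is the same as the paper's: the paper proves this proposition by exhibiting, for each of the five cases, the single value $a$ for which $\Delta(D+au-tv)=1$, computed via \Cref{cor-BanaRankComp}, with rigid marking guaranteeing existence of $\tauD$ and disjointness of the cases. However, your case taxonomy contains a genuine error. You assert that $D'_t \sim v+\Bar{u}$ is ``equivalently $\sim K_G$, giving rank $1$.'' This is false: since $\Bar{u} \sim K_G - u$ in genus $2$, we would have $v+\Bar{u} \sim K_G$ if and only if $u \sim v$, which rigid marking excludes; in fact $r(v+\Bar{u}) = 0$. The actual rank-$1$ class $K_G \sim u + \Bar{u}$ is \emph{not} a separate case in the proposition: it is absorbed into the $u+w$ case with $w = \Bar{u}$ (note $\Bar{u} \not\sim v$ by rigid marking, and $\Bar{u} \not\sim u$ unless $2u \sim K_G$, in which case it is the $2u$ case), so $D'_t \sim K_G$ yields displacement $-1$ or $-2$, never $+2$. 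If you organize the analysis around ``rank $1$ occurs exactly when $D'_t \sim K_G$, i.e.\ the $v+\Bar{u}$ case,'' the intermediate ranks in that branch come out wrong and the case analysis would not close up correctly.

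A related misconception: your worry that the magnitude-$2$ cases require ``checking two consecutive rank increments'' with $r(D'_t)$ and a neighbor both of rank $1$ is misplaced. Since all divisors are submodular, $\tauD$ is a well-defined bijection with $\delta(\tauD(b)=a) = \Delta(D+au-bv)$, so a \emph{single} verification $\Delta(D + (t\pm 2)u - tv)=1$ pins down $\tauD(t)$ exactly; this is what the paper does. Concretely, for $D'_t \sim 2u$ one checks $\Delta(D'_t - 2u) = \Delta(0) = 0-(-1)-(-1)+(-1) = 1$, which involves no rank-$1$ divisors at all, and for $D'_t \sim v+\Bar{u}$ one checks $\Delta(D'_t + 2u) = \Delta(K_G+u+v) = 2-1-1+1 = 1$; in both cases $r(D'_t)=0$. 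Two smaller points: $\tauD$ does not lie in $\eaf{2}$ (it lies in $\eaf{k}$ for $k$ the torsion order, which is typically large); the bound $|\tauD(t)-t|\leq 2$ you need follows instead, as you also note, from \eqref{eq-RRTauBounds} with $g = \deg D = 2$. On the positive side, your idea of deducing the $t+1$ and $t+2$ cases from the $t-1$ and $t-2$ cases via the involution of \Cref{rem-PermInvol} is sound and would genuinely halve the computation (the involution together with swapping $u,v$ carries $D'_t \sim v+w$ with $w \not\sim \Bar{u},\Bar{v}$ to the mirrored $v+\Bar{w}$ case with $\Bar{w} \not\sim u,v$, and carries $v+\Bar{u}$ to $2v$); the paper instead simply computes all five $\Delta$ values directly.
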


\begin{proof}
Note the that rigidly marked ensures that $\tauD$ exists and that the five cases above to not coincide with one another. We can examine each of these cases with \Cref{cor-BanaRankComp}.

If $D'_t \sim 2u$ then
\begin{align*}
    \Delta(D + (t-2)u-tv) = \Delta(0) = 0-(-1)-(-1)+(-1) = 1.
\end{align*}

If $D_t' \sim u+w$ for some $w$ other then $u,v$ then
\begin{align*}
    \Delta(D+(t-1)u-tv) =\Delta(w) = 0-(-1)-(-1)+-1 = 1.
\end{align*}

If $D_t' \sim v+w$ for some $w$ other than $\Bar{u},\Bar{v}$ then
\begin{align*}
    \Delta(D+(t+1)u-tv) = \Delta(u+v+w) = 1-0-0+0 = 1.
\end{align*}

If $D_t' \sim v+\Bar{u}$ then
\begin{align*}
    \Delta(D+(t+2)u-tv) = \Delta(K_G+u+v) = 2-1-1+1 = 1.
\end{align*}

Lastly, if none of the other four conditions attach, then we know that $r(D'_t) = 0$ and $r(D'_t-u) = r(D'_t-v) = -1$ so we get that
\begin{align*}
    \Delta(D+tu-tv) = 0-(-1)-(-1) +(-1) = 1.
\end{align*}
\end{proof}

\begin{remark}
This result can be easily generalized to any genus $2$ graph by replacing $\Bar{u},\Bar{v}$ with the more generic $K_G-u,K_G-v$ respectively.
\end{remark}

The upshot of \Cref{prop-thetaTransChar} is that in the rigidly marked case, we can always split transmission permutations into commuting factors of the form $\sigma_t, \sigma_t\sigma_{t+1}$ or $\sigma_{t+1}\sigma_t$. In particular, inversions can be counted by examining twists of a fixed degree, which we exploit in the next section.

\subsection{Non-Recurrence}

The purpose of this section is to prove the following criterion for $k$-general transmission in genus $2$. The idea behind this criterion is: in genus $2$, the only possible special divisors are those of degrees $0,1$, and $2$. Degrees $0$ and $2$ are predictable, so the complexity occurs in degree $1$. Therefore, in studying $\inv_k(\tauD)$ for a divisor $D$, one is primarily interested in how many degree $1$ twists of $D$ are special, i.e. linearly equivalent to an effective divisor. These twists form an arithmetic progression of common difference $[u-v]$, and in ``typical'' circumstances one does not expect more than two special divisors in such a progression.

\begin{definition}
Let $G$ be a graph, and $[D] \in \Pic^0(G)$ be a degree-$0$ divisor class, with order $k$. Call $[D]$ \emph{non-recurrent} if for every $v \in V(G)$, there is at most one integer $n \in \{1,\cdots,k-1\}$ such that $|nD + v| \neq \emptyset$.
\end{definition}

The following equivalent characterization of non-recurrence in genus $2$ is a direct consequence of Riemann--Roch, and is the reason for our choice of the word ``recurrent.''

\begin{lemma}\label{lem:nonrecDisjoint}
If $G$ has genus $2$ and $[D] \in \operatorname{Pic}^0(G)$, then $[D]$ is non-recurrent if and only all the sets $$\{ \Supp(K_G - nD): n \in \Z,\ n D \not\sim 0 \}$$ are pairwise disjoint.
\end{lemma}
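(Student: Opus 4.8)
The plan is to translate membership in the support sets $\Supp(K_G-nD)$ directly into the effectivity condition appearing in the definition of non-recurrence, exploiting the special numerology of genus $2$ in Riemann--Roch. Fix $n\in\Z$ with $nD\not\sim 0$ and a vertex $v$. Since $\deg K_G = 2g-2 = 2$ and $\deg nD = 0$, the divisor $(K_G-nD)-v$ has degree $1$, and so does $nD+v$. Applying Baker--Norine Riemann--Roch to $nD+v$ gives
\begin{align*}
    r(nD+v) - r(K_G-nD-v) = \deg(nD+v) - g + 1 = 1 - 2 + 1 = 0,
\end{align*}
so $r(nD+v) = r\big((K_G-nD)-v\big)$. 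Hence $v\in\Supp(K_G-nD)$, which by definition means $r((K_G-nD)-v)\geq 0$, holds if and only if $r(nD+v)\geq 0$, i.e. if and only if $|nD+v|\neq\emptyset$. Writing $S_n := \Supp(K_G-nD)$, this is the identity $v\in S_n \iff |nD+v|\neq\emptyset$, valid for every $v$ and every $n$ with $nD\not\sim 0$.

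Next I would record the periodicity coming from the order of $[D]$. Because $kD\sim 0$, the class $[nD]$, and therefore the set $S_n$, depends only on $n$ modulo $k$; moreover $nD\not\sim 0$ precisely when $n\not\equiv 0 \pmod k$. Thus the family of support sets $\{S_n : n\in\Z,\ nD\not\sim 0\}$ is exactly the family $S_1,S_2,\dots,S_{k-1}$, one set for each nonzero class in $\langle[D]\rangle$. With this identification the two conditions are verbatim translations of one another: non-recurrence asserts that for each vertex $v$ there is at most one $n\in\{1,\dots,k-1\}$ with $|nD+v|\neq\emptyset$, which by the previous paragraph says each $v$ lies in at most one of $S_1,\dots,S_{k-1}$; and that is exactly the assertion that $S_1,\dots,S_{k-1}$ are pairwise disjoint. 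Both implications follow immediately.

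I do not expect a serious obstacle here; the only real content is the degree bookkeeping that forces the Riemann--Roch correction term to vanish, together with the observation that the support of the degree-$2$ twist $K_G-nD$ is the ``dual'' description of the effective degree-$1$ twists of $nD$. The one point that deserves a little care is the indexing: one should verify that ranging $n$ over all of $\Z$ with $nD\not\sim 0$ and collapsing the resulting sets via periodicity yields precisely the $k-1$ sets $S_1,\dots,S_{k-1}$, so that ``pairwise disjoint'' matches the ``at most one $n$'' quantifier over $\{1,\dots,k-1\}$ without spurious double-counting of congruent indices.
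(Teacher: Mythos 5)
Your proof is correct and is essentially the paper's own argument: the paper states this lemma without a written proof, calling it ``a direct consequence of Riemann--Roch,'' and your computation $r(nD+v) = r\big((K_G-nD)-v\big)$ from $\deg(nD+v) = 1 = g-1$ is precisely that consequence, turning $v \in \Supp(K_G-nD)$ into $|nD+v| \neq \emptyset$. Your closing care about the indexing (collapsing $n \in \Z$ with $nD \not\sim 0$ to the family $S_1,\dots,S_{k-1}$ via periodicity, so that pairwise disjointness of the indexed family matches the ``at most one $n \in \{1,\dots,k-1\}$'' quantifier) is exactly the right point to check, and you handle it correctly.
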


\begin{theorem}
\label{thm:kgtThetas}
Suppose that $(G,u,v)$ is rigidly marked graph of genus $2$ and torsion order $k$. Then $(G,u,v)$ has $k$-general transmission if and only if $[u-v] \in \Pic^0(G)$ is non-recurrent.
\end{theorem}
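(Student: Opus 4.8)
The plan is to reduce the whole statement to a single clean count: I will show that for each divisor $D$ the number $\inv_k(\tauD)$ equals the number of effective classes lying in a certain coset of $\gen{[u-v]}$, and then observe that non-recurrence says exactly that every such coset contains at most $g=2$ effective classes. First I would record the easy reductions. Since $(G,u,v)$ is rigidly marked, every divisor is submodular, so by \Cref{lem:tauChars} every $\tauD$ exists and lies in $\eaf{k}$; thus $k$-general transmission is equivalent to the single inequality $\inv_k(\tauD)\leq 2$ holding for all $D$. The defining relation $\delta(\tauD(b)=a)=\Delta(D+au-bv)$ shows that replacing $D$ by a twist $D+cu-dv$ merely translates the permutation, $\tau_{D+cu-dv}(b)=\tauD(b+d)-c$, which preserves $k$-inversions; hence $\inv_k(\tauD)$ depends only on the twist orbit of $D$, and since every orbit contains a divisor of degree $2$ it suffices to treat $\deg D=2$.

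The heart of the argument is the identity
\[
\inv_k(\tauD)=\#\{\, t\in\{0,\dots,k-1\}: D+t(u-v)-u \text{ is effective}\,\},
\]
i.e. $\inv_k(\tauD)$ counts the effective classes in the degree-$1$ coset $C'=[D-u]+\gen{[u-v]}=[D-v]+\gen{[u-v]}$ (so $u$ and $v$ play symmetric roles). To prove this I would run through \Cref{prop-thetaTransChar}: writing $D'_t=D+t(u-v)$, the twist $D'_t$ gives a nonidentity value of $\tauD$ precisely when $D'_t-u$ or $D'_t-v$ is effective, and these two degree-$1$ divisors are the consecutive coset elements $F_t:=D'_t-u$ and $F_{t+1}=D'_t-v$ of $C'$. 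The key structural input is that rigid markedness forbids two \emph{adjacent} effective positions $F_t,F_{t+1}$ except for the two special pairs $\{u,v\}$ and $\{\overline u,\overline v\}$: if $F_t\sim w_1$ and $F_{t+1}\sim w_2$ are both effective but neither forces case (a) or (d), then $\Delta(D'_t)=r(D'_t)-r(w_1)-r(w_2)+r(D'_t-u-v)=0-0-0+(-1)=-1$, violating submodularity. Consequently the effective positions of $C'$ decompose into isolated points (each giving an adjacent transposition of $\tauD$, one inversion) together with at most the pair $\{u,v\}$, present exactly when some $D'_t\sim 2u$ (the $3$-cycle of \Cref{prop-thetaTransChar}, two inversions), and the pair $\{\overline u,\overline v\}$, present when some $D'_t\sim v+\overline u$ (again two inversions, the intervening twist $\sim K_G$ being forced to displacement $-1$ by bijectivity). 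These clusters are separated by fixed points of $\tauD$, so they do not interact, and each contributes exactly as many inversions as it contains effective positions; summing yields the displayed identity.

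Finally I would translate non-recurrence. For any vertex $w$ the class $w=0\cdot[u-v]+w$ is effective, so non-recurrence of $[u-v]$ — at most one $n\in\{1,\dots,k-1\}$ with $|n(u-v)+w|\neq\emptyset$ — says precisely that the coset $[w]+\gen{[u-v]}$ carries at most two effective classes. As $w$ ranges over $V(G)$ these cosets range over all $\gen{[u-v]}$-cosets of $\Pic^{1}(G)$ meeting $V(G)$ (empty cosets are automatically fine), while as $D$ ranges over degree-$2$ divisors the coset $C'$ ranges over exactly these. Combining with the identity above, $\inv_k(\tauD)\leq 2$ for all $D$ if and only if every coset carries at most two effective classes, which is precisely non-recurrence; this gives both implications at once.

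The step I expect to be the main obstacle is the structural identity for $\inv_k(\tauD)$, and in particular pinning down the cluster combinatorics uniformly: verifying via submodularity that no forbidden adjacencies occur, confirming that the special pairs $\{u,v\}$ and $\{\overline u,\overline v\}$ each contribute exactly two inversions, and handling the degenerate configurations not excluded by rigid markedness (for instance $2u\sim K_G$ or $2v\sim K_G$, where the type-(a)/(d) clusters degenerate and one must check that the bijectivity-forced values still make a cluster's inversion count equal its number of effective positions). The reductions and the non-recurrence translation should then be routine.
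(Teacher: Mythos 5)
Your proposal is correct in substance, but it proves the central identity by a genuinely different route than the paper. Both arguments pivot on the same key fact: for a rigidly marked genus-$2$ graph, $\inv_k(\tauD)$ equals the number of effective classes among the degree-$1$ twists $T^1_D$ (your coset $C'$), after which the translation to non-recurrence is essentially identical to the paper's (the paper runs the forward direction with arbitrary $D$ and the backward direction with $D = w$ a vertex; your biconditional coset count is the same argument). Where you diverge is in how this identity is established. The paper proves \Cref{lem:invtau} via an algebraic inclusion--exclusion (\Cref{lem:invtauGeneral}): it expands $\Delta$ inside the inversion count to get $\inv_k(\tauD) = S_D(K_G) - S_D(K_G-u) - S_D(K_G-v) + S_D(K_G-u-v)$ in \emph{every} genus, then cancels terms in genus $2$, leaving the main term $S^{1,1}_D(K_G)$ plus a correction $\delta(0 \in T^0_D \mbox{ and } u+v \sim K_G)$ --- which vanishes under rigid marking since $r(u+v)=0 \neq 1 = r(K_G)$. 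You instead derive the identity locally from \Cref{prop-thetaTransChar}, using the submodularity obstruction $\Delta(D'_t) = 0 - 0 - 0 + (-1) = -1$ to forbid adjacent effective positions outside the two special clusters, and matching inversions to effective positions cluster by cluster. Your approach is more elementary and makes visible \emph{why} the count works; the paper's is uniform in genus and degree, handles non-rigidly-marked submodular divisors via the correction term, and sidesteps the case analysis you correctly flag as the main labor.

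Three caveats on your route, none fatal. First, \Cref{prop-thetaTransChar} is stated for theta graphs, while \Cref{thm:kgtThetas} covers all rigidly marked genus-$2$ graphs; you need the generalization noted in the paper's remark (replace $\bar u, \bar v$ by $K_G - u, K_G - v$), which is harmless since the $\Delta$-computations in that proof are generic genus-$2$ Riemann--Roch facts. Second, your justification that clusters ``are separated by fixed points'' is not literally true --- an isolated transposition can abut a cluster with no intervening fixed point --- but non-interaction still holds because the commuting factors have disjoint interval supports and values stay within their blocks, so no cross-inversions arise; you should argue it that way. Third, the degenerate configurations you flag do work out as you predict: if $2u \sim K_G$ the two special pairs merge into a single cluster $\{v, u, \bar v\}$ at consecutive positions contributing $3$ inversions for $3$ effective positions (and forcing both sides of the equivalence to fail together), and the doubly degenerate case $2u \sim 2v \sim K_G$ forces $k=2$ with the same equality of counts; verifying this requires checking that the five cases of \Cref{prop-thetaTransChar} remain mutually exclusive under rigid marking, which they do. So the obstacle you identify is real bookkeeping but the plan survives it.
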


\begin{definition}
Let $D$ be a divisor on a twice-marked graph $(G,u,v)$. Denote by $T^d_D$ the set of divisors classes
$$T^d_{D} = \left\{ [D + a u - bv]: a,b \in \Z,\ \deg D + a - b = d \right\} \subseteq \Pic^d(G).$$
We call this the set of \emph{degree $d$ twists} of $D$. Note that $\#T^d_D = k$, where $k$ is the torsion order of $(G,u,v)$.
\end{definition}

\begin{lemma}
\label{lem:invtau}
Suppose that $D$ is a submodular divisor on a twice-marked graph $(G,u,v)$ of genus $2$. Then
$$\inv_k (\tau_D^{u,v}) = 
\# \{ [D'] \in T^1_D:\ |D'| \neq \emptyset \} + \delta( 0 \in T^0_D \mbox{ and } u+v \sim K_G).$$
\end{lemma}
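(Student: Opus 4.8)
The plan is to turn the count of $k$-inversions into a sum of ranks over one period, evaluate those ranks with genus-$2$ Riemann--Roch, and then match the resulting terms against the two summands in the statement. Since $D$ is submodular, $\tau := \tau_D^{u,v}$ exists and lies in $\eaf{k}$ by Lemma \ref{lem:tauChars}. Every $k$-inversion class then has a unique representative $(\ell,b)$ with $b \in [k]$ (shifting by $k$ normalizes $b$ and preserves the inversion, using $\tau(n+k)=\tau(n)+k$), so
\[
\inv_k(\tau) = \sum_{b=0}^{k-1} c(b), \qquad c(b) := \#\{\ell < b:\ \tau(\ell) > \tau(b)\},
\]
each $c(b)$ being finite because $\tau(\ell)-\ell$ is bounded. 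The key input is the second characterization in Lemma \ref{lem:tauChars}: specializing $a=\tau(b)$ gives $c(b) = r(K_G - D - \tau(b)u + bv)+1 = r(E_b)+1$, where I set $E_b := K_G - F_b$ and $F_b := D + \tau(b)u - bv$.

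Next I would use that $F_b$ is the ``spike'' divisor at index $b$, i.e.\ $\Delta(F_b)=1$. A short degree check shows $\Delta(E)=1$ forces $0 \le \deg E \le 4$: when $\deg E \le -1$ or $\deg E \ge 5$ every rank entering $\Delta$ is degree-determined and the alternating sum vanishes. Hence $\deg F_b \in \{0,1,2,3,4\}$ and $\deg E_b = 2 - \deg F_b$. Evaluating $c(b)=r(E_b)+1$ by genus-$2$ Riemann--Roch in each case yields: $c(b)=0$ when $\deg F_b \in \{3,4\}$ (since $\deg E_b<0$); $c(b)=\delta(F_b \sim K_G)$ when $\deg F_b = 2$; $c(b)=1$ when $\deg F_b=1$ (here $\Delta(F_b)=1$ forces $F_b$ effective, so $r(E_b)=r(F_b)=0$); and $c(b)=2$ when $\deg F_b=0$ (here $\Delta(F_b)=1$ forces $F_b\sim 0$, so $E_b\sim K_G$ and $r(E_b)=1$). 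Writing $n_0,n_1,n_2$ for the number of $b\in[k]$ falling in the degree-$0$, degree-$1$, and degree-$2$-with-$F_b\sim K_G$ cases respectively, this gives $\inv_k(\tau) = 2n_0 + n_1 + n_2$.

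I would then identify the three counts. A twist-orbit argument — two twists representing the same class differ by a multiple of $[u-v]$, which has order $k$, so no two distinct $b\in[k]$ can give linearly equivalent twists of equal degree — shows each relevant class is hit by at most one $b\in[k]$. Thus $n_0 = \delta(0\in T^0_D)$; and since $\Delta(K_G) = r(u+v) = \delta(u+v\sim K_G)$ and $K_G \in T^2_D \iff K_G-u-v \in T^0_D$, we get $n_2 = \delta(u+v\sim K_G \text{ and } 0\in T^0_D)$, which is exactly the correction term in the statement. The distinct $b$ with $\deg F_b=1$ give distinct effective classes in $T^1_D$, so $n_1$ counts the effective degree-$1$ twist classes that are spikes.

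The genuinely delicate point — and the step I expect to be the main obstacle — is that not every effective degree-$1$ twist is a spike: a direct computation gives $\Delta(u)=\Delta(v)=0$, so the (always effective) classes $[u],[v]$ are excluded from $n_1$ yet counted in term (I). Hence term (I) $= n_1 + \epsilon$ with $\epsilon := \#\big(\{[u],[v]\}\cap T^1_D\big)$, and the identity I must prove is $\epsilon = 2n_0$. I would settle this by a torsion computation: if $0\in T^0_D$ then $T^1_D = T^1_0 = \{[v + a(u-v)] : a\in\Z\}$ contains both $[v]$ (at $a=0$) and $[u]$ (at $a=1$), so $\epsilon=2$; conversely, if $[u]\in T^1_D$ then inspecting the degree-$0$ part of the twist orbit of $u$ yields $k(u-v)\sim 0 \in T^0_D$, and likewise for $v$, so $0\notin T^0_D$ forces $\epsilon=0$. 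Granting $\epsilon = 2n_0$, the terms line up:
\[
\inv_k(\tau) = 2n_0 + n_1 + n_2 = (n_1+\epsilon) + n_2 = \#\{[D']\in T^1_D:\ |D'|\neq\emptyset\} + \delta(0\in T^0_D \text{ and } u+v\sim K_G),
\]
which is the claim.
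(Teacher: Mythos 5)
Your proof is correct, and it takes a genuinely different route from the paper's. The paper proves a general-genus identity first (Lemma \ref{lem:invtauGeneral}): starting from the same one-period sum $\inv_k(\tauD) = \sum_{b=0}^{k-1}\#\{n<b:\ \tauD(n)>\tauD(b)\}$, it expands $\Delta(D')$ as an alternating sum over $E \in \{0,u,v,u+v\}$ to get $\inv_k(\tauD) = S_D(K_G) - S_D(K_G-u) - S_D(K_G-v) + S_D(K_G-u-v)$, and only then specializes to genus $2$, where all the $S^{d,e}$ terms cancel except $S^{1,1}_D(K_G)$ and $S^{0,0}_D(K_G-u-v)$. You instead stay entirely in genus $2$: you evaluate each summand directly as $c(b) = r(K_G - F_b)+1$ by specializing $a = \tau(b)$ in Lemma \ref{lem:tauChars}, classify the spike $F_b$ by its degree via genus-$2$ Riemann--Roch to get $\inv_k(\tau) = 2n_0 + n_1 + n_2$, and then match counts by hand, with the identity $\epsilon = 2n_0$ absorbing the mismatch between effective degree-$1$ twist classes and degree-$1$ spikes (the classes $[u],[v]$ are effective but not spikes, while the trivial class compensates by contributing $2$). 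This is precisely the ``controlled failure of bijectivity'' that the paper's expository paragraph before Lemma \ref{lem:invtauGeneral} describes but then sidesteps via the algebraic inclusion--exclusion; your argument makes that control explicit. The paper's route buys the all-genus formula as a reusable byproduct; yours buys a shorter, self-contained genus-$2$ proof in which the provenance of every inversion is visible.

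Two small points to tighten. First, $n_0 = \delta(0 \in T^0_D)$ needs both directions: your orbit argument gives at most one $b$, but existence requires the one-line computation $\Delta(0) = 1$, so that the index carrying the trivial class genuinely has $\deg F_b = 0$; you supplied the analogous computation $\Delta(K_G) = \delta(u+v\sim K_G)$ for $n_2$, so add the matching line for $n_0$. Second, your proof of $\epsilon = 2n_0$ tacitly assumes $u \not\sim v$: you use $\Delta(u)=\Delta(v)=0$, which fails when $u \sim v$ (there $\Delta(u) = -1$), and you need $[u] \neq [v]$ to conclude $\epsilon = 2$. The lemma as stated does not exclude $u \sim v$, but the case is vacuous under your hypotheses: if $u \sim v$ and either $0 \in T^0_D$ or $[u] \in T^1_D$, then $u$ is itself (linearly equivalent to) a twist of $D$ with $\Delta(u) = -1 < 0$, contradicting submodularity of $D$; hence $\epsilon = n_0 = 0$ there and the identity stands. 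One sentence noting this closes the argument completely.
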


Before proving lemma, we briefly explain the underlying idea. There is a function
$$\{ [D'] \in T^1_D:\ |D'| \neq \emptyset \} \to \Inv_k(\tauD),$$
which we will describe shortly. Although this function need not be a bijection, both injectivity and surjectivity fail in ways that are not hard to control. 

To describe this function, observe the following identity: for any twist $D' = D + au-bv$, Lemma \ref{lem:tauChars} shows that
\begin{eqnarray*}
\left( r(D') + 1 \right) \left( r(K_G-D') + 1 \right)
&=& \# \{ (m,n) \in \Z^2:\ m < b \leq n \mbox{ and } \tauD(m) > a \geq \tauD(n) \}.
\end{eqnarray*}
Assuming $G$ has genus $2$ and $\deg D' = 1$, Riemann--Roch implies that $r(D') +1= r(K_G-D')+1$, and both numbers are either $0$ or $1$, depending on whether or not $|D'| \neq \emptyset$. Therefore, each degree-$1$ twist $D' = D + au-bv$ such that $|D'| \neq \emptyset$ determines a unique inversion $(m,n)$ such that $m < b \leq n$ and $\tauD(m) > a \geq \tauD(n)$. Furthermore, two twists are linearly equivalent if and only if the two choices of $(a,b) \in \Z^2$ differ by a multiple of $(k,k)$, so the \emph{divisor class} $[D']$ determines a unique $k$-equivalence class in $\Inv_k(\tauD)$. This describes the desired function.

The proof of Lemma \ref{lem:invtau} therefore follows this strategy: the number $\inv_k(\tauD)$ is expressed as an inclusion-exclusion calculation, which corrects for both the possible non-injectivity and non-surjectivity of the function described above. Almost all terms in this calculation will cancel, to leave Lemma \ref{lem:invtau}.

We first state and prove a more general formula, valid in every genus, which is conveniently stated using the following shorthand. This shorthand will be used to count the sums of sizes of certain sets of inversions that will arise in our inclusion-exclusion argument.

\begin{definition}
Let $D,E$ be two divisors on a twice-marked graph $(G,u,v)$ of any genus. For any two integers $d,e$ such that $d+e = \deg E$, define 
$$S^{d,e}_D(E) = \sum_{[D'] \in T^d_D} \left( r(D') + 1 \right) \left( r(E-D') + 1\right).$$
Denote also $S_D(E) = \sum_{d \in \Z} S^{d,\deg E - d}(E)$.
\end{definition}

The number $e$ is redundant in this notation since it is determined uniquely by $d$ and $\deg E$; we include it as a reminder of the degrees of the divisors considered in the second factor, and to more clearly highlight the following symmetry:
$$S^{d,e}_D(E) = S^{e,d}_{E-D}(E).$$

One special case is important in our analysis: when $d=0$, we have the formula
\begin{equation}
\label{eq:S0e}
S^{0,e}_D(E) = \delta(0 \in T^0_D) \left( r(E) + 1 \right),
\end{equation}
since $D' = 0$ is the only possible choice for which the factor $(r(D')+1)$ does not vanish. By like reasoning, the case $e=0$ has the formula
\begin{equation}
\label{eq:Sd0}
S^{d,0}_D(E) = \delta(E \in T^d_D) \left( r(E) + 1 \right).
\end{equation}

The discussion above shows that, \emph{when $G$ has genus $2$}, the ``main term'' from Lemma \ref{lem:invtau} is one of these numbers: 
$$
\# \{ [D'] \in T^1_D:\ |D'| \neq \emptyset \} = S^{1,1}_D(K_G).
$$
Furthermore, the reader may verify that the ``correction term'' in Lemma \ref{lem:invtau} is also one of these numbers:
$$
\delta( 0 \in T^0_D \mbox{ and } u+v \sim K_G) = S^{0,0}_D(K_G-u-v).
$$

So Lemma \ref{lem:invtau} is equivalent to saying that, in genus $2$, we have $\inv_k(\tauD) = S^{1,1}_D(K_G) + S^{0,0}_D(K_G-u-v)$. We prove this by first proving the following more general fact (valid in all genera), and then showing that many terms vanish or cancel in genus $2$. Although the proof of this lemma is written in an algebraic way for convenience, the reader should observe that the formula resembles an inclusion-exclusions calculation, and indeed the proof may be reformulated explicitly using the inclusion-exclusion principle with a bit more work.

\begin{lemma}
\label{lem:invtauGeneral}
Let $D,E$ be two divisors on a twice-marked graph $(G,u,v)$ of any genus with torsion order $k$, and let $D$ be a submodular divisor. Then
$$
\inv_k(\tauD) = S_D(K_G) - S_D(K_G-u) - S_D(K_G-v) + S_D(K_G-u-v).
$$
\end{lemma}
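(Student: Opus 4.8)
The plan is to establish the identity
$$
\inv_k(\tauD) = S_D(K_G) - S_D(K_G-u) - S_D(K_G-v) + S_D(K_G-u-v)
$$
by interpreting each $k$-inversion of $\tauD$ as a lattice point counted by an alternating sum of products of ranks, and then showing that a telescoping/inclusion--exclusion cancellation reduces the full alternating sum to the raw inversion count. First I would unpack the definition $S_D(E) = \sum_{d \in \Z} S^{d,\deg E - d}_D(E)$, where each summand is $\sum_{[D'] \in T^d_D} (r(D')+1)(r(E-D')+1)$. Using \Cref{lem:tauChars}, I would rewrite each factor $r(D')+1$ and $r(E-D')+1$ as a cardinality of a set of integers determined by $\tauD$. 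Concretely, for a twist $D' = D + au - bv$, we have $r(D')+1 = \#\{\ell \geq b: \tauD(\ell) \leq a\}$, and for the canonical-type divisor $E-D'$ the second characterization of \Cref{lem:tauChars} expresses $r(E-D')+1$ as a count of the form $\#\{\ell < b': \tauD(\ell) > a'\}$ for appropriate $a',b'$ depending on $E$. Thus each product $(r(D')+1)(r(E-D')+1)$ becomes a count of pairs $(m,n) \in \Z^2$ satisfying strict/weak inequalities relating $m,n$ to $\tauD(m), \tauD(n)$ and to the degree data of $E$.

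The key step is to choose the bookkeeping so that summing over $[D'] \in T^d_D$ and then over $d$ collects, for each of the four divisors $K_G, K_G-u, K_G-v, K_G-u-v$, a count of pairs $(m,n)$ weighted by boundary conditions that depend on whether $u$ and/or $v$ have been subtracted. The reason I expect the four-term combination to collapse to $\inv_k(\tauD)$ is that subtracting $u$ shifts one inequality threshold by one unit in the $a$-direction, subtracting $v$ shifts the corresponding threshold in the $b$-direction, and the alternating sum $S_D(K_G) - S_D(K_G-u) - S_D(K_G-v) + S_D(K_G-u-v)$ is precisely a second finite difference in these two thresholds. A second finite difference of the counting function $\#\{(m,n) : m < b \leq n,\ \tauD(m) > a \geq \tauD(n)\}$ (summed appropriately over the twist orbit) picks out exactly those $(m,n)$ for which both the weak and strict versions of each inequality coincide, i.e. genuine inversions $m < n$ with $\tauD(m) > \tauD(n)$, counted once per $k$-equivalence class. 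Periodicity ($\tauD \in \eaf{k}$, valid since $ku \sim kv$) guarantees the relevant sums are finite and that the count is genuinely of $k$-equivalence classes, matching the definition of $\inv_k$.

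The main obstacle will be handling the boundary and convergence bookkeeping rigorously: the individual sums $S_D(E)$ are infinite in the naive formulation (one must sum $(r(D')+1)(r(E-D')+1)$ over all degrees $d$, and ranks grow linearly for $d \gg 0$ and $d \ll 0$), so each $S_D(E)$ term is individually divergent and only the alternating combination is finite. I would address this by regularizing: fix a large window of twists and work with partial sums, verifying that the second-difference cancellation eliminates all the divergent contributions before passing to the limit. Equivalently, one can reorganize the whole computation as an honest inclusion--exclusion over a finite box of inversions (as the remark preceding the lemma suggests), where each genuine $k$-inversion is counted with net multiplicity $+1$ and every spurious pair is counted with net multiplicity $0$ by the $2 \times 2$ sign pattern. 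The algebraic identities \eqref{eq:S0e} and \eqref{eq:Sd0} for the degenerate cases $d=0$ and $e=0$ will be used to confirm that the extreme terms of the telescoping sum vanish or contribute exactly the expected correction, so I would verify those endpoint contributions carefully as the final consistency check.
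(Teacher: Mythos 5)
Your strategy is, at its core, the same as the paper's, just run in the opposite direction. The paper starts from $\inv_k(\tauD) = \sum_{b=0}^{k-1} \#\{n < b :\ \tauD(n) > \tauD(b)\}$ (one representative per $k$-class with $0 \leq b < k$), inserts $\delta(\tauD(b) = a) = \Delta(D+au-bv)$ together with the second characterization in Lemma \ref{lem:tauChars}, and then expands $\Delta(D') = \sum_{E \in \{0,u,v,u+v\}} (-1)^{\deg E}\left(r(D'-E)+1\right)$; reindexing $D'-E$ over the twist orbit (legitimate because $E$ is a $\Z$-combination of $u$ and $v$) produces exactly the four terms $S_D(K_G - E)$. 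Your observation that the four-term combination is a second mixed finite difference in the two thresholds is precisely the identity defining $\Delta$, so your plan, carried out carefully, reproduces this computation from the other end. One precision point: after reindexing, the difference acts only on the factor $\left(r(D'-E)+1\right)$, with $\left(r(K_G-D')+1\right)$ held fixed across all four terms; so when you difference the pair-count $\#\{(m,n):\ m<b\leq n,\ \tauD(m)>a\geq \tauD(n)\}$, you must shift only the $n$-side constraints (pinning $n=b$ and $\tauD(b)=a$, leaving the $m$-count intact). Differencing both sides of the pair count simultaneously would spoil the cancellation; your phrasing gestures at the right version but should be made explicit.

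However, your claimed ``main obstacle'' is illusory: each $S_D(E)$ is already a finite sum, and no regularization, window, or limiting argument is needed. If $\deg D' = d < 0$ then $r(D') = -1$ and the factor $r(D')+1$ vanishes; if $d > \deg E$ then $\deg(E-D') < 0$ and $r(E-D')+1$ vanishes. Since each $T^d_D$ has exactly $k$ elements, $S_D(E)$ is supported on the finitely many degrees $0 \leq d \leq \deg E$ --- this is exactly the vanishing $S^{d,e}_D(E) = 0$ for $d<0$ or $e<0$ that the paper invokes later. Ranks do grow linearly in $|d|$, but always in the factor whose partner is zero. With this repaired, your fallback inclusion--exclusion over a finite box is unnecessary, and the endpoint bookkeeping via Equations \eqref{eq:S0e} and \eqref{eq:Sd0} belongs to the genus-$2$ specialization (Lemma \ref{lem:invtau}), not to the general lemma you were asked to prove.
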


\begin{proof}
First, observe that, since every equivalence class of inversion has a unique representative $(a,b)$ such that $0 \leq b < k$,  we may write
$$
\inv_k(\tauD)
=
\sum_{b=0}^{k-1} \# \{ n < b:\ \tauD(n) > \tauD(b) \}.
$$
This in turn may be rewritten as follows.
\begin{eqnarray*}
\inv_k(\tauD) &=&
\sum_{b=0}^{k-1} \sum_{a \in \Z} \delta(\tauD(b) = a) \cdot \# \{n < b:\ \tauD(n) > a\}\\
&=& \sum_{b=0}^{k-1} \sum_{a \in \Z} \Delta(D + au - bv) \left( r(K_G-(D+au-bv)) + 1 \right).\\
\end{eqnarray*}
Since $[u-v]$ has order $k$ in $\jac{G}$, the divisor $D' = D + au - bv$ in this last sum ranges over a system of representatives for the divisors classes of twists of $D$ (of any degree). Therefore the sum may be rewritten
$$
\inv_k(\tauD) = \sum_{d \in \Z} \sum_{[D'] \in T^d_D} \Delta( D') \left( r(K_G-D') + 1 \right).
$$
We may write $\Delta(D')$ as 
$\displaystyle \sum_{E \in \{0,u,v,u+v\}} (-1)^{\deg E} \left( r(D' - E) + 1 \right)$, and hence rewrite the equation above as follows.
\begin{eqnarray*}
\inv_k(\tauD) &=& \sum_{d \in \Z} \sum_{[D'] \in T^d_D} \sum_{E \in \{0,u,v,u+v\}} (-1)^{\deg E} \left( r(D'-E) + 1 \right) \left( r(K_G-D') + 1 \right)\\
&=& \sum_{E \in \{0,u,v,u+v\}} (-1)^{\deg E} \sum_{d \in \Z} \sum_{[D'] \in T^d_D} \left( r(D'-E) + 1 \right) \left( r(K_G-D') + 1 \right).
\end{eqnarray*}
In this sum, for a fixed choice of $E$, the fact that $E$ is a linear combination of $u$ and $v$ means that the divisor $D' - E$ ranges over all twists of $D$ of any degree. So we may write more simply 
$$
\sum_{d \in \Z} \sum_{[D'] \in T^d_D} \left( r(D'-E) + 1 \right) \left( r(K_G-D') + 1 \right) = S_D(K_G - E).
$$
The lemma follows.
\end{proof}

\begin{proof}[Proof of Lemma \ref{lem:invtau}]
Observe that $S^{d,e}_D(E) = 0$ whenever $d<0$ or $e<0$. So if $G$ has genus $2$, and thus $\deg K_G = 2$, the four terms in Lemma \ref{lem:invtauGeneral} each expand to a relatively small number of terms, as follows.
\begin{eqnarray*}
S_D(K_G) &=& S^{2,0}_D(K_G) + S^{1,1}_D(K_G) + S^{0,2}_D(K_G);\\
S_D(K_G-u) &=& S^{1,0}_D(K_G-u) + S^{0,1}_D(K_G-u);\\
S_D(K_G-v) &=& S^{1,0}_D(K_G-v) + S^{0,1}_D(K_G-v);\\
S_D(K_G-u-v) &=& S^{0,0}_D(K_G-u-v).
\end{eqnarray*}
As discussed in the paragraph above Lemma \ref{lem:invtauGeneral}, we wish to show that, when the formula in Lemma \ref{lem:invtauGeneral} is expanded using these equations, all terms cancel except $S^{1,1}_D(K_G)$ and $S^{0,0}_D(K_G-u-v)$.

Equations \eqref{eq:S0e} and \eqref{eq:Sd0}, Riemann--Roch, and the fact that $r(u) = r(v) = 0$, give the following formulas.

\begin{eqnarray*}
S^{2,0}_D(K_G) &=& \delta( K_G \in T^2_D) \left( r(K_G) + 1 \right)\\
&=& 2 \delta( K_G \in T^2_D);\\
S^{1,0}_D(K_G-u) &=& \delta(K_G - u \in T^1_D) \left( r(K_G-u) + 1 \right)\\
&=& \delta(K_G \in T^2_D);\\
S^{1,0}_D(K_G-v) &=& \delta( K_G -v \in T^1_d) \left( r(K_G-v) + 1 \right) \\
&=& \delta( K_G \in T^2_D).
\end{eqnarray*}

From this we deduce that
$$S^{2,0}_D(K_G) - S^{1,0}_D(K_G-u) - S^{1,0}_D(K_G-v) = 0.$$
The identity $S^{d,e}_D(E) = S^{e,d}_{E-D}(E)$ shows similarly that
$$S^{0,2}_D(K_G) - S^{0,1}_D(K_G-u) - S^{0,1}_D(K_G-v) = 0.$$
Putting this together with Lemma \ref{lem:invtauGeneral} shows that
$$
\inv_k(\tauD) = S_D(K_G) - S_D(K_G-u) - S_D(K_G-v) + S_D(K_G-u-v) = S^{1,1}_D(K_G) + S^{0,0}_D(K_G-u-v).
$$
As discussed above, these terms are equal to $\# \{ [D'] \in T^1_D:\ |D'| \neq \emptyset \}$ and $\delta( 0 \in T^0_D \mbox{ and } u+v \sim K_G)$, respectively.
\end{proof}

\begin{proof}[Proof of \Cref{thm:kgtThetas}]
First suppose that $[u-v] \in \Pic^0(G)$ is non-recurrent. Take any $[D] \in \pic{G}$ and let $\tau = \tauD$ (which exists since all divisors are submodular by assumption). Let $I  =\{ [D'] \in T^1_D:\ |D'| \neq \emptyset \}$. By \Cref{lem:invtau}, $\inv_k(\tau) = \# I$, so if $I$ is empty then we are done. Otherwise take $[D'] \in I$. Note that this is a degree $1$ rank $0$ divisor class, so by \Cref{lem-bridgelessFacts} there is a unique $w \in V(G)$ such that $D' \sim w$. Now note that if $[D+au-bv], [D+a'u-b'v] \in I$ then since they have the same degree $a-a' = b-b'$ and thus $D+a'u-b'v = (D+au-bv)+(a'-a)(u-v)$. Thus every divisor class in $I$ has a representative of the form $[w+n(u-v)]$. If we enforce that $0\leq n < k$ then these are all distinct. Since $[u-v]$ is non-recurrent, we conclude that there is at most one value of $n$ such that $[w+n(u-v)] \in I$ and thus $\inv_k(\tau) \leq 2$.

Now suppose that $(G,u,v)$ has $k$-general transmission. We claim this implies non-recurrence of $[u-v]$. Take any vertex $w \in V(G)$. Note that any degree $1$ twist of $w$ must be of the form $w + n(u-v)$. Every such class of twists has a representative satisfying $0\leq n <k$. Thus we have that
\begin{align*}
    \#\set{[w+au-bv] \in T^1_w:|w+au-bv|\neq \emptyset} &= \set{[w+n(u-v)]:\ 0\leq n<k,\ |w+au-bv|\neq \emptyset }\\
    &= 1 + \#\set{n \in \set{1,\dots, k-1}:|w+n(u-v)| \neq \emptyset}.
\end{align*}
Applying \Cref{lem:invtau} again to the permutation $\tau_{w}^{u,v}$ the above equation then implies that
\begin{align*}
    \#\set{n \in \set{1,\dots, k-1}:|w+n(u-v)| \neq \emptyset} \leq 1.
\end{align*}
Thus $[u-v]$ is non-recurrent.
\end{proof}

\begin{theorem}\label{thm:g2general}
    If $(G,u,v)$ is a twice-marked bridgeless graph of genus $2$ and torion order $k$, then $G$ has $k$-general transmission if and only if:
    \begin{enumerate}[label = \arabic*)]
        \item $(G,u,v)$ is the vertex gluing of a two twice-marked cycles, each of which has torsion order $k$;
       \item $G$ is a chain of two loops with one loop of length $2$ and the marked points on the two vertices on that loop;
       \item $G$ is a theta graph with $[u-v]$ non-recurrent and either
       \begin{enumerate}[label = \alph*)]
           \item $\set{u,v} = \set{v_{\alpha,0}, v_{\alpha,n_\alpha-1}}$
           \item $\set{u,v} = \set{v_{\alpha,1}, v_{\alpha,n_\alpha}}$
           \item $\set{u,v} = \set{v_{\alpha,i}, v_{\beta,j}}$ with $\alpha \neq \beta$ and $0<i<n_\alpha,0<j<n_\beta$
       \end{enumerate}
    \end{enumerate}
\end{theorem}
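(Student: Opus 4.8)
The plan is to reduce the statement to a topological classification and then dispatch each resulting family with the submodularity results and the non-recurrence criterion already in hand. A bridgeless genus $2$ graph has first Betti number $2$, and after suppressing bivalent vertices every surviving vertex has valence at least $3$; since $|E| = |V|+1$ and $2|E| = \sum \val{\cdot} \geq 3|V|$ force $|V|\leq 2$, the underlying topological type is either a theta graph (two trivalent vertices joined by three paths) or a chain of two loops (two cycles meeting at a single vertex, equivalently joined by a bridge). I would therefore split into these two cases, and in each first invoke the relevant submodularity classification, since $k$-general transmission requires every divisor to be submodular.

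For the theta graph, \Cref{cor-allSubmodSameStrand} cuts the markings (up to swapping $u,v$) down to: (S1) $u,v$ interior to distinct strands; (S2) $\{u,v\}=\{v_{\alpha,0},v_{\alpha,n_\alpha-1}\}$; (S3) $\{u,v\}=\{v_{\alpha,1},v_{\alpha,n_\alpha}\}$; (S4) $\{u,v\}=\{v_{\alpha,0},v_{\alpha,n_\alpha}\}$ (the two multivalent vertices); together with the case of a single multivalent marked point, which via the same-strand bookkeeping collapses into (S2) or (S4). In cases (S1)--(S3) I would check that $u\not\sim v$ and $u+v\not\sim K_G$: the unique degree-$2$ rank-$1$ class is $K_G=v_{\alpha,0}+v_{\alpha,n_\alpha}$ by \Cref{lem:g12}, and $u+v\sim K_G$ would force $v\sim\overline u$, impossible for these markings by bridgelessness (\Cref{lem-bridgelessFacts}). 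Hence (S1)--(S3) are rigidly marked, so \Cref{thm:kgtThetas} gives $k$-general transmission exactly when $[u-v]$ is non-recurrent, producing conditions 3c), 3a), 3b). It remains to eliminate (S4): here $u+v\sim K_G$, so for $D=0$ the correction term in \Cref{lem:invtau} equals $1$, while the main term counts at least the two distinct effective degree-$1$ twists $[u]$ and $[v]$ (the latter being the twist $u+(k-1)(u-v)\sim v$, using $k(u-v)\sim 0$); thus $\inv_k(\tau_0)\geq 3>2$ and (S4) never has $k$-general transmission.

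For the chain of two loops, \Cref{rem-degenerateTheta} shows that submodularity of all divisors forces either the marked points to lie on distinct loops or to be the two vertices of a common loop of length $2$. The latter has torsion order $2$, so \Cref{lem-TO2GenTrans} gives $2$-general transmission directly, yielding condition 2). In the former case the graph is the vertex gluing of two twice-marked cycles $(C_1,u,w)$ and $(C_2,w,v)$ with $w$ the shared vertex and $u,v$ interior, whence $u+v\not\sim K_G=2w$ and the correction term in \Cref{lem:invtau} vanishes. Writing $k_1,k_2$ for the torsion orders of the two cycles, the glued torsion order is $k=\operatorname{lcm}(k_1,k_2)$; invoking the behavior of transmission permutations under vertex gluing from \cite{Pfl22} (additivity of $k$-inversions, with each cycle being $k_i$-generally transmitting by \Cref{eg:cycle}) gives $\max_D\inv_k(\tau_D)=k/k_1+k/k_2$. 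Since $k/k_i\geq 1$, this is $\leq 2$ precisely when $k/k_1=k/k_2=1$, i.e. $k_1=k_2=k$, which is condition 1) and establishes its sharpness.

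The step I expect to be the main obstacle is the chain-of-two-loops subcase with marked points on distinct loops. Unlike the theta cases, it is not covered by the clean non-recurrence criterion of \Cref{thm:kgtThetas}, and showing that $k$-general transmission is equivalent to the two cycles sharing a common torsion order requires the gluing behavior of transmission permutations developed in the companion paper \cite{Pfl22} (and Section \ref{sec:mixedTorsion}) rather than anything internal to this excerpt. A secondary nuisance is the careful bookkeeping of the degenerate markings (a marked point at a multivalent vertex, or $u+v\sim K_G$): each must be funneled into the correct family and shown either to reduce to a listed case or to fail, and while \Cref{lem:invtau} together with \Cref{cor-BananaDeltaComps} makes these checks essentially mechanical, they are easy to mishandle.
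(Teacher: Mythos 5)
Most of your reduction coincides with the paper's proof: the same topological dichotomy (theta graph versus chain of two loops), the same use of \Cref{cor-allSubmodSameStrand} to cut the theta markings down to cases 3a)--3c), \Cref{thm:kgtThetas} for those rigidly marked cases, and \Cref{rem-degenerateTheta} together with \Cref{lem-TO2GenTrans} for the length-$2$ loop. Your elimination of the doubly-multivalent marking $\set{v_{0,0},v_{0,n_0}}$ is a genuinely different and rather clean route: the paper defers this to \Cref{prop-TriangleInversionNumber} (computing part of $\tau_{gv_{0,n_0}}$ via \Cref{lem-TriangleInversionII}), whereas you apply \Cref{lem:invtau} to $D=0$, where the correction term equals $1$ (since $u+v=K_G$) and the distinct classes $[u],[v]$ make the main term at least $2$; this is legitimate because all divisors are submodular for this marking by \Cref{cor-allSubmodSameStrand}, so $\inv_k(\tau_0)\geq 3 > 2$ as you claim.

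The genuine gap is exactly the step you flagged as the main obstacle, and your diagnosis of \emph{why} it is hard is wrong in a consequential way. First, \Cref{thm:kgtThetas} is stated for any rigidly marked graph of genus $2$, not only for theta graphs, so it \emph{does} cover a chain of two loops with marked points on distinct loops --- and this is precisely how the paper dispatches that case: if $k_1<k_2$, then $v+(u-v)\sim u$ and $v+k_1(u-v)\sim k_1u_2+(1-k_1)v_2$ (a degree-$1$ divisor on the genus-$1$ piece $G_2$, hence effective) exhibit two effective degree-$1$ twists of the vertex $v$, so $[u-v]$ is recurrent and \Cref{thm:kgtThetas} rules out $k$-general transmission; the converse for $k_1=k_2$ is \cite[Theorem A]{Pfl22}. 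Second, the substitute you propose --- ``additivity of $k$-inversions'' under gluing, giving $\max_D \inv_k(\tauD)=k/k_1+k/k_2$ --- is not an available result: Theorem A of \cite{Pfl22} concerns chains of \emph{equal} torsion order, and \Cref{sec:mixedTorsion} of this paper develops subadditivity of $\sci$ under Demazure products, not additivity of $\inv_k$ for mixed torsion orders. Moreover, as an equality your formula is false: applying \Cref{lem:invtau} to $D=0$, the effective classes in $T^1_0$ are the $[v+n(u-v)]$ with $k_1\mid n$ or $k_2\mid (n-1)$, and when $\gcd(k_1,k_2)=1$ these two progressions overlap in exactly one residue modulo $k$, so for $k_1=2$, $k_2=3$ one gets $\inv_6(\tau_0)=3+2-1=4$ rather than $5$. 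The qualitative conclusion you need (some transmission permutation has more than $2$ $k$-inversions whenever $k_1\neq k_2$) is true and could be salvaged from this inclusion--exclusion count, but as written your proof of case 1) rests in both directions on an unproven and incorrectly stated formula, so this step fails; the paper's recurrence argument is both simpler and already within reach of the tools you were using elsewhere.
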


\begin{proof}
    First suppose that $G$ is genus $2$ and has $k$-general transmission. Then $G$ is either a chain of two loops or a theta graph. If $G$ is a chain of loops then \cite[Lemma 2.5, Theorem 3.11]{Pfl22} implies that every divisor is submodular. If the marked points are on distinct loops, the $(G,u,v)$ is the vertex gluing of $(G_1,u_1,v_1)$ and $(G_2,u_2,v_2)$ where $G_1,G_2$ are cycles. Let $k_i$ denote the torsion order of $(G_i,u_i,v_i)$. If $k_1 \neq k_2$ then without loss of generality $k_1<k_2$. Note that this implies that
    \begin{align*}
        k_1(u-v)+v \sim k_1u_2 + (1-k_1)v_2
    \end{align*}
    Considering this as a divisor on $G_2$ it clearly must be effective since it is degree $1$ on a genus $1$ graph. Thus we have that $(u-v)+v, k_1(u-v)+v$ are rank $0$, so $[u-v]$ is recurrent. However by \Cref{thm:kgtThetas} this contradicts the assumption that $G$ has $k$-general transmission so we can assume $k_1 = k_2$ giving case $1)$.

    If the marked points or on the same loop then by \Cref{rem-degenerateTheta} $k$-general transmission implies $2)$ above.
    
    On the other hand if $G$ is a theta graph then by \Cref{cor-allSubmodSameStrand} we have the $G$ must be rigidly marked unless $\set{u,v} = \set{v_{0,0},v_{0,n_0}}$. This is incompatible with $k$-general transmission, the proof of which we defer to \Cref{prop-TriangleInversionNumber}. The remaining possibilities are the cases $3)$ $ a),$ $b)$ and $c)$ above which are all rigidly marked and thus by \Cref{thm:kgtThetas} $[u-v]$ is non-recurrent.

    For the other direction, $1)$ implies $k$-general transmission by \cite[Theorem A]{Pfl22}. $2)$ implies $k$-general transmission by \Cref{lem-TO2GenTrans}. Lastly $3)$ implies $k$-general transmission since in each case the graph is rigidly marked and thus \Cref{thm:kgtThetas} gives the result.
\end{proof}

\begin{definition}\label{defn:evenlyMarked}
    Let $(\theta_{n_0,n_1,n_2},v_{\alpha,i},v_{\beta,j})$ be a twice-marked theta graph. We say that such a graph is \textit{evenly marked} if $\frac{i}{n_\alpha} = \frac{j}{n_\beta}, \alpha \neq \beta$ and $0<i<n_\alpha$.
\end{definition}

\begin{lemma}
If $(\theta_{n_0,n_1,n_2},v_{\alpha,i},v_{\beta,j})$ is evenly marked, then the class $[v_{\alpha,i}-v_{\beta,j}]$ is non-recurrent, with order $\frac{n_\alpha}{\gcd(n_\alpha,i)} = \frac{n_\beta}{\gcd(n_\beta, j)}$ in $\jac{\theta_{n_0,n_1,n_2}}$.
\end{lemma}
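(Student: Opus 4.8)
The plan is to work entirely inside the two-dimensional presentation $\jac{\theta_{a,b,c}} \cong \Z^2/\Lambda$ with $\Lambda = \langle (a+c,c),(-a,b)\rangle$ established above, together with the Abel--Jacobi embedding $S = S_{v_{0,0}}$. After relabeling the three strands (a free choice, since the ordering only enters the presentation) I may assume the marked points lie on strands $0$ and $1$; write $a = n_0$, $b = n_1$, $c = n_2$, $u = v_{0,i}$, $v = v_{1,j}$. Under the isomorphism one checks $S(v_{\gamma,\ell}) = \ell\, d_\gamma$ with $d_0 = (1,0)$, $d_1 = (0,1)$, $d_2 = (-1,-1)$, so $[u-v] = (i,-j)$ in $\Z^2/\Lambda$. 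Writing $\tfrac{i}{a} = \tfrac{j}{b} = \tfrac{p}{q}$ in lowest terms and $m_0 = \gcd(a,i)$, $m_1 = \gcd(b,j)$ (so $a = qm_0$, $b = qm_1$), the even-marking hypothesis $bi = aj$ says exactly that $(i,-j) = p\,(m_0,-m_1)$ is parallel to the lattice vector $(a,-b) = -(-a,b) \in \Lambda$. Since $\gcd(p,q)=1$, the cyclic subgroup $H := \langle [u-v]\rangle$ equals $\langle (m_0,-m_1)\rangle$, so it suffices to analyze the element $(m_0,-m_1)$.

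First I would compute the order. Because $(-a,b) = -q(m_0,-m_1) \in \Lambda$ is redundant, the lattice $\Lambda'' := \Lambda + \Z(m_0,-m_1)$ is generated by $(a+c,c)$ and $(m_0,-m_1)$, with index in $\Z^2$ equal to $\bigl|\det\begin{pmatrix} a+c & c \\ m_0 & -m_1\end{pmatrix}\bigr| = (a+c)m_1 + cm_0 = M$, where $M := qm_0m_1 + c(m_0+m_1)$. Since $\#\jac{\theta_{a,b,c}} = ab+ac+bc = qM$, the order of $(m_0,-m_1)$, hence of $[u-v]$, is $qM/M = q = \tfrac{n_\alpha}{\gcd(n_\alpha,i)}$; the equality with $\tfrac{n_\beta}{\gcd(n_\beta,j)}$ is immediate from $p/q$ being reduced.

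For non-recurrence I would first restate the definition. Using \Cref{lem-bridgelessFacts} (degree-$1$ rank-$0$ classes correspond to vertices), $|n(u-v)+w| \ne \emptyset$ means $S(w) + n[u-v] \in S(V(G))$, so non-recurrence is equivalent to every coset of $H$ containing at most two vertex images. Two vertex classes lie in the same $H$-coset precisely when their difference lies in $\Lambda''$, so this is the assertion that the fibers of the induced map $V(G) \to \Z^2/\Lambda''$ have size at most $2$. The engine is the functional $f(x,y) = bx + ay$: one checks $f(m_0,-m_1) = 0$ and $f(a+c,c) = ab+ac+bc =: N$, so $f(\Lambda'') = N\Z$ and vertices in a common $H$-coset have equal $f$-values modulo $N$. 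On the strands $f(S(x_\ell)) = b\ell$, $f(S(y_\ell)) = a\ell$, and $f(S(z_\ell)) = -(a+b)\ell$.

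The argument then reduces to interval estimates against $N$. For two interior vertices on a single strand the difference of $f$-values has absolute value strictly less than $N$, forcing equal indices, so each strand injects and contributes at most one vertex per coset. A strand-$0$ and a strand-$1$ interior vertex can share a coset only when $b\ell = a\ell'$ (both values lie in $(0,N)$ and agree mod $N$), and such pairs do occur — these are the permitted coincidences. The crux is excluding any third strand: for an interior strand-$2$ vertex paired with an interior vertex of strand $0$ or of strand $1$, the relevant combination $b\ell + (a+b)\ell''$ (respectively $a\ell' + (a+b)\ell''$) is a positive integer bounded above by $N - a - 2b < N$ (respectively $N - 2a - b < N$), hence cannot be $\equiv 0 \pmod N$; so strand $2$ never coincides with strand $0$ or $1$. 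A final check that the two multivalent vertices are isolated (their $f$-values $0$ and $ab$ are matched by no interior vertex) shows every coset meets $S(V(G))$ in at most two points. I expect this third-strand exclusion to be the main obstacle — it is precisely where even marking (which pins the direction of $[u-v]$) and the control of the genus by $N$ combine — and the interval bound is exactly what dispatches it.
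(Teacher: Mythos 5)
Your proof is correct, but it takes a genuinely different route from the paper's. The paper works with the explicit divisor identity $n(v_{0,i}-v_{1,j}) \sim v_{0,\,ni \bmod a} - v_{1,\,nj \bmod b}$ (Equation \eqref{eq:multDiffMarkedPts}, which encodes the same fact you isolate, namely that even marking keeps multiples of $[u-v]$ off the third strand), then computes $\Supp(K_G - n(u-v))$ via \Cref{cor:suppUV} and concludes by the support-disjointness criterion of \Cref{lem:nonrecDisjoint}; the torsion order is found by direct manipulation in the quotient presentation of \Cref{prop-JacBanana}. You instead never touch $K_G$, supports, or \Cref{lem:nonrecDisjoint}: you reformulate non-recurrence, via \Cref{lem-bridgelessFacts} (which also gives the injectivity of $S$ on vertices that your coset count tacitly uses), as the statement that each coset of $H = \gen{[u-v]}$ meets the Abel--Jacobi image in at most two points, and then run everything through the rank-one functional $f(x,y) = bx + ay$, which vanishes on the direction $(m_0,-m_1)$ of $H$ and takes value $N = ab+ac+bc$ on the complementary generator, so that membership in a common coset forces $f$-congruence mod $N$ and the interval bounds ($N - a - 2b$ and $N - 2a - b$ for the strand-$2$ exclusions, which I have checked) do the rest. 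Your order computation via the lattice index $[\Z^2 : \Lambda'']=M$ and $\#\jac{\theta_{a,b,c}} = qM$ is also sound (note only that the subgroup identity $\gen{p(m_0,-m_1)} = \gen{(m_0,-m_1)}$ logically needs the order-$q$ computation first, a harmless ordering quirk in your exposition). The trade-off: the paper's argument is shorter and makes the geometry transparent — one sees directly which two vertices support each twist — while yours is more mechanical and arguably more robust, since the coset-counting reformulation plus a well-chosen linear functional could plausibly be adapted to markings where explicit reduced representatives of $n(u-v)$ are harder to write down.
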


\begin{proof}
Without loss of generality, let $(\alpha,\beta) = (0,1)$. For ease of notation let $a:= n_0, b:= n_1, c:=n_2$ and $d:= \gcd(i,a),d' = \gcd(j,b)$. In thise notation, the evenly marked assumption means that $aj = bi$. First we claim that the torsion order is $\frac{a}{d}$. This can be seen using the isomorphism in \Cref{prop-JacBanana}:
\begin{align*}
    \frac{a}{d}[i,-j] = \left[\frac{ai }{d},\frac{-aj}{d}\right] = \left[0,\frac{bi - aj}{d}\right] = 0.
\end{align*}
Thus the torsion order divides $\frac{a}{d}$. Further if $n[i,-j] = 0$ then there exists some $x,y \in \Z$ such that $n(i,-j) = x(a,-b) + y(a+c,c)$. The fact that $\frac{a}{b} = \frac{i}{j}$ means that we can assume $y = 0$. Thus we have that $a|ni$ so $\frac{a}{d}|n$ and we can conclude that $\frac{a}{d}$ is the torsion order, call it $k$. By a symmetric argument, $k =\frac{b}{d'}$ as well.

We now obtain a formula for each support complex $\Supp(K_G - n(v_{0,i}-v_{1,j}))$ for $0<n<k$. 
We follow the convention: for any $\ell \in \Z, m \in \Z$, $\ell\mod m$ denotes the nonnegative residue of $\ell$ modulo $m$, i.e. $\ell - \lfloor \frac{\ell}{m} \rfloor m$.
Using the identification of \Cref{prop-JacBanana}, and the equation $\lfloor \frac{ni}{a} \rfloor = \lfloor \frac{nj}{b} \rfloor$, we see that 
$$[ni,-n j ] = \left[ni - \left\lfloor \frac{ni}{a}\right\rfloor a,-n j + \left\lfloor \frac{ni}{a} \right\rfloor b \right] = \left[ ni \mod a, - (nj \mod b )\right].$$

Using the isomorphism in \Cref{prop-JacBanana} again, we deduce 
\begin{align}\label{eq:multDiffMarkedPts}
n(v_{0,i}-v_{1,j}) \sim v_{0,ni \mod a} - v_{1,nj \mod b},
\end{align}

and therefore

$$
K_G - n(v_{0,i} - v_{1,j}) \sim \overline{v}_{0, ni \mod a} + v_{1,nj \mod b}.
$$

Note the overline on the first vertex. Since we are assuming $0 < n < k$, we have $ni \not\equiv 0 \mod a$ and $nj \not\equiv 0 \mod b$. Therefore neither of the two vertices mentioned above are multivalent, so they do not lie on the same strand, and Corollary \ref{cor:suppUV} implies that
$$\Supp \left( K_G - n(v_{0,i} - v_{1,j}) \right)
= \left\{ \overline{v}_{0, ni \mod a},\ v_{1,nj \mod b} \right\}.
$$

Since $k$ is the additive order of $i + a \Z$ in $\Z / a \Z$, and also of $j + b \Z$ in $\Z / b \Z$, it follows that the vertices $\{ \overline{v}_{0, ni \mod a}:\ 0 < n < k \}$ are $k-1$ distinct vertices on strand $0$, and $\{ v_{1,n_j}:\ 0 < n < k \}$ are $k-1$ distinct vertices on strand $1$. So indeed these supports are pairwise disjoint, and $[v_{0,i} - v_{1,j}]$ is non-recurrent by \Cref{lem:nonrecDisjoint}.
\end{proof}

\begin{remark}
The simplicity of Equation \eqref{eq:multDiffMarkedPts} is the essential reason why evenly marked theta graphs present such at tractable case for analyzing recurrence: multiples of the difference of the marked points do not involve the third strand at all.
\end{remark}

Note that evenly marked theta graphs are rigidly marked by \Cref{cor-allSubmodSameStrand} so these two lemmas together establish the following Corollary, which was stated in the introduction as the second part of \Cref{thm:thetaSimple}.

\begin{corollary}\label{cor:evenlyMarkedKGT}
An evenly marked theta graph $(\theta_{n_0,n_1,n_2}, v_{\alpha,i}, v_{\beta,j})$  has $k$-general transmission, where $k = \frac{n_\alpha}{\gcd(n_\alpha, i)} = \frac{n_\beta}{\gcd(n_\beta, j)} $.
\end{corollary}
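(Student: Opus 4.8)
The plan is to obtain the corollary as a direct synthesis of three results already in hand: the submodularity classification (\Cref{cor-allSubmodSameStrand}), the support computation (\Cref{cor:suppUV}), and the non-recurrence criterion for $k$-general transmission (\Cref{thm:kgtThetas}), together with the non-recurrence computation carried out in the lemma immediately preceding the corollary.

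The first step is to confirm that an evenly marked theta graph is \emph{rigidly marked}, since this is precisely the hypothesis of \Cref{thm:kgtThetas}. By \Cref{defn:evenlyMarked}, an evenly marked graph has $\alpha \neq \beta$ with $0 < i < n_\alpha$ and $0 < j < n_\beta$, so $u = v_{\alpha,i}$ and $v = v_{\beta,j}$ are interior vertices lying on distinct strands. Because $\alpha \neq \beta$, \Cref{cor-allSubmodSameStrand} gives at once that every divisor on $(G,u,v)$ is submodular. For the remaining condition $r(u+v) = 0$, I would invoke \Cref{cor:suppUV}, which shows $\Supp(u+v) = \{u,v\}$. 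If $r(u+v) \geq 1$ held, then $r(u+v-w) \geq 0$ for every vertex $w$ and hence $\Supp(u+v) = V(G)$; since a theta graph has strictly more than two vertices, this is impossible, so $r(u+v) = 0$. Thus $(G,u,v)$ is rigidly marked.

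The second step is to recall the conclusion of the preceding lemma: for an evenly marked theta graph the class $[v_{\alpha,i} - v_{\beta,j}] \in \Pic^0(G)$ is non-recurrent and has order exactly $k = \frac{n_\alpha}{\gcd(n_\alpha,i)} = \frac{n_\beta}{\gcd(n_\beta,j)}$. This order is, by definition, the torsion order of $(G,u,v)$, so the $k$ appearing in the statement is the torsion order required by \Cref{thm:kgtThetas}.

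With rigid marking verified and $[u-v]$ shown to be non-recurrent, \Cref{thm:kgtThetas} applies verbatim and yields that $(G,u,v)$ has $k$-general transmission, which is exactly the assertion of the corollary. I do not anticipate any serious obstacle here: the substantive analytic content is already packaged inside \Cref{thm:kgtThetas} and the non-recurrence lemma, and the only point needing a short argument of its own is the verification that $r(u+v)=0$ (as opposed to the hyperelliptic value $1$), which is handled above via the support computation.
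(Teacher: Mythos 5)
Your proof is correct and takes essentially the same route as the paper, which likewise derives the corollary by noting that evenly marked theta graphs are rigidly marked (via \Cref{cor-allSubmodSameStrand}) and then combining the preceding non-recurrence lemma with \Cref{thm:kgtThetas}; your explicit check that $r(u+v)=0$ fills in a detail the paper leaves implicit. One small caveat: your aside that a theta graph has strictly more than two vertices fails for $\theta_{1,1,1}$, but the evenly marked hypothesis forces $n_\alpha, n_\beta \geq 2$, so the argument is valid in the case at hand.
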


\subsection{Banana Graphs Are Increasingly Special As Genus Increases}\label{ssec:mostBananas}

For almost all banana graphs of genus $\geq 3$ we can rule out general transmission using techniques we develop in \Cref{sec:mixedTorsion}, provided sufficiently high torsion order. Indeed, we saw in \Cref{thm-NSMForBanana} that $k$-general transmission is ruled out for most banana graphs because we can construct non-submodular divisors for many choices of markings.  In this section we show that as the genus increases, banana graphs become less and less general in the sense that the maximal number of inversions of a permutation on a given banana graph grows at least quadratically with the genus. Along the way we prove lower bounds on torsion orders of these graphs which we will use in \Cref{sec:mixedTorsion}.

Throughout this section $G = B_{n_0,\dots,n_g}$ is a banana graph of genus $g\geq 2$. Our general strategy will be to compute enough of the transmission permutations to get a lower bound on the number of $k$-inversions. In particular, if we are clever with our choice of divisor $D$, part of the permutation $\tauD$ can be determined with only the local data of the lengths of the strands supporting the marked points. Without loss of generality we assume these are $n_0,n_1$. Let $D = gv_{0,n_0}, \tau = \tauD, k$ denote the torsion order and $M$ the maximum number of $k$-inversions a transmission permutation of any divisor on $(G,u,v)$. With this notation, the main results are as follows.

\begin{theorem}\label{thm-quadInvGrowth}
    If $(G,u,v)$ is a twice-marked banana graph with genus $\geq 3$ where every divisor is submodular and the marked strands are sufficiently long, then $M$ is at least quadratic in $g$.
\end{theorem}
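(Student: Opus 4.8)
The goal is to produce, for a suitable divisor $D$ on a genus-$g$ banana graph, a transmission permutation $\tau_D^{u,v}$ whose number of $k$-inversions grows at least quadratically in $g$. Following the setup in the section, I would fix $D = g\,v_{0,n_0}$ and study the permutation $\tau = \tau_D^{u,v}$ using only the local data of the two marked strands (of lengths $n_0, n_1$, say). The essential strategy is to identify a large family of twists $D + au - bv$ whose $\Delta$-value is $1$, each of which pins down one value $\tau(b) = a$, and then to count inversions among these located values.

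**Key steps.**

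First, I would use Corollary \ref{cor-BanaRankComp} to compute ranks of twists of $D$ explicitly. The moral of that corollary---that rank depends only on the chip counts at the two multivalent vertices and the total number of chips on strands, not their positions---is exactly what makes $D = g v_{0,n_0}$ tractable: its twists redistribute chips in a predictable way. Second, leveraging Corollary \ref{cor-BananaDeltaComps}, I would extract a block of values of $\tau$ of the form $\tau(b) = a$ for a two-dimensional family of pairs $(a,b)$ indexed by (i) how many of the $g$ chips sit at each multivalent vertex and (ii) which interior strand a single stray chip occupies. The presence of $g+1$ strands is what lets the index set grow with $g$: stray chips can be placed on any of the many strands, producing on the order of $g$ distinct located values in each of on the order of $g$ ``degree levels.''

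Third, I would count $k$-inversions among these located values. The key point is that the located values I produce exhibit an order-reversing pattern: as $b$ increases through the family, the corresponding $a = \tau(b)$ decreases (or vice versa), because adding chips at $v$ while removing them at $u$ forces the transmission direction to reverse. A triangular (``staircase'') arrangement of on the order of $g$ decreasing values yields on the order of $g^2$ inversions; one then checks, using the $k$-equivalence relation of Definition \ref{def-inv}, that after quotienting by $k$-periodicity the count remains quadratic in $g$---this is where the lower bound on the torsion order $k$ (which the section promises to establish) enters, ensuring the fundamental domain is wide enough that the inversions are genuinely distinct modulo $k$ rather than collapsing. Since $M \ge \inv_k(\tau)$, this gives the claimed bound.

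**The main obstacle.**

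The hard part will be the bookkeeping in the inversion count modulo $k$-equivalence. Producing many located values of $\tau$ is relatively mechanical once Corollaries \ref{cor-BanaRankComp} and \ref{cor-BananaDeltaComps} are in hand, but verifying that a quadratic number of these genuinely survive as distinct $k$-inversions---rather than being identified under the relation $a - a' = b - b'$, $a - a' \equiv 0 \pmod k$---requires the ``sufficiently long strands'' and high-torsion hypotheses to guarantee that the staircase spans a range wide relative to $k$. I would organize the argument so that the quadratic growth is visible already within a single period, making the final estimate $\inv_k(\tau) = \Omega(g^2)$ robust to the precise value of $k$, and defer the exact torsion-order bounds to the companion lemmas of this subsection.
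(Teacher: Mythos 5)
Your proposal is correct and follows essentially the same route as the paper: the paper likewise fixes $D = g v_{0,n_0}$, uses \Cref{cor-BanaRankComp} and \Cref{cor-BananaDeltaComps} to pin down a (mostly) decreasing staircase of roughly $\frac{n_0}{n_0-1}g$ values of $\tau$ in each of the marking configurations permitted by \Cref{thm-NSMForBanana} (Lemmas \ref{lem-TriangleInversionII}, \ref{lem-BananOneOff}, \ref{lem-topOffBottomOffSimple} and \ref{lem-topOffBottomOff}), extracts the torsion bounds $k \geq g$, resp.\ $k > \frac{n_0}{n_0-1}g$, from those same located values, and then counts $\Theta(g^2)$ inversions that are automatically inequivalent as $k$-inversions because the whole window fits inside a single period. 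The one inaccuracy is your picture of a ``two-dimensional family'' with stray chips ranging over all $g+1$ strands: in the paper's computations the stray chip always sits on a marked strand and $\tau$ supplies exactly one located value per $b$, so the quadratic count comes, exactly as you say at the end, from pairs within the one-dimensional staircase rather than from a two-parameter set of located values.
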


Thus while banana graphs may be general in genus $2$, the higher the genus the more ``special'' these graphs are.

\begin{proposition}\label{prop-bananTorsion}
    If $(G,u,v)$ is a twice-marked banana graph of genus $\geq 3$ and torsion order $k$ where every divisor is submodular then either
    \begin{enumerate}[label = \arabic*)]
        \item Up to reordering the strands, $n_0 = n_1 = 2$ and $(G,u,v) = (G,v_{0,1},v_{1,1})$ and thus $k = 2$, or
        \item $k \geq g$.
    \end{enumerate}
\end{proposition}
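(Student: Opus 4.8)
The plan is to first use submodularity to pin down the marking, and then compute the order of $[u-v]$ directly in the presentation of $\jac{G}$ from Proposition \ref{prop-JacBanana}. Since every divisor is submodular, Theorem \ref{thm-NSMForBanana} rules out its case 2), so $(u,v)$ realizes one of the four configurations in its case 1). Reordering strands and applying the reversal automorphism $v_{\gamma,i}\mapsto \overline{v_{\gamma,i}}$ (both of which preserve the torsion order) we may place the active strand as strand $0$ and, in the different-strand case, the second strand as strand $1$. Under the identification of Proposition \ref{prop-JacBanana}, in which $[v_{\alpha,i}-v_{\beta,j}]$ is the class of $i e_\alpha - j e_\beta$ in $\Z^{g+1}/K$, the class $[u-v]$ then equals the class of $-n_0 e_0$ (for $(v_{0,0},v_{0,n_0})$), of $-(n_0-1)e_0$ (for both $(v_{0,1},v_{0,n_0})$ and $(v_{0,0},v_{0,n_0-1})$, which give the same class), or of $e_0-(n_1-1)e_1$ (for the different-strand pair $(v_{0,1},v_{1,n_1-1})$). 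It then remains to bound the order of each of these three elements.

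For the same-strand cases I would compute the order of $c\,e_0$ by solving $m c\, e_0 = c_0(1,\dots,1)+\sum_{\gamma\geq 1}c_\gamma(n_0 e_0-n_\gamma e_\gamma)$ coordinate by coordinate. The coordinates $\gamma\geq 1$ force $c_\gamma=c_0/n_\gamma$, so $\operatorname{lcm}(n_1,\dots,n_g)\mid c_0$, and the zeroth coordinate collapses (writing $\mathcal{N}:=\sum_\alpha\prod_{\beta\neq\alpha}n_\beta$) to the relation $c_0\,\mathcal{N} = m\,c\prod_{\alpha\geq1}n_\alpha$. For $c=n_0$ this yields order exactly $\sum_{\alpha=0}^g L/n_\alpha$ with $L=\operatorname{lcm}(n_0,\dots,n_g)$; since each summand is a positive integer and there are $g+1$ of them, the order is at least $g+1$. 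For $c=n_0-1$ the same relation together with $\tfrac{n_0}{n_0-1}>1$ bounds the order below by $\tfrac{n_0}{n_0-1}\sum_{\alpha\geq1}(c_0/n_\alpha)>g$. Either way $k\geq g+1$, giving conclusion 2).

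The different-strand element $e_0-(n_1-1)e_1$ is the heart of the argument. Solving $m(e_0-(n_1-1)e_1)\in K$ the same way, the coordinates $\gamma\geq 2$ again give $c_\gamma=c_0/n_\gamma$ (so $\operatorname{lcm}(n_2,\dots,n_g)\mid c_0$), coordinate $1$ determines $c_1$, and coordinate $0$ reduces to
\begin{align*}
m\,\big(n_0 n_1-n_0-n_1\big)=|c_0|\cdot \frac{\mathcal{N}}{\textstyle\prod_{\alpha\geq2}n_\alpha}.
\end{align*}
The decisive point is the factor $n_0n_1-n_0-n_1=1-(n_0-1)(n_1-1)$. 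It vanishes exactly when $n_0=n_1=2$; there the equation forces $c_0=0$ and one reads off directly that $k=2$, which is precisely the exceptional family $(G,v_{0,1},v_{1,1})$ of conclusion 1). Otherwise $n_0n_1-n_0-n_1>0$, and using $\mathcal{N}/\prod_{\alpha\geq2}n_\alpha=(n_0+n_1)+n_0n_1\sum_{\alpha\geq2}1/n_\alpha$ turns the relation into $m=\tfrac{|c_0|(n_0+n_1)}{n_0n_1-n_0-n_1}+\tfrac{n_0n_1}{n_0n_1-n_0-n_1}\sum_{\alpha\geq2}|c_0|/n_\alpha$. Since $\operatorname{lcm}(n_2,\dots,n_g)\mid|c_0|$ each term $|c_0|/n_\alpha\geq1$, and there are $g-1$ such strands, so together with $\tfrac{n_0n_1}{n_0n_1-n_0-n_1}>1$ this gives $m>g-1$, hence $k\geq g$.

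I expect the different-strand case to be the main obstacle: one must correctly identify $n_0n_1-n_0-n_1$ as the quantity controlling the order, recognize that its single zero locus $n_0=n_1=2$ is exactly the advertised exception, and check that the lower bound holds for \emph{every} admissible $c_0$ (noting $c_0\neq 0$ when $m>0$ in the nondegenerate case, so $|c_0|\geq \operatorname{lcm}(n_2,\dots,n_g)\geq 1$). The crucial structural input is the divisibility $\operatorname{lcm}(n_2,\dots,n_g)\mid c_0$, which converts the harmonic sum $\sum_{\alpha\geq2}1/n_\alpha$ into a genuine count of the $g-1$ remaining strands and thereby produces the linear-in-$g$ bound. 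The same-strand cases, by contrast, should be routine once the coordinate computation is set up.
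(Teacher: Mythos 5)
Your proposal is correct, and after the shared first step (invoking Theorem~\ref{thm-NSMForBanana} to reduce to the four marking configurations, which is exactly how the paper begins) it takes a genuinely different route. The paper derives its torsion bounds from divisor theory: for the markings $(v_{0,0},v_{0,n_0})$ and $(v_{0,0},v_{0,n_0-1})$ it computes initial segments of the transmission permutation of $D = gv_{0,n_0}$ via the $\Delta$-computations of Corollary~\ref{cor-BananaDeltaComps} (Lemmas~\ref{lem-TriangleInversionII} and~\ref{lem-BananOneOff}), and the periodicity $\tauD \in \eaf{k}$ then forces $k \geq g$, resp.\ $k > \frac{n_0}{n_0-1}g$; for the marking $(v_{0,1},v_{1,n_1-1})$ it argues directly with reduced divisors, showing $a(u-v)\not\sim 0$ for $1<a<g$ by a three-case analysis (Lemma~\ref{lem-bothOffTorOrder}). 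You instead compute the order of $[u-v]$ purely arithmetically in the presentation $\Z^{g+1}/K$ of Proposition~\ref{prop-JacBanana}, and your coordinate computations check out: the coordinates $\gamma\geq 1$ (resp.\ $\gamma\geq 2$) do force $c_\gamma = c_0/n_\gamma$, eliminating $c_1$ in the different-strand case does yield $m(n_0n_1-n_0-n_1) = |c_0|\bigl[(n_0+n_1)+n_0n_1\sum_{\gamma\geq 2}1/n_\gamma\bigr]$, and the divisibility $\operatorname{lcm}(n_2,\dots,n_g)\mid c_0$ converts the harmonic sum into a count of the $g-1$ remaining strands exactly as you say (the integrality constraint on $c_1$ is an extra necessary condition, so discarding it is legitimate for a lower bound). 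As for what each approach buys: the paper's permutation computations do double duty, since the same lemmas supply the inversion lower bounds behind Theorem~\ref{thm-quadInvGrowth}, so its torsion bounds come essentially for free from machinery needed anyway; your argument is more elementary and self-contained, gives the exact order $\sum_\alpha L/n_\alpha$ in the bi-multivalent case and the slightly sharper $k\geq g+1$ in the same-strand cases, and, most attractively, isolates \emph{structurally} why $n_0=n_1=2$ is the unique exception, as the vanishing locus of $n_0n_1-n_0-n_1$, where the paper's Lemma~\ref{lem-bothOffTorOrder} handles that case by ad hoc inspection. Two small repairs: the identity should read $n_0n_1-n_0-n_1=(n_0-1)(n_1-1)-1$ (you wrote its negative, though your subsequent sign claims already use the correct version); and in the different-strand case you should state $n_0,n_1\geq 2$ explicitly, since if say $n_0=1$ then $n_0n_1-n_0-n_1=-1<0$ and your positivity dichotomy fails --- this is harmless because $n_0=1$ would make $v_{0,1}$ a multivalent vertex, reducing to a same-strand configuration, and the paper makes the same standing assumption $n_0,n_1>1$ in the corresponding subsection.
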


As shown in \Cref{thm-NSMForBanana}, we need only consider a few cases in which all divisors are submodular. These cases are addressed in the following several subsections.

\subsubsection{$(g+1)-$valent markings}\label{subsub-g+1}
\begin{lemma}\label{lem-TriangleInversionII}
    With $(G,u,v) = (B, v_{0,0},v_{0,n_0})$, $D = g v_{0,n_0}$, and $\tau = \tauD$, for all $0 \leq b \leq g$ we have that
    \begin{align*}
        \tau(b) = g-b.
    \end{align*}
    As a consequence this yields that the $k \geq g$.
\end{lemma}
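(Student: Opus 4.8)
The plan is to read the transmission permutation directly off its defining relation $\delta(\tauD(b)=a)=\Delta(D+au-bv)$, and then extract the torsion bound from the periodicity of $\tauD$ as an element of $\eaf{k}$. First I would record that, since $u=v_{0,0}$ and $v=v_{0,n_0}$, every twist of $D=gv_{0,n_0}$ satisfies $D+au-bv = a\,v_{0,0}+(g-b)\,v_{0,n_0}$; as $a,b$ range over $\Z$ these are exactly the divisors $m\,v_{0,0}+n\,v_{0,n_0}$ with $m,n\in\Z$. A short case analysis using \Cref{cor-BanaRankComp} (together with the strand-reversing automorphism $v_{\alpha,i}\mapsto\overline{v_{\alpha,i}}$ to reduce to $m\ge n$) shows that $\Delta(m\,v_{0,0}+n\,v_{0,n_0})\ge 0$ for all $m,n$, so $D$ is submodular and $\tauD$ exists as a genuine bijection by \Cref{lem:tauChars}. (Existence of $\tauD$ is in any case presupposed by the statement.)

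The computation of $\tauD$ on $\{0,\dots,g\}$ is then immediate. For $0\le b\le g$ set $a=g-b$, so that $D+au-bv=(g-b)(v_{0,0}+v_{0,n_0})=(g-b)(u+v)$. Since $0\le g-b\le g$, part 1 of \Cref{cor-BananaDeltaComps} gives
$$\Delta\big((g-b)(u+v)\big)=\delta(g-b\le g)=1.$$
By the defining relation this forces $\tauD(b)=g-b$, as claimed. Because $\tauD$ is already known to be a well-defined permutation, the single value $a=g-b$ producing $\Delta=1$ is necessarily the value of $\tauD(b)$, so no further checking is needed.

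Finally I would deduce $k\ge g$ from periodicity. Since $ku\sim kv$ for the torsion order $k$, \Cref{lem:tauChars} gives $\tauD\in\eaf{k}$, i.e. $\tauD(n+k)=\tauD(n)+k$ for all $n$. Suppose toward a contradiction that $k\le g-1$. Then $b=k$ lies in $\{0,\dots,g\}$, so the formula just proved gives $\tauD(k)=g-k$; but periodicity together with $\tauD(0)=g$ gives $\tauD(k)=\tauD(0)+k=g+k$. Comparing these yields $g-k=g+k$, i.e. $k=0$, contradicting $k\ge 1$. Hence $k\ge g$. I expect the genuinely delicate point to be not the rank arithmetic but the bookkeeping around existence: one must confirm submodularity of $D$ so that $\tauD$ is defined and \Cref{lem:tauChars} applies before the periodicity argument can be invoked. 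The rank identities themselves are routine given \Cref{cor-BananaDeltaComps} and \Cref{cor-BanaRankComp}.
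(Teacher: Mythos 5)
Your proof is correct and takes essentially the same approach as the paper's: both read $\tauD(b)=g-b$ off the computation $\Delta\bigl(D+(g-b)u-bv\bigr)=\Delta\bigl((g-b)(u+v)\bigr)=1$ via part 1) of \Cref{cor-BananaDeltaComps} (the paper indexes the identical calculation as $\Delta(D+bu-(g-b)v)=\Delta(b(u+v))=1$), and both deduce $k\geq g$ from $\tauD\in\eaf{k}$. The only differences are cosmetic: you make explicit the periodicity contradiction and the submodularity bookkeeping that the paper leaves implicit.
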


\begin{proof}
    This follows fairly directly from \Cref{cor-BananaDeltaComps} part $1)$ as follows
    \begin{align*}
        \Delta(D+bu-(g-b)v) &= \Delta(bu+bv) = 1.
    \end{align*}
    The torsion order consequence follows from the fact that $\tau \in \eaf{k}$.
\end{proof}

\begin{proposition}\label{prop-TriangleInversionNumber}
    With $(G,u,v)$ as above, then $M \geq \binom{g+1}{2}$.
\end{proposition}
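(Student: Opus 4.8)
The plan is to exhibit a \emph{single} divisor whose transmission permutation already has at least $\binom{g+1}{2}$ distinct $k$-inversions. Since $M$ is by definition the maximum of $\inv_k$ over all divisors on $(G,u,v)$, producing one such divisor suffices, and the natural choice is the divisor $D = g v_{0,n_0}$ already fixed in the setup, whose permutation $\tau = \tauD$ was partially computed in \Cref{lem-TriangleInversionII}.

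First I would invoke \Cref{lem-TriangleInversionII}, which gives the explicit values $\tau(b) = g - b$ for all $0 \leq b \leq g$, and which also records that the torsion order satisfies $k \geq g$. From these values, for any pair of integers $a,b$ with $0 \leq a < b \leq g$ we have $\tau(a) = g - a > g - b = \tau(b)$, so $(a,b)$ is an inversion of $\tau$ in the sense of \Cref{def-inv}. There are exactly $\binom{g+1}{2}$ such pairs, and these will be our candidate inversions.

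The key step is to check that these $\binom{g+1}{2}$ inversions lie in \emph{pairwise distinct} $k$-equivalence classes, so that they genuinely contribute $\binom{g+1}{2}$ to $\inv_k(\tau)$ rather than collapsing under $k$-periodicity. Suppose $(a,b)$ and $(a',b')$ are two pairs in our family that are $k$-equivalent, i.e.\ $a - a' = b - b'$ and $k \mid (a - a')$. The decisive observation is that the first coordinate of any inversion in this family satisfies $0 \leq a \leq g-1$, because we require $a < b \leq g$; hence $|a - a'| \leq g - 1 < g \leq k$, and combined with $k \mid (a - a')$ this forces $a = a'$, whence $b = b'$. Thus the $\binom{g+1}{2}$ inversions represent distinct $k$-inversions, giving $\inv_k(\tau) \geq \binom{g+1}{2}$ and therefore $M \geq \binom{g+1}{2}$.

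The main subtlety, and the only place the argument is delicate, is the boundary case in which the torsion order is \emph{exactly} $k = g$ rather than strictly larger: there the bound $|a - a'| \leq g = k$ would a priori permit identifying pairs whose first coordinates are $0$ and $g$. This is precisely why it is essential that the first coordinates range only over $\{0, \dots, g-1\}$ and never reach $g$, keeping $|a - a'|$ strictly below $k$ even when $k = g$. Once this is observed the distinctness, and hence the proposition, follows immediately.
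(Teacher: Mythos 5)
Your proof is correct and follows essentially the same route as the paper's: both use the divisor $D = g v_{0,n_0}$, invoke \Cref{lem-TriangleInversionII} to get $\tau(b) = g-b$ on $\{0,\dots,g\}$, and count the $\binom{g+1}{2}$ resulting inversions as distinct $k$-inversions. Your explicit verification that these pairs lie in pairwise distinct $k$-equivalence classes (via $|a-a'| \leq g-1 < k$, using the first coordinates only ranging over $\{0,\dots,g-1\}$) is a detail the paper leaves implicit, and your boundary-case worry at $k=g$ is handled correctly — indeed one can check that the periodicity $\tau(n+k)=\tau(n)+k$ together with the lemma's values actually forces $k > g$, so the delicate case never arises.
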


\begin{proof}
    By \Cref{lem-TriangleInversionII}, the permutation associated to the divisor $D$ has at least $\binom{g+1}{2}$ $k-$inversions given by the pairs $\set{(i,j):i,j \in \set{0,\dots,g}, i<j}$.
\end{proof}

\begin{remark}
    As a consequence this entirely completes the picture for describing $k$-general transmission in genus $2$ which we proved except for the above case in \Cref{thm:g2general}. By the above proposition $M\geq 3$, ruling out $k$-general transmission in such cases as well.
\end{remark}

\subsubsection{$v_{0,0},v_{0,n_0-1}$ markings}
Note that in order for such markings to make sense we require $n_0>1$. To make some results easier to state we use $f(g)$ to denote $\floor*{ \frac{g}{n_0-1}}$ in this section.

\begin{lemma}\label{lem-BananOneOff}
    Let $(G,u,v) = (G,v_{0,0},v_{0,n_0-1})$, $D = gv_{0,n_0}$ and $\tau = \tauD$. If $0\leq b \leq \frac{n_0}{n_0-1}g$ then
    \begin{align*}
        \tau(b) = \begin{cases}\frac{b}{n_0} & b \equiv 0 \mod n_0\\
        g+\frac{b+1}{n_0} & b \equiv -1 \mod n_0\\
        g + 2 \left\lfloor \frac{b}{n_0} \right\rfloor -b + 1 & b \not\equiv 0,-1 \mod n_0
        \end{cases}.
    \end{align*}
    As a consequence, we get that $k > \frac{n_0}{n_0-1}g$.
\end{lemma}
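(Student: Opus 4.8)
The plan is to read off each value $\tau(b)$ directly from the defining identity $\delta(\tau(b)=a)=\Delta(D+au-bv)$ of the transmission permutation. Since $\tau$ exists by hypothesis (so $D$ is submodular) and is a bijection, $\Delta$ takes only the values $0$ and $1$ on twists of $D$, equalling $1$ at exactly one $a$ for each $b$; hence it suffices, for each $b$ in the stated range, to exhibit the claimed $a=\tau(b)$ and verify $\Delta(D+au-bv)=1$. Writing the twist as $T_b=gv_{0,n_0}+av_{0,0}-bv_{0,n_0-1}$, the whole computation reduces to identifying the class of $T_b$ and applying \Cref{cor-BananaDeltaComps}(2), whose three displayed values are exactly tailored to the markings $(v_{0,0},v_{0,n_0-1})$.

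The main work is the case analysis on $b\bmod n_0$, carried out by reducing $T_b$ along strand $0$. The equivalences I use repeatedly are $v_{0,n_0-1}\sim v_{0,0}+v_{0,n_0}-v_{0,1}$ (from the chip-firing relations in \Cref{lem:g12}) and $c(v_{0,1}-v_{0,0})\sim v_{0,c}-v_{0,0}$ together with its wrap-around $n_0v_{0,1}\sim v_{0,n_0}+(n_0-1)v_{0,0}$ (from \eqref{eq:multDiff}). Substituting these and collecting terms, I expect: for $b=n_0q$ one finds $T_b\sim(g-q(n_0-1))\,v_{0,n_0}$, a pure multivalent divisor, so the first identity of \Cref{cor-BananaDeltaComps}(2) gives $\Delta=1$ (the degenerate subcase $g-q(n_0-1)=0$ being $\Delta(0)=1$); for $b=n_0q-1$ one finds $T_b\sim g\,v_{0,0}+(g-q(n_0-1))\,v_{0,n_0}+v$, so the third identity gives $\Delta=\delta(g=g)=1$; and for $b=n_0q+s$ with $1\le s\le n_0-2$ one finds $T_b\sim b'(v_{0,0}+v_{0,n_0})+v_{0,s}$ with $b'=g-s-q(n_0-1)$, so the second identity gives $\Delta=1$. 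In each case the hypothesis $b\le\frac{n_0}{n_0-1}g$ is exactly what keeps the relevant parameter in range (it forces $0\le g-q(n_0-1)\le g$ and $0\le b'\le g-1$), which is the only place the upper bound on $b$ enters.

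For the torsion consequence I will use that $\tau\in\eaf{k}$ and $\tau(0)=0$. If $k\le\frac{n_0}{n_0-1}g$, then $k$ lies in the range just analysed, and periodicity forces $\tau(k)=\tau(0)+k=k$, so $k$ must be a fixed point of the explicit formula. The residues $k\equiv0$ and $k\equiv-1$ are impossible (the first gives $k=0$, the second forces $k=\frac{n_0g+1}{n_0-1}>\frac{n_0}{n_0-1}g$), so necessarily $k\equiv s\pmod{n_0}$ with $1\le s\le n_0-2$. The point to flag, and the main obstacle, is that the formula genuinely \emph{has} fixed points of this third type, so a fixed-point-freeness argument is not available; instead I will test periodicity at the \emph{second} point $b=k+1$ (still in range), where $\tau(k+1)=\tau(1)+k$ must hold. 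Substituting $\tau(1)=g$ and the case-2/case-3 expression for $\tau(k+1)$ yields $q=-s/(n_0-1)<0$ (or, when $s=n_0-2$, forces $g\le 1$), contradicting $q=\lfloor k/n_0\rfloor\ge 0$. The one delicate verification is that $k+1$ still lies in $[0,\frac{n_0}{n_0-1}g]$; in the small corner $g<n_0-1$ the only integer fixed point has $q=0$, $k=\tfrac{g+1}{2}$ (so $g\ge3$ is odd), and there the direct estimate $k+1=\tfrac{g+3}{2}\le g<\frac{n_0}{n_0-1}g$ closes the argument. Hence no $k\le\frac{n_0}{n_0-1}g$ can occur, giving $k>\frac{n_0}{n_0-1}g$.
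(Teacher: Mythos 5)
Your verification of the displayed formula is essentially the paper's proof: the paper likewise reduces the twist $D+au-bv$ along strand $0$ and invokes \Cref{cor-BananaDeltaComps}(2), arriving at the same three normal forms you predict, namely $(g-m(n_0-1))v_{0,n_0}$ for $b=mn_0$, $gv_{0,0}+v+(g-m(n_0-1))v_{0,n_0}$ for $b=mn_0-1$, and $(g+m-b+1)(v_{0,0}+v_{0,n_0})+v_{0,n}$ for $b=mn_0+n$ (your $b'=g-s-q(n_0-1)$ equals the paper's $g-(b-m)+1$ coefficient; your version in fact cleans up a visible typo in the paper's last display), and your range bookkeeping ($0\le g-q(n_0-1)\le g$, $0\le b'\le g-1$, with $\Delta(0)=1$ in the degenerate subcase) is the same check the paper leaves implicit. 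Where you genuinely diverge is the torsion consequence: the paper gives no argument at all for $k>\frac{n_0}{n_0-1}g$ (the analogous \Cref{lem-TriangleInversionII} merely says the bound ``follows from $\tau\in\eaf{k}$''), and you correctly observe that the naive reading --- periodicity forces $\tau(k)=k$, which the formula forbids --- breaks down, since the third branch really does have fixed points in the window (e.g.\ $\tau(6)=6$ for the $n_0=5$, $g=9$ graph of \Cref{fig:one-off-perm}, whose torsion order is $91$). Your repair via the second periodicity constraint $\tau(k+1)=\tau(1)+k=g+k$ is correct: in the case-3-at-$k+1$ situation it gives $q=k$, hence $q(1-n_0)=s$ and $q<0$, absurd; in the case-2-at-$k+1$ situation ($s=n_0-2$) it gives $k=q+1$, which with the fixed-point relation $2k=g+2q+1$ forces $g=1$, against the section's standing assumption $g\ge 2$; and the residues $s=0,-1$ are ruled out directly (this also disposes of $n_0=2$, where case 3 is vacuous). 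The one line you should add: outside your ``small corner'' $g<n_0-1$, membership of $k+1$ in the window is not automatic from $k\le\frac{n_0}{n_0-1}g$ alone, but it does follow from the fixed-point relation, which yields $k=\frac{g+1}{2}+q\le\frac{g+1}{2}+\frac{g}{n_0-1}$ and hence $k+1\le g+\frac{g}{n_0-1}=\frac{n_0}{n_0-1}g$ once $g\ge 3$, while $g=2$ is already excluded by the parity of $2k=g+2q+1$. With that sentence included, your argument is complete, and it supplies a proof of the consequence that the paper only asserts.
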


This lemma says that the for sufficiently small values of $b$, $\tau(b)$ is very well described by the residues of $b$ modulo $n_0$. In \Cref{fig:one-off-perm} the lemma describes the behavior of the a permutation for $0 \leq b \leq 12$.

\begin{figure}
    \centering
    \includegraphics[width = 10 cm]{PermGraphLegendBorder.pdf}
    \caption{The permutation $\tau_{gv_{0,n_0}}^{u,v}$ on the graph $(B_{5,4,4,3,3,3,3,3,3,3},v_{0,0},v_{0,n_0-1})$, a graph with genus $9$ and torsion order $91$.  Values are colored by residues modulo $n_0$ as indicated in the legend. The values predicted by \Cref{lem-BananOneOff} are those where $b\leq 12$ which are well sorted by color. Notice also that although the torsion order is $91$, the permutation appears ``quasiperiodic'' at greater frequency, in a manner we do not attempt to define precisely.
    \label{fig:one-off-perm}}
\end{figure}

\begin{proof}
    This essentially all follows from part $2)$ of \Cref{cor-BananaDeltaComps}. If $b = mn_0$ then
    \begin{align*}
        \Delta(D+mu-mn_0v) &= \Delta((g-(n_0-1)m)v_{0,n_0}) = 1.\\
    \end{align*}

    Now suppose $b = mn_0 -1$. Then we have
    \begin{align*}
        \Delta(D+(g+m)u-(mn_0-1)v) &= \Delta(gv_{0,0}+v+(g-m(n_0)-1)v_{0,n_0}) = 1.
    \end{align*}

    Lastly, suppose that $b = mn_0 + n$ with $0<n<n_0-1$ and $m = \left\lfloor \frac{b}{n_0} \right\rfloor$. Then the claim is that $\tau(b) = g + 2m - b + 1$, and we can compute
    \begin{align*}
        \Delta(D + (g + 2m - b +1)u - bv) & = \Delta((g+m-b + n)v_{0,n_0} + (g+m-b+1)u-(b-mn_0)v)\\
        &= \Delta((g-(b-m))v_{0,0} + (g-(b-m))v_{0,0} + v_{0,n}) = 1.
    \end{align*}
\end{proof}

\begin{proposition}
    With the same notation as above, $M \geq \binom{(n_0-2)(f(g))}{2}$.
\end{proposition}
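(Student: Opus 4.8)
The plan is to extract a large family of inversions of $\tau = \tauD$ from the third (generic) case of \Cref{lem-BananOneOff}, by locating a set of inputs on which $\tau$ is strictly decreasing. First I would restrict attention to the inputs $b$ with $b \not\equiv 0,-1 \pmod{n_0}$; writing such a $b$ as $b = mn_0 + n$ with $1 \le n \le n_0-2$, the generic case of \Cref{lem-BananOneOff} gives $\tau(b) = g + 2m - b + 1 = g + 1 - \big(n + m(n_0-2)\big)$. The key observation is that, as $b$ runs through these inputs in increasing order, the quantity $n + m(n_0-2)$ runs through the positive integers consecutively: within the block of fixed $m$ it takes the values $m(n_0-2)+1,\dots,(m+1)(n_0-2)$. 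Hence if $b_1 < b_2 < \cdots$ are these inputs in increasing order, then $\tau(b_j) = g+1-j$, so $\tau$ is strictly decreasing along the $b_j$ and every pair $b_i < b_j$ is an inversion of $\tau$.

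Next I would count how many of these inputs lie in the range $0 \le b \le \frac{n_0}{n_0-1}g$ on which \Cref{lem-BananOneOff} is valid. Taking the blocks $m = 0,1,\dots,f(g)-1$, each contributes the $n_0-2$ inputs $mn_0+1,\dots,mn_0+(n_0-2)$, giving $(n_0-2)f(g)$ inputs in all. The largest of these is $(f(g)-1)n_0 + (n_0-2) = f(g)\,n_0 - 2$, and since $f(g) = \lfloor g/(n_0-1)\rfloor \le g/(n_0-1)$ we get $f(g)\,n_0 - 2 \le \frac{n_0}{n_0-1}g - 2 < \frac{n_0}{n_0-1}g$, so all of them lie in the valid range and the formula for $\tau$ applies to each. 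Thus $b_1,\dots,b_{(n_0-2)f(g)}$ are legitimate inputs, and the $\binom{(n_0-2)f(g)}{2}$ pairs among them are honest inversions of $\tau$.

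The last step, and the point deserving the most care, is to check that these inversions represent \emph{distinct} $k$-equivalence classes, so that they genuinely contribute $\binom{(n_0-2)f(g)}{2}$ to $\inv_k(\tau)$ rather than collapsing. Here the torsion bound $k > \frac{n_0}{n_0-1}g$ supplied by \Cref{lem-BananOneOff} is exactly what is needed: two inversions $(b_i,b_j)$ and $(b_{i'},b_{j'})$ are $k$-equivalent only if $b_i - b_{i'} = b_j - b_{j'} \equiv 0 \pmod k$, but each $b_\ell$ lies in an interval of length less than $\frac{n_0}{n_0-1}g < k$, which forces $b_i = b_{i'}$ and $b_j = b_{j'}$. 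So distinct pairs yield distinct $k$-inversions, whence $\inv_k(\tau) \ge \binom{(n_0-2)f(g)}{2}$ and therefore $M \ge \inv_k(\tau) \ge \binom{(n_0-2)f(g)}{2}$, as claimed. I expect the main obstacle not to be any single computation but precisely this bookkeeping of $k$-equivalence classes: the same torsion lower bound both certifies that the explicit formula for $\tau$ holds on a long enough initial range and that the resulting abundance of ordinary inversions survives intact after passing to $k$-inversions.
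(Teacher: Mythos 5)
Your proof is correct and takes essentially the same route as the paper's: both extract the inversions from the generic case of \Cref{lem-BananOneOff} on the inputs $b \not\equiv 0,-1 \pmod{n_0}$ with $0 \le b \le \frac{n_0}{n_0-1}g$, count at least $(n_0-2)f(g)$ such inputs, and rely on the torsion bound $k > \frac{n_0}{n_0-1}g$ so that the resulting $\binom{(n_0-2)f(g)}{2}$ pairs are pairwise $k$-inequivalent. The paper's proof is a two-line gloss asserting that ``every pair of integers in the set gives a distinct $k$-inversion''; your write-up supplies exactly the details it leaves implicit, namely the strict monotonicity $\tau(b_j) = g+1-j$ and the $k$-equivalence bookkeeping.
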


\begin{proof}
    By \Cref{lem-BananOneOff}, every pair of integers in the set $\set{b:0<b<\frac{n_0}{n_0-1}g, b \not \equiv 0,-1\mod n_0}$ gives a distinct $k$-inversion. The number of elements in this set is lower bounded by $(n_0-2)f(g)$ thus giving the result.
\end{proof}

We can improve this bound with a more careful analysis of the implications of \Cref{lem-BananOneOff}.

\begin{proposition}\label{prop-oneOffNotGeneral}
    With the same notation as above, and letting $h(g) = f(g)(n_0-2) + \min{n_0-2,f(n_0g)-n_0f(g)}$,
    \begin{align*}
        M \geq \binom{f(g)+1}{2} + f(g)h(g) + \binom{h(g)}{2}.
    \end{align*}
\end{proposition}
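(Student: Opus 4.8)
The plan is to count ordinary inversions among the positions whose $\tau$-values are pinned down by \Cref{lem-BananOneOff}, and then to argue that these ordinary inversions represent distinct $k$-inversions. First I would fix $W := \lfloor \tfrac{n_0}{n_0-1}g\rfloor = f(n_0 g)$ and recall that \Cref{lem-BananOneOff} both computes $\tau(b)$ for all $0 \le b \le W$ and yields $k > \tfrac{n_0}{n_0-1}g \ge W$. Because this window has length less than $k$, if two inversions with all coordinates in $[0,W]$ were $k$-equivalent their common horizontal shift would be a nonzero multiple of $k$ lying in an interval of length $<k$, which is impossible; hence distinct inversions inside the window give distinct classes in $\Inv_k(\tau)$. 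Since $M \ge \inv_k(\tau)$, it suffices to exhibit the claimed number of ordinary inversions among the positions $0 \le b \le W$.

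Next I would sort these positions by residue modulo $n_0$ into the three families of \Cref{lem-BananOneOff}: the low positions $A$ (those $b\equiv 0$, with value $b/n_0$), the high positions $B$ (those $b \equiv -1$, with value $g+(b+1)/n_0$), and the middle positions $C$ (all other residues, with value $g+1+2\lfloor b/n_0\rfloor - b$). A short count against the range bound $b \le W = f(n_0 g)$ gives $|A| = f(g)+1$ and $|B| = f(g)$, while the middle family has $n_0-2$ positions in each full block together with a truncated final block of size $f(n_0 g) - n_0 f(g)$; this is exactly where the quantity in the definition of $h(g)$ enters, giving $|C| = h(g)$. I would also record the value ordering, which makes the inversion counts purely combinatorial: every $A$-value is at most $f(g)$, every $C$-value lies in $\{f(g)+1,\dots,g\}$, and every $B$-value is at least $g+1$, so across two distinct families the order of values is determined by the families alone.

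The inversions then split into three disjoint groups matching the three terms. Within $C$, the formula $\tau(b) = g+1+2\lfloor b/n_0\rfloor - b$ decreases by exactly $1$ at each successive $C$-position (both within a block and across a block boundary), so $C$ is reverse-sorted and every pair is an inversion, contributing $\binom{h(g)}{2}$. Between $A$ and $B$, an $(A,B)$ pair inverts precisely when the $B$-position lies to the left, i.e. when the block indices satisfy $m_B \le m_A$, and summing over $A$ gives $\sum_{m_A=0}^{f(g)} m_A = \binom{f(g)+1}{2}$. Between $C$ and $A\cup B$, the clean observation is that each fixed $C$-position in block $m$ inverts with exactly $f(g)$ members of $A\cup B$: the $f(g)-m$ many $A$-positions in later blocks (smaller values, to the right) and the $m$ many $B$-positions of index at most $m$ (larger values, to the left), for a total of $f(g)$ independent of $m$; hence this group contributes $|C|\cdot f(g) = f(g)h(g)$.

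Adding the three disjoint contributions produces exactly $\binom{f(g)+1}{2} + f(g)h(g) + \binom{h(g)}{2}$ distinct $k$-inversions of $\tau$, which is the asserted lower bound on $M$. I expect the only real friction to be bookkeeping, concentrated in two spots: verifying $|C| = h(g)$, which rests on the elementary identity $f(n_0g) - n_0 f(g) = g \bmod (n_0-1)$ (so that the $\min$ in the definition of $h(g)$ is always achieved by its second argument) together with care about the size of the truncated last block; and confirming the ``exactly $f(g)$ per $C$-position'' count, which is the observation responsible for the middle term collapsing so neatly to $f(g)h(g)$. An off-by-one in the per-block cardinalities would contaminate all three terms, so that is the computation I would pin down most carefully.
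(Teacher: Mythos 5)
Your proposal is correct and takes essentially the same approach as the paper's proof: both restrict to the initial window where \Cref{lem-BananOneOff} pins down $\tau$, split those positions into the three residue families modulo $n_0$, count the inversions among them family-by-family, and observe that these represent distinct $k$-inversions, arriving at the identical total $\binom{f(g)+1}{2}+f(g)h(g)+\binom{h(g)}{2}$. The only differences are bookkeeping (you include $b=0$ and count the cross-family inversions directly, e.g.\ the exactly-$f(g)$-per-$C$-position observation, whereas the paper's four-way intersection count produces cancelling $(n_0-2)\binom{f(g)+1}{2}$ terms), and you make explicit two points the paper leaves implicit: that inversions confined to a window of length less than $k$ lie in distinct $k$-equivalence classes, and that $f(n_0g)-n_0f(g)=g \bmod (n_0-1)$, so the $\min$ in the definition of $h(g)$ is always achieved by its second argument.
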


The idea for this proof is to split the inversions predicted by \Cref{lem-BananOneOff} into four different types depending on the residues of each coordinate of each inversion. In the notation of \Cref{fig:one-off-perm} these types are 
\begin{align*}
    (\text{light blue}, \text{yellow}),(\text{light blue}, \text{dark blue}),(\text{dark blue}, \text{yellow}),(\text{dark blue}, \text{dark blue}). 
\end{align*}

\begin{proof}
    First note that if $X = \set{b \in \Z: 0<b\leq \frac{n_0}{n_0-1}g}$ then
    \begin{align*}
        A:= \set{b \in X:b \equiv 0 \mod n_0}, \quad \#A &= f(g);\\
        B:= \set{b \in X: b \equiv -1 \mod n_0, b \neq \floor*{\frac{n_0g}{n_0-1}}}, \quad \#B &= f(g);\\
        C:= \set{b \in X:b\not \equiv 0,-1 \mod n_0}, \quad \#C &= f(g)(n_0-2) + \min{n_0-2,f(n_0g) -n_0 f(g)} = h(g).
    \end{align*}
    We are aiming to lower bound $\inv_k(\tau)$ by counting elements of $I:=\Inv_k(\tau) \cap (X \times X)$. We can write this latter set as the disjoint union:
    \begin{align*}
        I = [I\cap (B\times A)] \cup [I \cap (B\times C)]\cup [I\cap (C\times A)] \cup [I \cap (C\times C)].
    \end{align*}
    It is a straightforward computation using \Cref{lem-BananOneOff} that
    \begin{align*}
        I \cap (B\times A) = \set{(b,b') \in B\times A:b<b'}; \quad I \cap (B\times C) = \set{(b,b') \in B\times C:b<b'};\\
        I \cap (C\times A) = \set{(b,b') \in C\times A:b<b'}; \quad 
        I \cap (C\times C) = \set{(b,b') \in C\times C:b<b'}.
    \end{align*}
    Another simple counting argument shows:
    \begin{align*}
    \begin{split}
        \#(I \cap (B\times A)) &= \binom{\#A+1}{2};\\
        \#(I \cap (C\times A)) &= (n_0-2)\binom{\#A+1}{2};
    \end{split}
    \begin{split}
        \#I \cap (B \times C) &= \#B\#C - (n_0-2)\binom{\#B+1}{2};\\
        \#I \cap (C\times C) &=\binom{\#C}{2}.
    \end{split}
    \end{align*}
    Putting this all together, we get that
    \begin{align*}
        \#I = \binom{\#A+1}{2} + \#B\#C + \binom{\#C}{2}.
    \end{align*}
\end{proof}

Note that this implies that $M$ is always lower bounded by function which is quadratic in $g$. This is fairly easy to see when $f(g) >0$. If $f(g) = 0$, i.e. $g<n_0 -1$, then
\begin{align*}
    h(g) = \min{n_0-2,f(n_0g)} \geq \min{g-1,\frac{n_0}{n_0-1}g-1} = g-1.
\end{align*}

\begin{example}
    Examining the permutation in \Cref{fig:one-off-perm}, the proposition says that
\begin{align*}
    \inv_k(\tauD) \geq \#I = \binom{3}{2} + 2*7 + \binom{7}{2} = 38.
\end{align*}
This lower bound certainly exceeds the genus of $9$, but still misses most of the $217$ total $k$-inversions of this permutation.
\end{example}

\subsubsection{$v_{0,1},v_{1,n_1-1}$ markings}\label{subsub-bothOff}
The last class of markings we look at is least well behaved of the three since neither of our two marked points are the high valence vertices. In this context we are assuming that $n_0,n_1>1$.

We first give a lower bound on the torsion order for these markings.
\begin{lemma}\label{lem-bothOffTorOrder}
    The torsion order $k$ of $(G,v_{0,1},v_{1,n_1-1})$ is at least $g$ unless $n_0 = n_1 = 2$ in which case $k = 2$.
\end{lemma}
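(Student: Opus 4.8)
The plan is to dispatch the degenerate case $n_0 = n_1 = 2$ by a direct chip-firing argument, and to treat all remaining cases by computing enough values of a single transmission permutation to force a lower bound on its period, exactly in the style of the proofs of \Cref{lem-TriangleInversionII} and \Cref{lem-BananOneOff}.

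First I would handle the exceptional case. Suppose $n_0 = n_1 = 2$, so that $u = v_{0,1}$ and $v = v_{1,1}$ are the midpoints of two length-$2$ strands. Chip-firing $v_{0,1}$ gives $2v_{0,1} \sim v_{0,0} + v_{0,2} = v_{0,0} + v_{0,n_0}$, and chip-firing $v_{1,1}$ gives $2v_{1,1} \sim v_{1,0} + v_{1,2} = v_{0,0} + v_{0,n_0}$, using $v_{1,0} = v_{0,0}$ and $v_{1,2} = v_{1,n_1} = v_{0,n_0}$. Hence $2u \sim 2v$, so $k \mid 2$; since $u \neq v$ and $G$ is bridgeless, \Cref{lem-bridgelessFacts} gives $u \not\sim v$, so $k \neq 1$ and therefore $k = 2$, as claimed.

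For the general case I would set $D = g v_{0,n_0}$ and $\tau = \tau_D$, which lies in $\eaf{k}$ by \Cref{lem:tauChars}. Using the defining relation $\delta(\tau(b) = a) = \Delta(D + au - bv)$ together with the $\Delta$-evaluations of \Cref{cor-BananaDeltaComps}(3), the aim is to compute $\tau(b)$ for $b$ ranging over a window of $g$ consecutive integers, say $0 \le b \le g-1$, and to show that on this window $\tau$ is strictly decreasing (precisely as $\tau(b) = g-b$ is in the hub case of \Cref{lem-TriangleInversionII}). Granting this, the conclusion follows from periodicity alone: if the period satisfied $k \le g-1$, then both $0$ and $k$ would lie in the window, and membership in $\eaf{k}$ would force $\tau(k) = \tau(0) + k > \tau(0)$, contradicting $\tau(k) < \tau(0)$. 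Hence $k \ge g$.

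The main obstacle is the middle step, namely evaluating $\tau$ uniformly across the window. For each $b$ one must locate the unique $a$ with $\Delta(g v_{0,n_0} + a v_{0,1} - b v_{1,n_1-1}) = 1$, which amounts to recognizing the twist $D + au - bv$, via the Jacobian identification of \Cref{prop-JacBanana}, as one of the canonical effective forms appearing in \Cref{cor-BananaDeltaComps}(3) (such as $b'v_{0,n_0} + v_{0,n}$ or $a'(v_{0,0}+v_{0,n_0}) + v_{0,m} + v_{1,n}$) and then reading off $\Delta$. Carrying out this matching consistently over the whole window, and checking that the resulting values are genuinely strictly monotone, is the crux of the argument; it is also exactly here that the hypothesis $(n_0,n_1) \neq (2,2)$ is used, since when $n_0 = n_1 = 2$ the strictly monotone window collapses to length one, in agreement with the period $k = 2$ established above. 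I would not attempt a purely Jacobian-theoretic computation of the order of $[u-v]$ directly, since the exact order depends delicately on divisibility relations among all of $n_0,\dots,n_g$ and does not admit a clean closed form, whereas the periodicity argument only needs local information about the two marked strands.
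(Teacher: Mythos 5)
Your handling of the degenerate case $n_0=n_1=2$ is correct, but the main case contains a genuine gap: the central step you defer --- that $\tau_{gv_{0,n_0}}$ is strictly decreasing on a window of $g$ consecutive integers --- is not merely the hard part, it is false for general $n_0,n_1$. The paper's own computations for exactly this marking show why: \Cref{lem-topOffBottomOffSimple} establishes the decreasing stretch $\tau(b)=g-b+2$ only for $\max\{2,\,g+2-n_0\}\le b\le\min\{g-1,\,n_1-2\}$, which is nonempty only when $n_0$ and $n_1$ are large relative to $g$, and even in the best case contains at most $g-2$ integers, not $g$; and \Cref{lem-topOffBottomOff} shows that in general $\tau$ splits into branches according to the residue of $b$ modulo $n_1$, two of which are \emph{increasing} in $b$ (namely $\tau(b)=\frac{b}{n_1}+1$ for $b\equiv 0$ and $\tau(b)=g+\frac{b+1}{n_1}$ for $b\equiv -1 \pmod{n_1}$). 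For instance with $n_1=3$ one gets values like $\tau(3)=2$, $\tau(2)=g+1$, $\tau(5)=g+2$, $\tau(6)=3$ interleaved inside any window of length $g\geq 3$, so no strictly decreasing window of length $g$ exists; yet the lemma must cover all $n_0,n_1\geq 2$ with $(n_0,n_1)\neq(2,2)$, including small strand lengths. A secondary problem is that your argument presupposes that the transmission permutation of $D=gv_{0,n_0}$ exists, i.e., that all its twists are submodular; the lemma as stated carries no submodularity hypothesis, and for these markings submodularity is precisely what is under investigation in this part of the paper.

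The paper instead carries out the direct computation you declined, and your stated reason for avoiding it does not apply: one never needs a closed form for the order of $[u-v]$, only the lower bound, which amounts to showing $a(u-v)\not\sim 0$ for $1<a<g$ (the case $a=1$ following from \Cref{lem-bridgelessFacts}). Writing $a=q_0n_0+r_0=q_1n_1+r_1$ and sliding chips along the two marked strands, the paper reduces $a(u-v)$ to $mv_{0,0}-nv_{0,n_0}+r_0v_{0,1}-r_1v_{1,n_1-1}$ with $m=q_0(n_0-1)-q_1$ and $n=q_1(n_1-1)-q_0$, then checks in three cases according to the sign of $n$ (passing to $-a(u-v)$ when convenient) that a suitable $v_{0,0}$-, $v_{0,n_0}$-, or $v_{0,1}$-reduced representative is ineffective, the bound $a<g$ keeping the coefficient at the reduction vertex small enough that Dhar's burning algorithm certifies reducedness. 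This uses only the local data of the two marked strand lengths, exactly as you hoped, but via reduced divisors rather than transmission permutations --- and it requires no submodularity at all.
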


\begin{proof}
    The case where $n_0 = n_1 = 2$ is straightforward so suppose this does not hold. Take $a \in \Z$ such that $1<a<g$. We aim to show that $a(v_{0,1}-v_{1,n_1-1})$ is not equivalent to the zero divisor. We use throughout that $D \sim 0$ if and only if $-D \sim 0$. Let $a = q_0n_0+r_0 = q_1n_1+r_1$ where $q_i,r_i \in \N^0$ and $0\leq r_i < n_i$. Let $ m = q_0(n_0-1)-q_1, n = q_1(n_1-1)-q_0$ Then we have that
    \begin{align*}
        a(v_{0,1}-v_{1,n_1-1}) \sim mv_{0,0}-nv_{0,n_0}+r_0v_{0,1}-r_1v_{1,n_1-1}.
    \end{align*}
    \textbf{Case I:}
    If $n>0$ then we have that $a(u-v)$ has rank no greater than that of
    \begin{align*}
        mv_{0,0}-nv_{0,n_0}+r_0v_{0,1} \sim (m+r_0-1)v_{0,0}-nv_{0,n_0}+v_{0,r_0}
    \end{align*}
    Since $m+r_0-1 \leq a - 1 <g-1$ this is $v_{0,n_0}$-reduced and not effective, thus our original divisor was not effective. 
    \textbf{Case II:}
    On the other hand if $n<0$ then we can see that $-a(u-v)$ has rank no greater than that of
    \begin{align*}
        -mv_{0,0} + (n+r-1)v_{0,n_0} + v_{1,n_1-r}
    \end{align*}
    Since $n+m \geq 0$ and $n<0$ we have that $-m<0$ and by a symmetric argument as above $n+r-1 <g-1$ so this divisor is ineffective and $v_{0,0}$ reduced.\\
    \textbf{Case III:}
    This leaves the remaining case of $n = 0$. Note that this implies that $m\geq 0$. Thus either both $q_0$ and $q_1$ are zero or neither is. If $n_0 = n_1$ then $q_0 = q_1$ so we get that $q_0(n_0-2) = 0$. Since we are assuming that $n_0 \neq 2$ this implies that $q_0 = 0$, and thus $r_0 = r_1\geq 1$. So we have that $a(u-v)$ has rank no greater than that of
    \begin{align*}
        r_0v_{0,1}-v_{0,n_1-1} \sim (r_0-1)v_{0,0}+v_{0,r_0} -v_{0,n_1-1}.
    \end{align*}
    Because $r_0-1 < a<g$ this is $v_{0,n_1-1}$-reduced and ineffective. Lastly we consider if $n_0 \neq n_1$. Note that if $r_0 = r_1 = 0$ then this forces $q_0,q_1>0$ and $m = n$ as well, so we have $q_1(n_1-2) = q_0(n_0-2)$ which is contradiction since $q_0 \neq q_1$. Thus it must be the case that one of $r_0,r_1$ is at least $1$. If $r_0> 1$ then
    \begin{align*}
        -a(v_{0,1}-v_{1,n_1-1}) \sim -mv_{0,0} -v_{0,1}+r_1v_{1,n_1-1} \sim -mv_{0,0} -v_{0,1}+(r_1-1)v_{1,n_1}+v_{1,n_1-r}.
    \end{align*}
    which is $v_{0,1}$ reduced and ineffective. If $r_1>1$, a symmetric argument may be made about $a(v_{0,1}-v_{1,n_1-1})$.
\end{proof}

\begin{lemma}\label{lem-topOffBottomOffSimple}
    For $\max{2,g+2-n_0}\leq b \leq \min{g-1,n_1-2}$ and $D = gv_{0,n_0}$ we have that $\tauD(b) = g-b+2$.
\end{lemma}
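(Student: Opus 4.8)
The plan is to read off the value of $\tauD$ from its defining relation $\delta(\tauD(b) = a) = \Delta(D + au - bv)$. To prove $\tauD(b) = g-b+2$ it therefore suffices to take $a = g-b+2$ and verify that $\Delta(D + au - bv) = 1$; since $\tauD$ is a bijection, exhibiting this one value pins it down. With $u = v_{0,1}$, $v = v_{1,n_1-1}$ and $D = gv_{0,n_0}$, the relevant twist is $gv_{0,n_0} + (g-b+2)v_{0,1} - bv_{1,n_1-1}$.

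First I would put this twist into the canonical shape handled by \Cref{cor-BananaDeltaComps}, using only the two elementary linear equivalences on banana graphs. Equation \eqref{eq:multDiff} gives $(g-b+2)v_{0,1} \sim v_{0,g-b+2} + (g-b+1)v_{0,0}$, which is legitimate because $b \geq g+2-n_0$ forces $g-b+2 \leq n_0$. For the negative part, \Cref{lem:g12} gives $v_{1,n_1-1} \sim v_{0,0} + v_{0,n_0} - v_{1,1}$, and a second application of Equation \eqref{eq:multDiff} gives $bv_{1,1} \sim v_{1,b} + (b-1)v_{0,0}$ (valid since $b \leq n_1$); together these yield $-bv_{1,n_1-1} \sim -v_{0,0} - bv_{0,n_0} + v_{1,b}$. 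Substituting and collecting terms collapses the twist to
\begin{align*}
D + (g-b+2)u - bv \;\sim\; (g-b)(v_{0,0} + v_{0,n_0}) + v_{0,g-b+2} + v_{1,b}.
\end{align*}
With the twist in this form, the conclusion follows from the first identity of part 3) of \Cref{cor-BananaDeltaComps}, namely $\Delta\big(a'(v_{0,0}+v_{0,n_0}) + v_{0,m} + v_{1,n}\big) = 1$, applied with $a' = g-b$, $m = g-b+2$, and $n = b$. The constraints $0 \leq a' \leq g-2$ and $1 \leq n \leq n_1-2$ reduce to $2 \leq b \leq g-1$ and $b \leq n_1-2$, all of which hold by hypothesis.

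The step I expect to be the main obstacle is verifying the condition $2 \leq m \leq n_0-1$ on the middle vertex. The inequality $g-b+2 \leq n_0-1$ is precisely the requirement that $v_{0,g-b+2}$ be a genuine \emph{interior} vertex of strand $0$ rather than the multivalent vertex $v_{0,n_0}$, and it forces $b \geq g+3-n_0$. This is the delicate endpoint of the range: when $b$ attains the extreme value $g+2-n_0$ (the active lower bound once $n_0 \leq g$), the vertex $v_{0,g-b+2}$ degenerates onto $v_{0,n_0}$, the canonical form becomes $(n_0-2)v_{0,0} + (n_0-1)v_{0,n_0} + v_{1,b}$, and part 3) of \Cref{cor-BananaDeltaComps} no longer applies. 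That borderline instance must instead be treated directly, computing the four ranks $r(D')$, $r(D'-u)$, $r(D'-v)$, $r(D'-u-v)$ from \Cref{cor-BanaRankComp}; this is where the greatest care is needed, since the formula does not extend naively to this endpoint. Away from it, however, once $g-b+2 \leq n_0-1$ is in force the middle vertex is interior, part 3) of \Cref{cor-BananaDeltaComps} yields $\Delta = 1$ immediately, and the identification $\tauD(b) = g-b+2$ follows.
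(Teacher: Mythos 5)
Your main computation is not only correct but more careful than the paper's own proof, which follows exactly the same route: reduce the twist $D+(g-b+2)u-bv$ by the two elementary equivalences and invoke part 3) of \Cref{cor-BananaDeltaComps}. The paper's displayed reduction is $(g-b)v_{0,0}+(g-b)v_{0,n_0}+v_{0,g-b+1}+v_{1,b}$, but the middle index $g-b+1$ is an off-by-one slip; the correct form is yours, $(g-b)(v_{0,0}+v_{0,n_0})+v_{0,g-b+2}+v_{1,b}$, as one confirms either by your chip-firing argument or in the coordinates of \Cref{prop-JacBanana}, where the two sides of the paper's display differ by the nonzero class $[v_{0,1}-v_{0,0}]$. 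With the erroneous index, the hypothesis $b \geq g+2-n_0$ appears to yield exactly $2 \leq m \leq n_0-1$, which is why the paper's proof seems to cover the whole stated range; with the correct index, \Cref{cor-BananaDeltaComps} applies only when $b \geq g+3-n_0$, precisely the obstruction you flag.

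The gap in your proposal is the deferred endpoint $b = g+2-n_0$ (in play only when $n_0 \leq g$): you say it ``must be treated directly'' but never carry out the computation, and in fact no direct computation can succeed, because the claimed value is false there. Take $B_{3,4,1,1}$ (so $g=3$), $u = v_{0,1}$, $v = v_{1,3}$, $D = 3v_{0,3}$, and $b = 2 = g+2-n_0$, which lies in the lemma's stated range. Exactly as your degenerate canonical form predicts, $D+3u-2v \sim v_{0,0}+2v_{0,3}+v_{1,2}$, and \Cref{cor-BanaRankComp} gives $r(v_{0,0}+2v_{0,3}+v_{1,2}) = 1$, $r(v_{0,2}+v_{1,2}+v_{0,3}) = 0$, $r(v_{0,0}+v_{0,3}+v_{1,3}) = 1$, $r(v_{0,2}+v_{1,3}) = 0$ for the four twists appearing in $\Delta$, so $\Delta(D+3u-2v) = 1-0-1+0 = 0$ and $\tauD(2) \neq 3$. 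Instead $D+4u-2v \sim 4v_{0,0}+v_{1,1}$ satisfies $\Delta = 2-1-1+1 = 1$, so $\tauD(2) = 4$. The lemma's lower bound should therefore read $\max{2,g+3-n_0}$, and with that correction your proof is complete as written. This repair is harmless downstream: in \Cref{cor-bothOffMin} one assumes $\min{n_0,n_1} \geq g+1$, in which case $g+3-n_0 \leq 2$ and the bad endpoint never occurs, and the quadratic-growth bounds of \Cref{ssec:mostBananas} lose at most a single value of $b$.
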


\begin{proof}
    First note that
    \begin{align*}
        gv_{0,n_0}+(g-b+2)v_{0,1}-bv_{1,n_1-1} \sim (g-b)v_{0,0}+(g-b)v_{0,n_0}+v_{0,g-b+1}+v_{1,b}.
    \end{align*}
    Then the key observations are that because $b \leq g-1$, we have that $g-b \geq 1$ and because $b \geq 2$, $g-b \leq g-2$. Further because $b\geq g+2-n_0$ we have that $g-b +1\leq g-(g+2-n_0) + 1 = n_0-1$. This establishes the conditions for part $3)$ of \Cref{cor-BananaDeltaComps} thus proving the lemma.
\end{proof}

\begin{corollary}\label{cor-bothOffMin}
    If $\min{n_0,n_1} \geq g+1$, then $M \geq \binom{g-2}{2}$.
\end{corollary}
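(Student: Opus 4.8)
The plan is to extract a maximal decreasing segment of $\tau = \tau_D$ directly from Lemma \ref{lem-topOffBottomOffSimple} and count the inversions it forces, using the hypothesis $\min\{n_0,n_1\} \geq g+1$ to turn the awkward range of validity in that lemma into a clean interval of integers. First I would simplify the bounds. Since $n_0 \geq g+1$ we get $g+2-n_0 \leq 1$, so $\max\{2, g+2-n_0\} = 2$; and since $n_1 \geq g+1$ we get $n_1 - 2 \geq g-1$, so $\min\{g-1, n_1-2\} = g-1$. Thus Lemma \ref{lem-topOffBottomOffSimple} applies for every $b$ in the interval $\{2, 3, \dots, g-1\}$, on which it gives $\tau(b) = g-b+2$.

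Next I would observe that $b \mapsto g-b+2$ is strictly decreasing, so for any pair $2 \leq b < b' \leq g-1$ we have $\tau(b) = g-b+2 > g-b'+2 = \tau(b')$, i.e.\ $(b,b')$ is an inversion of $\tau$. The interval $\{2,\dots,g-1\}$ has $g-2$ elements, so this produces $\binom{g-2}{2}$ inversions of $\tau_D$.

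The one point that needs care — and the only real step beyond specializing the previous lemma — is checking that these inversions represent \emph{distinct} $k$-inversions, so that they actually contribute $\binom{g-2}{2}$ to $\inv_k(\tau_D)$ and hence to $M$. Two inversions $(b_1,b_2)$ and $(b_1',b_2')$ drawn from this range are $k$-equivalent only if $b_1 - b_1' = b_2 - b_2'$ with $b_1 - b_1' \equiv 0 \pmod{k}$. Here I would invoke Lemma \ref{lem-bothOffTorOrder}, which gives $k \geq g$ once we note that the exceptional case $n_0 = n_1 = 2$ is ruled out by $\min\{n_0,n_1\} \geq g+1 > 2$. Since $b_1, b_1' \in \{2,\dots,g-1\}$ we have $|b_1 - b_1'| \leq g-3 < g \leq k$, so $b_1 - b_1' \equiv 0 \pmod k$ forces $b_1 = b_1'$ and then $b_2 = b_2'$. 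Hence distinct pairs give distinct $k$-equivalence classes, and $M \geq \inv_k(\tau_D) \geq \binom{g-2}{2}$, as claimed. I expect no genuine obstacle here: the entire argument is a direct reading of Lemma \ref{lem-topOffBottomOffSimple}, with the torsion bound of Lemma \ref{lem-bothOffTorOrder} doing the small amount of work needed to separate the $k$-inversion classes.
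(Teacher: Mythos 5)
Your proof is correct and takes essentially the same approach as the paper's: both use the hypothesis $\min\{n_0,n_1\}\geq g+1$ to reduce the range of validity in Lemma~\ref{lem-topOffBottomOffSimple} to the full interval $\{2,\dots,g-1\}$ of $g-2$ integers, on which $\tau(b)=g-b+2$ is strictly decreasing, yielding $\binom{g-2}{2}$ inversions. Your explicit check, via the torsion bound $k\geq g$ of Lemma~\ref{lem-bothOffTorOrder}, that these inversions lie in pairwise distinct $k$-equivalence classes is left implicit in the paper, so this added care is a welcome refinement rather than a divergence.
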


\begin{proof}
    Consider the set $A_{n_0,n_0} = \set{b \in \Z: \max{2,g+2-n_0}\leq b \leq \min{g-1,n_1-2}}$. Let $m = \min{n_0,n_1}$. Then we have the following
    \begin{align*}
        \#A_{n_0,n_1} \geq \#A_{m,m} = \begin{cases}
            0 & m < \frac{3+g}{2}\\
            2m-(g+3) & \frac{3+g}{2}\leq m \leq g\\
            g-2 & m\geq g+1
        \end{cases}.
    \end{align*}
    Thus by \Cref{lem-topOffBottomOffSimple} we see that
    \begin{align*}
        \inv_k(\tauD) \geq \binom{\#A_{n_0,n_1}}{2} \geq \binom{g-2}{2}.
    \end{align*}
\end{proof}

\begin{lemma}\label{lem-topOffBottomOff}
    If $D = gv_{0,n_0}, \tau = \tauD$ then
    \begin{enumerate}[label = \arabic*)]
        \item If $1 \leq b \leq \min{\frac{n_1}{n_1-1}g,n_1(n_0-1)}$ and $b \equiv 0 \mod n_1$ then $\tau(b) = \frac{b}{n_1}+1$.
        \item If $1 \leq b \leq \min{\frac{n_1}{n_1-1}g,n_1(n_0-1-g)-1}$,  and $b \equiv -1 \mod n_1$ then $\tau(b) = g+\frac{b+1}{n_1}$.
        \item If $2 \leq b\leq \frac{n_1}{n_1-1}g$, $2\floor*{\frac{b}{n_1}}-b \leq n_0-3-g$ and $b \equiv -n \mod n_1$ with $n \neq 0,1$ then $\tau(b) = g+2\frac{b+n}{n_1}-b+2$.
    \end{enumerate}
\end{lemma}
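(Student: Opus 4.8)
The plan is to prove all three cases by the mechanism already used in the proof of \Cref{lem-BananOneOff}: for each residue class of $b$ modulo $n_1$, substitute the claimed value $a = \tau(b)$ into the twist $D + au - bv = g\,v_{0,n_0} + a\,v_{0,1} - b\,v_{1,n_1-1}$, reduce this divisor class to a standard $v_{0,0}$-reduced representative, and then read off $\Delta(D+au-bv) = 1$ from part 3) of \Cref{cor-BananaDeltaComps}. Since $\tau_D$ is characterized by $\delta(\tau_D(b)=a) = \Delta(D+au-bv)$, exhibiting $\Delta = 1$ for the proposed $a$ in each case immediately forces the stated value of $\tau(b)$.

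The reductions rely on three linear equivalences, all consequences of \Cref{prop-JacBanana} and \Cref{lem:g12}: the strand-sliding identity $c\,v_{0,1} \sim (c-1)v_{0,0} + v_{0,c}$ and its reverse-strand analogue $c\,v_{1,n_1-1} \sim (c-1)v_{0,n_0} + v_{1,n_1-c}$ (valid for $0 \le c \le n_0$, resp.\ $0 \le c \le n_1$); the wraparound identity $n_1\,v_{1,n_1-1} \sim (n_1-1)v_{0,n_0} + v_{0,0}$, which is the reverse-strand slide at $c = n_1$; and the reflection identity $v_{1,n} + v_{1,n_1-n} \sim v_{0,0} + v_{0,n_0}$, used to clear the lone off-strand chip that the reverse slide deposits on strand $1$. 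First I would write $b = m n_1 - \epsilon$ with $\epsilon$ chosen according to the three congruence cases, peel off $m$ (or $m-1$) full wraparounds of $v_{1,n_1-1}$ and slide the residual chips to their resting vertices, while simultaneously sliding the $a$ copies of $v_{0,1}$ to chips at $v_{0,0}$ together with a single interior vertex $v_{0,a}$.

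In the two aligned cases ($b \equiv 0$ and $b \equiv -1 \bmod n_1$) the off-strand chip either does not appear or is absorbed cleanly, and the twist reduces to a divisor of the form $b' v_{0,n_0} + v_{0,n}$ or $(g-1)v_{0,0} + b'v_{0,n_0} + v + v_{0,m}$; these are handled directly by the formulas $\Delta(b'v_{0,n_0}+v_{0,n}) = 1$ and $\Delta((g-1)v_{0,0} + b'v_{0,n_0} + v + v_{0,m}) = \delta(b' < g-1)$ of \Cref{cor-BananaDeltaComps}, once one checks that the stated upper bounds on $b$ force $0 \le b' < g$ (respectively $b' < g-1$) and that $v_{0,n}, v_{0,m}$ remain interior. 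The third case is the one in which both marked strands carry a genuine interior chip, so after using the reflection identity to trade the negative chip $-v_{1,n}$ for $+v_{1,n_1-n}$, the twist should land on the balanced form $b'(v_{0,0}+v_{0,n_0}) + v_{0,m} + v_{1,n}$ governed by $\Delta(b'(v_{0,0}+v_{0,n_0}) + v_{0,m}+v_{1,n}) = \delta(b'<g-1)$, with the congruence hypothesis $2\lfloor b/n_1\rfloor - b \le n_0 - 3 - g$ ensuring that the interior vertex $v_{0,a}$ is legitimate.

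I expect the main obstacle to be purely bookkeeping, but genuinely delicate. Because $b$ ranges up to $\tfrac{n_1}{n_1-1}g$, the chip $v_{1,n_1-1}$ wraps around many times, and one must track exactly how many chips the wraparounds deposit at each multivalent vertex so that the residual coefficients of $v_{0,0}$ and $v_{0,n_0}$ emerge correctly; in particular the two multivalent coefficients must be reconciled (via the reflection identity) to match the \emph{balanced} hypotheses of \Cref{cor-BananaDeltaComps}, and the precise additive constant appearing in $\tau(b)$ is exactly what makes this reconciliation succeed. I would therefore pin down this constant and the range hypotheses against a short explicit banana graph of genus $3$ before trusting the general computation, using \Cref{cor-BanaRankComp} and \Cref{cor:suppUV} to compute the four ranks in $\Delta$ by hand in that example, since an off-by-a-small-constant error in $\tau(b)$ is easy to introduce and flips $\Delta$ from $1$ to $0$.
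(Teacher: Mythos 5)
Your proposal matches the paper's proof essentially step for step: the paper likewise fixes the claimed value $a=\tau(b)$ for each residue class of $b \bmod n_1$, displays the reduction of $gv_{0,n_0}+av_{0,1}-bv_{1,n_1-1}$ to exactly the three normal forms you name (namely $b'v_{0,n_0}+v_{0,n}$, $(g-1)v_{0,0}+b'v_{0,n_0}+v+v_{0,m}$, and the balanced $b'(v_{0,0}+v_{0,n_0})+v_{0,m}+v_{1,n}$), and invokes part 3) of \Cref{cor-BananaDeltaComps} to conclude $\Delta(D+au-bv)=1$, with the hypotheses on $b$ serving, as you say, only to keep indices in bounds and meet the corollary's criteria. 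Your explicit use of the reflection identity $v_{1,n}+v_{1,n_1-n}\sim v_{0,0}+v_{0,n_0}$ to clear the off-strand chip in the third case is a bookkeeping detail the paper's displayed equivalences leave implicit, but it is the same argument.
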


\begin{proof}
    We proceed similarly to the proof of \Cref{lem-BananOneOff}. In all cases, the constraints on $b$ above are chosen to keep the various indices ``inbounds'' and to satisfy criteria of \Cref{cor-BananaDeltaComps}. If $b = mn_1$ then
    \begin{align*}
        gv_{0,n_0}+(m+1)v_{0,1}-mn_1v_{1,n_1-1} \sim v_{0,m+1} + (g-m(n_1-1))v_{0,n_0}.
    \end{align*}
    This satisfies the conditions of \Cref{cor-BananaDeltaComps} part $3)$ so we are done. If $b = mn_1-1$ then
    \begin{align*}
        gv_{0,n_0}+(m+g)v_{0,1}-(mn_1-1)v_{1,n_1-1} \sim (g-1)v_{0,0} + (g-m(n_1-1))v_{0,n_0}+v_{0,m+g}+v_{1,n_1-1}.
    \end{align*}
    Again this reduces us to checking the conditions of \Cref{cor-BananaDeltaComps}.
    Lastly if $b = mn_1 +n$ where $0<n<n_1-1$ then we have
    \begin{align*}
        gv_{0,n_0}+(g+2m-b+2)v_{0,1}-(mn_1+n)v_{1,n_1-1} \sim (g+m-b+1)(v_{0,0}+v_{0,n_0}) + v_{0,g+2m-b+2} + v_{1,n_1-n}.
    \end{align*}
    Applying \Cref{cor-BananaDeltaComps} a final time completes the argument.
\end{proof}

\begin{corollary}\label{cor-bothOffMax}
    When $n_0$ is sufficiently large relative to the genus then we get a lower bound on $M$ which is quadratic in $g$.
\end{corollary}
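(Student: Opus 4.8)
The plan is to reproduce the structure of the proof of Proposition~\ref{prop-oneOffNotGeneral}: I extract from Lemma~\ref{lem-topOffBottomOff} a set $C$ of domain values of linear size in $g$ on which $\tau = \tauD$ is strictly order-reversing, so that each of the $\binom{\#C}{2}$ pairs is an inversion, and then verify that these inversions represent distinct $k$-equivalence classes. All of the quadratic growth comes from case~$3)$ of Lemma~\ref{lem-topOffBottomOff}; cases~$1)$ and $2)$ describe monotone-increasing portions of $\tau$ and are not needed. I first make ``sufficiently large relative to the genus'' precise as $n_0 \geq g+3$. Under this hypothesis the side constraint $2\lfloor b/n_1\rfloor - b \leq n_0 - 3 - g$ in case~$3)$ holds automatically for every $b$ with $2 \leq b \leq \tfrac{n_1}{n_1-1}g$, since writing $b = mn_1 + r$ with $0 \leq r < n_1$ gives $2\lfloor b/n_1\rfloor - b = m(2-n_1) - r \leq 0 \leq n_0 - 3 - g$.

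Assuming $n_1 \geq 3$, set
$$C = \left\{ b \in \Z :\ 2 \leq b \leq \tfrac{n_1}{n_1-1}g,\ b \not\equiv 0,-1 \!\pmod{n_1} \right\}.$$
For $b \in C$ the integer $n$ with $b \equiv -n \pmod{n_1}$ and $2 \leq n \leq n_1-1$ satisfies $\tfrac{b+n}{n_1} = \lfloor b/n_1\rfloor + 1$, so case~$3)$ of Lemma~\ref{lem-topOffBottomOff} reduces on $C$ to $\tau(b) = g + 4 + \psi(b)$ with $\psi(b) := 2\lfloor b/n_1\rfloor - b$. I then show $\tau$ is strictly decreasing along $C$. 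Within a single residue block $\psi$ decreases by $1$ at each step, and the only steps on which $\psi$ increases are those crossing from residue $n_1-1$ to residue $0$; since $C$ omits precisely the residues $0$ and $n_1-1$, between consecutive elements of $C$ one always has $\psi(b') - \psi(b) = -1$. Hence $\tau$ is strictly decreasing on $C$, and every pair $b < b'$ in $C$ is an inversion of $\tau$.

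To finish, I count and check distinctness. For $n_1 \geq 3$ the set $C$ contains the fraction $\tfrac{n_1-2}{n_1}$ of residues over a window of length $\tfrac{n_1}{n_1-1}g$, so $\#C = \Theta(g)$. By Definition~\ref{def-inv}, two of the inversions produced above, say $(b_1,b_1')$ and $(b_2,b_2')$, are $k$-equivalent only if $b_1 - b_2$ is a multiple of $k$; since $n_0 \geq g+3 > 2$, Lemma~\ref{lem-bothOffTorOrder} gives $k \geq g$. If $k \geq \tfrac{n_1}{n_1-1}g$ then $C$ itself has diameter less than $k$; otherwise I restrict $C$ to a window of $k$ consecutive integers, which still retains $\Omega(g)$ of its elements. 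In either case no two distinct pairs collapse, so $M \geq \inv_k(\tau) \geq \binom{\#C}{2}$, which is quadratic in $g$. (When $n_1 = 2$ the set $C$ is empty, but then cases~$1)$ and $2)$ place the even-index values of $\tau$ near the low diagonal and the odd-index values near the high diagonal, and counting cross-inversions between the two pieces yields the same quadratic bound.)

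The step requiring the most care is the order-reversal on $C$ together with the distinctness bookkeeping: an inversion between $b$ and $b'$ lying in different residue blocks is not visible from a single evaluation of Lemma~\ref{lem-topOffBottomOff} but requires comparing the block-index gap $2(\lfloor b'/n_1\rfloor - \lfloor b/n_1\rfloor)$ against $b'-b$ uniformly over $C$, and ensuring these inversions remain pairwise non-$k$-equivalent given only $k \geq g$ is what forces the windowing argument above. Everything else is a direct substitution into Lemma~\ref{lem-topOffBottomOff}.
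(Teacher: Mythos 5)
Your main argument ($n_1 \geq 3$) is correct, and it takes a genuinely different route from the paper. The paper's proof is a one-line transfer: for $n_0$ sufficiently large the ranges on $b$ in \Cref{lem-topOffBottomOff} match those of \Cref{lem-BananOneOff}, and although the two permutations differ, they are asserted to have the same number of inversions on $[2,\frac{n_1}{n_1-1}g]$, so the four-class count of \Cref{prop-oneOffNotGeneral} (the $B\times A$, $B\times C$, $C\times A$, $C\times C$ inversions) is invoked wholesale. You instead discard classes $A$ and $B$ entirely, observe that case $3)$ collapses to $\tau(b) = g+4+2\lfloor b/n_1\rfloor - b$ on $C$, verify strict decrease across residue blocks, and count $\binom{\#C}{2}$ directly. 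This buys two things the paper's proof does not make explicit: a concrete threshold ($n_0 \geq g+3$ suffices, since $2\lfloor b/n_1\rfloor - b \leq 0$ always, so the side constraint in case $3)$ is vacuous), and an honest treatment of $k$-equivalence. The latter point is a real gap you correctly noticed: in the one-off setting of \Cref{prop-oneOffNotGeneral}, \Cref{lem-BananOneOff} guarantees $k > \frac{n_0}{n_0-1}g$, so the whole range fits in one period; here \Cref{lem-bothOffTorOrder} gives only $k \geq g$, and the range $[2,\frac{n_1}{n_1-1}g]$ can exceed $k$, so your windowing argument (restrict to $k$ consecutive integers, retaining $\Omega(g)$ elements of $C$ by density $\frac{n_1-2}{n_1}$) is needed and the paper's proof passes over it silently. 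What you give up is only the constant: the paper's transferred bound also harvests the cross-class inversions. (One cosmetic slip: in the windowed branch the final bound should be $\binom{\#(C \cap W)}{2}$ for the window $W$, not $\binom{\#C}{2}$.)

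The one genuine defect is the parenthetical $n_1 = 2$ case, which fails under your announced hypothesis $n_0 \geq g+3$. Case $2)$ of \Cref{lem-topOffBottomOff} applies only for $b \leq n_1(n_0-1-g)-1 = 2(n_0-1-g)-1$, which equals $3$ when $n_0 = g+3$; so only $O(1)$ odd values of $b$ are ``placed near the high diagonal,'' and no quadratic family of even/odd cross-inversions follows from the lemma. Your verification that the constraint in case $3)$ becomes vacuous once $n_0 \geq g+3$ does not extend to case $2)$, whose admissible range grows only linearly in $n_0 - g$. Since the statement is qualitative (``sufficiently large relative to the genus''), the repair is easy: for $n_1 = 2$ take the threshold to be $n_0 \geq 2g+2$ (so case $2)$ covers all odd $b \leq 2g$), or any $n_0 \geq (1+c)g$ with $c > 0$ to get $\Omega(g)$ odd values; combined with your windowing, the cross-count is then quadratic, and this matches how the paper's route handles $n_1 = 2$ automatically (there the $B \times A$ term $\binom{f(g)+1}{2}$ of \Cref{prop-oneOffNotGeneral} supplies the quadratic growth when class $C$ is empty). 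But as written, the single fixed threshold $n_0 \geq g+3$ stated at the outset is inconsistent with the $n_1 = 2$ claim, so you should either state the threshold as depending on $n_1$ or argue that case separately with its own bound.
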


\begin{proof}
    For sufficiently large $n_0$ the bounds appearing in on $b$ \Cref{lem-topOffBottomOff} match those of \Cref{lem-BananOneOff}. Although the permutation of $gv_{0,n_0}$ on $(G,v_{0,0},v_{1,n_1})$ does not agree with the permutation of $gv_{0,n_0}$ of $(G,v_{0,1},v_{1,n_-1})$, they have the same number of inversions in the range $[2,\frac{n_1}{n_1-1}g]$, thus \Cref{prop-oneOffNotGeneral} gives the lower bound for $M$.
\end{proof}

\begin{remark}
    A symmetric result to \Cref{cor-bothOffMax} could be developed by studying the divisor $gv_{0,0}$. The point is that this gives a lower bound on $M$ when $\max{n_0,n_1}$ is sufficiently large. Note also that this lower bound on $\max{n_0,n_1}$ generally exceeds the lower bound on $\min{n_0,n_1}$ investigated in \Cref{cor-bothOffMin}.
\end{remark}

With this result we complete the proof of \Cref{thm-quadInvGrowth}.

\begin{proof}[Proof of \Cref{prop-bananTorsion}]
    By \Cref{thm-NSMForBanana} the only possible banana graphs of genus $\geq 3$ for which every divisor is sumbodular are the three special cases investigated in this section. The  result is then a simple consequence of the torsion order bounds given by \Cref{lem-TriangleInversionII}, \Cref{lem-BananOneOff}, and \Cref{lem-bothOffTorOrder}, each of which are at least $g$ except in the case of $n_0 = n_1 = 2, (u,v) = (v_{0,1},v_{1,1})$.
\end{proof}

\section{Symmetries and Quasi-Symmetries of Transmission Permutations}\label{ssec:mpAutos}

This section is not needed for our main results, but is included to shine a light on some intriguing patterns in the examples discussed above, namely some symmetries of the permutations obtained. Perhaps more tantalizing are some ``quasi-symmetries'' in the permutations obtained, for which we have no formal definition, but which we wish to draw the reader's eye to.

In practice, one finds that transmission permutations often have more symmetry than expected for elements of $\eaf{k}$. In this section we give a brief accounting of where these additional sources of symmetry arise. In some cases, these symmetries can be brought to bear to yield information about general transmission on these graphs, although we are somewhat constrained by the fact that the symmetries in question do not necessarily impact the transmission permutation of \textit{every} divisor on a given graph.

The most obvious constraint on transmission permutations comes from the fact that the Riemann--Roch formula bounds the values of a transmission permutation. Namely
\begin{align}\label{eq-RRTauBounds}
    b-\deg D \leq \tauD(b) \leq 2g +b-\deg D.
\end{align}
For certain graphs we get an additional source of symmetry.

\begin{definition}
    For a twice-marked graph $(G,u,v)$ a marked point automorphism $\phi$ is a pair $\phi_V:V(G)\to V(G)$ and $\phi_E:E(G)\to E(G)$ such that $\phi_V,\phi_E$ are bijections and if $e \in E(G)$ connects vertices $w_1,w_2$ then $\phi_E(e)$ connects $\phi(w_1)$ and $\phi(w_2)$. We further require that $\phi_V$ restricts to a bijection of the marked points.
\end{definition}

Given any such automorphism $\phi$ we get an induced automorphism on the $\pic{G}$ given by $(\phi(D))(w) = D(\phi(w))$. Thus we get a bijection between the set of transmission permutations of $(G,u,v)$ and those of $(\phi(G),\phi(u),\phi(v))$ given by
\begin{align*}
    \tauD \mapsto \tau_{\phi(D)}^{\phi(u),\phi(v)}.
\end{align*}

A related phenomenon is the involution $\iota(D) = K_G-D+u+v$. This permutation of the elements of $\pic{G}$ does not arise from a marked point automorphism, but nonetheless induces constraints on the transmission permutations on $(G,u,v)$. We summarize the picture in the following lemma.

\begin{lemma}\label{lem:mpIds}
    If $\phi$ is a marked point automorphism of $(G,u,v)$ then the following are equivalent:
    \begin{enumerate}[label = \arabic*)]
        \item $\tauD(b) = a$;
        \item $\tau_D^{v,u}(-a) = -b$;
        \item $\tau_{\iota(D)}^{v,u}(a) = b$;
        \item $\tau_{\phi(D)}^{\phi(u),\phi(v)}(b) = a$.
    \end{enumerate}
    Note the change in the order of the marked points in cases $2)$, $3)$, and $4)$.
\end{lemma}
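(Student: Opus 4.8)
The plan is to observe that all four statements are, by the defining equation of the transmission permutation, assertions that a single number equals $1$: namely, that $\Delta(D + au - bv) = 1$, where throughout I abbreviate $\Delta = \Delta^{u,v}$ for the function $r(\cdot) - r(\cdot - u) - r(\cdot - v) + r(\cdot - u - v)$. Recall that statement $1)$, $\tauD(b) = a$, is by definition equivalent to $\delta(\tauD(b) = a) = 1$, i.e. to $\Delta(D + au - bv) = 1$. So the whole lemma reduces to checking that each of $2)$, $3)$, $4)$ is equivalent to this same condition. First I would record the defining equation in swapped form: for the marked pair $(v,u)$ one has $\delta(\tau_D^{v,u}(\beta) = \alpha) = \Delta(D + \alpha v - \beta u)$, since swapping the roles of the marked points simply interchanges which point carries the output coordinate and which carries the (negated) input coordinate. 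Note that $\Delta$ itself is manifestly symmetric in $u$ and $v$, so no relabelling of $\Delta$ is needed.

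With this in hand, statement $2)$ is pure substitution: taking $(\beta,\alpha) = (-a,-b)$ gives $\delta(\tau_D^{v,u}(-a) = -b) = \Delta(D + (-b)v - (-a)u) = \Delta(D + au - bv)$, which is exactly the condition for $1)$. Statement $4)$ is handled by the two defining properties of a marked point automorphism: $\phi$ acts as a rank-preserving isomorphism on $\pic{G}$, and it sends the vertex divisors $u,v$ to $\phi(u),\phi(v)$. Consequently $\phi(D) + a\phi(u) - b\phi(v) = \phi(D + au - bv)$, and since $r(\phi(E)) = r(E)$ for every divisor $E$, the four rank terms defining $\Delta^{\phi(u),\phi(v)}(\phi(E))$ equal those defining $\Delta^{u,v}(E)$. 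Taking $(\beta,\alpha) = (b,a)$ in the definition for the pair $(\phi(u),\phi(v))$ then yields $\delta(\tau_{\phi(D)}^{\phi(u),\phi(v)}(b) = a) = \Delta(D + au - bv)$, again matching $1)$.

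The substantive step, and the one I expect to be the main obstacle, is the equivalence $1) \Leftrightarrow 3)$ involving the involution $\iota(D) = K_G - D + u + v$. Here I would proceed in two moves. First, an algebraic identity: writing $D' = D + au - bv$, a direct expansion shows $\iota(D) + bv - au = K_G - D' + u + v = \iota(D')$. Applying the swapped defining equation with $(\beta,\alpha) = (a,b)$ gives $\delta(\tau_{\iota(D)}^{v,u}(a) = b) = \Delta(\iota(D) + bv - au) = \Delta(\iota(D'))$. Second, the key input: the identity $\Delta(K_G - E + u + v) = \Delta(E)$, valid for every divisor $E$. This I would prove by expanding the left side and applying Riemann--Roch $r(K_G - F) = r(F) - \deg F + g - 1$ to each of the four rank terms (with $F = E - u - v$, $E - v$, $E - u$, $E$ respectively); the degree and genus corrections cancel in the alternating sum, leaving precisely $\Delta(E)$. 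Combining, $\Delta(\iota(D')) = \Delta(D')$, so statement $3)$ is equivalent to $\Delta(D + au - bv) = 1$, completing the chain.

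Finally I would address a bookkeeping point about existence: each transmission permutation appearing requires the underlying divisor to be submodular. Since $\phi$ preserves $\Delta$ on all twists, and since the identity $\Delta(\iota(E)) = \Delta(E)$ together with the bijection $E \mapsto \iota(E)$ between twists of $D$ and twists of $\iota(D)$ shows $\iota$ preserves submodularity, the four permutations in the lemma either all exist or none do; the stated equivalences are then read off from the computations above. The only genuinely nontrivial ingredient is the Riemann--Roch cancellation establishing $\Delta(K_G - E + u + v) = \Delta(E)$; everything else is substitution and the homomorphism/rank-preservation properties already available.
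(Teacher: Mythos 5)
Your proposal is correct, and it follows the same route the paper has in mind: the paper in fact states Lemma \ref{lem:mpIds} without proof, treating it as immediate from the defining equation $\delta(\tauD(b)=a)=\Delta(D+au-bv)$, the symmetry of $\Delta$ in $u,v$, rank-preservation under automorphisms, and the Riemann--Roch identity already invoked for $1)\Leftrightarrow 3)$ in Remark \ref{rem-PermInvol}. Your write-up simply makes these substitutions explicit --- including the key cancellation $\Delta(K_G-E+u+v)=\Delta(E)$ and the bookkeeping that $\iota$ and $\phi$ preserve submodularity, both of which check out.
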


Thus relationships between a divisor $D$ and divisor obtained by applying combinations of marked point automorphisms and the involution $\iota$ can force a transmission permutation to have more than expected symmetry.

\begin{lemma}\label{lem-tauSyms}
    Let $(G,u,v)$ be a twice-marked graph and $\phi$ a marked point automorphism which transposes $u$ and $v$.
    \begin{enumerate}[label = \arabic*)]
        \item If $\phi(D) + D \sim K_G + u+v$ then $\delta(\tauD(b) = a) = \delta(\tauD(a) = b)$, i.e. $\left(\tauD\right)^2 = \id$.
        \item If $\phi(D) -D \sim n(u-v)$ for some $n \in \Z$ then $\delta(\tauD(b) = a)=\delta(\tauD(n-a) = n-b)$.
    \end{enumerate}
\end{lemma}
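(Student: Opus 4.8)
The plan is to derive both identities mechanically from Lemma \ref{lem:mpIds}, using two auxiliary facts. First, $\tau_D^{u,v}$ depends only on the class $[D]$: this is immediate from the defining relation $\delta(\tau_D^{u,v}(b)=a) = \Delta(D+au-bv)$, since $\Delta$ is built from ranks, which are linear-equivalence invariants. Second, one has the trivial ``twist formula'' obtained by substituting directly into that same defining relation. Throughout I will use that $\phi$ transposes the marked points, so $\phi(u)=v$ and $\phi(v)=u$; consequently the permutation appearing in items 2), 3), 4) of Lemma \ref{lem:mpIds} is $\tau^{v,u}$, with the \emph{order} of the marked points reversed, which is the source of most of the bookkeeping below. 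Existence of every permutation invoked is guaranteed by the correspondences in Lemma \ref{lem:mpIds}, so I may assume $D$ (hence its partners) submodular.

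For part 1), the hypothesis $\phi(D)+D \sim K_G + u + v$ rewrites as $\phi(D) \sim \iota(D)$, where $\iota(D) = K_G - D + u + v$. By class-invariance this gives $\tau_{\phi(D)}^{v,u} = \tau_{\iota(D)}^{v,u}$. I would first invoke the equivalence (1)$\iff$(4) of Lemma \ref{lem:mpIds}, which reads $\tau_D^{u,v}(b)=a \iff \tau_{\phi(D)}^{v,u}(b)=a$; since this holds for all $a,b$, the two permutations coincide, so $\tau_{\iota(D)}^{v,u} = \tau_D^{u,v}$. Feeding this equality into the equivalence (1)$\iff$(3), namely $\tau_D^{u,v}(b)=a \iff \tau_{\iota(D)}^{v,u}(a)=b$, yields $\tau_D^{u,v}(b)=a \iff \tau_D^{u,v}(a)=b$, which is exactly the assertion that $\tau_D^{u,v}$ is an involution.

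For part 2), the hypothesis is $\phi(D) \sim D + n(u-v)$. Here I would work directly from the defining relation for $\tau^{v,u}$, namely $\delta(\tau_{\phi(D)}^{v,u}(b)=a) = \Delta(\phi(D)+av-bu)$. Substituting $\phi(D) \sim D + nu - nv$ collapses the argument of $\Delta$ to $D+(n-b)u-(n-a)v$, so the right-hand side equals $\delta(\tau_D^{u,v}(n-a)=n-b)$. Combining this with the equivalence (1)$\iff$(4), which gives $\delta(\tau_D^{u,v}(b)=a) = \delta(\tau_{\phi(D)}^{v,u}(b)=a)$, immediately yields $\delta(\tau_D^{u,v}(b)=a) = \delta(\tau_D^{u,v}(n-a)=n-b)$, as desired.

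The computations are short; the one place demanding care — and the step I expect to be the main obstacle — is the index bookkeeping in part 2). Because the first and second marked points exchange roles on passing to $\tau^{v,u}$, the twist $+nu-nv$ interacts with the reflection implicit in that swap, and one must track signs carefully to arrive at the clean substitution $a \mapsto n-a$, $b \mapsto n-b$ rather than a shifted or mis-signed variant. Verifying that $\Delta(\phi(D)+av-bu)$ genuinely equals $\Delta(D+(n-b)u-(n-a)v)$ is the crux, and is cleanest to carry out by direct substitution into the definition of $\Delta$ rather than by manipulating the permutations abstractly.
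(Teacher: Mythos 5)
Your proof is correct and takes essentially the same route as the paper's: both parts are mechanical consequences of Lemma \ref{lem:mpIds} together with linear-equivalence invariance of transmission permutations and the hypothesis on $\phi(D)$ (the paper chains $1)\Leftrightarrow 4)$ then $3)$ for part 1, and $1)\Leftrightarrow 4)$ then $2)$ plus the twist shift $\tau^{u,v}_{D+n(u-v)}(x)=\tau^{u,v}_D(x+n)-n$ for part 2). Your only deviation is cosmetic: in part 2 you expand the defining relation $\delta(\tau^{v,u}_{\phi(D)}(b)=a)=\Delta(\phi(D)+av-bu)$ directly, implicitly using the symmetry of $\Delta$ in $u,v$ --- which is exactly how item $2)$ of Lemma \ref{lem:mpIds} is obtained, so the computation is the same and your sign bookkeeping checks out.
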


\begin{proof}
    For the first statement note that using the equivalence of $1),$ $3)$ and $4)$ of \Cref{lem:mpIds}
    \begin{align*}
        \delta(\tauD(b) = a) &= \delta(\tau_{\phi(D)}^{v,u}(b) = a)\\
        &=\delta(\tau_{K_G-\phi(D)+u+v}^{u,v}(a) = b)\\
        &=\delta(\tau_{D}^{u,v}(a) = b).
    \end{align*}
    For the second we can use the equivalence of $1)$, $2)$ and $4)$ \Cref{lem:mpIds} to get
    \begin{align*}
        \delta(\tauD(b) = a) & = \delta(\tau_{\phi(D)}^{v,u}(b) = a)\\
        &=\delta(\tau_{D+n(u-v)}^{u,v}(-a) = -b)\\
        &=\delta(\tau_{D}^{u,v}(n-a) = n-b).
    \end{align*}
\end{proof}

\begin{example}
    For the $(g+1)-$valently marked banana graphs discussed in \Cref{subsub-g+1} there is a marked point automorphism $\phi$ given by $v_{\alpha,i}\mapsto v_{\alpha,n_\alpha-i}$ which transposes the marked points. For the divisor $D = gv_{0,n_0}$ we have that $\phi(D) = gv_{0,0}$ so it satisfies condition $2)$ of \Cref{lem-tauSyms}.

    The other situation  occurs in the markings discussed in \Cref{subsub-bothOff}. If $n_0 = n_1$ then we get an automorphism $\phi(D)$ defined by
    \begin{align*}
        v_{\alpha,i}\mapsto \begin{cases}
            v_{0,n_0-i} & \alpha  = 1\\
            v_{1,n_1-i} & \alpha  = 0\\
            v_{\alpha,n_\alpha-i} & \alpha>1
        \end{cases}.
    \end{align*}
    Intuitively this is the automorphism which reverses every strand and then transposes the first two. Again here \Cref{lem-tauSyms} forces divisors such as $(g-1)v_{0,0}+u$ to be self-inverse.
\end{example}

These symmetries can be used to bound the number of inversions of the transmission permutations of some divisors. Notably, these techniques are also not restricted to banana graphs.
\begin{proposition}
    If $\phi$ is a marked point automorphism of $(G,u,v)$ and $D \in \pic{G}$ such that $\phi(D) + D \sim K_G + u + v$, then
    \begin{align*}
        \inv_k(\tauD) \geq \sum_{M \in [k]} \Big[ r(D+(M-1)u-Mv) - r(D+(M-2)u-Mv) \Big].
    \end{align*}
\end{proposition}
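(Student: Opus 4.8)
The plan is to show that the stated hypotheses force $\tau := \tau_D^{u,v}$ to be an involution, and then to recognize the right-hand side as a count of the nontrivial $2$-cycles of $\tau$ within one period, each of which is manifestly an inversion. First I would invoke part $1)$ of \Cref{lem-tauSyms}: its hypothesis is precisely $\phi(D) + D \sim K_G + u + v$ for a marked point automorphism $\phi$ transposing $u$ and $v$, so we obtain $\delta(\tau(b) = a) = \delta(\tau(a) = b)$, i.e. $\tau^2 = \id$, and hence $\tau^{-1} = \tau$. I also record that $\tau \in \eaf{k}$ by \Cref{lem:tauChars}, since $ku \sim kv$.

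Next I would rewrite the summand using the first identity of \Cref{lem:tauChars}. Applying it with $(a,b) = (M-1,M)$ and then with $(a,b) = (M-2,M)$ gives
\[
r(D+(M-1)u-Mv) + 1 = \#\{\ell \geq M:\ \tau(\ell) \leq M-1\}, \qquad r(D+(M-2)u-Mv) + 1 = \#\{\ell \geq M:\ \tau(\ell) \leq M-2\}.
\]
Subtracting, the $M$-th summand equals $\#\{\ell \geq M:\ \tau(\ell) = M-1\}$. Since $\tau$ is a bijection, the only candidate is $\ell = \tau^{-1}(M-1) = \tau(M-1)$ (using the involution), so the summand is $\delta(\tau(M-1) \geq M) = \delta(\tau(M-1) > M-1)$. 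Reindexing the sum over $M \in [k]$ by $n = M-1$, and using that $n \mapsto \delta(\tau(n) > n)$ is $k$-periodic (as $\tau(n+k) = \tau(n)+k$), the right-hand side becomes the number of residues $n$ modulo $k$ with $\tau(n) > n$.

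Finally I would produce one distinct $k$-inversion for each such residue. If $\tau(n) > n$, set $m = \tau(n)$; the involution gives $\tau(m) = n$, so the pair $(n,m)$ satisfies $n < m$ and $\tau(n) = m > n = \tau(m)$, which is an inversion in the sense of \Cref{def-inv}. Moreover distinct residues $n_1 \not\equiv n_2 \pmod{k}$ yield $k$-inequivalent inversions, since $k$-equivalence of $(n_1,\tau(n_1))$ and $(n_2,\tau(n_2))$ would force $n_1 - n_2 \equiv 0 \pmod{k}$. Hence these inversions represent that many distinct classes in $\Inv_k(\tau)$, giving $\inv_k(\tau) \geq \#\{n \bmod k:\ \tau(n) > n\}$, which is exactly the right-hand side.

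The main obstacle is conceptual rather than computational: one must see that the telescoping rank difference isolates precisely the indicator $\delta(\tau(M-1) > M-1)$, which hinges entirely on the involution identity collapsing $\tau^{-1}$ to $\tau$. The inequality (rather than equality) is expected and unavoidable, since an involution typically has many further inversions coming from nested or interleaved $2$-cycles that are not tallied here; the right-hand side counts only the ``ascending half'' of each $2$-cycle of $\tau$ per period, i.e. one inversion per $2$-cycle, while interactions between different $2$-cycles produce additional $k$-inversions.
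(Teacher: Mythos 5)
Your argument is correct, and its skeleton matches the paper's: use \Cref{lem-tauSyms} to make $\tau := \tauD$ an involution, identify the right-hand side with the number of off-diagonal residues of $\tau$ modulo $k$, and extract one $k$-inversion class per such residue from the two-cycle $(n,\tau(n))$ — your injectivity check via Definition \ref{def-inv} is exactly right, since $k$-equivalence forces the first coordinates to agree modulo $k$. Where you genuinely diverge is the middle step. The paper converts the off-diagonal count into the telescoping rank sum by a page-long inclusion--exclusion over the sets $B_\tau(i) = \{ b \geq i : \tau(b) \leq i-1 \}$; you instead evaluate each summand directly from \Cref{lem:tauChars}, noting that the difference of the two counts is $\#\{\ell \geq M : \tau(\ell) = M-1\}$ and that bijectivity collapses this to the indicator $\delta(\tau^{-1}(M-1) \geq M)$, whence $\tau^{-1} = \tau$ gives $\delta(\tau(M-1) > M-1)$. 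This buys brevity, and it cleanly isolates where the involution enters: your computation shows the counting identity holds for \emph{any} submodular divisor with $\tau^{-1}$ in place of $\tau$ (the $M$-th summand counts superdiagonal points of $\tau^{-1}$, equivalently subdiagonal points of $\tau$), so the involution is needed only to turn residues into inversions — which is also the only place the paper uses it. Incidentally, by counting superdiagonal rather than subdiagonal residues you sidestep a small sub/super-diagonal inconsistency in the paper's own write-up; the two counts agree for involutions in $\eaf{k}$, as your pairing of two-cycles shows. One shared caveat, not a defect of your proof: the proposition's hypothesis only says $\phi$ is a marked point automorphism, whereas \Cref{lem-tauSyms} requires $\phi$ to transpose $u$ and $v$; the paper's proof silently assumes this as well, and you flag the assumption explicitly, which is the correct reading.
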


The basic idea here is that when our permutations are self-inverse, we can obtain a lower bound on the number of inversions by counting the number of points of the permutation which fall below the identity permutation (equivalently we could count those above).

\begin{proof}
    If $\tauD(b)<b$ then the pair $(\tauD(b),b)$ is an increasing pair such that $(\tauD(\tauD(b)),\tauD(b)) = (b,\tauD(b))$ is decreasing, thus an inversion. Since $b \in [k]$, these inversion are all distinct as $k$-inversions, thus giving the lower bound. Thus
    \begin{align*}
        \inv_k(\tauD) \geq \set{b \in [k]:\tauD(b)>b}.
    \end{align*}
    To count this latter set we used Lemma \ref{lem:tauChars}. Let $B_\tau(i):= \set{b\geq i :\tau(b)\leq i-1}$. By an inclusion-exclusion argument,
    \begin{align*}
         \#\cup_{i \in [k]} B_\tau(i) &= \sum_{J \subseteq [k]}(-1)^{|J|+1}\#\bigcap_{i \in J} B_\tau(i)\\
         &=\sum_{M \in [k]}\sum_{m = 0}^M\sum_{\substack{J \subseteq [k]\\ \max{J} = M\\
         \min{J} = m}}(-1)^{|J|+1}\#\bigcap_{i \in J} B_\tau(i)\\
         &=\sum_{M \in [k]}\sum_{m = 0}^M\sum_{\substack{J \subseteq [k]\\ \max{J} = M\\
         \min{J} = m}}(-1)^{|J|+1}\# \set{b \geq M:\tau(b) \leq m-1}\\
         &=\sum_{M \in [k]}\sum_{m = 0}^M\# \set{b \geq M:\tau(b) \leq m-1}\sum_{\substack{J \subseteq [k]\\ \max{J} = M\\
         \min{J} = m}}(-1)^{|J|+1}\\
         &=\sum_{M \in [k]}\sum_{m = 0}^M\# \set{b \geq M:\tau(b) \leq m-1}\begin{cases}
             1 & m = M\\
             -1 & m = M-1\\
             0 & m<M-1
         \end{cases}\\
         &= \#\set{b\geq 0:\tau(b)\leq -1} + \sum_{M = 1}^{k-1} \Big[ \#\set{b \geq M :\tau(b) \leq M-1}-\# \set{b \geq M:\tau(b) \leq M-2} \Big]
    \end{align*}
    Thus in order to count subdiagonal points we add an error term:
    
    \begin{align*}
        \# \set{b \in [k]:\tau(b)<b} &= \#\cup_{i \in [k]} B_\tau(i) - \#\set{b \geq k:\tau(b) \leq k-2}\\
        &=\sum_{M \in [k]} Big[ \#\set{b \geq M :\tau(b) \leq M-1}-\# \set{b \geq M:\tau(b) \leq M-2}\Big]
    \end{align*} 
    
    By \cref{lem:tauChars} this is equivalent to
    \begin{align*}
        \sum_{M \in [k]} \Big[ r(D+(M-1)u-Mv) - r(D+(M-2)u-Mv) \Big]
    \end{align*}
\end{proof}

\begin{remark}
    Another notable feature is a kind of quasi-symmetry in which the permutation seems almost to belong to $\eaf{\ell}$ for some $\ell<k$, with a little bit of added noise. This pattern is particularly striking in \Cref{fig:one-off-perm} where despite a torsion order of $91$, the permutation almost seems to obey $\tau(b+12) = \tau(b)+12$ with some kind of error term. In some sense this suggest that $12(u-v)$ is ``close'' to being equivalent to $0$, but we do not yet have a framework for understanding this phenomenon.
\end{remark}

\section{Chains of mixed torsion orders}
\label{sec:mixedTorsion}

We prove in this section several criteria for Brill--Noether generality of graphs obtained by vertex gluing, or by chains thereof. We wish to prove criteria that are valid when twice-marked graphs of different torsion orders are chained together. The main purpose of this section is to prove Theorem \ref{thm:bngChain} from the introduction, but the inductive tools developed here may be more broadly applicable.

\subsection{Brill--Noether generality from $k$-general transmission}

We begin with a result not involving any chains. For sufficiently large $k$, $k$-general transmission directly implies Brill--Noether generality of the underlying graph (forgetting the marked points).

\begin{proposition}\label{prop:kgt-bngenl}
If $(G,u,v)$ is a twice-marked graph of genus $g$ with $k$-general transmission, and $k \geq \frac12 g + 1$, then $G$ is Brill--Noether general (as an unmarked graph).
\end{proposition}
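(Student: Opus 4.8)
The plan is to reduce Brill--Noether generality to a single combinatorial inequality about inversions. Recall that $G$ is Brill--Noether general exactly when every divisor $D$ satisfies $\rho(g,r(D),\deg D)\ge 0$, and by Riemann--Roch the quantity $g-\deg D+r(D)$ equals $r(K_G-D)+1$, so $\rho(g,r(D),\deg D)=g-(r(D)+1)(r(K_G-D)+1)$. Thus the goal is to show $(r(D)+1)(r(K_G-D)+1)\le g$ for every divisor $D$ (the cases $r(D)<0$ or $r(K_G-D)<0$ being trivial, as then $\rho>0$). Since $(G,u,v)$ has $k$-general transmission, every $D$ is submodular and $\inv_k(\tauD)\le g$, so it suffices to prove
\[
(r(D)+1)\,(r(K_G-D)+1)\ \le\ \inv_k(\tauD)
\]
for every submodular $D$, under the hypothesis $k\ge\tfrac12 g+1$, which I rewrite as $g\le 2k-2$.

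Next I would translate both factors into data of $\tau:=\tauD\in\eaf{k}$. Setting $a=b=0$ in the two characterizations of \Cref{lem:tauChars} gives $r(D)+1=\#A$ and $r(K_G-D)+1=\#B$, where $A=\set{\ell\ge 0:\tau(\ell)\le 0}$ and $B=\set{m<0:\tau(m)>0}$. Every pair $(m,\ell)\in B\times A$ is an inversion of $\tau$, since $m<0\le\ell$ and $\tau(m)>0\ge\tau(\ell)$. So $B\times A$ produces exactly $(r(D)+1)(r(K_G-D)+1)$ inversions, and the assignment sending each to its $k$-equivalence class gives a map $B\times A\to\Inv_k(\tau)$. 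The desired inequality is equivalent to controlling how badly this map fails to be injective.

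The key input is the Riemann--Roch bound \Cref{eq-RRTauBounds}, which confines the displacement $\tau(b)-b$ to a window of length $2g$; in particular $A\subseteq[0,\deg D]$ and $B\subseteq[\deg D-2g+1,-1]$. Two pairs of $B\times A$ are $k$-equivalent iff they differ by $(jk,jk)$ for some $j\ne 0$, and such a coincidence forces $A$ to contain two elements differing by $jk$ and $B$ likewise. Feeding these into the displacement window yields $jk\le\deg D$ and $jk<2g-\deg D$, hence $|j|\,k<g\le 2k-2$. This is exactly where the hypothesis $g\le 2k-2$ is decisive: it forces $|j|=1$, so no $k$-class is hit more than twice, and a class is hit twice only by a ``long'' inversion $(m,\ell)$ with $\ell-m>k$ (equivalently of length at most $2g-1-k$).

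It then remains to compensate for the doubly-hit classes. Writing $C_1,C_2$ for the classes hit once, respectively twice, by $B\times A$, one has $(r(D)+1)(r(K_G-D)+1)=\#C_1+2\#C_2$, while $\#C_1+\#C_2\le\inv_k(\tauD)$. The plan is to attach to each doubly-hit class a further inversion obtained by shifting a single endpoint by $k$ (such as $(m+k,\ell)$ or $(m,\ell-k)$, each of which remains an inversion by the displacement window), lying in a $k$-class not already among the $B\times A$ classes, and to argue this assignment is injective; granting $\#C_2$ such extra classes gives $\inv_k(\tauD)\ge\#C_1+2\#C_2=(r(D)+1)(r(K_G-D)+1)$, completing the proof. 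I expect this injective ``compensation'' to be the main obstacle: a naive endpoint shift can land back among the straddling classes (precisely when the long inversion has length near $2k$), so the bookkeeping must be done carefully---choosing canonical representatives in a fundamental domain for the $(k,k)$-shift and using the bound $\ell-m\le 2g-1-k<3k$ to limit how these shifted inversions can interact. Everything else in the argument is a direct consequence of \Cref{lem:tauChars} and the Riemann--Roch window \Cref{eq-RRTauBounds}.
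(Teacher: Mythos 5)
Your setup matches the paper's opening moves: both identify $r(D)+1=\#A$ and $r(K_G-D)+1=\#B$ via \Cref{lem:tauChars}, where $A=\set{\ell\geq 0:\tauD(\ell)\leq 0}$ and $B=\set{m<0:\tauD(m)>0}$, view $B\times A$ as a set of inversions mapping to $\Inv_k(\tauD)$, and use $k\geq\frac12 g+1$ to limit multiple hits. But the step you defer --- the injective ``compensation'' for doubly-hit classes --- is a genuine gap, and the specific candidates you name cannot be repaired by careful bookkeeping: they fail identically. If a class is doubly hit, say by $(m,\ell)$ and $(m+k,\ell+k)$ in $B\times A$, then $m+k\in B$ and $\ell+k\in A$, so $(m+k,\ell)$ lies in $B\times A$ itself; and $(m,\ell-k)$ is $k$-equivalent to $(m+k,\ell)$ (their coordinates each differ by $-k$), hence also lands in an already-hit class. (Note that $\eaf{k}$-periodicity gives $\tauD(\ell-k)=\tauD(\ell)-k\leq 0$, so these shifts really do stay inside the straddling configuration.) So every single-endpoint $k$-shift of a long straddling inversion produces a class in $C_1\cup C_2$, and your assignment yields no new classes at all. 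Worse, the intermediate inequality you reduce to, $(r(D)+1)(r(K_G-D)+1)\leq\inv_k(\tauD)$ for \emph{every} submodular divisor, is stronger than the proposition requires, and nothing in your proposal (or in the paper) establishes it when doubly-hit classes are present.

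The paper sidesteps the disjointness problem entirely by arguing by contradiction and counting a fresh family in bulk. Supposing $(r+1)(g-d+r)>g\geq\inv_k(\tauD)$, pigeonhole yields two $k$-equivalent straddling inversions $(u,v)$, $(u',v')$ with $u<u'$, forcing $v'-u\geq 2k+1$. Then for each integer $i$ with $u<i\leq v'$ and $i\not\equiv u\pmod{k}$, the sign dichotomy on $\tauD(i)$ makes either $(u,i)$ or $(i,v')$ an inversion, and these choices are pairwise $k$-inequivalent precisely because the congruence $i\equiv u\pmod k$ has been excluded. This produces at least $2k-1\geq g+1$ distinct $k$-inversions, contradicting $\inv_k(\tauD)\leq g$ --- crucially, no disjointness from the previously counted straddling classes is needed, since the contradiction uses only that the new classes are distinct from \emph{each other}. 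If you want to keep your direct formulation, the correct patch is conditional: a doubly-hit class forces $\inv_k(\tauD)\geq 2k-1>g$ (by exactly this intermediate-point construction applied to the pair $(m,\ell)\sim(m+k,\ell+k)$), so under the hypothesis $\inv_k(\tauD)\leq g$ the map $B\times A\to\Inv_k(\tauD)$ is injective and the bound follows. Either way, the intermediate-point construction is the one idea missing from your proposal, and without it the argument does not close.
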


\begin{proof}
For contradiction, suppose that $D$ is a divisor on $G$ of degree $d$ and rank $r$ such that $\rho(g,r,d) > g$, i.e. $(r+1)(g-d+r) > g$. 
Note that Lemma \ref{lem:tauChars} implies the following two equations.
\begin{eqnarray*}
r+1 &=& \# \{ v \geq 0:\ \tauD(v) \leq 0 \}\\
g-d+r &=& r(K_G-D) +1 = \# \{ u < 0: \tauD(u) > 0 \}
\end{eqnarray*}
It follows that the number $(r+1)(g-d+r)$ is equal to the number of inversions $(u,v)$ of $\tauD$ such that $u < 0, \tau(u) >0, v \geq 0$, and $\tau(v) \leq 0$.
Since there are more than $g$ such inversions, and $\inv_k(\tauD) \leq g$, some two such inversions $(u,v), (u',v')$ are $k$-equivalent. Without loss of generality, assume $u < u'$. We will use these values to construct more than $g$ distinct $k$-inversions, which will lead to a contradiction.

Let $I$ be the set of integers $i$ such that $u < i \leq v'$ and $u \not\equiv i \pmod{k}$. For each $i \in I$, as associate an inversion $$f(i) = \begin{cases}
(u,i) & \mbox{ if } \tauD(i) \leq 0,\\
(i,v') & \mbox{ if } \tauD(i) > 0.
\end{cases}$$
We claim that for $i,j \in I$, if $i \neq j$ then $f(i)$ and $f(j)$ are not $k$-equivalent. To see this, observe that $(u,i),(u,j)$ cannot be $k$-equivalent, nor can $(i,v'),(j,v')$, since $i \neq j$. By symmetry, we need only check that $(u,i)$ cannot be $k$-equivalent to $(j,v')$. This follows from $u \not\equiv j \pmod{k}$, which is part of the definition of the set $I$.

Therefore $\inv_k(\tauD) \geq |I|$. Now, the number of integers $u < i \leq v'$ is $v'-u$. Since $v' > v$ and $v' \equiv v \pmod{k}$, we have $v' \geq v+k$. Similarly $u \leq u'-k$. so $v'-u \geq v-u' + 2k$. Since $u' < 0 \leq v$, we have $v-u' \geq 1$. So $v'-u \geq 2k+1$. It follows that $I$ definitely contains the all the elements $u+1, u+2 \cdots, u+2k+1$ except $u+k$ and $u+2k$. So $|I| \geq 2k-1$. Since $k \geq \frac12 g + 1$, it follows that $\inv_k(\tauD) \geq |I| \geq g+1$, which contradicts $k$-general transmission.
\end{proof}

\begin{remark}
The bound $k \geq \frac12 g + 1$ in Proposition \ref{prop:kgt-bngenl} is sharp. This is because one can construct, for any $g \geq 1$ and $k \geq 2$, twice-marked graphs $(G,u,v)$ such that $ku \sim kv$ and $r(ku) = 1$ (for example, a chain of loops glued at points differing by $k$-torsion). The bound $k \geq \frac12 g + 1$ is equivalent to $\rho(g,1,k) \geq 0$, so if $k < \frac12 g + 1$ there exist twice-marked graphs with $k$-general transmission that are not Brill--Noether general.
\end{remark}

As proved in \cite[Theorem A]{Pfl22}, chaining twice-marked graphs of the same torsion order preserves $k$-general transmission, so can deduce an easy but somewhat restrictive criterion for Brill--Noether generality of chains.

\begin{corollary}
Let $(G_i, u_i, v_i)$, for $i=1,2, \cdots, \ell$, be a sequence of $\ell$ twice-marked graphs, and $(G,u,v) = (G,u_1, v_\ell)$ the iterated vertex gluing. Let $g_i$ be the genus of $G_i$. If each $(G_i,u_i,v_i)$ has $k$-general transmission for the same value of $k$, and $k \geq \frac12 (g_1 + \cdots + g_\ell) + 1$, then $G$ is Brill--Noether general.
\end{corollary}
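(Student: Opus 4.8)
The plan is to combine the two ingredients that the corollary is explicitly built from: Theorem A of \cite{Pfl22}, which states that vertex gluing of twice-marked graphs of a common torsion order preserves $k$-general transmission, and \Cref{prop:kgt-bngenl}, which converts $k$-general transmission into Brill--Noether generality once $k$ is large enough. The entire argument is an assembly of these, so I do not expect any genuinely new difficulty.

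First I would establish that the iterated vertex gluing $(G,u,v) = (G, u_1, v_\ell)$ itself has $k$-general transmission. Theorem A of \cite{Pfl22} is stated for a gluing of \emph{two} twice-marked graphs, so I would upgrade it to $\ell$ factors by a straightforward induction on $\ell$. The base case $\ell = 1$ is trivial. For the inductive step, let $(G', u_1, v_{\ell-1})$ denote the gluing of the first $\ell - 1$ factors, which has $k$-general transmission by the inductive hypothesis. By \Cref{lem:kgtImpliesTorsionOrder}, $k$ is then exactly the torsion order of $(G', u_1, v_{\ell - 1})$, and by hypothesis $k$ is the torsion order of $(G_\ell, u_\ell, v_\ell)$ as well. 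Hence Theorem A of \cite{Pfl22} applies to the single gluing $(G, u_1, v_\ell)$ of these two common-torsion graphs, yielding $k$-general transmission for $(G,u,v)$. The only mild care needed here is to confirm that \Cref{lem:kgtImpliesTorsionOrder} licenses the repeated application of Theorem A by certifying that the torsion order stays equal to $k$ at each stage; this is exactly what that lemma provides.

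Next I would compute the genus of $G$. Vertex gluing identifies a single pair of vertices and disjointly unions the edges, so $|V(G)| = |V(G_1)| + \cdots + |V(G_\ell)| - (\ell - 1)$ and $|E(G)| = |E(G_1)| + \cdots + |E(G_\ell)|$. Plugging into $g = |E(G)| - |V(G)| + 1$ gives $g = g_1 + g_2 + \cdots + g_\ell$, i.e. genus is additive under iterated vertex gluing. This is a one-line verification that I would record explicitly since the hypothesis on $k$ is phrased in terms of the $g_i$.

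Finally, the hypothesis $k \geq \frac{1}{2}(g_1 + \cdots + g_\ell) + 1$ becomes precisely $k \geq \frac{1}{2} g + 1$ once the genus computation is in hand. Since $(G,u,v)$ has $k$-general transmission and satisfies this inequality, \Cref{prop:kgt-bngenl} concludes that $G$, viewed as an unmarked graph, is Brill--Noether general. The whole corollary is therefore a clean composition of the preservation result with the threshold result, and the main (though minor) obstacle is simply the bookkeeping of the induction that lifts the two-factor gluing statement to $\ell$ factors while tracking the torsion order.
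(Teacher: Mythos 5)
Your proof is correct and is essentially the paper's argument: the corollary is stated there as an immediate consequence of \cite[Theorem A]{Pfl22} (chaining twice-marked graphs of a common torsion order preserves $k$-general transmission) combined with Proposition \ref{prop:kgt-bngenl}. You simply make explicit the induction on $\ell$, the torsion-order bookkeeping via Lemma \ref{lem:kgtImpliesTorsionOrder}, and the additivity of genus under vertex gluing, all of which the paper leaves implicit.
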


Proposition \ref{prop:kgt-bngenl} also provides the last ingredient needed to prove a slightly strengthened form of Theorem \ref{thm:bananas}.

\begin{corollary} \label{cor:bananasWithKGT}
    The only banana graphs of genus $\geq 3$ which have $k$-general transmission are $(B_{n_0, \cdots, n_g},v_{\alpha,1},v_{\beta,1})$ with $\alpha \neq \beta, n_\alpha = n_\beta = 2$; these examples have $2$-general transmission.
\end{corollary}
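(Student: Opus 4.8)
The plan is to prove both directions by combining the torsion-order dichotomy of \Cref{prop-bananTorsion} with \Cref{prop:kgt-bngenl}. First I would record the cheap observation that $k$-general transmission forces every divisor on $(B_{n_0,\dots,n_g},u,v)$ to be submodular, so the hypothesis of \Cref{prop-bananTorsion} is met. That proposition then splits into two cases: either $(G,u,v)$ is the exceptional marking $(v_{\alpha,1},v_{\beta,1})$ with $\alpha\neq\beta$ and $n_\alpha=n_\beta=2$ (with $k=2$), or else $k\geq g$.

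The heart of the necessity direction is ruling out the case $k\geq g$. Since $g\geq 3$ we have $k\geq g\geq \tfrac12 g+1$, so \Cref{prop:kgt-bngenl} applies and shows that the underlying graph $G=B_{n_0,\dots,n_g}$ is Brill--Noether general. But this is impossible: by \Cref{lem:g12} the divisor $v_{0,0}+v_{0,n_0}$ has degree $2$ and rank $1$, so the pair $(2,1)$ lies in the divisor census of $G$, whereas $\rho(g,1,2)=g-2(g-1)=2-g<0$ for $g\geq 3$. A Brill--Noether general graph admits no census pair with negative $\rho$, a contradiction. Hence $k\geq g$ cannot occur, and the only marking of a genus $\geq 3$ banana graph that can possibly have $k$-general transmission is the exceptional one, necessarily with torsion order $k=2$.

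It remains to check the converse, that $(B_{n_0,\dots,n_g},v_{\alpha,1},v_{\beta,1})$ with $\alpha\neq\beta$ and $n_\alpha=n_\beta=2$ genuinely has $2$-general transmission. Here the torsion order is $2$ (directly, since $2v_{\alpha,1}\sim v_{0,0}+v_{0,n_0}\sim 2v_{\beta,1}$ by the chip-firing identity in \Cref{lem:g12}, while $v_{\alpha,1}\not\sim v_{\beta,1}$), so by \Cref{lem-TO2GenTrans} it suffices to verify that every divisor on this graph is submodular. I would establish this by a direct computation with the rank formula of \Cref{cor-BanaRankComp}, which reduces the rank of any divisor to the chip counts at the two multivalent vertices and on the strand interiors; one then checks $\Delta(D')\geq 0$ for every twist $D'=D+av_{\alpha,1}+bv_{\beta,1}$, using that $v_{\alpha,1}$ and $v_{\beta,1}$ are bivalent vertices on the two short parallel strands, so that subtracting them and re-reducing alters the relevant chip counts in a tightly controlled way.

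The main obstacle is precisely this last submodularity verification. The cited classification \Cref{thm-NSMForBanana} only shows that the \emph{particular construction} of a non-submodular divisor used in its proof breaks down for this marking; it does not by itself prove that no non-submodular divisor exists. So the converse requires genuinely new (if routine) work that descends to the explicit rank formula. By contrast, the necessity direction is essentially immediate once one recognizes the key mechanism: a large torsion bound $k\geq g$ forces Brill--Noether generality via \Cref{prop:kgt-bngenl}, which collides with the fact that every genus $\geq 3$ banana graph is hyperelliptic.
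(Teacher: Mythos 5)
Your necessity argument is precisely the paper's proof of this corollary: $k$-general transmission gives submodularity of all divisors by definition, \Cref{prop-bananTorsion} yields the dichotomy, and in the case $k \geq g$ the inequality $k \geq \tfrac12 g + 1$ (valid since $g \geq 3$) lets \Cref{prop:kgt-bngenl} force Brill--Noether generality of the underlying banana graph, which collides with \Cref{lem:g12}; the paper states this contradiction simply by citing \Cref{lem:g12}, and you merely spell out the computation $\rho(g,1,2) = 2-g < 0$. Where you genuinely diverge is on the closing clause ``these examples have $2$-general transmission,'' and your diagnosis there is correct and sharper than the paper's own treatment: \Cref{thm-NSMForBanana} (and its proof) only \emph{constructs} non-submodular divisors outside the exceptional markings and never asserts that the exceptional markings have all divisors submodular, and the paper's proof of the corollary is silent on this direction entirely --- it implicitly leans on \Cref{lem-TO2GenTrans} together with an unstated submodularity claim for $(B_{n_0,\dots,n_g},v_{\alpha,1},v_{\beta,1})$ with $n_\alpha = n_\beta = 2$. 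Your proposed route for closing this --- torsion order $2$ via $2v_{\alpha,1} \sim v_{0,0}+v_{0,n_0} \sim 2v_{\beta,1}$ and $v_{\alpha,1} \not\sim v_{\beta,1}$ (using \Cref{lem-bridgelessFacts}), then \Cref{lem-TO2GenTrans}, reducing everything to a direct check that $\Delta(D') \geq 0$ for all twists via the rank formula of \Cref{cor-BanaRankComp} --- is the right one and uses exactly the tools the paper would need. Do note, however, that your proposal also stops at announcing this $\Delta$-computation rather than executing it, so as written both your argument and the paper's leave the same routine verification to the reader; the difference is that yours names the gap explicitly and identifies how to fill it, which is a genuine improvement in rigor over the published proof.
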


\begin{proof}
Suppose that $(B_{n_0, \cdots, n_g},v_{\alpha,1},v_{\beta,1})$ has $k$-general transmission. Then all divisors are submodular, so
\Cref{prop-bananTorsion} shows that either we are in the torsion order $2$ case described in the statement, or $k \geq g$. Suppose for contradiction that we are in the latter case. Since $g \geq 3$, this implies $k \geq \frac12 g + 1$, so \Cref{prop:kgt-bngenl} implies that the banana graph $B_{n_0, \cdots, n_g}$ is Brill--Noether general. But  this contradicts \Cref{lem:g12}, since we assume $g \geq 3$.
\end{proof}

\begin{remark}
    Although we defined the notion of evenly marked only in genus $2$, it's noteworthy that the higher genus banana graphs with $2$-general transmission satisfy a natural extension of that definition. Moreover, by \Cref{thm-NSMForBanana}, we get that a banana graph of genus $\geq 3$ has $k$-general transmission if and only if it is evenly marked and every divisor is submodular. This description is a bit misleading because the hypothesis is extremely restrictive; the only evenly marked banana graph of genus $\geq 3$ on which every divisor is submodular are the torsion order $2$ graphs discussed in the corollary.
\end{remark}

\subsection{Attaching a twice-marked graph to a one-marked graph}

Attaching a twice-marked graph with $k$-general transmission preserves Brill--Noether generality of a once-marked graph, as long as $k$ is large enough.

\begin{theorem}\label{thm:glueBNGtoKGT}
Let $(G_1, u_1, v_1), (G_2, u_2, v_2)$ be two twice-marked graphs of genera $g_1,g_2$ on which all divisors are submodular, and denote by $(G,u,v) = (G, u_1, v_2)$ their vertex gluing.
Suppose that $(G_1,v_1)$ is Brill--Noether general as a marked graph, and $(G_2, u_2,v_2)$ has $k$-general transmission, where $k > g_1 + g_2$. Then $(G,v)$ is Brill--Noether general.
\end{theorem}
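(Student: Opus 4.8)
The plan is to reduce the once-marked generality of $(G,v_2)$ to a \emph{gluing formula} for pole orders across the cut vertex $w = v_1 = u_2$, and then to bound the resulting defect count by splitting it between the two factors. First I would establish the cut-vertex principle: writing a divisor $E$ on $G$ as $E = E^{(1)} + E^{(2)}$, where $E^{(1)}$ carries all chips on $G_1$ (including those at $w$) and $E^{(2)}$ the chips strictly inside $G_2$, the divisor $E$ is equivalent to an effective divisor if and only if there is an integer $t$ (the net flow of chips across $w$) with $r_{G_1}(E^{(1)} - t\,w) \geq 0$ and $r_{G_2}(E^{(2)} + t\,w) \geq 0$. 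Iterating this over the effective divisors one subtracts to test rank, and using the definition of rank to handle the worst placement of those chips on each side, upgrades it to the higher-rank criterion
\[
 r_G(E) \geq i \iff \min_{i_1 + i_2 = i,\; i_1,i_2 \geq 0}\Big( \Gamma_1(E^{(1)}, i_1) + \Gamma_2(E^{(2)}, i_2)\Big) \geq 0,
\]
where $\Gamma_1(E^{(1)}, i_1) = \max\{t : r_{G_1}(E^{(1)} - t\,v_1) \geq i_1\} = -\,s_{i_1}(E^{(1)}, v_1)$ and $\Gamma_2(E^{(2)}, i_2) = \max\{t : r_{G_2}(E^{(2)} - t\,u_2) \geq i_2\}$.

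Next I would specialize to $E = D + \ell v_2$, so that $E^{(1)} = D^{(1)} =: D_1$ is independent of $\ell$ while $E^{(2)} = D^{(2)} + \ell v_2 =: D_2 + \ell v_2$. Monotonicity of $\Gamma_2$ in $\ell$ converts the displayed criterion into the pole-order gluing formula
\[
 s_i(D, v_2) \;=\; \max_{i_1 + i_2 = i}\; s_{i_2}\big(D_2 - s_{i_1}(D_1,v_1)\,u_2,\; v_2\big).
\]
The term $s_{i_2}(D_2 - c\,u_2, v_2)$ is precisely the twice-marked pole-order datum of $(G_2,u_2,v_2)$, and Lemma \ref{lem:tauChars} rewrites it in terms of $\tau := \tau^{u_2,v_2}_{D_2}$: it equals $-m$, where $m$ is the $(i_2+1)$-st largest integer with $\tau(m) \leq -c$. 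Both hypotheses of the theorem enter here: Brill--Noether generality of $(G_1,v_1)$ controls the charges $s_{i_1}(D_1,v_1)$ through $|\lambda(D_1,v_1)| \leq g_1$, while $k$-general transmission of $(G_2,u_2,v_2)$ controls $\tau$ through $\inv_k(\tau) \leq g_2$.

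I would then rewrite the quantity to be bounded as the defect count $|\lambda(D,v_2)| = \#\{(i,\ell): i \ge 0,\ s_i(D,v_2) \leq \ell < i + g - \deg D\}$ with $g = g_1 + g_2$, feed in the gluing formula, and attach to each defect either a defect of $(G_1,v_1)$ or a $k$-inversion of $\tau$, according to whether the split $(i_1,i_2)$ realizing the maximum at that level advances $i_1$ or $i_2$. The goal is that summing the two types yields $|\lambda(D,v_2)| \leq |\lambda(D_1,v_1)| + \inv_k(\tau) \leq g_1 + g_2 = g$.

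The main obstacle is the injectivity of this map into $\{\text{defects of }(G_1,v_1)\} \sqcup \{k\text{-inversions of }\tau\}$. Since $\tau \in \eaf{k}$, its inversions recur with period $k$, and correspondingly $s_{i_2}(D_2 - (c+k)u_2, v_2) = s_{i_2}(D_2 - c\,u_2, v_2) + k$ because $k u_2 \sim k v_2$; so a priori, as the charge $c = s_{i_1}(D_1,v_1)$ and the index $i_2$ sweep through the range forced by the defects, one might meet the same $k$-inversion of $\tau$ several times and overcount the $G_2$-contribution. The hypothesis $k > g_1 + g_2$ is exactly what forecloses this: applying Riemann--Roch on each of $G_1$ and $G_2$ confines the splits and charges that produce a positive contribution to a window of integers of length less than $g_1 + g_2 < k$, i.e.\ strictly shorter than one period of $\tau$, so distinct defects map to distinct $k$-inversions (this is the non-sharp analogue of the window count in the proof of Proposition \ref{prop:kgt-bngenl}, and degenerates correctly to that statement when $g_1 = 0$). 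Pinning down this width estimate, and separately checking that the $G_1$-type defects inject into $\lambda(D_1,v_1)$, is where the real work lies; the rest is the bookkeeping assembled above.
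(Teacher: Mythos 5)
Your reductions are sound, and in fact they reconstruct the paper's framework in coordinates: the cut-vertex rank criterion is \cite[Prop.~3.15]{Pfl22}, and your pole-order gluing formula $s_i(D,v_2) = \max_{i_1+i_2=i} s_{i_2}\big(D_2 - s_{i_1}(D_1,v_1)u_2,\, v_2\big)$ is exactly the min-plus product that defines the Demazure product in \cite{PflDemProd}, i.e.\ it is Equation \eqref{eq:tauGlued}, $\tau_D^{u_1,v_2} = \tau_{D_1}^{u_1,v_1} \star \tau_{D_2}^{u_2,v_2}$, written out. Likewise your defect count is Proposition \ref{prop:sciLambda}, $|\lambda(D,v)| = \sci(\tauD)$. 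After these (correct) steps, what remains is precisely the inequality $\sci(\alpha\star\beta) \leq \sci(\alpha) + \inv_k(\beta)$ under the hypothesis $k > \sci(\alpha)+\inv_k(\beta)$, which is Proposition \ref{prop:sciInvStar} — and this is exactly the part you defer as ``where the real work lies.''

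That deferred step is the combinatorial heart of the theorem, and the one quantitative idea you offer for it — that Riemann--Roch confines the positive contributions to a window of length less than $g_1+g_2 < k$, so that distinct defects automatically hit distinct $k$-inversions of $\tau$ — is not correct as stated, and is not how the collision problem is actually resolved. Nothing confines the defect pairs $(i,\ell)$ of $\lambda(D,v_2)$ to a single period of $\tau$: a single inversion of $\tau$ with a large drop $\tau(u)-\tau(v) > k$ already produces several sign-changing (defect-producing) translates $(u+jk, v+jk)$ inside \emph{one} $k$-equivalence class, while contributing only on the order of $(\tau(u)-\tau(v))/k$ classes to $\inv_k(\tau)$, so one-period window counting cannot by itself rule out double-charging. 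The paper's mechanism (Lemma \ref{lem-SciSimpleRefl}) is instead a counting contradiction, and crucially it is only available inside an induction: one factors $\beta = \sigma_n^k \star (\sigma_n^k\beta)$ using Equation \eqref{eq:starSigma} and absorbs one simple reflection at a time, maintaining the running budget $\sci(\text{left factor}) \leq k-2$; if two $k$-congruent positions both produced a new sign-changing inversion in one such step, the stretch of length at least $k$ between them would be forced to contain at least $k-1$ sign-changing inversions of the left factor, contradicting that budget. Your all-at-once charging map does not have access to this evolving budget, and it also has a well-definedness problem you do not address: the maximizing split $(i_1,i_2)$ in the gluing formula need not be unique, so ``whether the split advances $i_1$ or $i_2$'' must be made canonical before injectivity can even be posed. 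So: right architecture, genuinely parallel to the paper's, but the missing lemma is the actual theorem, and the sketch offered for it would fail as written.
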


\begin{remark}
The marked point $u_1$ is seemingly extraneous here, since it is only the divisor theory of $(G_1,v_1)$ that we care about. Indeed, we are quite confident that Theorem \ref{thm:glueBNGtoKGT} remains true if $u_1$ is not mentioned, and indeed without the ``all divisors submodular on $(G_1,u_1,v_1)$'' hypothesis. However, we include this assumption to simplify exposition, as it allows the machinery of Demazure products to be used directly.
\end{remark}

We establish this theorem of the course of this subsection. Before doing so, we note that we can take $G_1$ to be a single vertex (genus $0$) in Theorem \ref{thm:glueBNGtoKGT} to obtain Brill--Noether generality of once-marked graphs obtained by forgetting one marked point on a twice-marked graph with $k$-general transmission, provided that $k$ is large enough.

\begin{corollary}
If $(G,u,v)$ is a twice-marked graph of genus $g$ with $k$-general transmission, and $k > g$, then $(G,v)$ is a Brill--Noether general once-marked graph.
\end{corollary}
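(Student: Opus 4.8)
The plan is to obtain this corollary as the degenerate case $g_1 = 0$ of \Cref{thm:glueBNGtoKGT}, taking $G_1$ to be a single vertex. First I would set $(G_1,u_1,v_1)$ to be the trivial twice-marked graph consisting of one vertex $\ast$, with $u_1 = v_1 = \ast$, and let $(G_2,u_2,v_2) = (G,u,v)$ be the given graph. The vertex gluing identifies $\ast$ with $u$, which does nothing to $G$; hence the glued graph $(G,u_1,v_2)$ is literally $(G,u,v)$, and in particular the once-marked graph $(G,v_2)$ is exactly $(G,v)$. So it suffices to check that the three hypotheses of \Cref{thm:glueBNGtoKGT} hold, after which the theorem delivers precisely the desired conclusion. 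Note that although $u_1 \sim v_1$ here, \Cref{thm:glueBNGtoKGT} does not exclude this degenerate case.

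Second, I would verify the hypotheses. The genus of $G_1$ is $g_1 = 0$, so the requirement $k > g_1 + g_2$ becomes exactly $k > g$, which is assumed, and the hypothesis that $(G_2,u_2,v_2)$ has $k$-general transmission is given outright. It remains to check that all divisors on $(G_1,u_1,v_1)$ are submodular and that $(G_1,v_1)$ is Brill--Noether general. Both are routine since $G_1$ is a single vertex: every divisor is a multiple $m\ast$, and Riemann--Roch in genus $0$ gives $r(m\ast) = m$ for $m \geq 0$ and $r(m\ast) = -1$ otherwise. A direct computation of $\Delta(m\ast)$, recalling that $u_1 = v_1 = \ast$, yields $\Delta(m\ast) \geq 0$ for every $m$, so all divisors are submodular.

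Third, for Brill--Noether generality of the once-marked graph $(G_1,v_1)$, I would compute the Weierstrass partition directly from the definition. For $D = m\ast$ one has $r(D + \ell\ast) = m + \ell$ whenever this quantity is nonnegative, so $s_i(D,\ast) = i - m$, and hence $\lambda_i(D,\ast) = i - (i-m) + 0 - m = 0$ for all $i$. Thus $\lambda(D,\ast)$ is the empty partition and $|\lambda(D,\ast)| = 0 \leq g_1$, so $(G_1,v_1)$ is Brill--Noether general. With all hypotheses verified, \Cref{thm:glueBNGtoKGT} applies and gives the result. I do not expect a genuine obstacle here: all of the substance lives in the already-proved \Cref{thm:glueBNGtoKGT}, and this argument merely records that a single vertex is a legitimate choice of $G_1$ that is trivially Brill--Noether general and carries only submodular divisors — exactly what is needed to forget the marked point $u$ while preserving generality.
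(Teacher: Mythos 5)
Your proposal is correct and takes essentially the same approach as the paper: the paper obtains this corollary precisely by taking $G_1$ to be a single vertex (genus $0$) in \Cref{thm:glueBNGtoKGT}. Your explicit verifications --- that every divisor $m\ast$ on the one-vertex graph is submodular and has empty Weierstrass partition --- are exactly the routine checks the paper leaves implicit, and they are carried out correctly.
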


We now proceed to the proof of Theorem \ref{thm:glueBNGtoKGT}. We must first relate Weierstrass partitions to transmission permutations; the key mechanism is the following count.

\begin{definition}
For $\alpha \in \asp$ a \emph{sign-changing inversion} is a pair $(u,v) \in \Z^2$ with $u<v$ and $\alpha(u) > 0 \geq \alpha(v)$. Denote the number of sign-changing inversions by $\sci(\alpha)$.
\end{definition}

\begin{proposition}\label{prop:sciLambda}
If $D$ is a submodular divisor on a twice-marked graph $(G,u,v)$, then
$$\sci(\tauD) = \left| \lambda(D,v) \right|.$$
\end{proposition}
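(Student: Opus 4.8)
The plan is to express both $\sci(\tauD)$ and $|\lambda(D,v)|$ in terms of the ranks $r(K_G - D + nv)$ for integers $n$, using the two equivalent characterizations of $\tauD$ in Lemma \ref{lem:tauChars}, and then match them index by index. The central object is the set $N = \{m \in \Z : \tauD(m) \leq 0\}$. Since $\tauD(m) \geq m - \deg D$ by the Riemann--Roch bound of Equation \eqref{eq-RRTauBounds}, the set $N$ is bounded above, and being the $\tauD$-preimage of $\Z_{\leq 0}$ it is infinite; so I may list its elements in strictly decreasing order $m_0 > m_1 > m_2 > \cdots$. First I would show that $s_i(D,v) = -m_i$ for every $i \geq 0$. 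Setting $a = 0$ and $b = -\ell$ in the first identity of Lemma \ref{lem:tauChars} gives $r(D + \ell v) + 1 = \#\{j \in N : j \geq -\ell\}$, so $r(D + \ell v) \geq i$ holds precisely when at least $i+1$ elements of $N$ are $\geq -\ell$, i.e. when $m_i \geq -\ell$; taking the minimal such $\ell$ yields $s_i = -m_i$.

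Next I would compute $\sci(\tauD)$ by grouping the sign-changing inversions $(m,n)$ (I will rename the inversion coordinates to avoid the collision with the marked points $u,v$) according to their larger coordinate $n$. By definition $\sci(\tauD) = \sum_{n \in N} c(n)$, where $c(n) = \#\{m < n : \tauD(m) > 0\}$. The second identity of Lemma \ref{lem:tauChars}, taken with $a = 0$ and $b = n$, identifies this count as $c(n) = r(K_G - D + nv) + 1$. Hence $\sci(\tauD) = \sum_{i \geq 0} \bigl( r(K_G - D + m_i v) + 1 \bigr)$, where the sum is finite because the terms eventually vanish.

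It then remains to check that the $i$-th summand equals $\lambda_i(D,v)$. Riemann--Roch gives $r(K_G - D + m_i v) + 1 = r(D - m_i v) - \deg D + m_i + g$. Since $-m_i = s_i$, the divisor $D - m_i v$ equals $D + s_i v$, and I would argue its rank is exactly $i$: minimality of $s_i$ gives rank $\geq i$, while the standard inequality $r(E + v) \leq r(E) + 1$ applied at $\ell = s_i - 1$ (where the rank is $\leq i-1$) forces rank $\leq i$. Substituting $r(D + s_i v) = i$ yields $c(m_i) = i + m_i + g - \deg D = i - s_i + g - \deg D = \lambda_i(D,v)$, and summing over $i$ completes the proof.

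The main obstacle is purely the bookkeeping: correctly identifying the sorted set $N$ with the pole-order sequence $(s_i)$, and pinning the rank of $D + s_i v$ to be exactly $i$ rather than merely at least $i$ — this is the one place where the sub-additivity $r(E+v) \leq r(E)+1$ of rank under adding a single point is essential. Once these indexing facts are in place, two applications of Lemma \ref{lem:tauChars} and a single invocation of Riemann--Roch make the equality fall out term by term, with finiteness of both sides automatic since $\lambda_i(D,v) = 0$ for $i \gg 0$.
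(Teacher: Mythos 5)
Your proof is correct and takes essentially the same route as the paper's: both identify the set $\{m : \tauD(m) \leq 0\}$ with the negated pole orders $-s_i(D,v)$, group the sign-changing inversions by that second coordinate, count each group as $r(K_G - D + m_i v) + 1$ via the second identity of Lemma \ref{lem:tauChars}, and convert to $\lambda_i(D,v)$ by Riemann--Roch. The only difference is that you make explicit the bookkeeping the paper leaves implicit --- in particular pinning down $r(D + s_i v) = i$ exactly, using $r(E+v) \leq r(E)+1$ --- which is careful but not a different argument.
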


\begin{proof}
To simplify notation, write $s_i$ instead of $s_i(D,v)$.
The numbers $s_0, s_1, \cdots$ are precisely the integers $\ell$ such that $r(D+\ell v) > r(D + (\ell-1)v)$. By definition of $\tauD$, these are the integers $\ell$ such that $\tauD(-\ell) \leq 0$. Hence the sign-changing inversions of $\tauD$ are the pairs $(u,s_i)$, where $i \geq 0$, $u < s_i$, and $\tauD(u) > 0$.
It follows that

\begin{eqnarray*}
\sci(\tauD) &=& \sum_{i=0}^\infty \# \{ u < s_i:\ \tauD(u) > 0 \}\\
&=& \sum_{i=0}^\infty \left( r(K_G - D - s_i v) + 1\right)\\
&=& \sum_{i=0}^\infty \left( r(D + s_i v) - \deg( D + s_i v) +g\right) \mbox{ (by Riemann--Roch)}\\
&=& \sum_{i=0}^\infty \left( i - \deg D - s_i + g \right).\\
\end{eqnarray*}
This last sum is equal to $\displaystyle \sum_{i=0}^\infty \lambda_i = |\lambda|$.
\end{proof}

Before proceeding, we require some facts about a certain associative operation on permutations, the \emph{Demazure product}, which we denote by $\star$. For our purposes, this operation is defined on the group $\asp$ of \emph{almost-sign-preserving permutations}, consisting of all bijections $\alpha: \Z \to \Z$ for which $n$ and $\alpha(n)$ have the same sign for all but finitely many $n$. To each $\alpha \in \asp$ we associate a function $s_\alpha: \Z^2 \to \Z_{\geq 0}$ given by
$$s_\alpha(a,b) = \# \{\ell \geq b:\ \alpha(\ell) < a\}.$$
The Demazure product is characterized by the following min-plus matrix multiplication formula.
$$s_{\alpha \star \beta}(a,b) = \min{
s_\alpha(a,\ell) + s_\beta(\ell,b):\ \ell \in \Z
}.$$
Theorem A of \cite{PflDemProd} proves that this formula uniquely determines a well-defined associative operation on $\asp$. We also use a handy formula for specific computations. Following the notation of \cite{PflDemProd}, we will write, for a set $S \subseteq \Z$ containing no two consecutive integers, $\sigma_S$ for the permutation exchanging $n$ and $n+1$ for all $n \in S$, but fixing all other integers. By \cite[Theorem 8.7]{PflDemProd}, we have for all $\alpha \in \asp$ and sets $S$ as above,

\begin{equation}\label{eq:starSigma}
\alpha \star \sigma_S =  \alpha \sigma_T
\mbox{ where }
T = \{ \ell \in S:\ \alpha(\ell) < \alpha(\ell+1)\}.
\end{equation}

The relation between $\star$ and divisors on graphs is established in \cite[Theorem 3.11]{Pfl17}: if $(G,u,v)$ is the vertex gluing of $(G_1,u_1, v_1)$ and $(G_2, u_2, v_2)$, $D_1$ is a submodular divisor on $G_1$, and $D_2$ is a submodular divisor on $G_2$, then $D = D_1 + D_2$, regarded as a divisor on $G$, is also submodular and has transmission permutation given by

\begin{equation}\label{eq:tauGlued}
\tauD = \tauDi{1} \star \tauDi{2}.
\end{equation}

\begin{remark}
Because \cite{Pfl17} was written before \cite{PflDemProd}, the results in \cite{Pfl17} are phrased in a way that does not assume $\star$ is well-defined for all pairs of permutations, but they quickly imply the above claims in light of the results of \cite{PflDemProd}.
\end{remark}

\begin{lemma}\label{lem-SciSimpleRefl}
Let $k \geq 2$ be an integer, and $\alpha \in \asp$ a permutation with $\sci(\alpha) \leq k-2$. For any integer $n$,
$$\sci(\alpha \star \sigma^k_n) \leq \sci(\alpha)+1.$$
\end{lemma}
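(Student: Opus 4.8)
The plan is to reduce the statement to a question about how the \emph{sign pattern} of $\alpha$ changes, and then to spend the hypothesis $\sci(\alpha) \leq k-2$ on bounding how many positions can change sign. Write $\sigma^k_n = \sigma_S$ for $S = n + k\Z$; since $k \geq 2$, the set $S$ contains no two consecutive integers, so \eqref{eq:starSigma} applies and gives $\alpha \star \sigma^k_n = \alpha \sigma_T$ with $T = \{\ell \in S:\ \alpha(\ell) < \alpha(\ell+1)\}$. Right multiplication by $\sigma_T$ swaps the values $\alpha(\ell)$ and $\alpha(\ell+1)$ for each $\ell \in T$; these index pairs are disjoint and nonadjacent, being at least $k$ apart. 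Since $\sci$ depends only on which positions carry a positive value, such a swap affects $\sci$ only when $\alpha(\ell)$ and $\alpha(\ell+1)$ have opposite signs, and because $\ell \in T$ forces $\alpha(\ell) < \alpha(\ell+1)$, the relevant positions are exactly the \emph{sign-ascents} $\ell \in S$ with $\alpha(\ell) \leq 0 < \alpha(\ell+1)$. Thus the sign pattern of $\alpha \star \sigma^k_n$ is obtained from that of $\alpha$ by swapping the signs at each sign-ascent lying in $S$, and it suffices to understand the effect of these flips on $\sci$.

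First I would record the effect of the flips by a direct pair-by-pair comparison. Flipping one sign-ascent at $\ell$ (so the pattern $(\leq 0, >0)$ at $(\ell,\ell+1)$ becomes $(>0,\leq 0)$) raises $\sci$ by exactly $1$: the only sign-changing inversion created among the two moved positions is $(\ell,\ell+1)$ itself, while the contributions of pairs with exactly one endpoint in $\{\ell,\ell+1\}$ cancel telescopically. More generally, flipping $r$ sign-ascents that are pairwise at least $k$ apart raises $\sci$ by exactly $r$, since the ``both-endpoints-moved'' pairs contribute $\binom{r+1}{2} - \binom{r}{2} = r$ and the ``one-endpoint-moved'' pairs again cancel in matched pairs. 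Consequently the inequality will follow once I show that $S$ contains \emph{at most one} sign-ascent.

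The heart of the argument is precisely this last claim: if $\sci(\alpha) \leq k-2$, then no two sign-ascents of $\alpha$ are congruent modulo $k$. Because $\alpha \in \asp$, its sign pattern is eventually $\leq 0$ to the left and $>0$ to the right, so reading left to right it decomposes into an initial infinite nonpositive block, then alternating finite positive blocks of sizes $a_1,\dots,a_d$ and finite nonpositive blocks of sizes $b_1,\dots,b_d$ (all $\geq 1$), and finally an infinite positive block. The sign-ascents are the $d+1$ left endpoints $p_1 < \cdots < p_{d+1}$ of the positive blocks, with $p_{i+1} - p_i = a_i + b_i$, and a short count yields $\sci(\alpha) = \sum_{i \leq j} a_i b_j$. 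Suppose two ascents $p_i, p_{i'}$ with $i < i'$ were congruent mod $k$, so $p_{i'} - p_i \geq k$. Restricting the sum to the blocks between them, using $a_l b_l \geq a_l + b_l - 1$ on the diagonal terms and bounding the off-diagonal terms below by $\binom{i'-i}{2}$, I would get $\sci(\alpha) \geq (p_{i'}-p_i) - (i'-i) + \binom{i'-i}{2} \geq (p_{i'}-p_i) - 1 \geq k-1$, contradicting the hypothesis.

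This block estimate is the main obstacle, since it is exactly where the numerical hypothesis $\sci(\alpha) \leq k-2$ is consumed; the naive bound $\sum_l (a_l+b_l) \leq 2\sci(\alpha)$ is too weak, and the sharper diagonal inequality $a_lb_l \ge a_l + b_l - 1$ is what pushes the contradiction through. Everything else is bookkeeping. Combining the three steps, $S$ meets the sign-ascents in at most one point, so $\sci(\alpha \star \sigma^k_n) \leq \sci(\alpha) + 1$, as desired.
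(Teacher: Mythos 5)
Your proof is correct, and its skeleton matches the paper's: you reduce via Equation \eqref{eq:starSigma} to $\alpha\sigma_T$, show that $\sci$ grows by exactly the number of sign-ascents $\ell \in n + k\Z$ with $\alpha(\ell) \leq 0 < \alpha(\ell+1)$ (the paper gets the same equality by splitting the sign-changing inversions of $\alpha\sigma_S$ into pullbacks of those of $\alpha$ and the new pairs $(\ell,\ell+1)$), and then spend the hypothesis $\sci(\alpha) \leq k-2$ to show that at most one sign-ascent lies in the residue class. The genuine divergence is in this last step. The paper argues locally and by contradiction: if $m_1 < m_2$ are two such ascents, then $m_2 - m_1 \geq k$, and for each $u$ with $m_1 + 2 \leq u \leq m_2 - 1$ either $(m_1+1,u)$ or $(u,m_2)$ is a sign-changing inversion according to the sign of $\alpha(u)$; together with $(m_1+1,m_2)$ this exhibits $k-1$ sign-changing inversions directly. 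You instead decompose the whole sign pattern into alternating blocks, derive the exact formula $\sci(\alpha) = \sum_{i \leq j} a_i b_j$, and extract the same contradiction $\sci(\alpha) \geq (p_{i'}-p_i) - (i'-i) + \binom{i'-i}{2} \geq k-1$ via $a_l b_l \geq a_l + b_l - 1$ (both inequalities check out, the second since $\binom{t}{2} \geq t-1$ for $t \geq 1$). Both routes consume the hypothesis identically; the paper's count is shorter and needs no global structure, while your block formula is heavier but buys more: an exact expression for $\sci(\alpha)$ and a gap bound between \emph{any} two sign-ascents, not just congruent ones. Likewise your flip-bookkeeping (exact increase by $r$, with cross terms cancelling) is slightly stronger than what is needed, but it is correct and agrees with the paper's equality $\sci(\alpha\sigma_S) = \sci(\alpha) + \#\{\ell \in S:\ \alpha(\ell) \leq 0 < \alpha(\ell+1)\}$.
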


\begin{proof}
By Equation \eqref{eq:starSigma}, we have
$$\alpha \star \sigma^k_n = \alpha \sigma_S,
\mbox{ where }
S = \{ \ell \in n + k \Z:\ \alpha(\ell) < \alpha(\ell+1)\}.$$
For any pair $(u,v)$ with $u < v$, $(u,v)$ is a sign-changing inversion of $\alpha \sigma_S$ if and only if either
\begin{enumerate}[label = \arabic*)]
    \item $(u,v)$ is \emph{not} an inversion of $\sigma_S$ and $(\sigma_S(u), \sigma_S(v))$ is a sign-changing inversion of $\alpha$, or
    \item $(u,v)$ \emph{is} an inversion of $\sigma_S$, and $\alpha \sigma_S (u) > 0 \geq \alpha \sigma_S(v)$.
\end{enumerate}
Both of these statements can be simplified. The definition of $S$ implies that $\alpha$ and $\sigma_S$ have no inversions in common, so the first phrase of case $1)$ is redundant. This gives an embedding of the sign-changing inversions of $\alpha$ into those of $\alpha \sigma_S$. The remaining sign-changing inversions of $\alpha \sigma_S$ are those described in case $2)$. The only inversions of $\sigma_S$ are $(\ell,\ell+1)$ for $\ell \in S$, so we may write
$$\sci(\alpha \sigma_S) = \sci(\alpha) + \# \{ \ell \in S:\ \alpha(\ell) \leq 0 < \alpha(\ell+1) \}.$$
Therefore it suffices to demonstrate that there is at most one integer $\ell \in S$ such that $\alpha(\ell) \leq 0 < \alpha(\ell+1)$. We establish this by contradiction. Suppose that $m_1, m_2 \in S$ are two such integers, with $m_1 < m_2$. Then $m_2 - m_1 \geq k$, since they are congruent modulo $k$. Now, for each integer $u$ with $m_1+2 \leq u \leq m_2-1$, either $(m_1+1,u)$ or $(u,m_2)$ is a sign-changing inversion of $u$, depending on whether or not $\alpha(u) \leq 0$. This accounts for $m_2-m_1-2  \geq k-2$ sign-changing inversions of $\alpha$. Furthermore, $(m_1+1,m_2)$ is an additional sign-changing inversion. So $\alpha$ has at least $k-1$ sign-changing inversions. This contradiction completes the proof.
\end{proof}

\begin{proposition}
\label{prop:sciInvStar}
Suppose $\alpha \in \asp$ and $\beta \in \eaf{k}$ satisfy
$$k > \sci(\alpha) + \inv_k(\beta).$$
Then
$$\sci(\alpha \star \beta) \leq \sci(\alpha) + \inv_k(\beta).$$
\end{proposition}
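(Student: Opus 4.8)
The plan is to reduce the statement to a single application of \Cref{lem-SciSimpleRefl} per inversion of $\beta$, by writing $\beta$ as a Demazure product of simple reflections and a rotation, and then peeling these factors off one at a time. Throughout I would use that $\eaf{k} \subseteq \asp$ (every $\tau \in \eaf{k}$ satisfies $\tau(n) - n = O(1)$, since $\tau(n)-n$ is $k$-periodic, hence $\tau$ is almost sign-preserving) and that $\star$ is associative on $\asp$ by Theorem A of \cite{PflDemProd}.

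First I would record the structure of $\eaf{k}$ as the extended affine symmetric group. Writing $\rho$ for the rotation $n \mapsto n+1$ and $\sigma^k_n = \sigma_{n + k\Z}$ for the simple reflections, the quantity $\inv_k$ is the Coxeter length on the subgroup $\ker d$ of elements with vanishing rotation number $d(\beta) := \frac{1}{k}\sum_{i=0}^{k-1}(\beta(i) - i) \in \Z$ (this subgroup is the affine symmetric group), and the elements with $\inv_k = 0$ are exactly the powers of $\rho$. Consequently any $\beta \in \eaf{k}$ factors as $\beta = \sigma^k_{n_1} \cdots \sigma^k_{n_L}\, \rho^{\,d(\beta)}$, where $L = \inv_k(\beta)$ and $\sigma^k_{n_1}\cdots\sigma^k_{n_L}$ is a reduced word. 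Since the product of a reduced word is length-additive at each step, and since multiplication by $\rho^{\,d(\beta)}$ also preserves $\inv_k$ and is therefore length-additive, the ordinary product agrees with the Demazure product, giving $\beta = \sigma^k_{n_1} \star \cdots \star \sigma^k_{n_L} \star \rho^{\,d(\beta)}$.

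The next step is to dispose of the rotation factor, for which I would prove that for any $\gamma \in \asp$ and any $j \in \Z$ one has both $\gamma \star \rho^j = \gamma \rho^j$ and $\sci(\gamma \rho^j) = \sci(\gamma)$. The first identity follows from the min-plus characterization of $\star$: with $s_\gamma(a,b) = \#\{\ell \geq b : \gamma(\ell) < a\}$ one computes $s_{\rho^j}(\ell, b) = \max(0,\, \ell - j - b)$, and the minimum of $s_\gamma(a,\ell) + s_{\rho^j}(\ell,b)$ over $\ell$ is attained at $\ell = b + j$ (the first summand is non-increasing and the second non-decreasing in $\ell$, and the sum is non-decreasing for $\ell \geq b+j$), yielding $s_\gamma(a, b+j) = s_{\gamma\rho^j}(a,b)$. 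The second identity is immediate: the substitution $(u,v) \mapsto (u+j, v+j)$ is a bijection between the sign-changing inversions of $\gamma\rho^j$ and those of $\gamma$, since $u < v$ together with $\gamma(u+j) > 0 \geq \gamma(v+j)$ depends only on the values of $\gamma$ at the shifted arguments. By associativity this gives $\sci(\alpha \star \beta) = \sci\!\left( \alpha \star \sigma^k_{n_1} \star \cdots \star \sigma^k_{n_L} \right)$, with the rotation eliminated.

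Finally I would run the induction on the reflections. Setting $\alpha_0 = \alpha$ and $\alpha_i = \alpha_{i-1} \star \sigma^k_{n_i}$ for $1 \leq i \leq L$, I claim $\sci(\alpha_i) \leq \sci(\alpha) + i$ for all $i \leq L$. Granting $\sci(\alpha_{i-1}) \leq k-2$, \Cref{lem-SciSimpleRefl} gives $\sci(\alpha_i) \leq \sci(\alpha_{i-1}) + 1$, and the claim follows by induction. The hypothesis $\sci(\alpha_{i-1}) \leq k-2$ is exactly what must be maintained, and this is where the standing assumption $k > \sci(\alpha) + \inv_k(\beta)$ is used: the inductive bound gives $\sci(\alpha_{i-1}) \leq \sci(\alpha) + (i-1) \leq \sci(\alpha) + (L-1) = \sci(\alpha) + \inv_k(\beta) - 1 \leq k - 2$, so \Cref{lem-SciSimpleRefl} applies for every $i \leq L$. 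Taking $i = L$ then yields $\sci(\alpha \star \beta) = \sci(\alpha_L) \leq \sci(\alpha) + L = \sci(\alpha) + \inv_k(\beta)$. The only genuinely delicate points are the bookkeeping around the rotation factor (confirming it affects neither $\sci$ nor $\inv_k$) and verifying that the $\sci \leq k-2$ hypothesis survives all $L$ steps; both are controlled precisely by the numerical hypothesis, which is why it appears in the exact form $k > \sci(\alpha) + \inv_k(\beta)$.
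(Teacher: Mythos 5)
Your proof is correct, and it runs on the same engine as the paper's: both arguments consume the hypothesis $k > \sci(\alpha) + \inv_k(\beta)$ through exactly $\inv_k(\beta)$ applications of Lemma \ref{lem-SciSimpleRefl}, reassociated via associativity of $\star$, with the inversion-free residue of $\beta$ (a shift) contributing nothing to $\sci$. The difference is organizational. The paper inducts on $\inv_k(\beta)$ directly: given an adjacent descent $\beta(n) > \beta(n+1)$, it writes $\beta = \sigma^k_n \star (\sigma^k_n \beta)$ with $\inv_k(\sigma^k_n\beta) = \inv_k(\beta)-1$, so each inductive step extracts one simple reflection with no reduced-word theory needed --- the only combinatorial input is that a permutation with an inversion has an adjacent descent. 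You instead factor $\beta$ completely upfront as $\sigma^k_{n_1} \star \cdots \star \sigma^k_{n_L} \star \rho^{d(\beta)}$, which imports several standard but unproved-in-this-paper Coxeter facts: that the kernel of the rotation number is the affine symmetric group generated by the $\sigma^k_n$, that $\inv_k$ is its Coxeter length (so reduced words of length $L = \inv_k(\beta)$ exist), and that the Demazure product along a reduced word agrees with the ordinary product. These would need citation or proof, though the last can be rederived from Equation \eqref{eq:starSigma} plus $k$-periodicity, since for $\gamma \in \eaf{k}$ the set $T$ appearing there is either all of $n + k\Z$ or empty. What your route buys: the bookkeeping is fully transparent (the bound $\sci(\alpha_{i-1}) \leq \sci(\alpha) + L - 1 \leq k-2$ shows precisely why the hypothesis appears in the form it does, matching your closing remark), and your explicit min-plus verification that $\gamma \star \rho^j = \gamma\rho^j$ together with $\sci(\gamma\rho^j) = \sci(\gamma)$ supplies a detail the paper's base case ($\sci(\alpha \star \iota_d) = \sci(\alpha\iota_d) = \sci(\alpha)$) asserts without comment. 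A minor further bonus: by always appending reflections on the right you only ever invoke the right-handed formula \eqref{eq:starSigma}, whereas the paper's identity $\beta = \sigma^k_n \star (\sigma^k_n\beta)$ implicitly relies on a left-handed analogue from \cite{PflDemProd}.
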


\begin{proof}
We proceed by induction on $\inv_k(\beta)$. If $\inv_k(\beta) = 0$, then $\beta$ is a shift permutation $\iota_d$ and thus $\sci(\alpha\star \beta) = \sci(\alpha\beta) = \sci(\alpha)$.

Now suppose that $\inv_k(\beta) = n>0$. This implies that $(n,n+1)$ is an inversion of $\beta$ for some $n \in \Z$. Note that this implies that $\sci(\alpha) \leq k-2$. 
Then $\sigma^k_n \beta$ does not have any inversions in common with $\sigma^k_n$, so Equation \eqref{eq:starSigma} implies $\beta = \sigma^k_n \star (\sigma^k_n \beta)$.
Using this and the associativity of $\star$, we can write:
\begin{align*}
    \alpha \star \beta = \alpha \star \sigma_n^k\sigma_n^k\beta = \alpha \star (\sigma_n^k \star (\sigma_n^k\beta)) = (\alpha \star \sigma_n^k) \star (\sigma_n^k\beta).
\end{align*}
Then $\sigma_n^k \beta$ has one fewer $k$-inversion so by induction and \Cref{lem-SciSimpleRefl} we have that
\begin{align*}
    \sci(\alpha \star \beta) = \sci((\alpha \star \sigma_n^k) \star (\sigma_n^k\beta)) \leq \sci(\alpha \star \sigma_n^k) + \inv_k(\sigma_n^k\beta) \leq \sci(\alpha) + 1 + \inv_k(\beta) -1 = \sci(\alpha) + \inv_k(\beta).
\end{align*}
\end{proof}

\begin{proof}[Proof of Theorem \ref{thm:glueBNGtoKGT}]
Assume $(G_1, v_1)$ is Brill--Noether general and has all divisors submodular, and that $(G_2, u_2, v_2)$ has $k$-general transmission, where $k > g_1 + g_2$. 
Then the vertex gluing has all divisors submodular; this follows from Equation \eqref{eq:tauGlued}. 
Let $D$ be any divisor on $G$, and split $D$ into $D = D_1 + D_2$, where $D_1$ is a divisor on $G_1$ and $D_2$ is a divisor on $G_2$. Then by Equation \eqref{eq:tauGlued}, $D$ is submodular, and
$$\tau_D^{u_1,v_2} = \tau_{D_1}^{u_1, v_1} \star \tau_{D_2}^{u_2,v_2}.$$

Since $(G_1, v_1)$ is Brill--Noether general, Proposition \ref{prop:sciLambda} implies that $\sci(\tau_{D_1}^{u_1, v_1}) = |\lambda(D_1, v_1)| \leq g_1$. Since $(G_2, u_2, v_2)$ has $k$-general transmission, $\inv_k(\tau_{D_2}^{u_2,v_2}) \leq g_2$. Therefore $k > g_1 + g_2$ implies $k > \sci(\tau_{D_1}^{u_1,v_1}) + \inv_k(\tau_{D_2}^{u_2,v_2})$, and Proposition \ref{prop:sciInvStar} implies
$$ |\lambda(D,v)| = \sci(\tauD) = \sci(\tau_{D_1}^{u_1,v_1} \star \tau_{D_2}^{u_2,v_2}) \leq \sci(\tau_{D_1}^{u_1,v_1}) + \inv_k(\tau_{D_2}^{u_2,v_2}) \leq g_1 + g_2.$$
So $(G,v)$ is Brill--Noether general.
\end{proof}

\subsection{Attaching two once-marked graphs}

Attaching two Brill--Noether general once-marked graphs behaves as one would hope.

\begin{proposition}\label{prop:glueMarked}
If $(G_1,v_1)$ and $(G_2,v_2)$ are two Brill--Noether general marked graphs of genera $g_1,g_2$, and $G$ is the genus $g = g_1+g_2$ graph obtained by gluing  $v_1$ to $v_2$, then $G$ is Brill--Noether general.
\end{proposition}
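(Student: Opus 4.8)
The plan is to prove the slightly stronger statement that $(G,w)$ is Brill--Noether general \emph{as a once-marked graph}, where $w$ denotes the vertex obtained by identifying $v_1$ and $v_2$; the unmarked conclusion then follows formally. The point is that for \emph{any} divisor $D$ on $G$, with $d=\deg D$ and $r=r(D)$, one has $s_i(D,w)\le i-r$ for $0\le i\le r$ — because removing $r-i$ chips drops the rank by at most $r-i$, so $r(D+(i-r)w)\ge i$ — and hence $\lambda_i(D,w)\ge g-d+r$ for $0\le i\le r$. Thus $\lambda(D,w)$ dominates the rectangular partition with $r+1$ parts equal to $g-d+r$, giving $(r+1)(g-d+r)\le|\lambda(D,w)|$. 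Consequently, once we know $|\lambda(D,w)|\le g$ for every $D$, we get $\rho(g,r,d)=g-(r+1)(g-d+r)\ge 0$ for every divisor, which (via the census relation recorded in the introduction) is precisely Brill--Noether generality of $G$.

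So everything reduces to bounding $|\lambda(D,w)|$. First I would split $D=D_1+D_2$ with $D_i$ a divisor on $G_i$; using the invariance $\lambda(D,w)=\lambda(D+w,w)$ I may assume $D$ places no chips at $w$, so that $\deg D=\deg D_1+\deg D_2$ and $g=g_1+g_2$. Because $w$ is a cut vertex, the pole orders on $G$ are controlled by those on the two pieces through the min-plus (``Demazure'') formula
\[
s_i(D,w)=\max_{p+q=i+1,\ p,q\ge 0}\bigl(s_p(D_1,w)+s_q(D_2,w)\bigr)-1,
\]
which is the once-marked shadow of the gluing formula \eqref{eq:tauGlued} and follows from the theory of $w$-reduced divisors at a cut vertex (a twist $D+\ell w$ is effective exactly when the chips at $w$ can be apportioned between the two sides). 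Substituting this into the definition $\lambda_i=i-s_i+g-\deg D$ and expanding $(i+1)+g-\deg D=(p+g_1-\deg D_1)+(q+g_2-\deg D_2)$ turns the maxima into minima and yields the clean convolution identity $\lambda_i(D,w)=\min_{p+q=i+1}\bigl(\lambda_p(D_1,w)+\lambda_q(D_2,w)\bigr)$.

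The proof then rests on the purely combinatorial inequality that, for partitions $\lambda,\mu$, the shifted min-plus convolution $c_i=\min_{p+q=i+1}(\lambda_p+\mu_q)$ satisfies $\sum_{i\ge 0}c_i\le|\lambda|+|\mu|$. Granting this with $\lambda=\lambda(D_1,w)$ and $\mu=\lambda(D_2,w)$, Brill--Noether generality of the two pieces gives $|\lambda(D,w)|\le|\lambda(D_1,w)|+|\lambda(D_2,w)|\le g_1+g_2=g$, which is what we needed.

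I expect this final inequality to be the main obstacle: it is genuinely a \emph{global} fact about min-plus convolution, and not visible cell-by-cell — for instance, removing a single cell from $\lambda$ can decrease $\sum_i c_i$ by $2$ while decreasing $|\lambda|+|\mu|$ by only $1$, so a naive peeling induction loses control and some slack must be tracked. The cleanest route is to recognize this as exactly the subadditivity of sign-changing inversions under the Demazure product, namely $\sci(\alpha\star\beta)\le\sci(\alpha)+\sci(\beta)$ for $\alpha,\beta\in\asp$ — the companion of \Cref{prop:sciInvStar} with $\inv_k(\beta)$ replaced by $\sci(\beta)$ — since $\sci$ computes $|\lambda|$ by \Cref{prop:sciLambda} and the convolution above corresponds to $\star$. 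This subadditivity should yield to the same peeling-by-simple-reflections argument used to prove \Cref{lem-SciSimpleRefl} and \Cref{prop:sciInvStar}, tracking how each sign-changing simple reflection contributes at most one new sign-changing inversion.
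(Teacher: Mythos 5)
Your reduction fails at the very first step: the proposed stronger statement is false. Glue two loops at their marked vertices (every once-marked loop is Brill--Noether general). The result $G$ is a chain of two loops of genus $2$ whose gluing vertex $w$ satisfies $2w \sim K_G$, so $r(2w)=1$, hence $s_1(0,w)=2$ and $\lambda(0,w)=(2,1)$ with $\left|\lambda(0,w)\right| = 3 > g$ --- no genericity needed. So $(G,w)$ is \emph{never} Brill--Noether general as a once-marked graph, even though $G$ itself is: the gluing vertex behaves like a Weierstrass point. Concretely, your min-plus formula has an off-by-one. In this example $s_0(0,w)=0$, while your formula returns $\max\bigl(s_0(0,v_1)+s_1(0,v_2),\,s_1(0,v_1)+s_0(0,v_2)\bigr)-1 = 1$; the indices must in fact pair as $p+q=i$ with no shift, i.e. $\lambda_i(D,w)=\min_{p+q=i}\bigl(\lambda_p(D_1,v_1)+\lambda_q(D_2,v_2)\bigr)$ (compare the compatibility formula in the remark following the paper's proof of \Cref{prop:glueMarked}). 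With that correct shift your key combinatorial inequality fails: for $\lambda=\mu=(1)$ the convolution is $(2,1)$, of size $3>|\lambda|+|\mu|=2$, exactly reproducing the counterexample above.

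The fallback you propose, subadditivity $\sci(\alpha\star\beta)\le\sci(\alpha)+\sci(\beta)$ on $\asp$, is also false, and not repairable: take two cycles each of torsion order $2$ (say length $2m$ with antipodal marked points), glue $v_1$ to $u_2$, and set $D_1=2u_1$, $D_2=0$, $D=D_1+D_2$. Then $\sci(\tau_{D_1}^{u_1,v_1})=|\lambda(D_1,v_1)|=1$ and $\sci(\tau_{D_2}^{u_2,v_2})=1$, but on $G$ we have $2u_1\sim 2w\sim 2v_2$ and $r(2u_1)=r(K_G)=1$, forcing $s_0(D,v_2)=-2$ and $s_1(D,v_2)=0$, hence $\sci(\tau_D^{u_1,v_2})=|\lambda(D,v_2)|\ge 3 > 2$. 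This is precisely why \Cref{prop:sciInvStar} carries the hypothesis $k>\sci(\alpha)+\inv_k(\beta)$ with $\inv_k$ (not $\sci$) governing the right-hand factor, and why \Cref{thm:glueBNGtoKGT} requires $k>g_1+g_2$: without a torsion assumption, gluing Brill--Noether general once-marked graphs does not yield a Brill--Noether general once-marked graph at either the gluing vertex or the far marked point. The paper's proof of \Cref{prop:glueMarked} sidesteps all of this by never controlling a full Weierstrass partition on $G$: for $D$ of degree $d$ and rank $r$ it extracts from the rank-gluing formula of \cite[Prop.~3.15]{Pfl22} only the $r+1$ pairwise inequalities $\lambda_i(D_1,v_1)+\lambda_{r-i}(D_2,v_2)\ge g-d+r$ for $0\le i\le r$, and sums them to obtain $(r+1)(g-d+r)\le|\lambda(D_1,v_1)|+|\lambda(D_2,v_2)|\le g_1+g_2=g$. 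That ``one rectangle split across the two pieces'' bound is strictly weaker than your claimed convolution bound --- and, unlike it, true.
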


\begin{proof}
Fix a divisor $D$ on $G$, and split $D$ as a sum $D = D_1 + D_2$, where $D_1$ is a divisor on $G_1$ and $D_2$ is a divisor on $G_2$. Abbreviate $\deg D_1, \deg D_2$ by $d_1,d_2$.  By \cite[Prop. 3.15]{Pfl22}, 
$$r_G(D) = \min{ r_{G_1}(D_1 + \ell v_1) + r_{G_2}(D_2 - (\ell+1) v_2) + 1:\ \ell \in \Z}.$$
In this formula, subscripts of $r$ indicate the graph considered when rank is computed, e.g. $r_{G_1}(D_1)$ refers to the rank of $D_1$ as a divisor on $G_1$, not as a divisor on $G$. This notation should not be confused with the notation $r_W$ used in the discussion of rank-determining sets in \Cref{lem-BananaRDS}.

We will use this formula for $r(D)$ to obtain a lower bound on the Weierstrass partitions of $D_1$ and $D_2$. For simplicity of notation, we write $\lambda(D_1,v_1)$ to refer to the Weierstrass partition of $D_1$ as a divisor on $(G_1,v_1)$, and use the notation $s(D_1, v_1)$ similarly. The same remarks apply to $D_2$ on $G_2$. 

The formula for $r_G(D)$ implies that for all $\ell \in \Z$,
$$r_{G_1}(D_1 + \ell v_1) + r_{G_2}(D_2 - (\ell+1) v_2) \geq r-1.$$
Fix an integer $i \in \{0, 1, \cdots, r\}$, and let $\ell = s_i(D_1,v_1)-1$ in the formula above. Then $r_{G_1}(D_1 + \ell v_1) = i-1$ by definition, so the inequality is equivalent to
$r_{G_2}(D_2 - (\ell+1) v_2) \geq r - i$. Equivalently, $-(\ell+1) \geq s_{r-i}(D_2, v_2)$. By our choice of $\ell$, this is equivalent to 
$$0 \geq s_i(D_1,v_1) + s_{r-i}(D_2,v_2).$$
Now, using the definition of Weierstrass partitions, this inequality is equivalent to
$$ 0 \geq i + g_1 - d_1 - \lambda_i(D_1,v_1) + r-i + g_2 - d_2 - \lambda_{r-i}(D_2,v_2).$$
Upon writing $g = g_1 + g_2$ and $d = d_1 + d_2$, this is equivalent to
$$\lambda_i(D_1, v_1) + \lambda_{r-i}(D_2,v_2) \geq g-d+r.$$
This inequality on Weierstrass partitions must hold for all $i \in \{0, 1, \cdots, r\}$. Summing gives
$$ |\lambda(D_1,v_1)| + |\lambda(D_2,v_2)| \geq (r+1)(g-d+r).$$
Since we assumed that $(G_1,v_1)$ and $(G_2,v_2)$ are Brill--Noether general marked graphs, it follows that $|\lambda(D_1,v_1)| + |\lambda(D_2,v_2)| \leq g_1 + g_2 = g$, and the result follows.
\end{proof}

\begin{remark}
The proof above can be reorganized slightly to give the following formula for $r(D)$.
$$
r(D) = \operatorname{min} \left\{ r \in \Z:\ 
\lambda_i(D_1,v_1) + \lambda_{r-i}(D_2,v_2) \geq g-d+r
\mbox{ for all $i \in \{0,1,\cdots,r\}$ }
\right\}.
$$
Note that the condition in this set of possible $r$ is vacuous when $r = -1$, so $r(D) \geq -1$ for all $D$ (as it should be).
This formula is reminiscent of the ``compatibility condition'' for limit linear series on nodal algebraic curves (see for example \cite[Definition 5.33]{HarrisMorrison} and the surrounding discussion).
\end{remark}

We now Theorem \ref{thm:bngChain} from the introduction.

\begin{corollary}[Theorem \ref{thm:bngChain}]
Let $(G_i, u_i, v_i)$, for $i=1,2, \cdots, \ell$, be a sequence of $\ell$ twice-marked graphs, and $(G,u,v) = (G,u_1, v_\ell)$ the iterated vertex gluing. Let $g_i$ and $k_i$ be the genus of $G_i$ and torsion order of $(G_i, u_i, v_i)$, respectively. 
\begin{enumerate}[label = \arabic*)]
    \item If $k_i > g_1 + g_2 + \cdots + g_i$ for all $i$, then $(G,v)$ is a Brill--Noether general marked graph.
    \item if $k_i > \min{g_1 + g_2 + \cdots g_i, g_i + g_{i+1} + \cdots + g_\ell }$ for all $i$, then $G$ is a Brill--Noether general graph.
\end{enumerate}
\end{corollary}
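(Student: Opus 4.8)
The plan is to deduce both parts from the two gluing results already in hand---\Cref{thm:glueBNGtoKGT} and \Cref{prop:glueMarked}---with submodularity threaded through the chain via Equation \eqref{eq:tauGlued}. Throughout I take for granted (as the gluing theorems require) that each $(G_i,u_i,v_i)$ has $k_i$-general transmission; in particular every divisor on each $G_i$ is submodular, and since any divisor on an iterated gluing decomposes as a sum of divisors on the factors, Equation \eqref{eq:tauGlued} shows by induction that every divisor on any partial gluing is submodular as well.

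For part 1 I would induct on the number of factors. Write $H_j$ for the iterated gluing of the first $j$ factors, a twice-marked graph $(H_j,u_1,v_j)$ of genus $g_1+\cdots+g_j$, and show that $(H_j,v_j)$ is a Brill--Noether general marked graph for every $j$; the case $j=\ell$ is exactly part 1. The base case $j=1$ is the corollary following \Cref{thm:glueBNGtoKGT}, using $k_1>g_1$. For the inductive step, view $H_j$ as the vertex gluing of $(H_{j-1},u_1,v_{j-1})$ and $(G_j,u_j,v_j)$: all divisors on $H_{j-1}$ are submodular, $(H_{j-1},v_{j-1})$ is Brill--Noether general by induction, $(G_j,u_j,v_j)$ has $k_j$-general transmission, and $k_j>g_1+\cdots+g_j$ is precisely the inequality $k_j>\operatorname{genus}(H_{j-1})+g_j$ demanded by \Cref{thm:glueBNGtoKGT}. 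That theorem then yields Brill--Noether generality of $(H_j,v_j)$, closing the induction.

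For part 2 the idea is to split the chain at one index $m$, prove the left half Brill--Noether general at its right endpoint and the right half Brill--Noether general at its left endpoint (both by part 1), and glue the two marked graphs by \Cref{prop:glueMarked}. To run part 1 on the right half I read its chain backwards: by \Cref{rem-PermInvol} each $(G_i,v_i,u_i)$ again has $k_i$-general transmission, and the reversed iterated gluing of $(G_\ell,v_\ell,u_\ell),\ldots,(G_{m+1},v_{m+1},u_{m+1})$ reproduces the right half $H^R$ with its left endpoint $w=u_{m+1}=v_m$ as the distinguished (second) vertex. Applying part 1 to the forward left chain needs the inequalities $L_i:\ k_i>g_1+\cdots+g_i$ for $i\le m$, and to the reversed right chain needs $R_i:\ k_i>g_i+\cdots+g_\ell$ for $i>m$.

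The crux is then purely combinatorial: the hypothesis says $L_i\vee R_i$ for every $i$, and I must produce one cut $m$ with $L_i$ holding for all $i\le m$ and $R_i$ holding for all $i>m$. I expect this to be the only real obstacle. I would show no indices $i<i'$ can have $L_i$ and $R_{i'}$ both failing: since at most one of $L_i,R_i$ can fail, $R_i$ and $L_{i'}$ would then hold, and writing $P_j=g_1+\cdots+g_j$ and $S=P_\ell$ the four inequalities force $S-P_{i-1}<P_i$ and $P_{i'}<S-P_{i'-1}$, hence $P_{i'}+P_{i'-1}<S<P_i+P_{i-1}$; but $i<i'$ gives $P_{i-1}\le P_i\le P_{i'-1}\le P_{i'}$ by monotonicity of the partial sums, so $P_i+P_{i-1}\le P_{i'-1}+P_{i'}$, a contradiction. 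Consequently every index where $R$ fails lies strictly below every index where $L$ fails, and any $m$ with $\max\{i:R_i\text{ fails}\}\le m<\min\{i:L_i\text{ fails}\}$ works (taking $m=0$ or $m=\ell$ if one family never fails). Finally, applying part 1 to both halves and gluing at $w$ via \Cref{prop:glueMarked} shows $G$ is Brill--Noether general; the degenerate cases $m\in\{0,\ell\}$ collapse to a single half, where a Brill--Noether general marked graph is a fortiori Brill--Noether general after forgetting the marked point (a rectangular partition of size $(r+1)(g-d+r)$ lies in the marked census exactly when $(d,r)$ lies in the unmarked census, so size $\le g$ gives $\rho(g,r,d)\ge 0$).
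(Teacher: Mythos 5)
Your proposal is correct and takes essentially the same route as the paper: part 1 is the identical induction on $\ell$ with Theorem \ref{thm:glueBNGtoKGT} as the inductive step, and part 2 splits the chain into two once-marked halves (the right half read backwards, as you justify via Remark \ref{rem-PermInvol}) and glues them with Proposition \ref{prop:glueMarked}. The only cosmetic difference is in locating the cut: the paper simply takes the maximal $j$ with $g_1+\cdots+g_j \leq g_j+\cdots+g_\ell$, exploiting that the left-hand sums are nondecreasing and the right-hand sums nonincreasing in $j$, whereas you reach the same conclusion through your non-interleaving lemma on the failure sets of $L_i$ and $R_i$; your explicit handling of submodularity along the chain and of the degenerate cuts $m\in\{0,\ell\}$ fills in details the paper leaves implicit.
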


\begin{proof}
Part $1)$ follows by induction on $\ell$, using \Cref{thm:glueBNGtoKGT} for the inductive step. Part $2)$ follows from Proposition \ref{prop:glueMarked} upon splitting the chain into two once-marked chains of genera $g_1 + \cdots + g_j$ and $g_{j+1} + \cdots g_\ell$, where $j$ is the maximum index such that $g_1 + \cdots + g_j \leq g_{j} + g_{j+1} + \cdots g_\ell$.  This choice of $\ell$ means that both halves are Brill--Noether general marked graphs, by part $1)$.
\end{proof}

\bibliographystyle{plain} 
\bibliography{refs}

\end{document}